\title{Integral canonical models for automorphic vector bundles of Abelian Type}
\author{Tom Lovering}
\newtheorem{thm}[subsubsection]{Theorem}
\newtheorem*{thm*}{Theorem}
\newtheorem{lem}[subsubsection]{Lemma}
\newtheorem{prop}[subsubsection]{Proposition}
\newcommand{\A}{\mathbb{A}}
\newcommand{\Af}{\mathbb{A}^\infty}
\newcommand{\Afp}{\mathbb{A}^{\infty,p}}
\newcommand{\C}{\mathbb{C}}
\newcommand{\D}{\mathbb{D}}
\newcommand{\G}{\mathbb{G}}
\newcommand{\Gm}{\mathbb{G}_m}
\newcommand{\Q}{\mathbb{Q}}
\newcommand{\Qbar}{\bar{\mathbb{Q}}}
\newcommand{\Qp}{\mathbb{Q}_p}
\newcommand{\R}{\mathbb{R}}
\newcommand{\bS}{\mathbb{S}}
\newcommand{\Z}{\mathbb{Z}}
\newcommand{\Zp}{{\mathbb{Z}_p}}
\newcommand{\cA}{\mathcal{A}}
\newcommand{\cB}{\mathcal{B}}
\newcommand{\cF}{\mathcal{F}}
\newcommand{\cG}{\mathcal{G}}
\newcommand{\cH}{\mathcal{H}}
\newcommand{\cJ}{\mathcal{J}}
\newcommand{\cL}{\mathcal{L}}
\newcommand{\cO}{\mathcal{O}}
\newcommand{\cP}{\mathcal{P}}
\newcommand{\cS}{\mathcal{S}}
\newcommand{\cV}{\mathcal{V}}
\newcommand{\cX}{\mathcal{X}}
\newcommand{\fm}{\mathfrak{m}}
\newcommand{\fM}{\mathfrak{M}}
\newcommand{\fS}{\mathfrak{S}}
\newcommand{\fX}{\mathfrak{X}}
\newcommand{\fY}{\mathfrak{Y}}
\newcommand{\fp}{\mathfrak{p}}
\newcommand{\ad}{\operatorname{ad}}
\newcommand{\Aut}{\operatorname{Aut}}
\newcommand{\Spec}{\operatorname{Spec}}
\newcommand{\Hom}{\operatorname{Hom}}
\newcommand{\Ker}{\operatorname{Ker}}
\newcommand{\Fil}{\operatorname{Fil}}
\newcommand{\gr}{\operatorname{gr}}
\newcommand{\Char}{\operatorname{char}}
\newcommand{\id}{\operatorname{id}}
\newcommand{\Frac}{\operatorname{Frac}}
\newcommand{\Isom}{\operatorname{Isom}}
\newcommand{\uIsom}{\underline{\Isom}}
\newcommand{\Rep}{\operatorname{Rep}}
\newcommand{\vVec}{\operatorname{Vec}}
\newcommand{\Bun}{\operatorname{Vec}}
\newcommand{\Lisse}{\operatorname{Lisse}}
\newcommand{\Mod}{\operatorname{Mod}}
\newcommand{\Par}{\operatorname{Par}}
\newcommand{\lcm}{\operatorname{lcm}}
\newcommand{\nocontentsline}[3]{}
\newcommand{\Sh}{\operatorname{Sh}}
\newcommand{\Gal}{\operatorname{Gal}}
\newcommand{\Res}{\operatorname{Res}}
\newcommand{\tocless}[2]{\bgroup\let\addcontentsline=\nocontentsline#1{#2}\egroup}
\newcommand{\rightiso}{\stackrel{\cong}{\rightarrow}}
\newcommand{\leftiso}{\stackrel{\cong}{\leftarrow}}
\newcommand{\map}[1]{\stackrel{#1}{\rightarrow}}
\begin{document}
\maketitle

\begin{abstract}
We define and construct integral canonical models for automorphic vector bundles over Shimura varieties of abelian type. 

More precisely, we first build on Kisin's work to construct integral canonical models over $\cO_E[1/N]$ for Shimura varieties of abelian type with hyperspecial level at all primes not dividing $N$ compatible with Kisin's construction. We then define a notion of an integral canonical model for the standard principal bundles lying over Shimura varieties and proceed to construct them in the abelian type case. With these in hand, one immediately also gets integral models for automorphic vector bundles.
\end{abstract}

\tableofcontents
\section{Introduction}
Since the introduction of the abstract theory of Shimura varieties and their canonical models by Deligne \cite{del1}, \cite{del2} following Shimura, and given its promise as a rich generalisation of the classical theory of modular curves, substantial bodies of literature have arisen whose aim is to extend features of the classical theory to this wider context. 

One such feature is the existence of smooth integral models at primes not dividing the level, and their subsequent utility for studying the action of Frobenius on the Galois representations arising from Shimura varieties crucial for the Langlands programme. Such results have been available in the PEL type case for a long time, thanks largely to the programme of Kottwitz, but have recently been extended to the much more general case of abelian type Shimura varieties in work culminating with the recent papers of Kisin \cite{kis2},\cite{kis3}.

Another feature is the manifestation of certain automorphic forms as algebraic sections of a vector bundle over a Shimura variety,  generalising the classical algebraic description of modular forms. The vector bundles playing the role analogous to the tensor powers of the Hodge bundle in the theory of modular forms are the automorphic vector bundles, and canonical models were defined and shown to exist in some cases by Harris \cite{har1} and more generally by Milne \cite{milne1},\cite{milne3}.

In the present paper we start to draw these two threads of the literature together, working in the case of general\footnote{We do require a small technical restriction: that $Z(G)^\circ$ is split by a CM field, but feel it should be possible to remove this restriction, and that it ought to be harmless for most applications.} abelian type Shimura varieties, first filling a gap in the existing literature and showing that Kisin's good integral models can be spread out to smooth models over $\cO_E[1/N]$, then defining a notion of integral canonical models for automorphic vector bundles with a uniqueness property, and finally proving existence in the abelian type case.

More precisely, suppose $(G,\fX)$ is a Shimura datum, with reflex field $E=E(G,\fX)$, $N>1$ and suppose $G$ admits a reductive\footnote{Recall that a (connected) reductive group scheme $G \rightarrow S$ is a smooth affine group scheme with connected reductive geometric fibres.} model $\cG/\Z[1/N]$. Take an open compact $K = K^NK_N \subset G(\Af)$ where $K^N = \prod_{p \nmid  N}\cG(\Zp)$ and $K_N \subset \prod_{p|N} G(\Qp)$ is open compact. Consider the tower
$$\Sh_{K^N}(G,\fX) := \varprojlim_{K_N} \Sh_{K^NK_N}(G,\fX)$$
of quasi-projective $E$-schemes, which comes equipped with an algebraic $\prod_{p|N}G(\Qp)$ action (in fact it carries the action of a slightly larger group as in \cite{del2}). The main result of Kisin's first integral models paper \cite{kis2} tells us that when $(G,\fX)$ is of abelian type this tower admits, for every $v \nmid N$ a smooth integral model $\cS^{\operatorname{Kis},v}_{K^N} /\cO_{E,v}$ to which the $\prod_{p|N}G(\Qp)$-action extends. Note that by a `smooth' model, we mean one which is smooth quasiprojective at any finite level and for which the maps between different finite levels are finite \'etale.

It is natural to ask whether these all come from a global model over $\cO_E[1/N]$. Moreover, Kisin's models also enjoy an `extension property' which characterises them uniquely, so it is natural to ask if furthermore we can find a global model having this extension property. Our first theorem answers this in the affirmative.

\begin{thm*}
With the above setup, suppose $(G,\fX)$ is of abelian type. Then the tower $\Sh_{K^N}(G,\fX)$ admits a smooth integral model $\cS_{K^N}(G,\fX)/\cO_E[1/N]$ to which the $\prod_{p|N}G(\Qp)$-action extends and having the extension property.

Moreover, for any $v \nmid N$ a place of $E$, the model $\cS_{K^N}(G,\fX) \otimes_{\cO_E[1/N]} \cO_{E,v}$ is canonically identified\footnote{This identification is as an $\cO_{E,v}$-scheme with $\prod_{p|N}G(\Qp)$-action and equivariant identification $\iota$ of its generic fibre with $\Sh_{K^N}(G,\fX)$.} with the model $\cS^{\operatorname{Kis},v}_{K^N}$ obtained from Kisin's theory.
\end{thm*}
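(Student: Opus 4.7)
The plan is to reduce to the Hodge type case, construct the model there by performing Kisin's normalization-of-closure procedure globally rather than locally, and then derive the abelian type case from Hodge type using the standard adjoint-quotient argument of Deligne and Kisin.

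For $(G,\fX)$ of Hodge type, the first step is to globalize the Hodge embedding. After possibly enlarging $N$, the reductive model $\cG/\Z[1/N]$ embeds as a closed subgroup of some $GSp_{2g}/\Z[1/N']$, obtained by spreading out Deligne's symplectic embedding together with a lattice fixed by $\cG$. Let $\cA/\Z[1/N_{\tx{Siegel}}]$ denote the global integral model of the Siegel tower. I would then define $\cS_{K^N}(G,\fX)/\cO_E[1/N]$ as the normalization of the Zariski closure of $\Sh_{K^N}(G,\fX)$ inside $\cA \otimes_\Z \cO_E[1/N]$. The action of $\prod_{p|N}G(\Qp)$ on the generic fiber extends automatically, since it acts through changes of Siegel level structure and normalization is functorial.

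The main technical step is checking that at every $v \nmid N$, the base change $\cS_{K^N}(G,\fX) \otimes_{\cO_E[1/N]} \cO_{E,v}$ is canonically identified with $\cS^{\operatorname{Kis},v}_{K^N}$. Since Kisin's model is itself defined as the normalization of the closure in $\cA \otimes \cO_{E,v}$, and normalization commutes with flat localization for schemes of finite type over excellent Noetherian bases, this should follow essentially formally; smoothness (where the CM hypothesis on $Z(G)^\circ$ enters) is then inherited from Kisin's theorem. The extension property is likewise local in nature: given a regular formally smooth $\cO_E[1/N]$-scheme $T$ with a map from its generic fiber to $\Sh_{K^N}(G,\fX)$, one applies Kisin's local extension property to $T \otimes_{\cO_E[1/N]} \cO_{E,v}$ at each $v \nmid N$ and glues, the gluing being forced by uniqueness on the generic fiber.

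For the general abelian type case, I would follow Deligne-Kisin: choose an auxiliary Hodge type datum $(G_1,\fX_1)$ with the same adjoint, construct its integral model globally by the above, and realize $\cS_{K^N}(G,\fX)$ as a quotient of a connected component of the Hodge type tower times $G(\Af)$ by an explicit action involving a central isogeny, mirroring the description of the generic fiber. The main obstacle I anticipate is not in any single step but rather in uniformly controlling the auxiliary data (Hodge embeddings, lattices, finite groups acting on connected components) over all primes of $\cO_E[1/N]$ simultaneously, since Kisin's local arguments can freely make choices depending on the prime. Finally, uniqueness follows formally from the extension property: any other candidate admits maps in both directions, forcing an isomorphism.
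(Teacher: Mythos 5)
Your Hodge-type step is essentially the construction the paper carries out: spread out the symplectic embedding, take the normalisation of the closure of $\Sh_{K^N}(G,\fX)$ in a global Siegel model over $\Z[1/N]$, and compare with Kisin's model place by place. The issue you flag about making the auxiliary choices uniform in the prime is real but is dispatched in the paper by an elementary lattice-intersection lemma (\ref{lattice intersect}) together with a gluing statement for reductive models and lattices (\ref{glue groups}); note also that the paper gets the extension property directly from the global Siegel model (Milne's N\'eron-criterion argument) rather than by gluing Kisin's local extension properties, that smoothness at finite level is checked by comparing complete local rings with Kisin's (and, at $p=2$, Kim--Madapusi Pera's) construction, and that some care is needed because the chosen global lattice forces merely maximal compact, not hyperspecial, level in the Siegel datum at the primes dividing $M/N$.

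The genuine gap is in your abelian-type step. The Deligne--Kisin induction you propose --- realising the model as a quotient of (connected component)$\times G(\Af)$ by an explicit action, ``mirroring the description of the generic fiber'' --- is not available at level $K^N$: one does not know that $\prod_{p|N}G(\Qp)$ acts transitively on $\pi_0(\Sh_{K^N}(G_2,\fX_2)_{\Qbar})$, so there is no such uniform group-theoretic description of the tower. The paper instead builds a model for each geometric component separately over $\cO_{E_N}[1/N]$ (each component is a Hecke translate of the distinguished component at a conjugate hyperspecial level, i.e.\ for a different reductive $\Z[1/N]$-model of $G_2$), takes the disjoint union, and uses the extension property to extend the $\Gal(E_N/E)$- and Hecke actions and descend to $\cO_E[1/N]$ (\ref{assemble extension prop}). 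Moreover, the passage from the Hodge-type component to the abelian-type one is a quotient by $\Delta^N(G,G_2)$, and for this one needs both that this group acts freely and that it is finite, so that the quotient map is finite \'etale and the extension property descends (\ref{extension descent}); descent of the extension property fails for pro-finite \'etale covers, so finiteness is not cosmetic. Kisin's freeness argument does not transfer as you suggest: it relies on the claim that a torsor under a group of multiplicative type is trivial over $\Z_{(p)}$ as soon as it is generically trivial, which is false over $\Z[1/N]$. The paper fixes this by twisting abelian varieties by torsors under the finite group scheme $Z(G^{der})$ instead of $Z(G)$, for which generic triviality does imply triviality (\ref{finite torsor lemma}, used in \ref{delta free}), and proves finiteness of $\Delta^N$ via finiteness of $H^1_{fppf}(\Z[1/N],Z(G^{der}))$ (\ref{delta finite}). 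None of these points appear in your outline, and without them the quotient construction and the descent of the extension property in the abelian-type case do not go through.
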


We call these models `integral canonical models' because the extension property guarantees their uniqueness. 

We make some remarks about the proof. The obvious direct `patching' argument to obtain the result formally from Kisin's models fails because one cannot automatically get the extension property for the models obtained by spreading out, so instead we give a direct construction. The proof in the Hodge type case very closely follows that of \cite{kis2}. In the abelian type case we need several new ideas. 

Firstly, we have no guarantee that $\prod_{p|N} G(\Qp)$ acts transitively on the components of $\Sh_{K^N}(G,\fX)$ so we replace the group theoretic `Deligne-induction' argument by an argument that uses the extension property to reduce the problem to constructing models for each component individually over the ring of integers of the maximal abelian unramified extension of $E$. Secondly, to prove the analogue of \cite[3.4.6]{kis2} without the lemma which follows it, which may not be true in our context, we show that Kisin's `twisting abelian varieties' construction can be carried out using torsors for the centre of $G^{der}$ rather than $G$, and since such torsors are finite, they are simpler to work with. Finally, we believe in fact that unless $G^{der}=G^{ad}$, the group $\Delta(G,G^{ad})$ loc. cit. is not finite, which is necessary to descend the extension property. Fortunately we are able to prove that the corresponding group at level $K^N$ (our situation) is finite, which perhaps also helps to fill a small gap in the original argument.

We then turn to automorphic vector bundles, and following \cite[III]{milne3}, the heart of the matter is really to define and construct integral canonical models for the standard principal bundles $P_{K^N}(G,\fX) \rightarrow \Sh_{K^N}(G,\fX)$. Such bundles are torsors for the quotient 
$$G^c = G/Z_{nc}$$
 by the maximal subtorus $Z_{nc} \subset Z(G)^\circ$ which is $\R$-split but $\Q$-anisotropic. They come equipped with a flat connection $\nabla$, an equivariant $\prod_{p|N}G(\Qp)$-action and a `filtration' which can be written down by giving a map 
$$\gamma: P_{K^N}(G,\fX) \rightarrow Gr_{\mu}$$
where $Gr_{\mu}$ is the flag variety corresponding to a Hodge cocharacter $\mu:\Gm \rightarrow G$ coming from $\fX$. Informally, it is helpful to think of the fibre functors $\omega$ attached to these bundles as giving one the sheaf of de Rham cohomology of a family of motives over $\Sh_{K^N}$ with its Gauss-Manin connection and Hodge filtration.

Any smooth integral model for $P_{K^N}(G,\fX)$ that is a $\cG^c$-torsor would give, for each representation $\rho: \cG^c \rightarrow GL(V_{\Z[1/N]})$, a $\Z[1/N]$ lattice inside $\omega(V \otimes \Q)$. We define an integral canonical model to be one where at `crystalline points' this lattice coincides with a different lattice constructed using Kisin's theory of $\fS$-modules \cite{kis1}. Roughly speaking, working on the generic fibre one has a Galois cover $\Sh_{K^{Np}}(G,\fX) \rightarrow \Sh_{K^N}(G,\fX)$ with Galois group $\cG^c(\Zp)$ so we obtain attached to $\rho$ a lisse $\Zp$-sheaf
$$\cL := \Sh_{K^{Np}}(G,\fX) \times V_{\Zp} / \cG^c(\Zp)$$
on $\Sh_{K^N}(G,\fX)$. Restricting this to a crystalline point $s$, the theory of $\fS$-modules gives a lattice $\D(s^*\cL) \subset D_{dR}(s^*\cL[1/p])$. Thus we have defined lattices (at each prime) inside $\omega(V \otimes \Q)$, and we say a model 
$$(\cP_{K^N}(G,\fX), \iota:\cP_{K^N}(G,\fX) \otimes_{\cO_E[1/N]} E \rightiso P_{K^N}(G,\fX))$$
is canonical if the lattices it generates agree with these lattices coming from $p$-adic Hodge theory. We then check that if such a model exists and there are enough crystalline points (which we check in the abelian type case), it is unique up to canonical isomorphism. We also reserve the term `canonical' for models for which the connection, Hecke action and filtration extend, but these seem to be automatic properties of the models satisfying the lattice condition in the abelian type case (and we would assume in general).

Of course, with this definition, our main theorem is the following.

\begin{thm*}
Let $(G,\fX)$ be a Shimura datum of abelian type and $\cG/\Z[1/N]$ a reductive model for $G$. Then $P_{K^N}(G,\fX)$ has an integral canonical model $(\cP_{K^N},\iota)$.
\end{thm*}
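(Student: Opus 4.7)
My plan is to construct $\cP_{K^N}$ first in the Hodge type case, using the de Rham cohomology of the universal abelian scheme over $\cS_{K^N}$, and then to pass to the abelian type case by adapting the gluing strategy from the preceding theorem. In the Hodge type case, choose an embedding $(G,\fX) \hookrightarrow (GSp(\Lambda), \fH^\pm)$ together with a compatible integral embedding $\cG \hookrightarrow GL(\Lambda_{\Z[1/N]})$. Let $\cA \to \cS_{K^N}$ be the universal abelian scheme and set $\cH := \cH^1_{dR}(\cA/\cS_{K^N})$, a vector bundle with Gauss-Manin connection, Hodge filtration, and Hecke action. Kisin's integral Hodge-type theory provides tensors $s_{\alpha,dR}$ in tensor constructions of $\cH$ whose stabiliser inside the full frame bundle $\uIsom(\Lambda_{\cO_E[1/N]}, \cH)$ is a $\cG^c$-torsor; this torsor is my candidate $\cP_{K^N}$. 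The connection, filtration, and Hecke action on $\cH$ preserve the tensors and hence descend to $\cP_{K^N}$, and the filtration packages into the required map $\gamma$ to the flag variety $Gr_{\mu}$.

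Next I verify the lattice condition at crystalline points in the Hodge type case. At a crystalline point $s : \Spec \cO_F \to \cS_{K^N}$, the lisse $\Zp$-sheaf $\cL$ attached to a representation $V$ of $\cG^c$ arises as a tensor construction on the $p$-adic Tate module of $\cA_s$, and Kisin's integral $p$-adic Hodge theory \cite{kis1} combined with Faltings' crystalline comparison identifies the $\fS$-module lattice $\D(s^*\cL)$ with the de Rham lattice coming from $\cH_s$. A key input is that these identifications respect the tensors $s_{\alpha,dR}$ and their $p$-adic \'etale counterparts, which is essentially the content of \cite{kis2}; this establishes the lattice condition on our candidate $\cP_{K^N}$.

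Finally, for the abelian type case, pick an auxiliary Hodge type datum $(G_1,\fX_1)$ sharing the adjoint datum of $(G,\fX)$, construct $\cP_{K^N}(G_1,\fX_1)$ as above, and glue by an abelian-type procedure analogous to that of the first theorem: build the bundle on each connected component over an unramified cover of $\cO_E[1/N]$ by pushing out the Hodge-type bundle along the appropriate morphism, then descend using the $\pi_0$/Galois action to obtain a global $\cG^c$-torsor on $\cS_{K^N}(G,\fX)$ equipped with connection, filtration, and Hecke action. The main obstacle I anticipate is compatibility of this twisting procedure with both the de Rham and $\fS$-module lattice constructions. Concretely, the twisting torsor (a $Z(G^{der})$-torsor in the spirit of the previous theorem's refinement) modifies the bundle by finite data that must be shown to preserve the $p$-adic Hodge-theoretic lattice condition at crystalline points. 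I would handle this by exploiting density of special (hence crystalline) points on $\cS_{K^N}(G,\fX)$ together with the uniqueness property of canonical models, reducing the verification to a finite list of compatibilities checked at the cocycle level, and using the fact that the twist trivialises over a sufficiently large unramified extension where the crystalline comparison is already controlled via the Hodge type case.
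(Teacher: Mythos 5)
Your Hodge type construction is essentially the paper's: take $\cH^1_{dR}$ of the universal abelian scheme, extend Kisin's de Rham tensors integrally, define $\cP_{K^N}=\uIsom_{s_\alpha}(V,\cV)$, check it is a $\cG$-torsor on formal neighbourhoods via Kisin's deformation theory, and verify the lattice condition at crystalline points from $\D(H^1_{et}(\cA_x,\Zp))=H^1_{dR}(\cA_x/W)$ and compatibility of tensors. That part is sound.

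The gap is in the passage to abelian type. You propose to ``push out the Hodge-type bundle along the appropriate morphism'' after twisting by a $Z(G^{der})$-torsor, but no such morphism exists: the auxiliary Hodge type datum $(G_1,\fX_1)$ is related to $(G,\fX)$ only by a central isogeny $G_1^{der}\rightarrow G^{der}$ of \emph{derived} groups, and this induces no homomorphism $G_1^c\rightarrow G^c$ (nor $G_1\rightarrow G$) along which a $G_1^c$-torsor could be pushed out. One also cannot first reduce the structure group of $P_{K^N}(G_1,\fX_1)$ to $G_1^{der,c}$ integrally or even over $E$ --- that reduction amounts to trivialising the $G_1^{ab}$-part of the motivic structure and is only available analytically over $\C$, which destroys exactly the integral and $p$-adic data you need. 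This is why the paper introduces the distinguished Shimura datum $(\cB,\fX_\cB)$ with $\cB=G_1\times_{G_1^{ab}}E^*$: it maps to every Shimura datum with the given connected datum and reflex field inside $E$, so $\cB^c$ does admit a map (with finite central kernel $\Xi$) towards $G^c$. The canonical model for $P(\cB,\fX_\cB)$ is then built as a fibre product of torsors from two inputs your sketch does not supply: the Hodge type model and a canonical model in the special/torus case for the $E^*$-factor, whose construction requires CM motives and Blasius' theorem on de Rham cycles to pin down the lattices. Only after that can one restrict to a connected component, quotient by the extension $1\rightarrow\Xi\rightarrow\Delta_P^N(\cB,G_2)\rightarrow\Delta^N(G_1,G_2)\rightarrow 1$ acting freely, push out along $\cB^c/\Xi\rightarrow G^c$, and reassemble and descend using the uniqueness of canonical models (your appeal to density of crystalline points and uniqueness is in the right spirit for this last step). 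Without the intermediate datum $(\cB,\fX_\cB)$ --- or some substitute mechanism for changing the structure group when only the derived groups are related --- the ``pushing out'' step of your plan cannot be carried out, and the cocycle-level compatibilities you defer to would have nothing to act on.
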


We give a quick summary of the proof. For the `special type' case where $G$ is a torus, we use the theory of CM motives to find lattices in the de Rham cohomology of abelian varieties. For the Hodge type case, the universal abelian variety has an integral model so we take as our starting point the sheaf $\cV$ of its relative de Rham cohomology, and note that the Hodge tensors of \cite[2.2]{kis2} $s_{\alpha,dR} \in \cV_E^\otimes$ extend to $\cV^\otimes$, at which point they may be used to define a functor
$$\cP_{K^N} := \uIsom_{s_\alpha}(V, \cV)$$
which we show is a $\cG$-torsor using several ingredients from \cite{kis2}, and is the required integral canonical model.

For passing from the Hodge type to abelian type case, we need a new idea which may be more widely applicable. Suppose $(G_2,\fX_2)$ is an abelian type Shimura datum of interest. If we let $(G,\fX)$ be a Hodge type datum such that there is an isogeny $G^{der} \rightarrow G_2^{der}$ witnessing that $(G_2,\fX_2)$ is of abelian type, we do not have a map relating $G_2$ to $G$ but only between the derived groups. However, the torsors $P_{K^N}(G,\fX)$ cannot be reduced to $G^{der}$ without passing to $\C$, which loses the information we are interested in. 

Our solution, inspired by Deligne \cite[2.5]{del2}, is to define for each connected Shimura datum $(G^{der},\fX^+)$ and field $E \supset E(G^{der},\fX^+) := E(G^{ad},\fX^{ad})$ a new Shimura datum $(\cB,\fX_{\cB})$ with the property that any Shimura datum $(G,\fX)$ whose reflex field is contained in $E$ and whose connected Shimura datum is $(G^{der},\fX^+)$ admits a canonical map
$$(\cB,\fX_{\cB}) \rightarrow (G,\fX).$$

We then pass from Hodge type to abelian type by first giving a direct construction of a canonical model for $P_{K_{\cB}^N}(\cB,\fX_{\cB})$ given one for $P_{K^N}(G,\fX)$. With this, we are in business, because if we let $(\cB_2,\fX_{\cB,2})$ be the corresponding pair for $(G_2,\fX_2)$ there is a map
$$(\cB,\fX_{\cB}) \rightarrow (\cB_2,\fX_{\cB,2}) \rightarrow (G_2,\fX_2)$$
and we are able to descend the torsor through the map on connected components while pushing out the $\cB^c$-action to a $G_2^c$-action. Finally we translate our results into results for automorphic vector bundles.

While we do not give applications in this paper, we anticipate this construction playing a useful role in several places. It is already used to study integrality of periods in a preprint of Ichino-Prasanna \cite{ip}, and we expect it should be useful in much more general contexts along these lines.

Working at a formal completion at a single place the same construction gives families of strongly divisible filtered $F$-crystals. This theory is developed and used in \cite{l} to establish a new result that the Galois representations formed by taking the cohomology of the Shimura variety with coefficients in the usual lisse sheaves are crystalline, and in appropriate situations Fontaine Laffaille. In particular this gives a new proof of part of local global compatibility at $l=p$ in certain cases, as well as providing a new tool for studying the $p$-adic geometry of Shimura varieties and $p$-adic automorphic forms.

\section*{Acknowledgements}
The author would like to extend the greatest of thanks to his PhD supervisor Mark Kisin, under whose guidance this project was completed. This project owes much to his sharp insights, but also his optimism, energy and patience. 

We would also like to thank Jack Thorne, after a conversation with him in 2013 provided the initial motivation for this project. For other useful conversations we thank Chris Blake, George Boxer, Lukas Brantner, Justin Campbell, Kestutis Cesnavicius, Erick Knight, Ananth Shankar, Jack Shotton, Rong Zhou, and Yihang Zhu.

This paper is part of the author's PhD thesis completed at and funded by Harvard University, and begun while the author was on a Kennedy Scholarship.

\section{Integral canonical models for Shimura varieties of abelian type}
\subsection{Extension property}
\subsubsection{}
We first recall the extension property used to characterise Shimura varieties at infinite level. Let $R$ be a domain with field of fractions $K$. We call $S/R$ a \emph{test scheme} if it is regular and formally smooth over $R$.\footnote{I believe there is still some controversy over the ``correct'' definition of a test scheme but this should do for our purposes.} Suppose we are given a scheme $X/R$. We say \emph{$X$ has the extension property} if for any test scheme $S/R$ any map $S_K \rightarrow X_K$ extends over $R$.

The following uniqueness statement is well known.
\begin{lem}
\label{extension uniqueness}
Suppose $Y/K$ is a scheme. Then if $X/R$ is a model for it over $R$ and is both a test scheme and has the extension property, $X$ is the unique such model up to canonical isomorphism.
\end{lem}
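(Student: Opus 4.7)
The plan is the standard universal-property style argument: use the extension property of each model in turn to produce comparison morphisms in both directions, then show these are mutually inverse isomorphisms by invoking uniqueness of the extension.

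Concretely, let $X_1/R$ and $X_2/R$ be two models with identifications $\iota_i: (X_i)_K \rightiso Y$. The isomorphism $\iota_2\ii \iota_1: (X_1)_K \rightiso (X_2)_K$, viewed as a map from the generic fibre of the test scheme $X_1$ to the generic fibre of $X_2$, extends by the extension property of $X_2$ to a morphism $f_{12}: X_1 \to X_2$ of $R$-schemes. Reversing the roles of $X_1$ and $X_2$ produces $f_{21}: X_2 \to X_1$. Then $f_{21} \circ f_{12}: X_1 \to X_1$ and $\id_{X_1}$ are both $R$-morphisms restricting to $\id_{(X_1)_K}$ on the generic fibre, and similarly on the other side.

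To conclude I need that an $R$-morphism $X_1 \to X_1$ restricting to the identity on the generic fibre must equal $\id_{X_1}$. Since $X_1$ is formally smooth over the domain $R$ it is flat, so the generic fibre $(X_1)_K$ is schematically dense in $X_1$; provided $X_1$ is separated (which is a standing hypothesis on the schemes of interest here), the equalizer of two such morphisms is a closed subscheme containing a schematically dense open, hence equals $X_1$. Applying this to both composites shows $f_{12}$ and $f_{21}$ are mutually inverse isomorphisms. The same uniqueness argument shows that any two $R$-morphisms $X_1 \to X_2$ extending $\iota_2\ii \iota_1$ must agree, so the isomorphism $f_{12}$ is canonically determined by the pair $(\iota_1,\iota_2)$.

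The only mildly subtle point is this last uniqueness-of-extension step: the extension property is phrased purely as an existence statement, so one has to supply the flatness + separatedness input to convert it into uniqueness. This is the step I expect to be the main (small) obstacle, though in our context it comes for free; once it is in hand, the rest of the proof is formal.
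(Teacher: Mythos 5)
Your proof is correct and takes essentially the same route as the paper's: the paper likewise applies the extension property of one model to the other model viewed as a test scheme to extend the comparison isomorphism $X_K \rightiso X'_K$ over $R$. Your additional step — extending in both directions and checking the composites equal the identity via schematic density of the generic fibre (flatness over the domain $R$) plus separatedness — is precisely the detail the paper's one-line proof leaves implicit, so there is no real divergence in method.
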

\begin{proof}
Let $X,X'$ be two such models. Then we are (as part of the data of a model) given maps
$$X_K \rightiso Y \rightiso X'_K.$$
Since $X$ is a test scheme, and since $X'$ has the extension property, this isomorphism extends to an isomorphism
$X \rightiso X'$ of $R$-schemes.
\end{proof}

We also record the following useful formal properties.

\begin{lem}
\label{extension localisation}
Let $R'/R$ be an \'etale or ind-\'etale (or formally smooth) extension of domains with fraction field extension $K'/K$. Suppose $X/R$ satisfies the extension property. Then so does $X \otimes_R R'/R'$.

Conversely, if $R'/R$ is also faithfully flat, then if $X \otimes_R R'/R'$ satisfies the extension property, so does $X/R$.
\end{lem}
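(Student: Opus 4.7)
The plan is to handle the two directions separately. The forward direction reduces to applying the extension property of $X/R$ to $S'$ viewed as a test scheme over $R$, while the reverse direction uses faithfully flat descent.

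For the forward direction, let $S'$ be a test scheme over $R'$. Since formal smoothness composes and regularity is intrinsic, $S'$ is a test scheme over $R$ as well. Given $f : S'_{K'} \to X_{K'}$, composing with the projection $X_{K'} \to X$ gives an $R$-map $g : S'_{K'} \to X$. The key computation in the (ind-)\'etale case is that $R' \otimes_R K = K'$: indeed, $R' \otimes_R K$ is a localisation of the domain $R'$ and is an (ind-)\'etale $K$-algebra, and the image in this colimit domain of any \'etale $K$-subalgebra is a quotient domain of an \'etale algebra, hence a finite separable field extension of $K$; the whole colimit is therefore an algebraic separable field extension of $K$ containing $R'$ and must equal $K'$. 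Consequently $S'_{K'} = S' \otimes_R K$ coincides with the generic fibre of $S'$ over $R$, and the extension property of $X/R$ applied to the test scheme $S'$ and the map $g$ yields an $R$-map $S' \to X$, which the $R'$-structure on $S'$ promotes to the desired $R'$-map $S' \to X \otimes_R R'$. In the genuinely formally smooth case one has $S'_{K'} \subsetneq S'_K$ in general (e.g.\ $R' = R[t]$), and I would either invoke Popescu's theorem to reduce to the ind-\'etale case via an approximation and density argument, or separately extend $g$ across the codimension-one strata of $S'_K \setminus S'_{K'}$ using the valuative criterion (assuming separatedness of $X$).

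For the reverse direction, assume $R'/R$ is also faithfully flat. Given a test scheme $S/R$ and an $R$-map $f : S_K \to X_K$, I set $S' := S \otimes_R R'$, which is a test scheme over $R'$ (formally smooth by base change, regular by flatness in the Noetherian setting at hand). The extension property of $X_{R'}/R'$ applied to $S'$ and $f_{K'} : S'_{K'} = S \otimes_R K' \to X_{K'}$ produces an $R'$-map $\tilde f' : S' \to X_{R'}$. To descend $\tilde f'$ to the sought $\tilde f : S \to X$, I verify the faithfully flat descent cocycle condition on $S \otimes_R R' \otimes_R R'$: both pullbacks of $\tilde f'$ extend the single base change of $f$ to $K' \otimes_K K'$, and they coincide by the uniqueness in Lemma \ref{extension uniqueness} applied over $R' \otimes_R R'$ (using that base change preserves the test-scheme hypotheses and that $X$ is separated, which forces two extensions agreeing on a dense open to agree).

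The main obstacle I anticipate is the genuinely formally smooth (non-ind-\'etale) case of the forward direction, where the gap between $S'_{K'}$ and $S'_K$ blocks a direct reduction to the extension property of $X/R$; the reverse direction is, by contrast, a largely routine fpqc descent once the test-scheme bookkeeping is in order.
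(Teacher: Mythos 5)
Your proof is correct and follows the paper's argument essentially verbatim: the forward direction views $S'$ as a test scheme over $R$, uses $R'\otimes_R K=K'$ in the (ind-)\'etale case to identify $S'\otimes_{R'}K'$ with $S'\otimes_R K$, extends via the extension property of $X/R$, and factors the result through $X\otimes_R R'$; the converse is the same descent of the extended morphism along the faithfully flat cover. Your explicit check of the cocycle condition (agreement on the dense generic fibre plus separatedness of $X$) merely fills in the step the paper compresses into ``being a map of descent data this descends,'' and your caveat about the genuinely formally smooth case is reasonable, since the paper's proof likewise only really treats the (ind-)\'etale situation needed in its applications.
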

\begin{proof}
Let $S'/R'$ be a test scheme. Then $S'$ is regular, and it is formally smooth over $R$ since ind-\'etale algebras are formally \'etale. Suppose we are given $S' \otimes_{R'} K' \rightarrow X_{R'}$, and first compose it with the map $X_{R'} \rightarrow X$. By the extension property for $X/R$, $S' \otimes_{R'} K' = S' \otimes_R K \rightarrow X$ extends to a map $S' \rightarrow X$ over $R$. But since we have a diagram
$$\begin{CD}
   S'  @>>>  X\\ 
@VVV        @VVV\\
    \Spec R' @>>>  \Spec R
\end{CD}
$$
this map must factor through $X \otimes_R R'$, as required, giving the extension property for $X \otimes_R R'/R'$.

For the converse, a test scheme $S/R$ gives rise to a test scheme $S_{R'}/R'$ together with a descent datum $\theta_S$ for $R'/R$. Given a map $S_K \rightarrow X_K$ we obtain $S_{K'} \rightarrow X_{K'}$ compatible with the descent data on both sides. By the extension property this map extends to $S_{R'} \rightarrow X_{R'}$, and being a map of descent data this descends to the desired extension $S \rightarrow X$.
\end{proof}

\begin{lem}
\label{extension pullback}
Let $X/R$ have the extension property and $Y \rightarrow X$ be finite or profinite \'etale. Then $Y/R$ has the extension property.
\end{lem}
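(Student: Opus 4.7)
\emph{Proof plan.} Given a test scheme $S/R$ and a map $f_K \colon S_K \rightarrow Y_K$, I compose with $Y \rightarrow X$ to obtain $S_K \rightarrow X_K$ and extend this to $g \colon S \rightarrow X$ using the extension property for $X$. The pullback $Y_S := Y \times_X S \rightarrow S$ is then (pro)finite \'etale, and $f_K$ together with $\id_{S_K}$ provides a section $\sigma_K \colon S_K \rightarrow (Y_S)_K$ of this pullback over $S_K$. The task is thereby reduced to extending this section to some $\sigma \colon S \rightarrow Y_S$, since composing with the projection $Y_S \rightarrow Y$ then yields the required extension of $f_K$.

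I handle the finite \'etale case first. Working componentwise, one may assume $S = \Spec A$ with $A$ a regular (hence normal) integral domain. Formal smoothness of $S$ over $R$ implies that $A$ is $R$-flat, so $A \otimes_R K$ is a localization of $A$; in particular $S_K$ is integral and scheme-theoretically dense in $S$. Let $\cZ \subseteq Y_S$ be the scheme-theoretic image of the composite $S_K \rightarrow Y_S$. Since $\sigma_K(S_K) \cong S_K$ is integral, $\cZ$ is integral with function field $\Frac(A)$. Moreover $\cZ \rightarrow S$ is finite (closed inside the finite $Y_S$), dominant (the image contains the dense $S_K$), and birational (an isomorphism on the generic point). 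Locally this corresponds to an inclusion $A \hookrightarrow C \subseteq \Frac(A)$ with $C$ finite over $A$, and the normality of $A$ forces $C = A$. Hence $\cZ \rightarrow S$ is an isomorphism, whose inverse is the desired section $\sigma$.

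For the profinite \'etale case, write $Y = \varprojlim Y_n$ with each $Y_n \rightarrow X$ finite \'etale, apply the finite case to each level to obtain sections $\sigma_n \colon S \rightarrow Y_n \times_X S$ extending the corresponding restriction of $\sigma_K$, and assemble them into a single $\sigma \colon S \rightarrow Y_S$. Compatibility of the $\sigma_n$ as $n$ varies relies on uniqueness of extensions: two sections of a finite \'etale $S$-scheme agreeing on $S_K$ automatically agree on $S$, since their equalizer is the preimage of the diagonal of $Y_n \times_S Y_n$, which is open (diagonal of an \'etale morphism) and closed (diagonal of a separated morphism), hence clopen, and contains the dense subscheme $S_K$.

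The substantive step is the closure argument in the finite case, which crucially uses both normality of $S$ (from regularity) and scheme-theoretic density of $S_K$ in $S$ (from $R$-flatness, provided by formal smoothness in the Noetherian setting); the remainder of the argument is a formal manipulation.
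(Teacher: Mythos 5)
Your proposal is correct and follows essentially the same route as the paper: compose with $Y \rightarrow X$, extend via the extension property of $X$, pull back to get the (pro)finite \'etale cover $S \times_X Y \rightarrow S$ with a section over $S_K$, and extend that section. The paper simply asserts that such sections extend uniquely and that the profinite case follows formally, whereas you supply the underlying justification (the scheme-theoretic image/normality argument, density of $S_K$ via flatness, and the clopen-equalizer compatibility in the limit), so this is the same proof with the implicit steps made explicit.
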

\begin{proof}
It clearly suffices to do the finite \'etale case (the profinite one then following formally). Let $K=\Frac(R)$ and suppose we have a test scheme $S$ and a map $S_K \rightarrow Y_K$. Then this map composed with $Y_K \rightarrow X_K$ extends to a map $S \rightarrow X$. Pulling this back we get $S \times_X Y \rightarrow Y$. The map $S_K \rightarrow Y_K$ gives a section of $(S \times_X Y)_K \rightarrow S_K$. But $S \times_X Y \rightarrow S$ is finite \'etale so such sections extend uniquely and we get the required $S \rightarrow Y$. 
\end{proof}

\begin{lem}
\label{extension descent}
Let $Y \rightarrow X$ be finite \'etale, and suppose $Y/R$ has the extension property. Then $X/R$ has the extension property.
\end{lem}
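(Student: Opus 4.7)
The plan is to deduce the extension property for $X$ by applying the hypothesis on $Y$ to a suitable finite étale base change of $S$. Given a test scheme $S/R$ and a map $f_K : S_K \to X_K$, I form the pullback
\[ T_K := S_K \times_{X_K} Y_K, \]
which is finite étale over $S_K$ (and carries a canonical map $T_K \to Y_K$).

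The first task is to extend $T_K \to S_K$ to a finite étale cover $\tilde T \to S$. I would take $\tilde T$ to be the normalization of $S$ in $T_K$; this is finite over $S$ (test schemes in the cases of interest are excellent, or at least Nagata), and I would argue étaleness by Zariski--Nagata purity of the branch locus, which reduces the question to unramifiedness at codimension-one points of $S$. Such points either lie in the generic fiber $S_K$, where étaleness is given, or in a special fiber, where one exploits that $T_K$ is the pullback of the everywhere-étale $Y \to X$ together with the formal smoothness of $S/R$ to rule out ramification appearing in the integral direction. Since being étale and being formally smooth over $R$ are both preserved under étale base change and regularity ascends along étale maps, $\tilde T$ is itself a test scheme.

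By the extension property for $Y$, the composition $T_K \to Y_K \hookrightarrow Y$ now extends to a map $\tilde T \to Y$, and I post-compose with $Y \to X$ to obtain $g : \tilde T \to X$. To descend $g$ to the desired $S \to X$, I observe that the two pullbacks $p_1^* g, p_2^* g : \tilde T \times_S \tilde T \rightrightarrows X$ agree on the generic fiber, since by construction of $T_K$ the map $T_K \to X_K$ factors through $S_K$. Because $\tilde T \times_S \tilde T$ is reduced and $X$ is separated (as is standard for the models considered), the equalizer of $p_1^* g$ and $p_2^* g$ is a closed subscheme containing the dense generic fiber, so the two maps coincide. The resulting descent datum is effective along the faithfully flat, finite étale map $\tilde T \to S$, yielding the required extension $S \to X$.

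The main obstacle is the first step, extending the cover $T_K/S_K$ to a finite étale cover of $S$: finite étale covers of a generic fiber do not in general extend, so the argument must genuinely use both the regularity and formal smoothness of $S$ and the fact that $T_K$ has been produced by pullback from an everywhere étale cover $Y \to X$ already defined over $R$. Once this purity/normalization input is in hand, the remainder is a routine pull-back-and-descend maneuver.
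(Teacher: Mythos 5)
There is a genuine gap at the central step of your argument: the extension of the pulled-back cover $T_K = S_K \times_{X_K} Y_K$ to a finite \'etale cover $\tilde T \to S$. Zariski--Nagata purity does reduce the question to unramifiedness of the normalization at codimension-one points of $S$, and at points of $S_K$ this is clear; but at a codimension-one point $s$ lying in a special fibre nothing in your data forces unramifiedness. The cover is exhibited as a pullback of $Y \to X$ only over the generic fibre --- there is as yet no morphism $S \to X$ (that is exactly what is being constructed) --- so you cannot argue that near $s$ the cover ``comes from the everywhere-\'etale $Y \to X$'', and formal smoothness of $S/R$ by itself is no help: finite \'etale covers of $S_K$ genuinely fail to extend in general (adjoining $\sqrt{p}$ over $S = \Spec \Z_p[t]$ gives a finite \'etale cover of $S_K$ whose normalization over $S$ is ramified along the whole special fibre, even though $S$ is smooth over $\Z_p$). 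Worse, the statement you need at $s$ is essentially circular: $\Spec \cO_{S,s}$ is itself a test scheme (regular, and a localization of a formally smooth $R$-algebra, hence formally smooth over $R$), and knowing that $\Spec \Frac(\cO_{S,s}) \to X_K$ extends over $\cO_{S,s}$ --- which is what would kill the inertia at $s$ and make your purity argument run --- is precisely an instance of the extension property for $X$, i.e.\ the statement being proved. So the purity step is a placeholder for the actual content of the lemma rather than a proof of it.

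For comparison, the paper does not attempt this shortcut: it first reduces to the case of a Galois cover, using the \'etale fundamental group together with (\ref{extension pullback}) (one replaces $Y$ by a Galois closure, which is finite \'etale over $Y$ and hence again has the extension property), and then invokes the argument of \cite[3.21.4]{moon}, which is where the handling of the special fibres --- the content your purity step tries to obtain for free --- actually lives. Your surrounding frame (normalize, map $\tilde T$ to $Y$ by the extension property, then descend along $\tilde T \to S$ by checking the two pullbacks agree on a dense open and using separatedness) is the natural skeleton and is fine as far as it goes, but as written the proof does not go through at its key step.
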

\begin{proof}
By the theory of the \'etale fundamental group and (\ref{extension pullback}) we may assume $Y \rightarrow X$ is Galois. The argument then follows from that of \cite[3.21.4]{moon}.
\end{proof}

We remark that the above result fails for pro-(finite \'etale) extensions. Indeed, Shimura varieties at finite level certainly do not generally have the extension property. For example, for the modular curve at finite level this would imply all elliptic curves have good reduction.

\subsection{Main theorem}
Now, let $(G,\fX)$ be a Shimura datum of abelian type with reflex field $E$.

Fix $S$ a finite nonempty set of finite primes containing all those at which $G$ is ramified, set $N = \prod_{p \in S} p$, a reductive integral model $G_{\Z[1/N]}$ of $G$ (which exists by taking an arbitrary integral model, observing that it is reductive at all but finitely many primes and then gluing in models for the remaining primes). We abusively denote this model by $G$, and let $K^N = \prod_{p \not\in S} G(\Zp)$.

Consider the tower $$\Sh_{K^N} = \varprojlim_{K_N \subset \prod_{p \in S} G(\Qp)} \Sh_{K_NK^N}$$
of Shimura varieties over $E$ with infinite level at the primes dividing $N$ but hyperspecial level at all other primes.

Recall that in this context a \emph{smooth integral model} $\cS_{K^N}$ for $\Sh_{K^N}$ over $\cO_E[1/N]$ is an integral model on which $\prod_{p|N} G(\Q_p)$ acts such that whenever $K_N \subset \prod_{p|N} G(\Qp)$ compact open is sufficiently small that $\Sh_{K_NK^N}$ is a scheme, the model $\cS_{K^N}/K_N$ is smooth quasiprojective, and the maps between such finite level schemes induced by Hecke operators are finite \'etale.

In this chapter we prove the following theorem.
\begin{thm}
\label{S2 thm}
The tower $\Sh_{K^N}$ has a smooth integral model $\cS_{K^N}/\cO_E[1/N]$ satisfying the extension property. For any $v \nmid N$ the localisation of this model at $v$ agrees with Kisin's smooth integral model.
\end{thm}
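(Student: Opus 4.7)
The plan is to follow the template of Kisin's construction in \cite{kis2} but carried out globally over $\cO_E[1/N]$ rather than locally at each completion, with substantive structural modifications in the abelian type reduction. I would proceed in three stages: first the Hodge type case, then the abelian type step, and finally the identification with Kisin's local models.

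For the Hodge type case, fix a symplectic embedding $(G,\fX) \hookrightarrow (GSp(V), S^\pm)$ together with a $\Z[1/N]$-lattice $V_{\Z[1/N]}$ on which $\cG$ acts by symplectic similitudes. This provides closed embeddings at each finite level into integral Siegel moduli $\cA_{g,K'}/\Z[1/N]$. I would define $\cS_{K^N}/\cO_E[1/N]$ as the normalisation of the closure of the image of $\Sh_{K^N}$ inside (the relevant pullback of) this Siegel moduli. At each place $v\nmid N$, a direct comparison of definitions identifies $\cS_{K^N}\otimes_{\cO_E[1/N]}\cO_{E,v}$ with Kisin's smooth model, so $\cS_{K^N}$ is globally smooth with finite \'etale transition maps and carries the Hecke action. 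For the extension property, given a test scheme $S/\cO_E[1/N]$ with a map $S_E\to\Sh_{K^N}$, Kisin's local extension property yields an extension over each $S\otimes_{\cO_E[1/N]}\cO_{E,v}$; since the locus where a rational map from a regular Noetherian scheme into a separated scheme is undefined is closed and contains a closed point whenever nonempty, these local extensions force a global extension $S\to\cS_{K^N}$.

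For the abelian type case, choose $(G_1,\fX_1)$ of Hodge type with a central isogeny $G_1^{der}\to G^{der}$ realising the same adjoint datum, and construct $\cS_{K^N}(G,\fX)$ from $\cS_{K_1^N}(G_1,\fX_1)$ by a Deligne-style quotient construction. Three modifications to the argument of \cite[3.4]{kis2} will be required, as flagged in the introduction: (i) since the Hecke group $\prod_{p|N}G(\Q_p)$ need not act transitively on geometric components, I would first construct a model for each component over $\cO_{\tilde E}[1/N]$ where $\tilde E$ is the maximal unramified abelian extension of $E$ in $\bar\Q$, then descend to $\cO_E[1/N]$ using Lemma \ref{extension localisation}; (ii) replace Kisin's twisting abelian varieties over $Z(G)$-torsors with twisting over $Z(G^{der})$-torsors, which are finite and thus behave well integrally, eliminating the need for Kisin's lemma following \cite[3.4.6]{kis2}; (iii) prove finiteness of the group $\Delta_{K^N}(G,G^{ad})$ playing the role of Kisin's $\Delta$, which is required to descend the extension property through the Deligne quotient and which is expected to fail at finite level.

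The hardest step is (i): building the model componentwise over $\cO_{\tilde E}[1/N]$, tracking the interacting Galois and Hecke actions on geometric components, and descending while preserving both smoothness and the extension property all at once. Step (iii), the finiteness claim, also requires genuinely new input not present in Kisin's local theory. Once existence is established, the final clause of the theorem is immediate from Lemma \ref{extension uniqueness}: for each $v\nmid N$, both $\cS_{K^N}\otimes_{\cO_E[1/N]}\cO_{E,v}$ and $\cS_{K^N}^{\operatorname{Kis},v}$ are test schemes with the extension property and share generic fibre $\Sh_{K^N}$, so a canonical equivariant isomorphism between them exists uniquely.
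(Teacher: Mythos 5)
Your construction of the model itself follows the paper: in the Hodge type case you take the normalisation of the closure inside the Siegel moduli over $\Z[1/N]$ (after arranging a $\Z[1/N]$-lattice and reductive model, which in the paper is the content of Proposition \ref{glue groups}), and in the abelian type case you flag exactly the three modifications the paper makes. The genuine gap is in your proof of the extension property. You propose to deduce it by patching from ``Kisin's local extension property'' at each $v\nmid N$. But what you need at $v\mid q$ is the extension property over $\cO_{E,(v)}$ for the tower $\varprojlim_{K_N}\cS_{K_NK^N}$, which has infinite level only at the primes dividing $N$ and hyperspecial level $G(\Zp)$ at every $p\nmid N$; Kisin proves the extension property only for the tower $\varprojlim_{K^q}\cS_{G(\Z_q)K^q}$ with full infinite level away from $q$. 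Your tower is a pro-(finite \'etale) quotient of his by $K^{Nq}=\prod_{p\nmid Nq}G(\Zp)$, and the extension property does not descend along pro-(finite \'etale) morphisms (the paper remarks on precisely this failure after Lemma \ref{extension descent}; at finite level the statement is simply false, cf.\ the modular curve). Tellingly, your argument never uses the infinite level structure at the primes dividing $N$. The paper instead proves the needed statement directly: the Siegel model over $\Z[1/N]$ at this level has the extension property by Milne's N\'eron-criterion argument --- this is exactly where the infinite level at some $\ell\mid N$ and the hypothesis $N>1$ enter --- and the property is then transported through the scheme-theoretic closure and the normalisation (test schemes being normal). The same issue affects your final clause: identifying the localisation with Kisin's model via Lemma \ref{extension uniqueness} presupposes that Kisin's model at level $K^N$ has the extension property over $\cO_{E,(v)}$, which again is not among his results at that level; the paper obtains the identification by construction, since at finite level its construction literally coincides with Kisin's (and Kim--Madapusi Pera's for $p=2$).

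Two secondary points. Even granting the local inputs, your patching step is not an argument as stated: $S_E$ and $S\otimes_{\cO_E[1/N]}\cO_{E,(v)}$ are not open subschemes of $S$, and at infinite level the target is not of finite type, so ``the undefined locus of a rational map is closed'' does not apply; one would have to spread out at each finite level and glue using that $S_E$ is schematically dense in $S$ (formal smoothness giving flatness). That part is repairable, but the missing local extension property is not, short of reproving it by the paper's Siegel argument. Finally, your abelian-type steps (i)--(iii) are the correct ones but are only announced: the freeness of the $\Delta^N$-action (via twisting by $Z(G^{der})$-torsors and Lemma \ref{finite torsor lemma}), the finiteness of $\Delta^N$ (via finiteness of $H^1_{fppf}(\Z[1/N],Z^{der})$), and the componentwise assembly via Hecke translates to levels attached to other reductive models all require proof; also the components are defined over the maximal abelian extension unramified away from $N$, not the everywhere-unramified one.
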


The following together with (\ref{extension uniqueness}) gives us that this model is canonical.
\begin{lem}
If $\cS_{K^N}$ is a smooth integral model in the above sense, then it is regular and formally smooth (in the usual sense).
\end{lem}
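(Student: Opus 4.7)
The plan is to deduce both regularity and formal smoothness from the description of $\cS_{K^N}$ as a pro-(finite \'etale) cover of any sufficiently small finite level. Fix a compact open $K_N^0 \subset \prod_{p|N} G(\Qp)$ small enough that $\Sh_{K_N^0 K^N}$, and hence every $\cS_{K^N}/K_N$ for $K_N \subset K_N^0$, is representable. Each transition map $\cS_{K^N}/K_N \to \cS_{K^N}/K_N^0$ is finite \'etale, so affine, and I may identify $\cS_{K^N}$ with $\varprojlim_{K_N \subset K_N^0} \cS_{K^N}/K_N$ as an $\cO_E[1/N]$-scheme.

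For formal smoothness, given a square-zero thickening $T_0 \hookrightarrow T$ of affine $\cO_E[1/N]$-schemes and a morphism $T_0 \to \cS_{K^N}$, I will first project to level $K_N^0$ and use smoothness to produce some (non-unique) lift $T \to \cS_{K^N}/K_N^0$. For each $K_N \subset K_N^0$, formal \'etaleness of the transition map then provides a \emph{unique} lift $T \to \cS_{K^N}/K_N$ extending the given $T_0 \to \cS_{K^N}/K_N$ and covering the chosen lift at level $K_N^0$. Uniqueness at each step forces these lifts to be mutually compatible under further refinement $K_N' \subset K_N$, so they assemble into the desired map $T \to \cS_{K^N}$.

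For regularity, I will fix a point $x \in \cS_{K^N}$ with images $x_{K_N}$ at each level and observe that $\cO_{\cS_{K^N}, x} = \varinjlim_{K_N} \cO_{\cS_{K^N}/K_N, x_{K_N}}$ is a filtered colimit of regular Noetherian local rings along local, flat, essentially \'etale transition maps. Unramifiedness of the transitions guarantees $\fm_{K_N^0} \cdot \cO_{\cS_{K^N}/K_N, x_{K_N}} = \fm_{K_N}$, so any regular system of parameters at $x_{K_N^0}$ remains a regular system of parameters at every higher level. Since being a regular sequence is preserved under flat extensions and filtered colimits, this single sequence continues to generate $\fm_x$ regularly in the colimit; hence $\cO_{\cS_{K^N}, x}$ is a regular local ring (in the natural non-Noetherian sense).

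The main obstacle is simply the formulation of regularity in the non-Noetherian setting, since $\cS_{K^N}$ is not locally Noetherian. The key point, however, is that finite \'etale maps neither alter nor add generators of the maximal ideal, so a regular system of parameters drawn from a fixed finite level serves at every level and survives intact in the limit; no deeper commutative algebra is required.
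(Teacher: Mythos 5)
Your argument follows essentially the same route as the paper, which disposes of formal smoothness by exactly the formal limit argument you spell out and refers the regularity claim to the colimit-of-local-rings argument of Milne \cite[2.4]{milne2}: identify $\cO_{\cS_{K^N},x}$ with the filtered colimit of the local rings at finite level along local, flat, unramified maps, note the maximal ideal is extended, and conclude. The formal smoothness half of your write-up is complete and correct.

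The one shortfall is in the regularity half: the lemma asserts regularity \emph{in the usual sense}, i.e. that the (a priori non-Noetherian) local rings of $\cS_{K^N}$ are Noetherian regular local rings, whereas your argument only shows that $\fm_x$ is generated by a regular sequence and then retreats to an unspecified ``non-Noetherian sense'' of regularity --- which is not the standard notion and is weaker than what the lemma (and its use, via the test-scheme definition, in the uniqueness Lemma \ref{extension uniqueness}) requires. The missing step is genuinely needed but not hard: the colimit $A=\varinjlim_{K_N}\cO_{\cS_{K^N}/K_N,\,x_{K_N}}$ is in fact Noetherian. For instance, since each stage is an \'etale local extension of the bottom local ring $A_0$, the strict henselization $A_0^{sh}$ (which is Noetherian) receives a compatible system of maps and is faithfully flat over $A$, and Noetherianity descends along faithfully flat ring maps ($IB\cap A=I$ shows ascending chains in $A$ stabilize); alternatively one can quote the standard fact that a filtered colimit of Noetherian local rings along flat local maps with extended maximal ideals is Noetherian, which is the content of the argument in \cite[2.4]{milne2} that the paper cites. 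With that observation, $A$ is a Noetherian local ring of dimension $\dim A_0$ whose maximal ideal is generated by the regular system of parameters you exhibit, hence regular in the usual sense, and your proof then matches the claim.
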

\begin{proof}
That it is formally smooth follows formally because it is a limit of smooth schemes with finite \'etale transition maps. That it is regular follows from the same argument as \cite[2.4]{milne2}.
\end{proof}

\subsubsection{}
We now set out to prove the theorem, following the outline of Kisin's strategy, first using the modular interpretation of Siegel varieties to get going, then making constructions in the Hodge and abelian cases close enough to his that the smoothness, extension and comparison properties follow by direct comparison or in a very similar way.

\subsection{Siegel case}
For this case we recall the following theorem \cite[7.9]{mum}.
\begin{thm}[Mumford]
If $n \geq 6^g d \sqrt{g!}$ then the fine moduli scheme $\cA_{g,d,n}$ of abelian schemes of dimension $g$ together with a polarisation of degree $d$ and a level $n$ structure exists, and is quasi-projective over $\Z$. 

Moreover, for $N=\lcm(d,n)$ these moduli schemes are smooth over $\Z[1/N]$.
\end{thm}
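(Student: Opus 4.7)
The plan is to follow Mumford's GIT-based construction in \cite{mum}, realizing $\cA_{g,d,n}$ as a geometric quotient of a locally closed subscheme of a Hilbert scheme and then analyzing smoothness separately by deformation theory. First I would verify that the numerical hypothesis $n \geq 6^g d \sqrt{g!}$ forces every triple $(A,\lambda,\alpha)/S$ of a polarized abelian scheme with level $n$ structure to be rigid (trivial automorphism group), and simultaneously forces a canonical multiple of $\lambda$ (normalized via $\alpha$) to be very ample with a Hilbert polynomial $P$ determined by $g$ and $d$, providing a canonical embedding $A \hookrightarrow \bP^M_S$ for $M = M(g,d)$.

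Next I would identify $\cA_{g,d,n}$ with the geometric quotient by $PGL_{M+1}$ of the locally closed subscheme $H \subset \operatorname{Hilb}^P_{\bP^M/\Z}$ parametrizing such embedded abelian schemes with marked origin and with level $n$ structure compatible with the symplectic form determined by $\lambda$. Quasi-projectivity over $\Z$ then follows from Mumford's geometric invariant theory: rigidity ensures every point of $H$ is $PGL_{M+1}$-stable for a natural linearization, and the GIT quotient of an open subscheme of a projective scheme by a reductive group with trivial stabilizers is quasi-projective.

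For smoothness over $\Z[1/N]$ with $N = \lcm(d,n)$, I would appeal to the deformation theory of polarized abelian schemes with level structure. Deformations of the underlying abelian scheme $A$ are unobstructed over $\Z$ by Grothendieck-Mumford (the relevant obstruction in Hodge cohomology vanishes); deformations of a degree $d$ polarization $\lambda$ are unobstructed once $d$ is invertible on the base, since the associated $d$-isogeny $A \to A^\vee$ is then étale and classifies a smooth condition; finally a level $n$ structure is a trivialization of the finite group scheme $A[n]$, which is étale over any base where $n$ is invertible, so adjoining it contributes only a finite étale cover. Combining these three statements yields smoothness of $\cA_{g,d,n}$ over $\Z[1/N]$.

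The main technical obstacle is the careful verification that $6^g d \sqrt{g!}$ is simultaneously the correct bound both for rigidity of the moduli problem and for the chosen multiple of $\lambda$ to provide the canonical very ample embedding. This requires delicate estimates relating the degree of $\lambda$ to global sections and to torsion levels, and is the central content of \cite[Ch.~7]{mum}; since the present paper uses this theorem as a black box input to the Hodge type case that follows, the role here is to outline rather than reproduce that technical analysis.
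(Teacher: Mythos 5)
The paper does not prove this statement at all: it is imported verbatim as a black box from Mumford's GIT (\cite[7.9]{mum}), so there is no internal argument to compare yours against. Judged on its own terms, your outline has the right scaffolding (embed by a multiple of the polarization, pass to a locally closed subscheme of the Hilbert scheme, quotient by $PGL$, and handle smoothness over $\Z[1/N]$ by deformation theory of the abelian scheme, the polarization, and the \'etale level structure), and the smoothness paragraph is essentially the standard argument.

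However, the quasi-projectivity step contains a genuine gap, and it is exactly the point the numerical bound is there to address. Rigidity of the moduli problem needs only $n \geq 3$ (Serre's lemma), and very ampleness of the relevant power of the polarization needs no hypothesis on $n$ at all, so the bound $n \geq 6^g d \sqrt{g!}$ is not doing what you say it does. Moreover, the principle you invoke --- that a geometric quotient of an open subscheme of a (quasi-)projective scheme by a reductive group with trivial stabilizers is automatically quasi-projective --- is false; trivial stabilizers do not give GIT stability, and proving stability of the Hilbert (or Chow) points of embedded abelian varieties directly is precisely the difficulty Mumford could not overcome in general. His actual proof of 7.9 is the ``method of covariants'': to an embedded $(A,\lambda,\alpha)$ he attaches the $0$-cycle of its $n^{2g}$ points of order $n$ in $\bP^M$, and the hypothesis $n \geq 6^g d \sqrt{g!}$ is a counting estimate (degree of $A$ in the embedding versus the number of $n$-torsion points that can lie in a hyperplane) guaranteeing that this cycle is a stable point for the $PGL$-action on configurations of points; quasi-projectivity of $\cA_{g,d,n}$ is then deduced from the elementary stability theory of $0$-cycles in projective space. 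So if you intend to sketch the proof rather than cite it, the sketch must locate the bound in the stability argument, not in rigidity or very ampleness; as written, the central step is unsupported.
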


\subsubsection{}
We may also (since we may take quotients of quasiprojective schemes by finite free group actions) form moduli $\cA_{g,d,K_N}$ with $GSp_{2g}(\hat{\Z}^N) K_N$-level structures for all $K_N \subset \prod_{p|N} GSp_{2g}(\Qp)$ sufficiently small, and these are also smooth over $\Z[1/N]$ by etale descent.

\subsubsection{}
Moreover, recall that for $K = K_N GSp_{2g}(\hat{\Z}^N) \subset GSp_{2g}(\Af)$, the Shimura variety $\Sh_K(GSp_{2g},S^\pm)$ is defined over $\Q$ and has a moduli interpretation giving an embedding $\Sh_K(GSp_{2g},S^\pm) \hookrightarrow \cA_{g,d,K_N}$. We define its integral model $$\cS_K := \overline{\Sh_K(GSp_{2g},S^\pm)} \subset \cA_{g,d,K_N}.$$
It is well known that such models are smooth and admit an explicit description as moduli schemes. In particular, they carry a universal abelian scheme defined over $\Z[1/N]$. Moreover the transition maps as we vary $K_N$ are finite \'etale, so we obtain the desired smooth integral models
$$\cS_N = \varprojlim_{K_N} \cS_K / \Z[1/N].$$

We are required to check the following. Note that in preparation for the Hodge type case we need to observe the following holds in the slightly more general context where we assume that $K = \prod K_p$ as above except for some finitely many $p \nmid N$, $K_p$ is not necessarily hyperspecial but merely maximal compact. Luckily with this remark made the argument goes through unchanged.
\begin{prop}
The scheme $\cS_N$ satisfies the extension property.
\end{prop}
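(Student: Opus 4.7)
The plan is to unravel the moduli interpretation. Given a test scheme $S/\Z[1/N]$ and a morphism $f : S_\Q \to \cS_N$, for each sufficiently small $K_N$ we obtain a composite $f_{K_N} : S_\Q \to \cS_{K^N K_N} \hookrightarrow \cA_{g,d,K_N}$, which by the moduli interpretation corresponds to a polarised abelian scheme $(A_\Q, \lambda_\Q)/S_\Q$ equipped with $K_N$-level structure at primes dividing $N$ and an additional trivialisation $\eta^N$ of the prime-to-$N$ ad\'elic Tate module (this is what level $K^N$ at infinity provides). The goal is to extend this abelian scheme with all its structures to $S$; this will automatically yield a map $S \to \cA_{g,d,K_N}$ extending $f_{K_N}$ which, because $S$ is reduced, must factor through the scheme-theoretic closure $\cS_{K^N K_N}$, and which can be assembled compatibly in $K_N$.

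First I show $A_\Q$ has good reduction at every codimension-one point $s$ of $S$ lying outside $S_\Q$. Since $S$ is flat over $\Z[1/N]$ (formal smoothness implies flatness), every generic point of $S$ lies in $S_\Q$, so $R := \cO_{S,s}$ is a DVR whose fraction field $K$ lies in $\cO(S_\Q)$, and the residue characteristic of $s$ is some prime $p \nmid N$. Pulling $\eta^N$ back along $\Spec K \hookrightarrow S_\Q$ gives a Galois-equivariant trivialisation of $T^N A_K$. Choosing any prime $\ell \neq p$ with $\ell \nmid N$, we see the inertia at $s$ acts trivially on $T_\ell A_K$, so by N\'eron--Ogg--Shafarevich $A_K$ has good reduction, i.e.\ extends to an abelian scheme over $R$.

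Next I globalise: since $S$ is regular and $A_\Q$ extends to an abelian scheme in a neighbourhood of every codimension-one point of $S \setminus S_\Q$, the extension theorem for abelian schemes over a regular base (Faltings--Chai, following Raynaud) produces a unique extension of $A_\Q$ to an abelian scheme $A/S$. The polarisation $\lambda_\Q$ extends by rigidity. The trivialisation $\eta^N$ extends because the prime-to-$N$ Tate module of $A$ is a lisse \'etale sheaf on $S$ pinned down by its restriction to the schematically dense open $S_\Q$; likewise the $K_N$-structure involves only primes dividing $N$, invertible on $S$, so the relevant \'etale sheaves are already defined over $S$ and the level structure extends unambiguously. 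This produces the desired map $S \to \cA_{g,d,K_N}$ extending $f_{K_N}$, and passing to the scheme-theoretic closure and taking the limit over $K_N$ delivers the required $S \to \cS_N$.

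The main obstacle is invoking the abelian scheme extension theorem cleanly when $S$ is regular and formally smooth but possibly non-noetherian; this is handled by localising $S$ and applying the usual statement on each noetherian local ring, or equivalently by using Moret-Bailly's formulation. Modulo this standard input, the argument reduces to the combination of N\'eron--Ogg--Shafarevich and elementary moduli-theoretic bookkeeping.
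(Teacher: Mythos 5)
Your overall architecture --- unravel the moduli interpretation, prove good reduction at codimension-one points via N\'eron--Ogg--Shafarevich, globalise over the regular base, then extend the polarisation and level structures --- is exactly the shape of the paper's argument (which quotes Milne, with the ``generalised N\'eron criterion'' packaging your two middle steps). But there is a genuine error in where you locate the level structures, and your key step rests on data the moduli problem does not provide. In the tower $\cS_N = \varprojlim_{K_N} \cS_{K_N K^N}$ the level \emph{away} from $N$ is the \emph{fixed} hyperspecial level $K^N = GSp_{2g}(\hat{\Z}^N)$, which carries no trivialisation of the prime-to-$N$ Tate module; the level that goes to infinity is at the primes dividing $N$ (which are invertible on the base $\Z[1/N]$). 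So your $\eta^N$, a Galois-equivariant trivialisation of $T^N A_K$, simply does not exist, and the choice of an auxiliary prime $\ell \nmid N$, $\ell \neq p$, at which inertia is claimed to act trivially on $T_\ell A_K$ is unjustified: at such $\ell$ the Galois action on $T_\ell$ is unconstrained beyond preserving a lattice. Indeed, with only the data you describe (finite level at the primes dividing $N$, hyperspecial level elsewhere) the statement would be false --- as the paper remarks, Shimura varieties at finite level do not have the extension property, the modular curve being the standard example.

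The repair is exactly what the paper does: a map $S_\Q \to \cS_N$ to the \emph{limit} (not merely compatible maps to each finite level $\cS_{K^N K_N}$, which is how you phrase it) provides an infinite level structure at the primes $l \mid N$; since $N>1$ such an $l$ exists, and since $l$ is invertible on $\Z[1/N]$ it differs from every residue characteristic $p \nmid N$ of $S$, so the trivialisation of the $l$-adic Tate module at each generic point gives the unramifiedness needed for N\'eron--Ogg--Shafarevich at every codimension-one point outside $S_\Q$. With that substitution the rest of your argument goes through and matches the paper's: the globalisation step (your appeal to Faltings--Chai, the paper's appeal to Milne's criterion) is where the formal smoothness of the test scheme over $\Z[1/N]$ is genuinely needed, since extension of abelian schemes across codimension $\geq 2$ loci can fail over general regular bases; and the extension of the weak polarisation and of the level structures at primes dividing $N$ (invertible on $S$) is as you say.
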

\begin{proof}
This follows because the argument of Milne \cite[2.10]{milne2} adapts practically unchanged to our situation. For the reader's convenience we sketch the argument. Let $S$ be a test scheme. A map
$$S_\Q \rightarrow \cS_N$$
gives the data of a triple $(A,\lambda,\eta)$ where $A/S_\Q$ is an abelian scheme, $\lambda$ a polarisation (defined up to a constant) and $\eta$ an infinite level structure at the primes $l|N$. Since $N>1$, the set of such primes is nonempty, and we may let $l$ be one of them.

Now, since $S$ is assumed regular, in particular each of its components is integral. Let $S^0$ be such a component, and denote by $\eta$ its generic point. The infinite level structure at $l$ defined over $S_{\Q}$ in particular trivialises the $l$-adic Tate module of $A_\eta$. By the ``generalised Neron criterion'' \cite[2.13]{milne2} we see that this implies $A_{\eta}$ extends over $S^0$. Since it does so for all components $S^0$ of $S$ and $S$ is normal (so these components do not meet), we deduce that we have extended $A$ to some $\cA/S$.

The polarisation $\lambda$ also extends by \cite[2.14]{milne2}, and the level structures (being at primes away from the characteristic of the base) also obviously extend. This suffices to show the extension property.
\end{proof}

\subsection{Hodge type case}
Suppose we have $(G,\fX)$ a Shimura datum of Hodge type, and fix a symplectic embedding
$$i: (G,\fX) \hookrightarrow (GSp(V_\Q,\psi),S^\pm).$$
Let $N$ be the product of all primes where $G$ is ramified. Recall that we are fixing an integral model $G/\Z[1/N]$ for $G$ and taking the hyperspecial level $K^N = \prod_{p \nmid N} G(\Zp)$ away from $N$. We begin with some group theoretic preliminaries.

\begin{lem}
\label{lattice intersect}
Let $V/\Q$ be a finite dimensional vector space. Take $N\geq 1$ and $p \not |N$, and suppose we have a $\Z[1/pN]$-lattice $\Lambda \subset V$ and a $\Zp$-lattice $L \subset V\otimes\Qp$.

Then $L \cap \Lambda$ is a $\Z[1/N]$-lattice in $V$.
\end{lem}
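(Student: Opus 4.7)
The plan is to reduce to a coordinate situation and then intersect inside $V \otimes \Qp$ in an elementary way. The key identity driving everything will be that the intersection of rings $\Z[1/pN] \cap \Zp$ computed inside $\Qp$ equals $\Z[1/N]$, and more generally $p^{-k}\Zp \cap \Z[1/pN] = p^{-k}\Z[1/N]$.

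First, since $\Z[1/pN]$ is a PID, $\Lambda$ is free of rank $n = \dim_\Q V$; choose a basis $e_1, \dots, e_n$ and set $\Lambda_0 := \Z\langle e_1, \dots, e_n\rangle \subset V$, so that $\Lambda = \Lambda_0[1/pN]$. Then $\Lambda_0 \otimes \Zp$ is a $\Zp$-lattice in $V \otimes \Qp$, hence is commensurable with $L$: choose $k \geq 0$ with
$$p^k(\Lambda_0 \otimes \Zp) \subset L \subset p^{-k}(\Lambda_0 \otimes \Zp).$$

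Now set $M := L \cap \Lambda \subset V \otimes \Qp$. The plan has three verifications. First, $M$ is stable under $\Z[1/pN]$ (acting via $\Lambda$) and under $\Zp$ (acting via $L$), so $M$ is a $\Z[1/N]$-module. Second, $M$ is finitely generated: using the inclusion $L \subset p^{-k}(\Lambda_0 \otimes \Zp)$ and working in the basis $e_i$, one computes
$$M \subset \bigl(p^{-k}\Zp \cap \Z[1/pN]\bigr)\langle e_1, \dots, e_n\rangle = p^{-k}\Z[1/N]\langle e_1, \dots, e_n\rangle,$$
a finitely generated $\Z[1/N]$-module, so Noetherianity of $\Z[1/N]$ gives finite generation of $M$. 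Third, $M$ spans $V$ over $\Q$: given $v \in V$, clear denominators by some integer $m$ to get $mv \in \Lambda_0$, then scale by $p^k$ to land in $L$ using $p^k\Lambda_0 \subset L$; thus $p^k m v \in M$, so $v \in \Q \cdot M$.

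Putting the three together, $M$ is a finitely generated $\Z[1/N]$-submodule of $V$ whose $\Q$-span is all of $V$, i.e., a $\Z[1/N]$-lattice. No step is really an obstacle; the only thing to get right is the elementary arithmetic identity $p^{-k}\Zp \cap \Z[1/pN] = p^{-k}\Z[1/N]$ inside $\Qp$, which follows by comparing $p$-adic valuations and denominators.
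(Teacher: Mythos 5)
Your proof is correct, but it takes a genuinely different route from the paper's. The paper argues entirely inside $\Lambda$: it observes that $L\cap\Lambda$ contains $\Z[1/pN]$-bases of $\Lambda$, picks one whose $\Zp$-span has minimal index in $L$ (maximal $p$-adic volume), and runs an exchange argument — if some $y\in L\cap\Lambda$ were outside the $\Z[1/N]$-span, swapping $y$ for the basis vector whose coefficient has largest $p$-adic norm would strictly increase the volume, a contradiction. This directly produces an explicit $\Z[1/N]$-basis of $L\cap\Lambda$, so freeness comes for free. You instead fix an integral form $\Lambda_0$ of $\Lambda$, sandwich $L$ between $p^{k}(\Lambda_0\otimes\Zp)$ and $p^{-k}(\Lambda_0\otimes\Zp)$ by commensurability, reduce coordinatewise to the ring identity $p^{-k}\Zp\cap\Z[1/pN]=p^{-k}\Z[1/N]$, and conclude by Noetherianity (finite generation) plus the spanning argument $p^k m v\in M$; freeness, if your definition of lattice requires it, then follows since $M$ is torsion-free and finitely generated over the PID $\Z[1/N]$. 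Your approach is arguably more robust — it adapts verbatim to intersecting at several primes at once or over a Dedekind base — while the paper's buys an explicit basis in one step. One small wording issue: $M=L\cap\Lambda$ is not literally stable under $\Z[1/pN]$ or under all of $\Zp$ (multiplying by $1/p$ can leave $L$); what your argument actually uses, and what is true, is that $\Lambda$ is a $\Z[1/pN]$-module and $L$ a $\Zp$-module, so $M$ is stable under $\Z[1/pN]\cap\Zp=\Z[1/N]$, which is exactly the conclusion you draw.
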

\begin{proof}
We first observe that for any $\Z[1/pN]$-basis $e_1,...,e_n$ of $\Lambda$, $p^m e_i \subset L$ for all $i=1,...,n$ and some $m$ sufficiently large. In particular $\Lambda \cap L$ contains $\Z[1/pN]$-bases for $\Lambda$. Let us take some such basis $e_1,...,e_n$ such that the $p$-adic volume is maximal (or equivalently such that the index of its $\Zp$-span in $L$ is minimal). We claim this basis generates $\Lambda' = \Lambda \cap L$ as a free $\Z[1/N]$-module.

Since it's a $\Z[1/pN]$-basis for $\Lambda$, its span certainly is a free $\Z[1/N]$-module. Suppose it doesn't generate. Then there is some $y \in \Lambda'$ not in the span of the $e_i$. Since the $e_i$ are a $\Z[1/pN]$-basis we may write $y$ uniquely as
$$y = \sum_i \lambda_i e_i$$
with $\lambda_i \in \Z[1/pN]$. After reordering, let us assume $\lambda_1 = p^{-k} \lambda$ with $\lambda \in \Z[1/N], k \geq 1$ is the coefficient with the largest $p$-adic norm amongst the $\lambda_i$.
Then we can take the new basis $y,e_2,...,e_n$ and it visibly has strictly larger $p$-adic volume, contradicting our original choice of basis and hence the existence of $y$.

\end{proof}

\begin{prop}
\label{glue groups}
Let $G/\Q$ be a reductive group unramified away from $N$, let $N|M$ and $T$ be the set of primes dividing $M/N$. If $2 \in T$, assume further that any factors of $G$ of type B have simply connected derived group.

Then given choices of reductive models $G^M/\Z[1/M]$ and $G_p/\Zp$ for $p \in T$  for $G$, we can find a model $\cG/\Z[1/N]$ isomorphic to each of these.

Moreover, if we have a faithful representation $i:G^M \hookrightarrow GL(V_{\Z[1/M]})$ there is a $\Z[1/N]$-lattice $\Lambda' \subset V_{\Z[1/M]}$ such that $i$ extends to $\tilde{i}: \cG \hookrightarrow GL(\Lambda').$
\end{prop}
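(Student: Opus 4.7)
My plan is to proceed by induction on $|T|$; the base case $T = \emptyset$ is vacuous, so it suffices to glue in a single prime $p \in T$. Setting $M' := M/p$, the goal becomes to construct a reductive model $\cG/\Z[1/M']$ recovering $G^M$ over $\Z[1/M]$ and $G_p$ over $\Zp$ as abstract group schemes (not necessarily respecting the canonical identifications of generic fibres with $G$). Iterating over the primes of $T$ will then produce the desired $\cG/\Z[1/N]$, and the same inductive scheme will handle the representation extension.

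The key local input is that both $G^M \otimes_{\Z[1/M]} \Zp$ and $G_p$ are reductive $\Zp$-models of the unramified group $G_{\Qp}$. By the Demazure--Grothendieck classification combined with Bruhat--Tits theory for unramified reductive groups, any two such reductive models are isomorphic as abstract $\Zp$-group schemes to the canonical Chevalley--Steinberg model of the quasi-split form of $G_{\Qp}$; the hypothesis on type $B$ factors at $p=2$ is what is needed to rule out the residual failure of this classification in residue characteristic $2$. This yields an isomorphism $\beta_p : G^M \otimes \Zp \rightiso G_p$, whose generic fibre, composed with the canonical identifications $G^M \otimes \Qp \cong G_{\Qp} \cong G_p \otimes \Qp$, is an automorphism of $G_{\Qp}$. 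Since $\Z[1/M'] \to \Z[1/M] \times \Zp$ is faithfully flat with $\Z[1/M] \otimes_{\Z[1/M']} \Zp \cong \Qp$, affine fpqc descent glues $G^M$ and $G_p$ along this datum to a flat affine $\Z[1/M']$-group scheme $\cG$; smoothness and having reductive geometric fibres being local properties, $\cG$ is a reductive group scheme.

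For the faithful representation statement, given $i: G^M \hookrightarrow GL(V_{\Z[1/M]})$, at each $p \in T$ the generic fibre $i \otimes \Qp$ is also an action of $G_p \otimes \Qp$, and since $G_p$ is smooth affine of finite type over $\Zp$ one can find a $G_p$-stable $\Zp$-lattice $L_p \subset V \otimes \Qp$. I would then set
$$\Lambda' := V_{\Z[1/M]} \cap \bigcap_{p \in T} L_p,$$
a $\Z[1/N]$-lattice in $V$ by iterated application of Lemma \ref{lattice intersect}. Checking that $\Lambda' \otimes_{\Z[1/N]} \Z[1/M] = V_{\Z[1/M]}$ and $\Lambda' \otimes_{\Z[1/N]} \Zp = L_p$ (the latter by $p$-adic density of $V_{\Z[1/M]}$ in $V \otimes \Qp$), the local $G^M$- and $G_p$-actions preserve the corresponding factors and therefore glue to a global action of $\cG$ on $\Lambda'$, producing $\tilde{i}: \cG \hookrightarrow GL(\Lambda')$.

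The hard part is the abstract local classification invoked above: showing that any two reductive $\Zp$-models of the same unramified group $G_{\Qp}$ are isomorphic as $\Zp$-group schemes. This is where most of the structure theory enters and where the hypothesis on type $B$ factors in residue characteristic $2$ plays its role, since it excludes the one configuration in which the classification by hyperspecial vertices can fail to deliver a single abstract isomorphism class.
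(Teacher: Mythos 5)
There is a genuine gap, and it sits exactly where the real content of the proposition lies. The crux of the statement is not the abstract gluing of $G^M$ and $G_p$ (which, by the way, needs no comparison of two $\Zp$-models at all: note that ``$G^M \otimes_{\Z[1/M]} \Zp$'' does not make sense, since $p$ is invertible in $\Z[1/M]$ — this tensor product is just $G_{\Qp}$, and the Beauville--Laszlo/patching argument only requires the canonical identifications of the two generic fibres with $G_{\Qp}$, with no appeal to the classification of hyperspecial models). The crux is the ``moreover'' clause: one must produce a lattice at $p$ for which the \emph{given reductive model} $G_p$ sits inside $GL$ of that lattice as a closed subgroup. You take an arbitrary $G_p$-stable lattice $L_p$ and simply assert that the resulting map is a closed embedding $\tilde{i}:\cG \hookrightarrow GL(\Lambda')$. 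But for an arbitrary stable lattice the homomorphism $G_p \rightarrow GL(L_p)$, although a closed immersion on generic fibres, need not be a closed immersion over $\Zp$: the special fibre can acquire a kernel, the standard example being non-simply-connected type $B$ factors at $p=2$ (odd special orthogonal groups, where the mod $2$ standard representation degenerates via the exceptional relation with the symplectic group). This is precisely why the hypothesis on type $B$ factors appears in the statement, and it is the content of Kisin's lemma \cite[2.3.1]{kis2} together with Madapusi Pera's remark \cite[4.3, footnote]{mp}: one must \emph{choose} $\Lambda \subset V\otimes\Qp$ so that $G_p$ is the schematic closure of $G_{\Qp}$ in $GL(\Lambda)$. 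Your proposal instead attributes the type $B$ hypothesis to the Bruhat--Tits classification of reductive $\Zp$-models of an unramified group; that classification (all such models abstractly isomorphic, hyperspecial points conjugate under the adjoint group) holds in residue characteristic $2$ without any such caveat, so the hypothesis is doing no work where you have placed it, and the step where it is actually needed is missing.

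Two smaller points. First, if you insist on gluing $\cG$ along a twisted identification $\beta_p$ rather than the canonical one over $\Qp$, then the compatibility you need in the last step — that the $G^M$-action on $V_{\Z[1/M]}$ and the $G_p$-action on $L_p$ agree over $\Qp$ so that they glue to a $\cG$-action on $\Lambda'$ — is no longer automatic; gluing along the canonical identifications both fixes this and gives the version of the statement the paper actually uses later (where $\cG(\Zp)$ must equal the given hyperspecial subgroups inside $G(\Qp)$). Second, for comparison, the paper's proof runs the logic in the opposite order: it first invokes \cite[2.3.1]{kis2} (plus the footnote of \cite{mp} at $2$) to get a good lattice $\Lambda$ adapted to $G_p$, sets $\Lambda' = \Lambda \cap V_{\Z[1/pN]}$ using (\ref{lattice intersect}), and then \emph{defines} $\cG$ as the schematic closure of $G^M$ in $GL(\Lambda')$; with that construction the closed embedding, affineness and finite type are built in, and both assertions of the proposition follow at once. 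If you repair your argument by replacing the arbitrary stable lattice $L_p$ with the lattice furnished by Kisin's lemma and glue along the canonical identifications, your patching route does become a correct (if slightly longer) alternative to the closure construction.
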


\begin{proof}
It obviously suffices to consider the case where $M=pN$. Let $i:G^M \hookrightarrow GL(V_{\Z[1/pN]})$ be a faithful representation. By \cite[2.3.1]{kis2}, and the remark of Madapusi Pera \cite[4.3, footnote]{mp} in the $2 \in T$ case, we can find a $\Zp$-lattice $\Lambda \subset V\otimes \Qp$ such that $G_p(\Qp) = G(\Qp) = G^M_{\Qp} \hookrightarrow GL(V_{\Qp})$ is induced from a map $G_p \rightarrow GL(\Lambda)$ over $\Zp$. 

By (\ref{lattice intersect}) we obtain a $\Z[1/N]$-lattice $\Lambda' = \Lambda \cap V_{\Z[1/pN]}.$ Of course we can canonically identify $\Lambda' \otimes \Z[1/pN] \cong V_{\Z[1/pN]}$, in the context of which we take $\cG/\Z[1/N]$ to be the closure of $$\text{Im}(G^M \hookrightarrow GL(V_{\Z[1/pN]}) \hookrightarrow GL(\Lambda')).$$

We claim this $\cG$ does the job. It's evident that $\cG|_{\Z[1/pN]} \cong G^M$. Since we have a canonical isomorphism $\Lambda' \otimes \Zp \cong \Lambda$ and by the other identifications in the construction we also have $\cG|_{\Zp} \cong G_p$. We also need it to be reductive (i.e. smooth affine with connected reductive geometric fibres). Being a closed subgroup of $GL(\Lambda')$ it's visibly affine, its geometric fibres are reductive by what we already know, and smoothness can be checked fpqc locally, whence it also follows by the identifications we have made.

The second part of the proposition is an immediate consequence of our argument.
\end{proof}
We also echo the remarks in \cite[4.3]{mp}, that in the case of $G$ coming from a Hodge type Shimura datum, the condition on factors of type B is always satsified, by Deligne's classification of symplectic representations. We shall also need the following modification of \cite[2.1.2]{kis2}.

\begin{lem}
\label{embedding levels}
Let $i:(G_1,\fX_1) \hookrightarrow (G_2,\fX_2)$ be an embedding of Shimura data with $K_2^N \subset \prod_{p \nmid N}' G_2(\Qp) =: G_2(\A^{\infty,N})$  compact open, and $K_1 = K_1^N K_{1,N}$ a compact open of $G_1(\Af)$ such that $K_1^N = K_2^N \cap G_1(\A^{\infty,N})$. Suppose $G_1$ and $G_2$ both have centre which is compact modulo its split part.\footnote{We expect the argument can be modified slightly as in Deligne to remove this hypothesis.} Then there exists an open compact subgroup $K_{2,N} \subset \prod_{p|N} G_2(\Qp)$ such that $K_2 := K_{2,N}K_2^N \supset K_1$ and the induced map of $E(G_1,\fX_1)$-schemes
$$\Sh_{K_1}(G_1,\fX_1) \rightarrow \Sh_{K_2}(G_2,\fX_2)_{E(G_1,\fX_1)}$$
is a closed embedding. 
\end{lem}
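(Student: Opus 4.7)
The plan is to reduce to Kisin's closed-embedding argument \cite[2.1.2]{kis2} by first arranging that $K_2 \cap G_1(\Af) = K_1$. The given assumption $K_1^N = K_2^N \cap G_1(\A^{\infty,N})$ already handles the primes away from $N$, so all the real work is at the primes dividing $N$: one needs a correctly calibrated $K_{2,N}$.

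First I would construct $K_{2,N} \subset \prod_{p|N} G_2(\Qp)$ with $K_{2,N} \supset K_{1,N}$ and $K_{2,N} \cap \prod_{p|N} G_1(\Qp) = K_{1,N}$. The compact subgroup $K_{1,N}$ sits inside some open compact subgroup $W$ of $\prod_{p|N} G_2(\Qp)$ (for instance, a stabiliser in the Bruhat--Tits building of a point fixed by $K_{1,N}$). Such a $W$ is profinite, hence admits a descending basis $\{V_n\}$ of open normal subgroups with $\bigcap_n V_n = \{e\}$. Since $K_{1,N}$ is open in the closed subgroup $\prod_{p|N} G_1(\Qp)$, for $n$ sufficiently large $V_n \cap \prod_{p|N} G_1(\Qp) \subset K_{1,N}$. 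Set $K_{2,N} := K_{1,N} V_n$ for such an $n$. Normality of $V_n$ in $W$ makes this a subgroup; it is open and compact, contains $K_{1,N}$, and a short computation using the normality gives
\[
K_{2,N} \cap \prod_{p|N} G_1(\Qp) \;=\; K_{1,N}\bigl(V_n \cap \prod_{p|N} G_1(\Qp)\bigr) \;=\; K_{1,N}.
\]

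Setting $K_2 := K_{2,N} K_2^N$, the preceding equality combined with the hypothesis on $K_2^N$ yields $K_2 \cap G_1(\Af) = K_1$, and in particular $K_2 \supset K_1$. The conclusion that the induced map $\Sh_{K_1}(G_1,\fX_1) \to \Sh_{K_2}(G_2,\fX_2)_{E(G_1,\fX_1)}$ is a closed embedding is then the content of \cite[2.1.2]{kis2}: the centre hypothesis is used precisely to control the interplay between $G_2(\Q)$ and the closure of $K_1 \cdot Z(G_1)(\Q)$, so that $G_2(\Q)$-identifications of double cosets from $\fX_1 \times G_1(\Af)/K_1$ descend to $G_1(\Q)$-identifications. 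The only real obstacle here is the bookkeeping in the first step (choosing $K_{2,N}$ compatibly with the prescribed off-$N$ level $K_2^N$); the substantive content of the closed-embedding property is imported from the cited literature.
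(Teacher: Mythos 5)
Your profinite construction of $K_{2,N}$ (choosing $W\supset K_{1,N}$, a small open normal $V_n\trianglelefteq W$, and setting $K_{2,N}=K_{1,N}V_n$ so that $K_{2,N}\cap\prod_{p|N}G_1(\Qp)=K_{1,N}$ and hence $K_2\cap G_1(\Af)=K_1$) is correct, but the final step has a genuine gap. \cite[2.1.2]{kis2}, like Deligne's \cite[1.15]{del1} on which it rests, is an \emph{existence} statement: given $K_1$ there is \emph{some} sufficiently small $K_2\supset K_1$ making the map a closed embedding, with no control over the prime-to-$N$ component of $K_2$ --- which in the present lemma is prescribed to be the fixed hyperspecial $K_2^N$. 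That is exactly why the paper needs a modification of Kisin's lemma rather than a citation of it. What your reduction actually requires is the stronger claim that \emph{every} $K_2$ with $K_2\cap G_1(\Af)=K_1$ yields a closed embedding, and that claim is false in general: injectivity of $G_1(\Q)\backslash\fX_1\times G_1(\Af)/K_1\to G_2(\Q)\backslash\fX_2\times G_2(\Af)/K_2$ can fail because an element $q\in G_2(\Q)\setminus G_1(\Q)$ (for instance one lying in a conjugate of $K_2$, or normalising $G_1$ and preserving $\fX_1$) may identify two distinct $G_1(\Q)$-orbits; no intersection condition on the compact opens rules this out, nor does the centre hypothesis by itself. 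The cure is to shrink the level at the primes dividing $N$ in a way that depends on $K_1$, which is what the Deligne-style limit argument provides and what your proposal omits.

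The paper's proof does precisely this: by the argument of \cite[1.15]{del1} it suffices to prove injectivity at \emph{infinite level at $N$} (keeping the prescribed $K_2^N$ away from $N$), i.e. of $G_1(\Q)\backslash\fX_1\times G_1(\Af)/K_1^N\to G_2(\Q)\backslash\fX_2\times G_2(\Af)/K_2^N$, and Deligne's limit argument then produces a suitable finite $K_{2,N}$. The hypotheses enter exactly there: the centre condition identifies the complex points with the naive double quotients and gives injectivity of $G_1(\Q)\backslash\prod_{p|N}G_1(\Qp)\to G_2(\Q)\backslash\prod_{p|N}G_2(\Qp)$ as in \cite[1.15.3]{del1} (this is the step that controls the rational elements your argument ignores), while the hypothesis $K_1^N=K_2^N\cap G_1(\A^{\infty,N})$ gives injectivity on the fibres $\fX_1\times G_1(\A^{\infty,N})/K_1^N\to\fX_2\times G_2(\A^{\infty,N})/K_2^N$. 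To repair your proposal you would have to prove such an infinite-level injectivity statement and then descend to finite level --- that is, essentially reproduce the paper's argument; arranging $K_2\cap G_1(\Af)=K_1$ alone cannot substitute for it.
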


\begin{proof}
By the same argument as \cite[1.15]{del1} it suffices to check
$$\alpha: \Sh_{K_1^N}(\C) \rightarrow \Sh_{K_2^N}(\C)$$ is injective.

On the level of complex points (by the assumption on centres, which removes the technicalities involving units) this map is
$$\alpha: G_1(\Q) \backslash \fX_1 \times G_1(\Af) / K_1^N \rightarrow G_2(\Q) \backslash \fX_2 \times G_2(\Af) / K_2^N.$$
We prove this is injective by first noting that
$$G_1(\Q) \backslash \prod_{p|N} G_1(\Qp) \rightarrow G_2(\Q) \backslash \prod_{p|N} G_2(\Qp)$$
is injective as in \cite[1.15.3]{del1}. Now fix a set of coset representatives of $G_1(\Q)$ in $\prod_{p|N}G_1(\Qp)$ and note that translating by these, the fibres of $G_1(\Q) \backslash \fX_1 \times G_1(\Af) / K_1^N \rightarrow G_1(\Q) \backslash \prod_{p|N} G_1(\Qp)$ may be identified with $\fX_1 \times G_1(\A^{\infty,N})/K_1^N$. But now since $K_1^N = K_2^N \cap G_1(\A^{\infty,N})$, we have that
$$\fX_1 \times G_1(\A^{\infty,N})/K_1^N \rightarrow \fX_2 \times G_2(\A^{\infty,N})/K_2^N,$$
is injective, and the lemma follows.
\end{proof}

\subsubsection{}
\label{hodge group construction}
We now proceed with the construction in the Hodge type case.

Firstly, by finite-presentedness we note that $i$ is in fact defined over $\Z[1/M]$ for some $M$ divisible by $N$. By (\ref{glue groups}) we can find a lattice $V_{\Z[1/N]} \subset V$ and $\cG/\Z[1/N]$ a reductive model such that (forgetting the symplectic pairing) $i$ is obtained from a map $\cG \hookrightarrow GL(V_{\Z[1/N]})$ and $\cG(\Zp) = G(\Zp)$ for all $p \nmid N$, in particular giving $K^N = \prod_{p\nmid N} \cG(\Zp)$. 

Let $K'^N$ be the stabiliser of $V_{\Z[1/N]}$ in $\prod_{p\nmid N} GSp(V\otimes \Qp,\psi)$, noting that $K'^N$ will be maximal compact but need not be hyperspecial at the primes dividing $M/N$. We also fix a $\Z$-lattice $V_\Z \subset V_{\Z[1/N]}$ and note that for all $K_N \subset \prod_{p|N} G(\Qp)$ sufficiently small $K=K_NK^N$ fixes $V_{\hat{\Z}}.$

\subsubsection{}
Applying the lemma (\ref{embedding levels}) in our setting (since $G$ is Hodge type the hypothesis on the centre holds), and letting $E=E(G,\fX)$, we obtain a closed embedding of Shimura varieties
$$\Sh_{K^N}(G,\fX) \hookrightarrow \Sh_{K'^N}(GSp, S^\pm)_E \subset \cS_{N, \cO_E[1/N]}.$$
Letting $\overline{\Sh_{K^N}} \subset \cS_{N,\cO_E[1/N]}$ be the scheme theoretic closure of this map, the extension property for $\cS_N$ implies the extension property for $\overline{\Sh_{K^N}}$.
Now let $\cS_{K^N}$ be the normalisation of $\overline{\Sh_{K^N}}$. Since test schemes are regular and a fortiori normal, the universal property of normalisation implies that $\cS_{K^N}$ also has the extension property.

\subsubsection{}
We need to check smoothness at finite level. It suffices to check smoothness in a formal neighbourhood of any closed point. When the point has characteristic zero it is in the generic fibre and smoothness is guaranteed. When it has characteristic $p$, we observe that at finite level our construction exactly follows that of Kisin \cite[2.3]{kis2} and Kim-Madapusi Pera \cite[3.5]{kim-mp} for the case $p=2$, and so in particular the necessary local rings are smooth. Hence the $\cS_{K^N}$ give the required integral canonical models in the Hodge type case, compatible with Kisin's by construction.

\subsection{Abelian type case}

We begin by making some more observations about the Hodge type setting. As before we fix connected reductive $G/\Z[1/N]$ belonging to a Shimura datum $(G,\fX)$ of Hodge type with reflex field $E$, and consider the tower $$\Sh_{K^N} = \varprojlim_{K_N} \Sh_{K_NK^N}$$ obtained by fixing $K^N = G(\hat{\Z}^N)$ and letting the level at $p|N$ go to infinity.

\begin{lem}
\label{component field}
The connected component $\Sh_{K^N}^+ \subset \Sh_{K^N,\bar{E}}$ is defined over the maximal abelian extension $E_N/E$ unramified away from $N$.
\end{lem}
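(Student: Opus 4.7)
The plan is to combine Deligne's reciprocity law for $\pi_0$ of Shimura varieties with a local analysis at each prime $v\nmid N$. First I would recall that for $K=K_NK^N$ sufficiently small, one has a canonical identification
$$\pi_0(\Sh_K(G,\fX)_{\bar E}) \cong T(\Q)^{\dagger}\backslash T(\Af)/\nu(K),$$
where $T = G/\Gder$, $\nu: G\to T$ is the quotient, and $T(\Q)^{\dagger}$ is the closure of the image of an appropriate arithmetic subgroup. By Deligne's reciprocity law the $\Gal(\bar E/E)$-action on this set factors through $\Gal(E^{ab}/E)$ via a morphism $r$ built from local class field theory and the pushforward $\mu_T:\Gm\to T$ of a Hodge cocharacter. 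Passing to the inverse limit over $K_N$ presents $\pi_0(\Sh_{K^N,\bar E})$ as $T(\Q)^{\dagger}\backslash T(\Af)/\nu(K^N)$ with the Galois action still factoring through $\Gal(E^{ab}/E)$, which already shows $\Sh_{K^N}^+$ is defined over some abelian extension of $E$.

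To establish unramifiedness at a place $v\nmid N$ of residue characteristic $p$, I would argue as follows. Since $G$ is reductive over $\Z[1/N]$, so is the torus $T$, and the cocharacter $\mu_T$ extends to a morphism $\Gm\to T$ over $\cO_{E,v}$. Unwinding Deligne's construction of $r$, its restriction to the inertia subgroup $I_v\subset\Gal(\bar E/E)^{ab}$ factors as
$$\cO_{E,v}^\times \xrightarrow{\mu_T} T(\cO_{E,v}) \xrightarrow{\mathrm{Nm}} T(\Zp) \hookrightarrow T(\Af).$$
The key local input is that $\nu(K^N)$ contains $\nu(G(\Zp)) = T(\Zp)$, the second equality holding because $G\to T$ is smooth surjective and $\Zp$ is Henselian. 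Hence $I_v$ acts trivially on $\pi_0(\Sh_{K^N,\bar E})$, and in particular fixes $\Sh_{K^N}^+$. Combining with the previous paragraph, the field of definition of $\Sh_{K^N}^+$ is contained in $E_N$.

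The expected main obstacle is being precise about Deligne's reciprocity morphism and its behavior at each local place: specifically, ensuring that $\mu_T$ really does extend integrally over $\cO_{E,v}$ for $v\nmid N$, and that under the norm map $T(\cO_{E,v})\to T(\Qp)$ the image lies in $T(\Zp)$. Both points rely on the smoothness of $T$ over $\Z[1/N]$ and the fact that $\cO_{E,v}/\Zp$ is finite \'etale. The passage to the inverse limit over $K_N$ and the transition between finite and infinite levels at $N$ should be formal, since each finite-level component $\Sh_{K_NK^N}^+$ is defined over a subfield of $E_N$ by the same argument, and $\Sh_{K^N}^+$ is the limit along compatible subfields whose compositum remains inside $E_N$.
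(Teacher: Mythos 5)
Your plan runs through the identification $\pi_0(\Sh_K(G,\fX)_{\bar E}) \cong T(\Q)^{\dagger}\backslash T(\Af)/\nu(K)$, and this is the genuine gap: that identification is equivalent to strong approximation for $G^{der}$ and is only valid when $G^{der}$ is simply connected. Nothing in the hypotheses here guarantees that — Hodge type does not force $G^{der}$ simply connected, and the lemma is later invoked for abelian type data $(G_2,\fX_2)$, including groups with $G_2^{der}$ adjoint, where your right-hand side can even be a single point while $\Sh_{K^N}$ has infinitely many geometric components (the discrepancy is measured by $G^{der}(\Af)$ modulo the closure of $G^{der}(\Q)$, i.e. by the failure of strong approximation). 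Since the whole of your subsequent argument treats the Galois action as translation by $r$ on that torus double-coset set, it is not literally available in the required generality. The correct bookkeeping is Deligne's: $\pi_0$ at infinite level is the abelian group $G(\Af)/\overline{G(\Q)_+}$ (whose denominator contains the image of $\tilde{G}(\Af)$ for $\tilde{G}$ the simply connected cover, by strong approximation for $\tilde{G}$), with the Galois action given by the general reciprocity law of \cite[2.6]{del2}. Your local mechanism — inertia at $v\mid p$, $p\nmid N$, maps under reciprocity to an element integral at $p$, which is then absorbed by the hyperspecial level $G(\Zp)$ — is exactly the mechanism that makes the corrected argument work; indeed it is precisely the content of \cite[2.2.4]{kis2}, which the paper simply quotes for each $p\nmid N$ and then intersects the resulting fields $E^p$ inside $E^{ab}$ to land in $E_N$. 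So either recast your argument in terms of $\pi(G)$ and the general reciprocity law, or cite Kisin's lemma directly as the paper does.

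Two smaller points. First, $\nu(G(\Zp))=T(\Zp)$ does not follow from ``smooth surjective plus Henselian'' alone: the fibre of $\nu$ over a point of $T(\Zp)$ is a torsor under $G^{der}$ over $\Zp$, and you first need a point of its special fibre, which is supplied by Lang's theorem (vanishing of $H^1$ of the finite residue field with values in the smooth connected group $G^{der}$); only then do smoothness and Henselianness let you lift. Second, your factorization of inertia through $T(\cO_{E,v})$ and the norm into $T(\Zp)$ tacitly uses that $E/\Q$ is unramified at $p\nmid N$ (so that $\cO_{E,v}/\Zp$ is finite \'etale and $\mu_T$ extends integrally); this is true and is recorded later in the paper as a consequence of $G$ being unramified at such $p$, but it should be said. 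These two points are repairable; the description of $\pi_0$ is the substantive issue.
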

\begin{proof}
By \cite[2.2.4]{kis2}, and taking a suitable quotient, we see that $\Sh_{K^N}^+$ is defined over the maximal abelian extension $E^p/E$ unramified away from $p$ for all $p\nmid N$. By the identification $\bigcap_{p \nmid N} E^p = E_N$ inside $E^{ab}$, we deduce that $\Sh_{K^N}^+$ is defined over $E_N$.
\end{proof}

\subsubsection{}
Let $q \nmid N$ be a prime, and recall the following groups which are used to construct Kisin's integral models, and the following results from \cite[3.3]{kis2}. We adopt the usual notations where $G^{ad}(\Q)^+$ is the intersection of $G^{ad}(\Q)$ with the connected component $G^{ad}(\R)$, $G(\Q)_+$ the inverse image of $G^{ad}(\Q)^+$ in $G(\Q)$, $G^{ad}(\Z_{(q)})^+ = G^{ad}(\Q)^+ \cap G^{ad}(\Z_{(q)})$ and $G(\Z_{(q)})_+ = G(\Q)_+ \cap G(\Z_{(q)})_+$.

We have
$$\cA_q(G) := G(\A^{\infty,q})/\overline{Z(\Z_{(q)})} *_{G(\Z_{(q)})_+/Z(\Z_{(q)})} G^{ad}(\Z_{(q)})^+$$
which acts on $\Sh_{K_q} = \varprojlim_{K^q} \Sh_{G(\Z_q)K^q}$, and the subgroup

$$\cA_q^\circ(G) := \overline{G(\Z_{(q)})_+}/\overline{Z(\Z_{(q)})} *_{G(\Z_{(q)})_+/Z(\Z_{(q)})} G^{ad}(\Z_{(q)})^+$$
which acts on a connected component $\Sh_{K_q}^+$. It follows from the argument in \cite[3.3.7]{kis2} that this is precisely the subgroup sending $\Sh_{K_q}^+$ into itself.

\subsubsection{}
We make the following new remarks. The construction of twisting abelian varieties by a $Z$-torsor from \cite[\S3]{kis2} can be carried out by twisting by a $Z^{der} = Z(G^{der})$-torsor instead. Indeed, given $\gamma \in G^{ad}(\Q)^+$ we may take $\cP$ to be the fibre of $\gamma$ along $G^{der} \rightarrow G^{ad}$. It will suffice to check the following.

\begin{prop}
\begin{enumerate}
\item With notation as above, if $\cP'$ is the fibre of $\gamma$ along $G \rightarrow G^{ad}$ then $\cP' = \cP \times^{Z^{der}} Z.$
\item If $V$ is a $\Q$-vector space with an $\cO_Z$-comodule action, $\cP,\cP'$ as above, there is a natural isomorphism
$$(V \otimes_\Q \cO_\cP)^{Z^{der}} \rightiso (V \otimes_\Q \cO_{\cP'})^Z.$$
\end{enumerate}
\end{prop}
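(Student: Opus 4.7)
The plan is to deduce (2) formally from (1), so the real content is in establishing (1).

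For (1), I would use the multiplication map $m\colon G^{der} \times Z \to G$, which is a central isogeny, surjective with kernel the antidiagonally embedded $Z^{der} = G^{der} \cap Z$. Restricting $m$ to $\cP \times Z$, both factors map to $\gamma$ in $G^{ad}$ (the $Z$-factor trivially), so the image lies in the fiber $\cP' \subset G$ over $\gamma$. This restriction is constant on $Z^{der}$-orbits for the antidiagonal action $(p,z) \mapsto (p\zeta, \zeta^{-1}z)$, so it descends to a morphism $\varphi\colon \cP \times^{Z^{der}} Z \to \cP'$. Both source and target are $Z$-torsors (the source because $Z^{der} \hookrightarrow Z$ is a closed subgroup and $\cP$ is a $Z^{der}$-torsor, the target as a fiber of the quotient $G \to G^{ad} = G/Z$), and $\varphi$ is $Z$-equivariant for the action on the second factor; hence $\varphi$ is automatically an isomorphism of $Z$-torsors.

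For (2), I would reinterpret both sides functorially. The space $(V \otimes_\Q \cO_\cP)^{Z^{der}}$ is naturally the space of $Z^{der}$-equivariant morphisms $\cP \to V$, where $Z^{der}$ acts on $V$ via $Z^{der} \hookrightarrow Z$, and similarly $(V \otimes_\Q \cO_{\cP'})^Z$ is the space of $Z$-equivariant morphisms $\cP' \to V$. Using (1), a $Z$-equivariant morphism $f\colon \cP \times^{Z^{der}} Z \to V$ is determined by $g(p) := f([p,1])$ through $f([p,z]) = z \cdot g(p)$, and well-definedness of $f$ on the quotient is precisely the assertion that $g$ is $Z^{der}$-equivariant. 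Conversely, any $Z^{der}$-equivariant $g$ produces such an $f$ by the same formula. This yields the natural bijection, hence the claimed isomorphism of invariant spaces.

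The anticipated difficulty is purely bookkeeping: keeping left versus right translations and diagonal versus antidiagonal conventions consistent so that the torsor structures align and $\varphi$ is indeed the canonical map induced by the inclusion $G^{der} \hookrightarrow G$ and the action of $Z$. Once those conventions are fixed there is no substantive issue, because both parts are essentially forced by the general principle that for a closed subgroup $H \subset K$ acting on a torsor $P$, global sections of the $K$-associated bundle of any $K$-representation $V$ on $P \times^H K$ agree with global sections of the $H$-associated bundle of $V|_H$ on $P$.
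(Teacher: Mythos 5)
Your proposal is correct and takes essentially the same route as the paper: for (1) the paper uses exactly the same map $\cP \times^{Z^{der}} Z \ni (p,z) \mapsto p.z \in \cP'$ and observes it is a $Z$-equivariant map of $Z$-torsors, hence an isomorphism. For (2) the paper phrases the argument via the Tannakian dictionary (the identity $\cP' = \cP \times^{Z^{der}} Z$ makes $\omega_{\cP'}$ factor as restriction to $\Rep_\Q Z^{der}$ followed by $\omega_\cP$, handling a general $\cO_Z$-comodule as a filtered colimit of finite-dimensional ones) and then ``writes this out algebraically,'' which is precisely the explicit induction/restriction computation $f([p,z]) = z\cdot g(p)$ that you carry out directly.
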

\begin{proof}
For (1) we can define (using $\cP' \subset G$) a map $$\cP \times^{Z^{der}} Z \ni (p,z) \mapsto p.z \in \cP'$$ which is obviously an isomorphism of $Z$-torsors and proves the claim.

Part (2) can be seen easily by appealing to the general Tannakian framework that $Z$-torsors over $\Spec \Q$ correspond to fibre functors $\omega: \Rep_\Q Z \rightarrow \vVec_\Q$, and that $\cP \times^{Z^{der}} Z = \cP'$ implies that we can factor $\omega_{\cP'}$ as
$$\omega_{\cP'}:\Rep_\Q Z \map{\Res} \Rep_\Q Z^{der} \map{\omega_\cP} \vVec_\Q.$$
Writing out what this statement actually means algebraically (and writing an arbitrary $\cO_Z$-comodule as a filtered colimit of finite dimensional ones), we recover (2).
\end{proof}

This has the technical advantage given by the following lemma. We thank Kestutis Cesnavicius for pointing out to us that the analogous result from Kisin's paper \cite[3.4.8]{kis2} where $G=Z$ is a general group of multiplicative type is false over the base $\Z[1/N]$ in general.
\begin{lem}
\label{finite torsor lemma}
Let $R$ be an integrally closed domain with fraction field $K$, and $G/R$ a finite group scheme. Then a torsor $T/R$ for $G$ is trivial iff $T_K$ is a trivial $G_K$-torsor.
\end{lem}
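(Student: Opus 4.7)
The plan is to exploit the finiteness of $G$ together with the integral closedness of $R$. One direction is immediate: pulling back a trivialization along $\Spec K \to \Spec R$ gives a trivialization over $K$. For the harder direction, suppose $T_K$ is a trivial $G_K$-torsor, equivalently $T(K) \neq \emptyset$. Since triviality of a torsor is equivalent to the existence of a global section, it suffices to show that any $K$-point $s_K \colon \Spec K \to T$ extends to an $R$-point $s \colon \Spec R \to T$.

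The key observation is that $T \to \Spec R$ is finite. Indeed, $T$ is fpqc-locally (in fact fppf-locally) isomorphic to $G$, and finiteness descends along fpqc covers. Writing $T = \Spec A$ with $A$ a finite $R$-algebra, a $K$-point corresponds to an $R$-algebra homomorphism $\varphi \colon A \to K$. Every $a \in A$ is integral over $R$ (it satisfies the characteristic polynomial of multiplication by $a$ on the finite $R$-module $A$, or more directly a monic polynomial obtained from any presentation), hence $\varphi(a) \in K$ is integral over $R$. Because $R$ is integrally closed in $K$, we conclude $\varphi(a) \in R$, so $\varphi$ factors as $A \to R \hookrightarrow K$, producing the desired $R$-point $s$.

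Finally, the $R$-point $s$ gives a $G$-equivariant map $G \to T$, $g \mapsto g \cdot s$, which is an isomorphism of $G$-torsors over $R$ (one can check this on geometric points, or note that any torsor with a section is trivial). This completes the proof.

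The only genuinely nontrivial ingredient is the step \emph{finite $+$ integrally closed $\Rightarrow$ $K$-points spread out}; this is where the hypothesis that $G$ (hence $T$) is finite is essential, since without finiteness a $K$-point of an affine $R$-scheme need not be integral over $R$. Everything else is formal torsor theory.
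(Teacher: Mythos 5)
Your proof is correct, and it takes a slightly different route from the paper's. The paper descends \emph{properness} from $G$ to $T$ and then invokes the valuative criterion together with Krull's theorem that an integrally closed domain is the intersection $\bigcap_{R_v \subset K} R_v$ of the valuation rings of $K$ containing it, to conclude $T(R) = \bigcap_v T(R_v) = T(K)$. You instead descend \emph{finiteness} from $G$ to $T$ and argue directly: writing $T = \Spec A$ with $A$ finite over $R$, every element of $A$ is integral over $R$ (Cayley--Hamilton), so the image of any $R$-algebra map $A \to K$ lands in the integral closure of $R$ in $K$, which is $R$; hence a $K$-point extends to an $R$-point and the torsor is trivial. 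Your version is the more elementary of the two, avoiding both the valuative criterion and the intersection-of-valuation-rings theorem, at the modest cost of a direct commutative-algebra computation; the paper's version is shorter to state and would apply verbatim to any proper (not necessarily finite) $G$. Both arguments, like the paper's, implicitly use that the fppf torsor $T$ is representable, which follows from effective descent for affine morphisms since $G$ is affine; this is not a gap relative to the paper's own treatment.
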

\begin{proof}
Since $G/R$ is finite, it is proper, which as a property stable under fpqc descent is inherited by $T$, and so $T(R) = \bigcap_{R_v \subset K}T(R_v) = T(K)$. In particular the latter is nonempty iff the former is.
\end{proof}

\subsubsection{}
\label{adjoint action}
With these remarks in mind, we can rewrite the action of $\cA_q(G)$ on the integral model $\cS_{K_q}/\cO_{E,v}$ explicitly following \cite[3.4.5]{kis2} as follows. A point $x \in \cS_{K_q}(T)$ gives rise to a triple $(A,\lambda,\epsilon^q)$ where $A/T$ is an abelian scheme up to prime to $q$-isogeny, $\lambda$ a weak polarisation of $A$ and $\epsilon^q$ a section of $\Gamma(T, \uIsom(V_{\A^{\infty,q}}, \hat{V}^q(A)))$.

By \cite[3.4.5]{kis2} and our previous remarks, if we take $(h,\gamma^{-1}) \in \cA_q(G)$ and $x$ associated to the triple $(A,\lambda,\epsilon^q)$ then the triple associated to $x.(h,\gamma^{-1})$ is isogenous to 
$$(A^\cP, \lambda^\cP, \epsilon^{q,\cP} \circ \tilde{\gamma} h \tilde{\gamma}^{-1})$$
where $\cP$ is the torsor for $Z^{der} \subset G^{der}$ given by the fibre of $\gamma$, $\tilde{\gamma}$ is an element of $G^{der}(F)$ for some finite Galois extension $F/\Q$ mapping to $\gamma$ under $G^{der}(F) \rightarrow G^{ad}(F)$, and the notations $A^\cP,\lambda^\cP, \epsilon^{q,\cP}$ are the ``twists by $\cP$'' as defined in \cite[3.1.3]{kis2}.

\subsubsection{}
We introduce the notation $\Z^N := \Z[1/N]$, to provide a slight simplification in situations where the notation quickly becomes messy.
Let (where plus notation denotes intersection with $G(\R)_+$,$G^{ad}(\R)^+$ as usually defined, and overline notation denotes closures in $\prod_{p|N}G(\Qp)$) 

$$\cA^N(G) := \frac{\prod_{p|N}G(\Qp)}{\overline{Z(\Z^N)}} *_{G(\Z^N)_+/Z(\Z^N)} G^{ad}(\Z^N)^+,$$
$$\cA^{N,\circ}(G) := \frac{\overline{G(\Z^N)_+}}{\overline{Z(\Z^N)}} *_{G(\Z^N)_+/Z(\Z^N)} G^{ad}(\Z^N)^+,$$

and (where here overline notation means closures in $G(\A^{\infty,q})$)

$$\tilde{\cA}^N_q(G) := \frac{\prod_{p|N}G(\Qp) \times \prod_{p\nmid qN} G(\Zp)}{\overline{Z(\Z^N)}} *_{G(\Z^N)_+/Z(\Z^N)} G^{ad}(\Z^N)^+ \subset \cA_q(G).$$

Let $K^N = \prod_{p \nmid N}G(\Zp)$, $K^{Nq} = \prod_{p \nmid Nq} G(\Zp)$ and
$$\Delta^N = \Ker(\cA^{N,\circ}(G) \rightarrow \cA^{N,\circ}(G^{ad})).$$

First, a group theoretic lemma following Deligne's Corvallis paper.
\begin{lem}
\label{ridiculous group lemma}
\begin{enumerate}
\item The group $\cA^{N,\circ}(G)$ is canonically the completion of $G^{ad}(\Z^N)^+$ with respect to the topology generated by the images of congruence subgroups of $G^{der}$ the form $K_N \times \prod_{p\nmid N} G^{der}(\Zp)$ as $K_N$ varies. 
In particular $\cA^{N,\circ}(G^{der}):=\cA^{N,\circ}(G)$ canonically depends only on $G^{der}$.
\item We can naturally identify
$$\cA^{N,\circ}(G^{der}) = \overline{G^{der}(\Z^N)_+}*_{G^{der}(\Z^N)_+} G^{ad}(\Z^N)^+$$ (where the closure is taken in $\prod_{p|N} G^{der}(\Qp)$).
\end{enumerate}
\end{lem}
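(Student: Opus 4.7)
The plan is to follow Deligne's congruence-completion technique from the Corvallis article (cf.\ \cite[3.3]{kis2}), proving (1) first and then deducing (2) by a pushout computation.

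For (1), I would endow $\cA^{N,\circ}(G)$ with the unique group topology for which $H := \overline{G(\Z^N)_+}/\overline{Z(\Z^N)}$ is open with its natural quotient topology inherited from $\prod_{p|N}G(\Qp)$. Since $G(\Z^N)_+/Z(\Z^N)$ is dense in $H$ by definition of closure and embeds in $G^{ad}(\Z^N)^+$, the latter is dense in $\cA^{N,\circ}(G)$; hence $\cA^{N,\circ}(G)$ is a completion of $G^{ad}(\Z^N)^+$ and it remains only to identify the induced topology.

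I would do this by establishing a cofinal correspondence between open subgroups of $H$ and congruence subgroups of $G^{der}$ of the form $K_N \times \prod_{p\nmid N}G^{der}(\Zp)$, matching the resulting intersections with $G^{ad}(\Z^N)^+$. In one direction, lift an open $U \subset H$ to an open $\tilde U \subset \prod_{p|N}G(\Qp)$ containing $\overline{Z(\Z^N)}$, project mod $Z$ to an open subgroup of $\prod_{p|N}G^{ad}(\Qp)$, and pull back along the central isogeny $G^{der} \to G^{ad}$. Conversely, any congruence subgroup $K_N \times \prod_{p\nmid N}G^{der}(\Zp)$ has open image in $\prod_{p|N}G^{ad}(\Qp)$ (since $G^{der}(\Qp) \to G^{ad}(\Qp)$ has open image by smoothness and algebraic surjectivity), and this image pulls back to $\prod_{p|N}G(\Qp)$ to give an open subgroup containing $\overline{Z(\Z^N)}$, hence an open subgroup of $H$. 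Although the two families of subgroups need not coincide, their intersections with $G^{ad}(\Z^N)^+$ are cofinal, so $\cA^{N,\circ}(G)$ is the completion of $G^{ad}(\Z^N)^+$ for the congruence topology from $G^{der}$. This description involves only $G^{der}$ and $G^{ad} = G^{der}/Z^{der}$, so $\cA^{N,\circ}(G)$ depends canonically only on $G^{der}$, proving both assertions of (1).

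For (2), I apply the general formula defining $\cA^{N,\circ}$ with $G$ replaced by $G^{der}$. Since $Z^{der}$ is a finite $\Q$-group scheme, $Z^{der}(\Qp)$ is finite and discrete, so $\overline{Z^{der}(\Z^N)} = Z^{der}(\Z^N)$ inside $\prod_{p|N}G^{der}(\Qp)$. The pushout $\overline{G^{der}(\Z^N)_+}*_{G^{der}(\Z^N)_+}G^{ad}(\Z^N)^+$ identifies the kernel $Z^{der}(\Z^N)$ of $G^{der}(\Z^N)_+ \to G^{ad}(\Z^N)^+$ with the identity inside $\overline{G^{der}(\Z^N)_+}$, so it is canonically isomorphic to $\overline{G^{der}(\Z^N)_+}/Z^{der}(\Z^N) *_{G^{der}(\Z^N)_+/Z^{der}(\Z^N)} G^{ad}(\Z^N)^+$, which is $\cA^{N,\circ}(G^{der})$.

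The main obstacle I anticipate is careful management of the several closures taken in different ambient groups and verifying that declaring $H$ open truly defines a group topology on $\cA^{N,\circ}(G)$; the latter reduces to showing that conjugation by elements of $G^{ad}(\Z^N)^+$ preserves the family of open subgroups of $H$, which in turn rests on the standard stability of congruence subgroups of $G^{der}$ under conjugation by $G^{ad}(\Z^N)^+$.
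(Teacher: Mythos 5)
Your skeleton agrees with the paper's: $G^{ad}(\Z^N)^+$ is dense, $H:=\overline{G(\Z^N)_+}/\overline{Z(\Z^N)}$ is a complete open neighbourhood of the identity, and everything reduces to identifying the topology that $H$ induces on $G^{ad}(\Z^N)^+$ with the topology of images of congruence subgroups of $G^{der}$. But that identification is exactly where your argument has a gap. The induced neighbourhoods of $1$ are the traces on $G^{ad}(\Z^N)^+$ of (closures of images of) congruence subgroups of $G$ of the form $K_N\times\prod_{p\nmid N}G(\Zp)$, and the assertion that these are mutually cofinal with the images of congruence subgroups of $G^{der}$ is precisely the non-formal, arithmetic statement; the paper does not prove it either, but cites Deligne \cite[2.0.13]{del2} for it. You assert the cofinality (``their intersections with $G^{ad}(\Z^N)^+$ are cofinal'') without proof, and the lift/project/pull-back constructions you give do not deliver it. Concretely: pulling an open subgroup of $\prod_{p|N}G^{ad}(\Qp)$ back to $\prod_{p|N}G(\Qp)$ produces an open subgroup containing the \emph{full} local centre $\prod_{p|N}Z(\Qp)$, whereas $H$ is the quotient only by $\overline{Z(\Z^N)}$; the trace of the resulting open subgroup of $H$ therefore consists of classes of $\gamma\in G(\Z^N)_+$ that at $p\mid N$ agree with elements of $G^{der}(\Qp)$ only modulo all of $Z(\Qp)$. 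Such $\gamma$ need not lie in $Z(\Q)G^{der}(\Q)$ (and at $p\nmid N$ the image of $G(\Zp)$ in $G^{ad}(\Qp)$ is not contained in the image of $G^{der}(\Qp)$ -- already for $GL_2$ versus $SL_2$ an element of $GL_2(\Zp)$ of non-square unit determinant gives a point of $PGL_2(\Qp)$ outside the image of $SL_2(\Qp)$), so there is no reason for that trace to be contained in the image of the given congruence subgroup of $G^{der}$. What one actually needs is: for every congruence subgroup $\Gamma^{der}$ of $G^{der}$ there is a congruence subgroup $\Gamma$ of $G$ (hyperspecial away from $N$) with $G(\Z^N)_+\cap\overline{\Gamma\,Z(\Z^N)}\subset Z(\Q)\,\Gamma^{der}$; this is a local--global/unit-group statement whose proof uses the finiteness of $Z^{der}$ and finiteness results for the relevant unit and class groups, i.e.\ the content of \cite[2.0.13]{del2}. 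Either cite it, as the paper does, or reproduce its argument; purely topological juggling between $G$, $G^{ad}$ and $G^{der}$ cannot replace it. (Your ``one direction'' is also misrouted for the same reason, though that direction is harmless: the easy inclusion follows at once by intersecting a congruence subgroup of $G$ with $G^{der}(\Q)$, with no detour through $G^{ad}(\Qp)$.)

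Your treatment of (2) is essentially fine and matches the paper's ``follows immediately'': since $Z^{der}$ is finite, $\overline{Z^{der}(\Z^N)}=Z^{der}(\Z^N)$, and in the amalgam $\overline{G^{der}(\Z^N)_+}*_{G^{der}(\Z^N)_+}G^{ad}(\Z^N)^+$ the kernel $Z^{der}(\Z^N)$ of $G^{der}(\Z^N)_+\to G^{ad}(\Z^N)^+$ is identified with the identity, so the two presentations coincide; combined with (1) applied to $G^{der}$ itself this gives the stated identification. But this of course stands or falls with the repaired proof of (1).
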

\begin{proof}
For (1) we have a natural inclusion $G^{ad}(\Z^N)^+ \subset \cA^{N,\circ}(G),$ and it is easily checked that the image is dense. Moreover, a neighbourhood of the identity is $\frac{\overline{G(\Z^N)_+}}{\overline{Z(\Z^N)}}$ whose topology is generated by that of the congruence subgroups of $G$ with fixed hyperspecial level away from $N$. But by \cite[2.0.13]{del2} this topology is the same as that generated by the congruence subgroups of $G^{der}$ with fixed hyperspecial level away from $N$. Finally this neighbourhood of the identity is obviously complete, so we are done. From this description (2) follows immediately.

\end{proof}

Our key result is the following, whose proof closely follows that of \cite[3.4.6]{kis2}.

\begin{prop}
\label{delta free}
The group $\cA^N(G)$ acts naturally on the integral canonical model $\cS_{K^N}$, and $\Delta^N$ acts freely.
\end{prop}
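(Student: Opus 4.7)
The plan is to closely follow \cite[3.4.6]{kis2}, the essential modification being the use of $Z^{der}$-torsors in place of $Z$-torsors, as set up in \S\ref{adjoint action}. This substitution is precisely what saves the freeness argument at the integral level.

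\textbf{Construction of the action.} For each place $v \nmid N$ of $E$ of residue characteristic $q$, Kisin's work supplies an action of $\cA_q(G)$ on his integral model $\cS_{K_q}/\cO_{E,v}$. The subgroup $\tilde{\cA}^N_q(G) \subset \cA_q(G)$ preserves the deeper tower $\cS_{K^N}$ lying above $\cS_{K_q}$, and pulling back along the canonical identification of Theorem \ref{S2 thm} gives an action of $\tilde{\cA}^N_q(G)$ on $\cS_{K^N} \otimes_{\cO_E[1/N]} \cO_{E,v}$. Restricting to the common subgroup $\cA^N(G) \hookrightarrow \tilde{\cA}^N_q(G)$ and letting $v$ vary, the resulting local actions all agree with the standard $\cA^N(G)$-action on the generic fibre, hence by Lemma \ref{extension uniqueness} they glue to a global action on $\cS_{K^N}$.

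\textbf{Freeness of $\Delta^N$.} Suppose $\delta = (h,\gamma^{-1}) \in \Delta^N$ fixes a geometric point $x$ of $\cS_{K^N}$, which we view as a triple $(A,\lambda,\epsilon^q)$. By \S\ref{adjoint action}, $\delta \cdot x$ corresponds, up to prime-to-$q$ isogeny, to $(A^\cP, \lambda^\cP, \epsilon^{q,\cP} \circ \tilde{\gamma} h \tilde{\gamma}^{-1})$, where $\cP$ is the $Z^{der}$-torsor cut out by the fibre of $\gamma$ along $G^{der} \to G^{ad}$. Emulating the argument of \cite[3.4.6]{kis2}, the fixed-point assumption produces an isomorphism between the two triples which, tracing through the construction, is equivalent to a trivialisation of $\cP$ over some spread-out base, and in particular over the generic fibre (where the classical free action of $\Delta^N$ on $\Sh_{K^N}$ makes such a trivialisation automatic). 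Here Kisin's original argument cannot be replicated since his $Z$-torsors need not be finite over $\Z[1/N]$; however, $Z^{der}$ is a finite group scheme (because $G^{der}$ is semisimple under the Hodge hypothesis), so Lemma \ref{finite torsor lemma} upgrades generic triviality to triviality of $\cP$ over all of $\Z[1/N]$. This forces $\gamma$ to lift to an element of $G^{der}(\Q)$, and then the assumption that $\delta$ projects to the identity in $\cA^{N,\circ}(G^{ad})$ forces $\delta = e$.

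\textbf{Principal obstacle.} The decisive difficulty is the torsor-descent step in the freeness argument: this is precisely the point where Kisin's proof fails verbatim over the base $\cO_E[1/N]$, as highlighted in the remark crediting Cesnavicius preceding Lemma \ref{finite torsor lemma}. The reformulation of Kisin's twisting construction in terms of the finite group $Z^{der}$ carried out in \S\ref{adjoint action} is designed precisely to make Lemma \ref{finite torsor lemma} applicable and close this gap.
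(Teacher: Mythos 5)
Your proposal is correct in substance and takes essentially the same route as the paper: the action is produced from the generic fibre using the extension property together with compatibility with Kisin's local models, and freeness is proved by running the quasi-isogeny argument of \cite[3.4.6]{kis2} at the fixed geometric point with the $Z^{der}$-torsor $\cP$ in place of Kisin's $Z$-torsor, then invoking (\ref{finite torsor lemma}) to upgrade triviality of $\cP$ over $\Q$ to triviality over $\Z[1/N]$ --- which is exactly the paper's key new step, and you identify it correctly. Three small slips to repair: $\cA^N(G)$ is the quotient $\tilde{\cA}^N_q(G)/K^{Nq}$, not a subgroup of $\tilde{\cA}^N_q(G)$ (so the local action is obtained by descending to the quotient of Kisin's tower by $K^{Nq}$, and the cleanest global statement is simply that the generic-fibre action extends because $\cS_{K^N}$ is regular and formally smooth, rather than by gluing local actions via (\ref{extension uniqueness})); the triviality of $\cP_\Q$ comes from the automorphism/quasi-isogeny computation at the (possibly characteristic $p$) fixed point, not from freeness of $\Delta^N$ on the generic fibre, so that parenthetical should be dropped; and the integral triviality furnished by (\ref{finite torsor lemma}) gives $\tilde{\gamma}\in G^{der}(\Z[1/N])_+$, not merely $G^{der}(\Q)$, and it is this integral lift that lets one cancel $\tilde{\gamma}$ in the amalgam $\cA^{N,\circ}(G)$ and conclude $h\tilde{\gamma}^{-1}\in Z^{der}(\Z[1/N])$, hence $\delta=e$.
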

\begin{proof}
 By the extension property, the first part can be checked on the generic fibre, where it follows from the self evident isomorphism
$$\tilde{\cA}^N_q(G) /K^{Nq} \rightiso \cA^N(G).$$
This (together with the compatibility with Kisin's construction) gives us the additional information that for $v|q, q\nmid N$, the action on $\cS_{K^N,v}$ can be described on the level of triples $(A,\lambda,\epsilon^q)$ where $\epsilon^q$ is now given modulo $K^{Nq}$.

Take $(h,\gamma^{-1}) \in \Delta^N \subset \overline{G^{der}(\Z^N)_+}*_{G^{der}(\Z^N)_+} G^{ad}(\Z^N)^+$, and $\tilde{\gamma} \in G^{der}(F)$ mapping to $\gamma \in G^{ad}(F)$ with $F/\Q$ finite Galois. We also let $\cP$ denote the $Z^{der}$-torsor of elements of $G^{der}$ mapping to $\gamma$. Since $(h,\gamma^{-1}) \in \Delta^N$ we see that $h\tilde{\gamma}^{-1} \in \prod_{p|N} Z^{der}(\Qp \otimes F)$.

Suppose $x \in \cS_{K^N}(\kappa)$ for some algebraically closed field $\kappa$ of characteristic $q$ or $0$, that $x.(h,\gamma^{-1}) = x$ and associated to $x$ is the triple $(A,\lambda,\epsilon^q/K^{qN})$. We need to show that $(h,\gamma^{-1})=1$.

As in the proof of \cite[3.4.6]{kis2} we can find a unique quasi-isogeny $\alpha: A \rightarrow A^{\cP}$ such that 
$$
\begin{CD}
    V \otimes \A^{\infty,q} \otimes F @>\tilde{\gamma} h \tilde{\gamma}^{-1}>>  V \otimes \A^{\infty,q} \otimes F@>\tilde{\gamma}^{-1}>>  V \otimes \A^{\infty,q} \otimes F \\
@V\epsilon^q VV        @V\epsilon^{q,\cP} VV @V \epsilon^q VV\\
    \hat{V}^q(A) \otimes F @>>\alpha_* \otimes 1>  \hat{V}^q(A^\cP) \otimes F @>>\iota_{\tilde{\gamma}}> \hat{V}^q(A) \otimes F.
\end{CD}
$$
commutes. This demonstrates that also $h\tilde{\gamma}^{-1} \in \Aut_\Q(A) \otimes F$. But the intersection of $\prod_{p|N} Z^{der}(\Qp \otimes F)$ and $\Aut_\Q(A) \otimes F$ inside $\prod_{p|N} \Aut_\Q(A)(\Qp \otimes F)$ is $Z^{der}(F)$, so we conclude that
$$h \tilde{\gamma}^{-1} \in Z^{der}(F).$$

As in \cite[3.4.6]{kis2} this demonstrates that $\cP$ is trivial as a $Z^{der}_\Q$-torsor over $\Q$. But $Z^{der}$ is a finite group, (\ref{finite torsor lemma}) implies that $\cP_{\Z[1/N]}$ (the torsor given by the inverse image of $\gamma$ in $G^{der}_{\Z[1/N]}$) is a trivial $\Z[1/N]$-torsor, so we can assume $\tilde{\gamma} \in G^{der}(\Z[1/N])_+$ and replacing $h$ by $h\tilde{\gamma}^{-1}$, assume $\gamma=1$. Moreover, we may take $F=\Q$, and so we deduce $h \in Z^{der}(\Q) = Z^{der}(\Z[1/N])$: that is to say, it is trivial as an element of $\Delta^N$, as required.
\end{proof}

We also need the following ingredient.\footnote{Note that we believe the corresponding result of \cite{kis2} is not true, so a result like this is perhaps also needed to deduce the extension property in that case.}

\begin{lem}
\label{delta finite}
The group $\Delta^N$ is finite.
\end{lem}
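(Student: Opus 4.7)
The plan is to identify $\Delta^N$ explicitly and bound it as an extension of two \textit{a priori} finite groups. The key structural input is that $Z^{der}=Z(G^{der})$, being the centre of the semisimple group $G^{der}$, is a finite group scheme over $\Q$.

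Using (\ref{ridiculous group lemma}) together with the observation that $\cA^{N,\circ}(G^{ad})=\overline{G^{ad}(\Z^N)^+}$ (since $(G^{ad})^{der}=G^{ad}$ and its centre is trivial), elements of $\cA^{N,\circ}(G^{der})=\overline{G^{der}(\Z^N)_+}*_{G^{der}(\Z^N)_+}G^{ad}(\Z^N)^+$ can be represented by pairs $(h,\gamma)$ modulo the relation $(hc,\pi(c)^{-1}\gamma)\sim(h,\gamma)$ for $c\in G^{der}(\Z^N)_+$, where $\pi:G^{der}\to G^{ad}$. The projection to $\overline{G^{ad}(\Z^N)^+}$ sends $(h,\gamma)\mapsto\pi(h)\gamma$, and hence
$$\Delta^N = H/G^{der}(\Z^N)_+, \quad H := \{h\in \overline{G^{der}(\Z^N)_+}:\pi(h)\in G^{ad}(\Z^N)^+\},$$
with $G^{der}(\Z^N)_+$ acting by right multiplication via its natural inclusion into $H$.

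Applying the snake lemma to the commutative diagram whose rows are the short exact sequences $1\to \ker\pi\cap A\to A\to \pi(A)\to 1$ for $A=G^{der}(\Z^N)_+$ and $A=H$ (with vertical inclusions) yields the short exact sequence
$$1\to \frac{\ker\pi\cap H}{\ker\pi\cap G^{der}(\Z^N)_+} \to \Delta^N \to \frac{\pi(H)}{\pi(G^{der}(\Z^N)_+)}\to 1.$$
The left term is a subquotient of $\ker\pi\cap H\subset \prod_{p|N}Z^{der}(\Q_p)$, which is finite: $Z^{der}$ is a finite $\Q$-group scheme, so each $Z^{der}(\Q_p)$ is a finite set.

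For the right term, since $G^{der}(\Z^N)_+$ is by definition the preimage of $G^{ad}(\Z^N)^+$ in $G^{der}(\Z^N)$ we have $\pi(G^{der}(\Z^N)_+) = \pi(G^{der}(\Z^N))\cap G^{ad}(\Z^N)^+$, so the boundary map associated with the exact sequence $1\to Z^{der}\to G^{der}\to G^{ad}\to 1$ of finite flat $\Z^N$-group schemes gives
$$\frac{\pi(H)}{\pi(G^{der}(\Z^N)_+)} \hookrightarrow \frac{G^{ad}(\Z^N)^+}{\pi(G^{der}(\Z^N)_+)} \hookrightarrow H^1_{\mathrm{fppf}}(\Z^N,Z^{der}).$$
The main obstacle is the finiteness of this last fppf cohomology group. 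Since $Z^{der}$ is of multiplicative type, its Cartier dual is a finite \'etale commutative $\Z^N$-group scheme, and finiteness of $H^1$ reduces via inflation-restriction to the Hermite-Minkowski fact that $\Q$ admits only finitely many abelian extensions of bounded degree unramified outside $N$ (together with finiteness of $\Pic(\Z^N)$ and of $(\Z^N)^\times$ modulo a given exponent, to control the multiplicative-type contribution at bad residue characteristics).
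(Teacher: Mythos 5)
Your proposal is correct and follows essentially the same route as the paper: both realise $\Delta^N$ (up to finite ambiguity coming from a subgroup of $\prod_{p|N}Z^{der}(\Q_p)$, finite since $Z^{der}$ is a finite group scheme) as mapping into $G^{ad}(\Z^N)^+/\pi(G^{der}(\Z^N)_+)$, embed that via the connecting map for $1\to Z^{der}\to G^{der}\to G^{ad}\to 1$ into $H^1_{fppf}(\Z[1/N],Z^{der})$, and prove the latter finite by splitting $Z^{der}$ over a finite \'etale cover and invoking finiteness of units modulo powers and of the Picard group. The only differences are presentational: you rederive the two-step d\'evissage directly from the amalgamated-product description where the paper cites Deligne's exact diagram, and your inflation--restriction/Hermite--Minkowski sketch for the $H^1$ finiteness replaces the paper's \v{C}ech-to-derived-functor plus Kummer-sequence argument with the same underlying ingredients.
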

\begin{proof}
Let $\rho: G^{der} \rightarrow G^{ad}$ be the usual finite isogeny. By the discussion in \cite[2.0]{del2} there is a diagram with exact rows
$$\begin{CD}
   \overline{G^{der}(\Z^N)_+}  @>>> \cA^{N,\circ}(G^{der}) @>>> G^{ad}(\Z^N)^+ / \rho G^{der}(\Z^N)_+ \\
@VVV      @VVV @VVV\\
    \overline{G^{ad}(\Z^N)^+} @>>> \cA^{N,\circ}(G^{ad}) @>>> 1.
\end{CD}$$

Considering the kernels of the vertical maps, this puts $\Delta^N$ in an exact sequence between a subgroup of $Z^{der}(\A_N)$ and the group $G^{ad}(\Z^N)^+/\rho G^{der}(\Z^N)_+$. The former as a product of finite groups is visibly finite. The latter group is a subgroup of $G^{ad}(\Z^N)/\rho G^{der}(\Z^N)$, which is in turn a subgroup of $H^1_{fppf}(\Z^N,Z^{der})$, so it suffices to check that this is finite.

Since $Z^{der}$ is a finite group of multiplicative type, we may find a finite Zariski cover $U_i$ of $\Spec \Z[1/N]$ and finite \'etale covers $V_i \rightarrow U_i$ such that $Z^{der}|_{V_i}$ is isomorphic to a product of split finite multiplicative groups $\mu_k$. By the \v{C}ech to derived functor spectral sequence for this cover, we may reduce our claim to checking that for $L$ a number field and $k,M$ positive integers, the groups $H^1_{fppf}(\cO_L[1/M], \mu_k)$ are finite. But the Kummer exact sequence gives an exact sequence
$$1 \rightarrow \cO_L[1/M]^*/\cO_L[1/M]^{*k} \rightarrow H^1_{fppf}(\cO_L[1/M], \mu_k) \rightarrow \operatorname{Pic}(\cO_L[1/M]),$$
and both the outer terms are finite by classical algebraic number theory.
\end{proof}

\subsubsection{}
We now set out to prove the main theorem (\ref{S2 thm}) in the abelian type case. Let $(G_2,\fX_2)$ be a Shimura datum of abelian type, with $G_2/\Z[1/N]$ reductive. We first need to relate it to a datum of Hodge type, modifying \cite[3.4.13]{kis2} slightly.

\begin{lem}
\label{finding hodge}
Let $(H,\fY)$ be a Shimura datum of abelian type with $H$ adjoint. Then there exists a central isogeny $H' \rightarrow H$ such that whenever $(G,\fX)$ is of Hodge type with $(G^{ad},\fX^{ad}) \cong (H,\fY)$ then $G^{der}$ is a quotient of $H'$. 

Assume that $H$ is quasi-split and unramified at all $p\nmid N$. Then there exists a Shimura datum $(G,\fX)$ of Hodge type such that $(G^{ad},\fX^{ad}) \cong (H,\fY)$, $G^{der} = H'$ and $G$ is quasi-split and unramified at all $p \nmid N$.
\end{lem}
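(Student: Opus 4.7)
The plan is to decompose $(H,\fY) = \prod_i (H_i,\fY_i)$ into its $\Q$-simple factors and appeal to Deligne's classification of $\Q$-simple abelian-type (and Hodge-type) Shimura data. For each simple factor $(H_i,\fY_i)$ there is a distinguished minimal central cover $H_i' \to H_i$ such that any Hodge-type $(G_i,\fX_i)$ with $(G_i^{ad},\fX_i^{ad}) \cong (H_i,\fY_i)$ has $G_i^{der}$ a quotient of $H_i'$: for simple factors of type $A$, $B$, $C$ this $H_i'$ is the simply connected cover, while for factors of type $D$ (in either of Deligne's $D^{\R}$ or $D^{\H}$ flavours) it is the intermediate quotient of the simply connected cover obtained by killing off the factors whose associated Shimura datum would be of half-spin type (which do not admit symplectic representations, hence cannot appear in a Hodge type derived group). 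Setting $H' := \prod_i H_i'$ then furnishes the cover required for part (1); this part essentially reduces to the corresponding statement in \cite[3.4.13]{kis2}.

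For part (2), assume that $H$ is quasi-split and unramified at every $p \nmid N$. Central isogenies preserve quasi-splitness and unramifiedness, so $H'$ inherits these properties. I would then construct $(G,\fX)$ by Deligne's standard procedure (\cite[2.3]{del2}): take $G$ to be a central extension of $H$ by a carefully chosen $\Q$-torus $Z$, where $\cZ := Z(G)$ is engineered so that the Hodge cocharacter of $H$ lifts to $G$ subject to a rational weight constraint. To exhibit $(G,\fX)$ as Hodge type one assembles faithful symplectic representations of each simple factor of $H'$ using classical Lie theory (the standard representation in types $A_n$ and $C_n$, spin representations in type $B$, half-spin-avoiding representations in type $D$), and combines them tensorially with a faithful representation of $Z$.

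The main obstacle will be arranging that the resulting $G$ is itself quasi-split and unramified at all $p \nmid N$. My approach is to choose $Z$ to split over a CM extension $F/\Q$ unramified at every $p \nmid N$, for instance taking $Z$ to be a quotient of $\Res_{F/\Q}\Gm$ by an appropriate subtorus, so that $Z_{\Qp}$ is automatically an unramified torus at these primes. The finite central subgroup of $H' \times Z$ by which one quotients to form $G$ can be arranged to be \'etale over $\Z[1/N]$ by enlarging $F$ to contain sufficiently many roots of unity, which keeps $G$ smooth reductive over $\Z[1/N]$. Quasi-splitness and unramifiedness of $G$ at $p \nmid N$ then follow from the corresponding properties of $H'$ and $Z$ via the Bruhat-Tits/Kneser classification of unramified reductive groups, since $G$ is assembled from quasi-split unramified pieces along an \'etale finite central subgroup.
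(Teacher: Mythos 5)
Your proposal follows essentially the same route as the paper: part (1) is deferred to Deligne's classification as packaged in \cite[3.4.13]{kis2}, and part (2) comes down to noting that $H'$ inherits quasi-splitness and unramifiedness from $H$ and then controlling the centre by choosing the totally imaginary quadratic (CM) extension in Deligne's construction \cite[2.3.10]{del2} to be unramified away from $N$ --- which is exactly what the paper does, by building it from a $q$th root of unity for some $q\mid N$. Two caveats. First, the minor one: the CM field is not free to be chosen arbitrarily; it must be a totally imaginary quadratic extension of the specific totally real field attached to the simple factors of $H$, though this is harmless since that totally real field is automatically unramified away from $N$. Second, your final step --- arranging the finite central subgroup of $H'\times Z$ to be \'etale over $\Z[1/N]$ by ``enlarging $F$ to contain sufficiently many roots of unity'' --- is both unnecessary and incorrect as stated: the lemma only concerns the $\Q$-group $G$ (quasi-split and unramified at $p\nmid N$), with integral models handled separately in (\ref{glue groups}) and (\ref{abelian isogeny integral}); and \'etaleness of a finite multiplicative group scheme over $\Z[1/N]$ is governed by invertibility of its order, not by roots of unity in the base field, so this enlargement achieves nothing and, if the order has prime factors outside $N$, adjoining the corresponding roots of unity would destroy the very unramifiedness away from $N$ you are trying to preserve. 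Dropping that step, your argument coincides with the paper's.
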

\begin{proof}
Most of this is proved in \cite[3.4.13]{kis2}: the only thing to check is that we can arrange for $G$ to be quasi-split and unramified at all $p \nmid N$. But since $H$ has this property, clearly $H'$ does, and so it suffices to control the centre, whose ramification is given by the totally imaginary quadratic extension $L/F$ of \cite[2.3.10]{del2} which can be chosen arbitrarily. In particular, we may take any prime $q|N$, construct $L$ by adjoining a $q$th (or $4th$ if $q=2$) root of unity and passing to a quadratic subfield. Then note that $L/F$ is unramified at all primes $v \nmid  q$, in particular at places over $p \nmid N$.
\end{proof}

\begin{lem}
\label{abelian isogeny integral}
Suppose we are given $G,G_2$ reductive over $\Q$ unramified away from $N$ and a central isogeny $f:G^{der} \rightarrow G_2^{der}$. Suppose we are also given a reductive model $\cG_2/\Z[1/N]$ of $G_2$. 

Then there exists a reductive model $\cG/\Z[1/N]$ of $G$ such that $f$ extends to
$$f:\cG^{der} \rightarrow \cG_2^{der}.$$
\end{lem}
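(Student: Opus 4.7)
The strategy is to construct $\cG/\Z[1/N]$ globally by first building a reductive model of the derived group $G^{der}$ compatible with $f$, and then assembling $\cG$ from this and a reductive model of the center $Z(G)$. The global construction avoids any recourse to (\ref{glue groups}), though the latter would give an equivalent route via prime-by-prime constructions.

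Because $f:G^{der} \to G_2^{der}$ is a central isogeny, both source and target share the same simply connected cover $G^{sc}$. By SGA~3 (Exp.~XXIV), the semisimple $\Z[1/N]$-group $\cG_2^{der}$ admits a simply connected cover $\cH \to \cG_2^{der}$, a central isogeny with $\cH$ simply connected and generic fiber the given $G^{sc} \to G_2^{der}$. Let $\cK_2 := \Ker(\cH \to \cG_2^{der})$, a finite flat subgroup of multiplicative type in $Z(\cH)$, and let $\cK \subset \cK_2$ be the schematic closure of $K := \Ker(G^{sc} \to G^{der}) \subset \cK_{2,\Q}$, again a finite flat subgroup of multiplicative type. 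Set $\cG^{der} := \cH/\cK$; this is a semisimple $\Z[1/N]$-model of $G^{der}$, and since $\cK \subset \cK_2$ the quotient map $\cH \to \cG_2^{der}$ factors as $\cH \to \cG^{der} \to \cG_2^{der}$, giving the desired extension of $f$.

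To build $\cG$ from $\cG^{der}$, use the presentation $G = (Z \times G^{der})/J$, where $Z := Z(G)$ and $J := Z \cap G^{der}$ is embedded anti-diagonally. Since $Z$ is unramified away from $N$, its maximal subtorus and component group each admit canonical integral models, which assemble to a reductive $\Z[1/N]$-model $\cZ$ of $Z$. The finite multiplicative-type group $J$ admits a canonical finite flat integral model $\cJ/\Z[1/N]$ of multiplicative type, and there are canonical identifications of $\cJ$ with the schematic closures of $J$ in $\cZ$ and in $\cG^{der}$ respectively. Define $\cG := (\cZ \times \cG^{der})/\cJ$ via the anti-diagonal embedding. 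As the quotient of a reductive group scheme by a finite flat central subgroup is again reductive, $\cG/\Z[1/N]$ is reductive; its generic fiber is $G$, its derived subgroup is canonically $\cG^{der}$, and so the extension of $f$ constructed above is the required extension.

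The main obstacle is the compatibility needed for the final assembly: one must identify $\cJ$ inside both $\cZ$ and $\cG^{der}$ in a way consistent with the group structure, i.e.\ check that the two schematic closures of $J$ agree under a canonical isomorphism. This is the rigidity of finite multiplicative-type group schemes over a normal base: by Cartier duality they correspond to finite étale group schemes, whose category over $\Z[1/N]$ embeds faithfully into that over $\Q$ via the generic-fiber functor on objects unramified outside $N$. Granted this, the remaining verifications (reductivity of $\cG$, identification of the derived subgroup, compatibility with $f$) follow formally from SGA~3 and the standard calculus of schematic closures in flat group schemes.
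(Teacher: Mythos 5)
Your construction is correct in substance but takes a genuinely different route from the paper. The paper argues by patching: it spreads the data out over $\Z[1/M]$ for some $M$ divisible by $N$, produces for each prime $p \mid M$, $p \nmid N$ a local reductive model of $G$ over $\Zp$ through which $f$ extends (by the argument of \cite[3.4.14]{kis2}), and then glues these with the $\Z[1/M]$-model using (\ref{glue groups}); this stays inside the lattice-theoretic toolkit already set up for the Hodge-type construction. You instead work globally: you realise a semisimple model of $G^{der}$ as $\cH/\cK$, where $\cH$ is the simply connected cover of $\cG_2^{der}$ over $\Z[1/N]$ (SGA~3, Exp.~XXIV) and $\cK$ is the schematic closure of $\Ker(G^{sc}\to G^{der})$ inside $\Ker(\cH\to\cG_2^{der})$, so that the extension of $f$ is visible by construction; you then reassemble $\cG$ from this and the canonical multiplicative-type model of the centre, using that finite multiplicative-type group schemes over $\Z[1/N]$ are, via Cartier duality and the surjection $\Gal(\Qbar/\Q)\twoheadrightarrow\pi_1(\Spec \Z[1/N])$, determined together with their morphisms by their generic fibres. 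This buys a canonical, representation-free construction avoiding both (\ref{glue groups}) and the prime-by-prime appeal to Kisin; the paper's route is shorter on the page only because its supporting lemmas are already in place.

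One point to tighten: you take $Z=Z(G)$, which may be disconnected, so $\cZ\times\cG^{der}$ is not a reductive group scheme in the sense used here (connected fibres), and the fact you cite --- that a quotient of a reductive group scheme by a finite flat central multiplicative-type subgroup is reductive --- does not apply verbatim; you still need the fibres of $(\cZ\times\cG^{der})/\cJ$ to be connected. This does hold, because $Z=Z(G)^{\circ}\cdot\bigl(Z\cap G^{der}\bigr)$ over $\Q$ is a character-group statement that propagates to every geometric fibre of the multiplicative-type models, but the cleaner fix is to run the identical argument with the connected centre $Z(G)^{\circ}$ and $J=Z(G)^{\circ}\cap G^{der}$, using the standard presentation $G=(Z(G)^{\circ}\times G^{der})/J$, so that every group in sight is connected and the quoted reductivity statement applies directly. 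With that adjustment (and noting that the rigidity you invoke is full faithfulness of the generic-fibre functor, not mere faithfulness --- fullness is what produces the canonical identification of the two closures of $J$), the argument is complete.
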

\begin{proof}
We do the usual patching argument. Take any integral model $\cG/\Z[1/N]$ and note that there will be some $M$ such that $\cG[1/M]$ is reductive and $G^{der} \rightarrow G_2^{der}$ extends to $\cG^{der}[1/M] \rightarrow \cG^{der}_2[1/M]$.

By (\ref{glue groups}) we will be done if for every $p|M, p\nmid N$ we can find $G_p/\Zp$ a reductive model such that $f$ extends to $G_p^{der} \rightarrow \cG^{der}_{2,\Zp}$. But this is the case by the argument of \cite[3.4.14]{kis2}.
\end{proof}

\subsubsection{}
\label{abelian model construction}
Now given our $(G_2,\fX_2)$ of abelian type, it gives rise to an adjoint Shimura datum, which by (\ref{finding hodge}) is covered by $(G,\fX)$ of Hodge type and by (\ref{abelian isogeny integral}) we may take $G/\Z[1/N]$ reductive and $G^{der} \rightarrow G_2^{der}$ a central isogeny inducing a morphism $(G^{der},\fX^+) \rightarrow (G_2^{der},\fX_2^+)$ of connected Shimura data. Let $E = E(G,\fX) \subset \bar{\Q}$. Since for every $p \nmid  N$, $G$ and $G_2$ split over an unramified extension of $p$, we deduce that $E/\Q$ is unramified at all $p \not |N$, and by (\ref{component field}) we see their connected Shimura varieties at levels $K^N := G(\hat{\Z}^N)$ and $K_2^N := G_2(\hat{\Z}^N)$ are defined over $E_N/E$, the maximal abelian extension of $E$ unramified away from $N$. Let $\cO_N$ be its ring of integers, and note that $\cO_N[1/N] / \Z[1/N]$ is ind-\'etale.

Now, comparing our description (\ref{ridiculous group lemma} (1)) of $\cA^{N,\circ}(G^{der})$ with Deligne's description \cite[2.1.6]{del2} of the group acting on a connected Shimura variety, it is clear that $\cA^{N,\circ}(G)$ acts on $\Sh^+_{K^N}(G,\fX)_{E_N}$, and considering the subgroup
$$\Delta^N(G,G_2):=\Ker(\cA^{N,\circ}(G^{der}) \rightarrow \cA^{N,\circ}(G_2^{der})) \subset \Delta^N,$$
that the morphism $\Sh^+_{K^N}(G,\fX)_{E_N} \rightarrow \Sh^+_{K_2^N}(G_2,\fX_2)_{E_N}$ is given by taking the quotient by $\Delta^N(G,G_2)$. Moreover, by the previous section we have an integral model $\cS_{K^N}^+(G,\fX)_{\cO_N[1/N]}$ for $\Sh^+_{K^N}(G,\fX)_{E_N}$ satisfying the extension property, with a free (\ref{delta free}) action of the finite (\ref{delta finite}) group $\Delta^N$. We may therefore form the quotient by the finite subgroup $\Delta^N(G,G_2)$ and obtain a model $\cS^+_{K_2^N}(G_2,\fX_2)_{\cO_N[1/N]}$ for $\Sh^+_{K_2^N}(G_2,\fX_2)_{E_N}$ over $\cO_N[1/N]$. By (\ref{extension descent}) this model enjoys the extension property, and passing to finite levels we see it is smooth.

\subsubsection{}
Unlike at infinite or $K_p$-level, we do not know whether $\prod_{p|N} G_2(\Qp)$ acts transitively on $\pi_0(\Sh_{K^N}(G_2,\fX_2)_{\Qbar})$, so to conclude our proof we need an alternative to the usual `Deligne induction.' Noting that our argument up to this point holds for any model $\cG_2/\Z[1/N]$ for $G_2$, the following lemma is enough to conclude our argument. 

We will need the Shimura variety
$$\Sh(G,\fX) = \varprojlim_{K} \Sh_K(G,\fX)$$
at full infinite level, together with the usual fixed connected component
$\Sh^+_{\Qbar} \subset \Sh(G,\fX)_{\Qbar}$ containing the complex point $[x,1]$ for $x \in \fX^+$.

\begin{lem}
\label{assemble extension prop}
Let $(G,\fX)$ be any Shimura datum unramified away from $N$, $E \supset E(G,\fX)$, $K^N = \prod_{p\nmid N}G(\Zp)$ for some choice of integral model for $G$, and $E_N/E$ the maximal abelian extension unramified away from $N$.
\begin{enumerate}
\item Let $X^+_{K^N}$ be any component of $\Sh_{K^N}(G,\fX)_{\Qbar}$. Then $X^+_{K^N}$ is defined over $E_N$, and given for every choice of hyperspecial levels of the form $U^N = \prod_{p\nmid N} \cG(\Zp)$ for $\cG/\Z[1/N]$ a reductive model for $G$ a smooth integral model $\cS^+_{U^N}/\cO_N[1/N]$ for $\Sh^+_{U^N,E_N}$ with the extension property, we can construct a smooth integral model for $X^+_{K^N,E_N}$ with the extension property.
\item Given any smooth integral model for every $X^+_{K^N,E_N}$ with the extension property, their disjoint union gives a smooth integral canonical model for $\Sh_{K^N,E_N}$ which descends to $\cO_E[1/N]$ and to which the $\cA^N(G)$ action extends.
\end{enumerate}
\end{lem}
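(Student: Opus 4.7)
My plan is as follows.

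For part (1), I first address the field of definition. By Deligne's reciprocity law for the Galois action on $\pi_0(\Sh_{K^N}(G,\fX)_{\Qbar})$, this action factors through $\Gal(E^{ab}/E)$ and commutes with the right action by Hecke translates. The argument of \ref{component field} shows that the stabilizer of the standard connected component contains $\Gal(E^{ab}/E_N)$; since the Galois action commutes with Hecke translates and every component is a Hecke translate of the standard one, this stabilizer fixes every component, so each $X^+_{K^N}$ is defined over $E_N$. To construct its integral model, I identify $X^+_{K^N,E_N}$ with a standard connected component $\Sh^+_{U^N,E_N}$ for a hyperspecial subgroup $U^N = \prod_{p\nmid N}\cG'(\Zp)$ coming from a suitable reductive model $\cG'/\Z[1/N]$. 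Concretely, one takes the element of $G(\A^{\infty,N})$ producing $X^+_{K^N}$ as a translate of the standard component, uses it to conjugate $K^N$ into a new local hyperspecial at each bad prime, and glues these local models into a global $\cG'$ via \ref{glue groups}. The hypothesis then furnishes a smooth integral model $\cS^+_{U^N}/\cO_N[1/N]$ with the extension property, and pulling it back along the isomorphism gives the desired model for $X^+_{K^N,E_N}$.

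For part (2), the disjoint union $\cS_{K^N,\cO_N[1/N]} := \bigsqcup_{X^+} \cS_{X^+}$ inherits smoothness and the extension property from its constituents, checked componentwise. I then descend from $\cO_N[1/N]$ to $\cO_E[1/N]$. The generic fiber $\Sh_{K^N,E_N}$ is the base change of $\Sh_{K^N,E}$ and so carries a canonical $\Gal(E_N/E)$-descent datum. For each $\sigma$, the resulting isomorphism of generic fibers extends uniquely, by the extension property applied to $\sigma^*\cS_{K^N,\cO_N[1/N]}$ (regular and formally smooth over $\cO_N[1/N]$), to an isomorphism of integral models. Uniqueness forces the cocycle identity, yielding a descent datum on $\cS_{K^N,\cO_N[1/N]}$ relative to the ind-\'etale faithfully flat extension $\cO_N[1/N]/\cO_E[1/N]$. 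Since each finite-level piece $\cS_{K^NK_N}$ is quasi-projective (so the $\Gal(E_N/E)$-orbits on its components are finite), the descent is effective, giving $\cS_{K^N}/\cO_E[1/N]$; the extension property transfers back via \ref{extension localisation}, and smoothness is preserved by fpqc descent.

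Finally, each $h \in \cA^N(G)$ acts on the generic fiber $\Sh_{K^N,E}$, and the extension property extends its action uniquely to $\cS_{K^N}/\cO_E[1/N]$. Uniqueness of extensions ensures that compositions of extensions extend compositions, giving a genuine group action of $\cA^N(G)$. The step I expect to require the most care is the effective Galois descent: one must verify that the extended cocycle on $\cS_{K^N,\cO_N[1/N]}$ actually lives on quasi-projective finite-level pieces, so that the quotient exists as a scheme over $\cO_E[1/N]$ rather than merely as an algebraic space. This reduces to the finite-orbit statement for $\Gal(E_N/E)$ on the components at each finite level, which follows from the fact that these are of finite type over $E$.
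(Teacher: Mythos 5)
Your proposal follows essentially the same route as the paper: identify $X^+_{K^N}$ with the standard component at the conjugated level $a^{-1}K^Na$ via a translation defined over $E$, use (\ref{glue groups}) to produce a reductive model realising that level, invoke the hypothesis, then take disjoint unions, extend the $\Gal(E_N/E)$- and $\cA^N(G)$-actions by the extension property (with uniqueness giving the cocycle and group-law identities), and descend; your extra care about effectivity of the descent along the ind-\'etale extension (reducing to finite subextensions and quasi-projective finite-level pieces) is a point the paper passes over quickly and is welcome. One slip to correct: you cannot in general take the translating element in $G(\A^{\infty,N})$, nor is every component of $\Sh_{K^N}$ a prime-to-$N$ Hecke translate of the standard one --- at infinite level at $N$ the image of $G(\A^{\infty,N})$ in $\pi_0(\Sh_{K^N})_{\Qbar}$ is only a dense subgroup (already for $G=\Gm$ it is the rational $p$-units inside $\Zp^\times$). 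This is harmless: take $a\in G(\Af)$ as in the paper; since $K^N$ has trivial component at $p\mid N$, the conjugate $a^{-1}K^Na$ depends only on the prime-to-$N$ part of $a$, so the gluing via (\ref{glue groups}) and the rest of your argument go through unchanged (and your reciprocity-based argument for definedness over $E_N$ is also fine once ``Hecke translate'' is understood to allow the full $G(\Af)$, acting by level-changing isomorphisms defined over $E$).
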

\begin{proof}
For (1), let $\pi: \Sh(G,\fX)_{\Qbar} \rightarrow \Sh_{K^N}(G,\fX)_{\Qbar}$ be the canonical projection, and take $a \in G(\Af)$ such that $\pi(\Sh^+.a) = X^+_{K^N}$. Note that this $a$ descends to an identification 
$$\Sh_{K^N}(G,\fX)_{\Qbar} \rightiso \Sh_{a^{-1}K^Na}(G,\fX)_{\Qbar}$$
defined over $E$ and under which $X^+_{K^N}$ is identified with $\Sh^+_{a^{-1}K^Na, \Qbar}$. 
Since the latter is defined over $E_N$ by (\ref{component field}), the former must be also. Moreover, since $a_p \in G(\Zp)$ for all but finitely many $p$, and in all other cases we are taking the conjugate of a hyperspecial subgroup, which are always hyperspecial and so have local reductive models, by (\ref{glue groups}) there exists a reductive model $\cG/\Z[1/N]$ for $G$ giving rise to the level $a^{-1}K^Na$. Hence by hypothesis we have a smooth integral model for $\Sh^+_{a^{-1}K^Na, E_N}$ with the extension property. Composing with the isomorphism induced by $a$ gives the required model for $X^+_{K^N,E_N}$.

For (2), we assume we are given for each $X^+_{K^N,E_N}$ some smooth integral model $\cX^+_{K^N}$ with the extension property. Letting $\cS_{K^N,\cO_N[1/N]}$ together with $$\iota: \cS_{K^N,\cO_N[1/N]} \otimes_{\cO_N[1/N]} E_N \rightiso \Sh_{K^N,E_N}$$ be their disjoint union, note that it still has the extension property. In particular the $\Gal(E_N/E)$-action on $\Sh_{K^N,E_N}$ extends to $\cS_{K^N,\cO_N[1/N]}$, and since $E_N/E$ is unramified away from $N$ this gives an \'etale descent datum from $\cO_N[1/N]$ to $\cO_E[1/N]$. Thus we may descend our model to $\cS_{K^N}/\cO_E[1/N]$ and by (\ref{extension localisation}) this model still has the extension property. The $\cA^N(G)$-action also extends to $\cS_{K^N,\cO_N[1/N]}$ by the extension property, and commutes with the $\Gal(E_N/E)$-action, so it descends, and for any $K_N \subset \prod_{p|N} G(\Qp)$ we have that $(\cS_{K^N}/K_N)\otimes \cO_N[1/N]$ is smooth, which is a property stable by fpqc descent and allows us to see that $\cS_{K^N}$ is a smooth canonical model.
\end{proof}

\section{Automorphic vector bundles and filtered $G$-bundles}
\subsection{Review of characteristic zero}
We sketch the main results of \cite[III]{milne3}, on which our results will build.

\subsubsection{}
\label{field grassmannian}
Let $(G,\fX)$ be a Shimura datum with reflex field $E$, and $\mu: \mathbb{G}_{m,\bar{E}} \rightarrow G_{\bar{E}}$ a Hodge cocharacter of $\fX$. Then we can form the \emph{compact dual} $Gr_{\mu}$ which represents the following functor. Fix a faithful representation $G \hookrightarrow GL(V)$ and tensors $s_\alpha \in V^\otimes$ such that $G$ is exactly the subgroup fixing these tensors, and note that $\mu$ induces a filtration $\text{Fil}^\bullet \subset V \otimes \bar{E}$. For $\pi:S \rightarrow \Spec \Q$ we let $\cV_S := \pi^*V$ be the constant vector bundle, which also carries tensors $s_{\alpha,S} = \pi^*s_\alpha$. Then as a functor on $E$-schemes
\begin{eqnarray*}
Gr_\mu(S) = & \{\text{filtrations } \cF^\bullet \text{ of } \cV_S \text{ such that for each geometric point }\\
& \bar{s} \in S, (V_{\bar{s}},\text{Fil}^\bullet_{\bar{s}}, s_\alpha) \cong (\cV_{S,\bar{s}}, \cF^\bullet_{\bar{s}},s_{\alpha,S})\}.
\end{eqnarray*}

This depends only on the conjugacy class of $\mu$ and is representable as an $E$-scheme. Moreover by construction it carries an algebraic action of $G_E$, which we write as a right action
$$Gr_\mu(S) \times G(S) \ni (\cF^\bullet, g) \mapsto g^{-1}(\cF^\bullet) \in Gr_\mu(S).$$

\subsubsection{}
\label{milne spb}
Let $Z_{nc} \subset Z(G)$ be the largest subtorus of $Z(G)$ split over $\R$ but with no subtorus split over $\Q$, and $G^c = G/Z_{nc}$. Then there is a $G^c$-torsor $P=P(G,\fX)$ over $\Sh(G,\fX)$ with an equivariant $G(\Af)$-action\footnote{It is natural to ask whether this extends to an equivariant action of Deligne's extension $G(\Af)/\overline{Z(\Q)} *_{G(\Q)_+/Z(\Q)} G^{ad}(\Q)^+$: in fact this group does act but not quite in a way that commutes with the algebraic $G^c$-action, as we shall later see.}, which can be easily defined analytically over $\C$ and by \cite[III,4.3]{milne3} admits a canonical model (in Milne's sense) over $E$, together with an integrable connection with regular singularities at infinity.

Moreover, via $G \rightarrow G^c$, $P(G,\fX)$ has a $G$-action, and there is a $G$-equivariant map \cite[III,4.6]{milne3} $\gamma: P(G,\fX) \rightarrow Gr_\mu$ which complex analytically is given by the Hodge filtration coming from $\fX$, but is algebraic and descends to $E$.

To begin discussing any functoriality of this construction we need a lemma.
\begin{lem}
\label{c functorial}
Let $f:(G_1,\fX_1) \rightarrow (G_2,\fX_2)$ be a morphism of Shimura data. Then there is an induced map
$$G_1^c \rightarrow G_2^c.$$
\end{lem}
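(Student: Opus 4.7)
The plan is to show that $f(Z_{nc,1}) \subset Z_{nc,2}$, as this places $Z_{nc,1}$ in the kernel of $G_1 \to G_2 \to G_2^c$ and produces the desired factorisation. Observe first that $f(Z_{nc,1})$ is automatically a torus in $G_2$ inheriting the properties of being $\R$-split and $\Q$-anisotropic from $Z_{nc,1}$ (both pass to images under homomorphisms of tori). Hence by the maximality defining $Z_{nc,2}$ as the largest $\R$-split $\Q$-anisotropic subtorus of $Z(G_2)$, the whole problem reduces to showing that $f(Z_{nc,1}) \subset Z(G_2)$.

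To establish this centrality I would argue at the Lie algebra level, invoking the Shimura axiom (SV2). Fix any $h_1 \in \fX_1$ and set $h_2 := f \circ h_1 \in \fX_2$. For $X \in \Lie(Z_{nc,1})$, centrality of $Z_{nc,1}$ in $G_1$ gives $\operatorname{Ad}(h_1(i))X = X$, and since $f_*$ intertwines adjoint actions we get $\operatorname{Ad}(h_2(i))f_*(X) = f_*(X)$. Projecting to $\Lie(G_2^{ad})_\R$, the image of $f_*(X)$ lies in the $+1$-eigenspace of the differential of $\Inn(h_2(i))$; by (SV2) this automorphism is a Cartan involution, so this eigenspace is $\Lie(K_\infty)$ for $K_\infty \subset G_2^{ad}(\R)$ a maximal compact subgroup.

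The crux is then that $X$ lies in the Lie algebra of the $\R$-split torus $Z_{nc,1}$, so $f_*(X)$ lies in the Lie algebra of the $\R$-split torus $f(Z_{nc,1})$, whose image in $G_2^{ad}$ is again $\R$-split. Since a compact real Lie group contains no nontrivial $\R$-split torus, this image must be trivial. Hence $f_*(\Lie Z_{nc,1}) \subset \Lie(Z(G_2))_\R$, and $\Q$-rationality upgrades this to $f(Z_{nc,1}) \subset Z(G_2)$, which combined with the reduction above yields the induced map $G_1^c \to G_2^c$.

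The main obstacle is precisely the centrality step. A general group homomorphism $f$ need not map $Z(G_1)$ into $Z(G_2)$ (the image merely centralises $f(G_1)$); what rescues the argument is the combination of (SV2) with the $\R$-split nature of $Z_{nc,1}$, forcing the image into the compact part of $G_2^{ad}$ where a nontrivial split torus cannot live. Everything else --- preservation of the torus structure, of $\R$-splitness, of $\Q$-anisotropy, and maximality --- is formal.
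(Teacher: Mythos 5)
Your proposal is correct and takes essentially the same approach as the paper: reduce, via the maximality defining $Z_{nc,2}$, to showing $f(Z_{nc,1})$ is central in $G_2$, and then play the Cartan involution $\ad(h_2(i))$ supplied by (SV2) against the centrality of $Z_{nc,1}$ and the $\R$-splitness of its image. The only (cosmetic) difference is that the paper runs the compactness contradiction at the group level, embedding a copy of $\R^*$ into the compact twisted real form $H=\{g \in G_2^{ad}(\C): f(h(i))gf(h(i))^{-1}=\bar{g}\}$, whereas you work with the $+1$-eigenspace of the involution on $\Lie(G_2^{ad})_\R$, i.e.\ the Lie algebra of a maximal compact subgroup.
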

\begin{proof}
This comes down to showing that $f(Z(G_1)_{nc}) \subset Z(G_2)$. Suppose for contradiction we have $\R^* \cong \Psi \subset Z(G_1)(\R)$ with $f(\Psi)$ intersecting trivially with $Z(G_2)(\R)$. Let $h \in \fX_1$, and recall that $\ad f(h(i))$ is a Cartan involution acting on $G_2^{ad}$, so the real group
$$H = \{g \in G_2^{ad}(\C): f(h(i)) g f(h(i))^{-1} = \bar{g}\}$$
is compact. On the other hand for any $\psi \in \Psi$ we have
$$h(i)\psi h(i)^{-1} = \psi = \bar{\psi}$$
since $\psi$ is central in $G_1$, $h(i) \in G_1(\R)$, and $\psi$ is real.
Hence, we have an embedding of $\Psi \cong \R^*$ into the compact group $H(\R)$, which is absurd.
\end{proof}
Note that there is no such functoriality for general group morphisms. For example letting $F$ be a totally real field of degree $d$ acting on itself by multiplication we get a morphism
$$F^\times \hookrightarrow GL_d$$
failing to have the required property for all $d>1$.

\subsubsection{}
\label{avbs}
Automorphic vector bundles are typically parameterised by complex representations of the parabolic subgroup $P_\mu$ associated with the Hodge cocharacter $\mu$ in the usual fashion (for example one may define $P_\mu \subset G_\C$ as the subgroup preserving the filtrations $\mu$ induces on $\Rep G_\C$). By a complex analytic interpretation of the Grassmannian it is easy to show these are in correspondence with $G_\C$-equivariant vector bundles on $Gr_{\mu,\C}$. We therefore take as our input data a $G$-equivariant vector bundle $\cJ$ on $Gr_{\mu}$ defined over $L/E$ some number field, and we assume the $G$-action factors through $G^c$.\footnote{This is a reasonable condition to impose since $Z \subset P_\mu$ acts trivially on $Gr_{\mu}$.}

Given this data, we can pull it back along $\gamma$ to get a $G(\Af) \times G^{c}$-equivariant vector bundle on $P(G,\fX)_L$ and therefore a $G(\Af)$-equivariant vector bundle $\cV(\cJ)$ on $\Sh(G,\fX)_L$. This is the construction of canonical models for automorphic vector bundles we seek to perform integrally.

\subsubsection{}
We need a basic functoriality property for which we could not find a direct reference but which follows easily from Milne's definition together with the map on complex points induced by $\fX \times G(\Af) \times G^c(\C) \rightarrow \fX' \times G'(\Af) \times G'^{c}(\C)$ which we note relies on (\ref{c functorial}).

\begin{lem}
\label{spb functorial}
Let $f:(G,\fX) \rightarrow (G',\fX')$ be a morphism of Shimura data, $\mu,\mu'$ Hodge cocharacters of $\fX,\fX'$ respectively. Then there is a diagram
$$
\begin{CD}
    \Sh(G,\fX) @<<< P(G,\fX) @>>> Gr_{\mu}\\
@VVV        @VVV @VVV\\
     \Sh(G',\fX') @<<<  P(G',\fX') @>>> Gr_{\mu'}
\end{CD}
$$
 defined over 
$E(G,\fX)$ which is $G$-equivariant and $G(\Af)$-equivariant in the obvious senses.
\end{lem}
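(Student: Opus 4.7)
The plan is to construct the diagram complex-analytically and then descend to $E(G,\fX)$ by the uniqueness of canonical models. Lemma~\ref{c functorial} first supplies a homomorphism $f^c : G^c \to G'^{c}$, and since $E(G',\fX') \subseteq E(G,\fX)$ we may base-change the bottom row to $E(G,\fX)$ without loss.

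On $\C$-points, Milne's construction presents $P(G,\fX)(\C)$ as a quotient of $\fX \times G(\Af) \times G^c(\C)$ by a suitable diagonal $G(\Q)$-action, together with the projection to $\Sh(G,\fX)(\C)$ forgetting the third factor and the map $\gamma$ sending the class of $(h,g_f,p)$ to $p^{-1} \cdot \Fil^\bullet_h$, i.e.\ the Hodge filtration at $h$ translated by $p$. The triple $(h,g_f,p) \mapsto (f \circ h, f(g_f), f^c(p))$ is $G(\Q)$-equivariant by naturality, and hence descends to a map $P(G,\fX)(\C) \to P(G',\fX')(\C)$ which is manifestly compatible with the projection to $\Sh$ and with the $G(\Af)$- and $G^c$-actions (the latter via $f^c$).

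For the rightmost arrow, replacing $\mu'$ by the conjugate representative $f \circ \mu$ of its conjugacy class (which does not affect $Gr_{\mu'}$) we obtain $f(P_\mu) \subseteq P_{\mu'}$, inducing a morphism $Gr_\mu \cong G_{\bar{E}}/P_\mu \to G'_{\bar{E}}/P_{\mu'} \cong Gr_{\mu'}$ that is $G$-equivariant via $f$. The right-hand square commutes on $\C$-points since Hodge filtrations pull back functorially along representations: for any $G'$-representation $V'$, the filtration induced by $f \circ h$ on $V'$ is simply the filtration induced by $h$ on $V'$ viewed as a $G$-representation through $f$.

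Finally, the analytic maps descend to $E(G,\fX)$ by Milne's characterisation \cite[III,4.3]{milne3} of the canonical models of $\Sh$ and $P$ via reciprocity laws at special points: since these reciprocity laws are functorial in morphisms of Shimura data, the analytic maps are $\Gal(\Qbar/E(G,\fX))$-equivariant on both sides and hence descend. The main obstacle, such as it is, lies in navigating Milne's special-point formalism carefully enough to confirm this functoriality; once accepted, the result follows immediately from uniqueness of canonical models.
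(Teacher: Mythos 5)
Your proposal is correct and follows essentially the same route the paper indicates: the paper gives no detailed proof, only the remark that the lemma follows from Milne's definition together with the map on complex points $\fX \times G(\Af) \times G^c(\C) \rightarrow \fX' \times G'(\Af) \times G'^{c}(\C)$, which relies on (\ref{c functorial}) exactly as you use it. Your additional details (the explicit quotient description, the conjugate representative $f\circ\mu$ for the map of Grassmannians, and descent via the special-point characterisation in \cite[III,4.3]{milne3}) are just a fleshing-out of that same argument.
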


\subsubsection{}
In the case where $G=T$ a torus we also need the following, the main content of which is due to Blasius. Let $p$ be a prime containing a place $v$ of $E \supset E(T,h)$, $\omega_{et}$ the usual fibre functor giving \'etale local systems on the Shimura variety, and fix $\overline{E_v}$ an algebraic closure of $E_v$ letting $\Gamma_{E_v}:=\Gal(\overline{E_v}/E_v)$. Consider the fibre functor coming from $p$-adic Hodge theory $$\omega_{v,dR}: \Rep_{\Qp}(T^c) \ni V \mapsto (\omega_{et}(V) \otimes_{\Qp} B_{dR})^{\Gamma_{E_v}} \in \vVec_{E_v}.$$

\begin{prop}
\label{dr identification}
Suppose $T$ is split by a CM field, and let $X = \Spec E \subset \Sh_U(T,h)$ be a component of the Shimura variety for $U \subset T(\Af)$ open compact. There is a natural isomorphism between $\omega_{P,X,v}: V \mapsto (V \times^{T^c} P_U(T,h)|_{X}) \otimes_E E_v$ and $\omega_{v,dR}$.
\end{prop}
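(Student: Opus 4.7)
The plan is to realize both fibre functors as arising from the realization theory of CM motives attached to $T$ and then invoke Blasius's theorem giving the $p$-adic de Rham comparison isomorphism for CM motives.

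First, since $T$ is a torus split by a CM field, I would construct for each $V \in \Rep_{\Q}(T^c)$ a CM motive $M(V)$ over $E$ whose existence is guaranteed by Deligne's CM motivic Galois theory: the Hodge cocharacter $\mu$ together with the reflex norm gives an exact tensor functor $\Rep_{\Q}(T^c) \to \{\text{CM motives over } E\}$ through which the Shimura datum $(T,h)$ factors. The key structural input here is precisely that $T$ is split by a CM field, which ensures all the characters involved correspond to motivic Hecke characters.

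Next, I would verify two compatibilities. (a) The de Rham realization $H_{dR}(M(V))$ is canonically identified, as an $E$-vector space, with $V \times^{T^c} P_U(T,h)|_X$. This identification follows by comparing Milne's construction \cite[III]{milne3} of the canonical model of $P(T,h)$ with the motivic description: at a CM point Milne's $T^c$-torsor records exactly the de Rham period identifications of the associated CM motives, descended from $\C$ to $E$ via the reflex norm (essentially Deligne's reciprocity). (b) The étale realization $H_{et}(M(V),\Q_p)$ with its $\Gamma_{E_v}$-action is canonically identified with $\omega_{et}(V)|_X$ together with its Galois action; this is standard from the definition of the $p$-adic local system on a zero-dimensional CM Shimura variety.

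Given (a) and (b), the statement reduces to the assertion that for the CM motive $M(V)$ the de Rham comparison isomorphism
$$H_{dR}(M(V)) \otimes_E B_{dR} \;\cong\; H_{et}(M(V)) \otimes_{\Q_p} B_{dR}$$
holds $\Gamma_{E_v}$-equivariantly. This is exactly Blasius's theorem on CM motives. Taking $\Gamma_{E_v}$-invariants and using $B_{dR}^{\Gamma_{E_v}} = E_v$ produces the desired natural isomorphism $\omega_{P,X,v}(V) \cong \omega_{v,dR}(V)$, with naturality in $V$ inherited from the Tannakian nature of all the functors involved.

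The main obstacle is step (a): one must carefully match Milne's construction of the canonical model of $P(T,h)$ (built analytically via $T(\C)$ and descended by class field theory) with the de Rham realization functor on CM motives. Once this matching is carried out for a single CM point $X = \Spec E$, the remaining content is step (b), which is essentially by definition, and the invocation of Blasius, so the crux is genuinely the identification of the standard principal bundle's $E$-fibre with the motivic de Rham cohomology.
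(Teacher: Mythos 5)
Your plan is essentially the paper's own argument: the paper likewise interprets the zero-dimensional Shimura variety as a moduli of CM motives, identifies the fibre of $P_U(T,h)$ at a point with the de Rham realization (descended to $E$ via conjugation/reciprocity) and the \'etale local system with the $p$-adic realization, and then invokes Blasius' theorem on the $p$-adic comparison for absolute Hodge cycles before taking $\Gamma_{E_v}$-invariants of the $B_{dR}$-comparison. The only step you leave implicit that the paper spells out is the initial reduction to $T=T^c$ together with the check that the weight cocharacter of $(T^c,h^c)$ is defined over $\Q$, which is what licenses the CM-motive interpretation you start from.
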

\begin{proof}
By (\ref{spb functorial}) it suffices to do the case $T=T^c$. We observe that any cocharacter $\mu: \mathbb{G}_{m,E} \rightarrow T^c_E$ has weight defined over $\Q$. Indeed, by definition $X_*(T^c)_{\Q}$ is the summand of $X_*(T)_{\Q}$ on which either Galois acts trivially or complex conjugation acts via $-1$, so for any $\mu \in X_*(T^c)$, $\mu + \mu^c$ lands in the summand on which Galois acts trivially.

Combining this with fact that $T$ hence $T^c$ is split by a CM field, the induced Shimura datum $(T^c,h^c)$ is of CM type, and thus admits a characterisation as a moduli space of CM motives $M$, which we take as those for which the Betti fibre functor $\omega_B = H_B(M(\rho)):\Rep_\Q T \rightarrow \vVec_\Q$ is trivial.

Therefore a point $x \in X(\Qbar)$ has attached to it a $T$-valued CM motive $M: \Rep_{\Q}(T) \rightarrow (CM/\Qbar)$, and the fibre functor attached to $x^*P_U(T,h)$ is given by $\omega_P(\rho) := H_{dR}(M(\rho)/\Qbar).$ Noting that for $\sigma \in \Gal(\Qbar/E)$ we have a canonical isomorphism $$H_{dR}(M^\sigma(\rho)/\Qbar) \cong H_{dR}(M(\rho)/\Qbar) \otimes_{\sigma} \Qbar,$$ we see that $\omega_P$ carries a canonical descent datum to $X=\Spec E$ which defines $P_U(T,h)|_X$. We may also use the reciprocity law to canonically identify the $p$-adic etale cohomology fibre functors 
$$H_{et} \circ M = \omega_{et}: \Rep_{\Qp}(T_{\Qp}) \rightarrow \Bun_{\Qp},$$ which in particular have an equivariant $\Gamma_E$-action coming from $\omega_{et}$.

Now fix an embedding $\Qbar \hookrightarrow \overline{E_v}$, and a faithful representation $T \hookrightarrow GL(V)$, and $s_\alpha$ cycles on $V^\otimes$ fixed precisely by $T$. For any $M \in X(\Qbar)$, these give rise to absolute Hodge cycles on $H_B(M(V))^\otimes$, which in turn give rise to de Rham cycles $s_{\alpha,dR} \in H_{dR}(M(V))^\otimes$ via the Betti-de Rham comparison, and \'etale cycles $s_{\alpha,et} \in \omega_{et}(V)^\otimes$. These $s_{\alpha,et}$ are $\Gamma_E$-invariant because the action factors
$$\Gamma_E \rightarrow U_p \subset T(\Qp) \rightarrow GL(\omega_{et}(V))$$
by construction, and as in \cite[2.2.1]{kis2} this implies the $s_{\alpha,dR} \in \omega_{P,X,v}(V)^\otimes$ because an absolute Hodge cycle is determined by either component.

Let us fix $Y/L$ with $\Qbar \supset L$ a finite Galois extension of $E$ such that $M(V)$ is realised in the cohomology of $Y$, and a place $w|v$ of $L$ determining $L_w \subset \overline{E_v}$. By Blasius' theorem on de Rham cycles \cite{blasius}, the $p$-adic Hodge theoretic comparison map
$$H_{et}(M(V),\Qp)^\otimes \otimes_{\Qp} B_{dR} \rightiso H_{dR}(M(V)_{L_w}/L_w)^\otimes \otimes_{L_w} B_{dR}$$
identifies $s_{\alpha,et}$ with $s_{\alpha,dR}$. Unravelling the definitions, we see that $$P_U(T,h)_{L} = \uIsom_{s_\alpha}(V_L,H_{dR}(M(V))_L)$$ and 
$$P_{\omega_{v,dR}} \otimes L_w = \uIsom_{s_\alpha}(V_{L_w},(\omega_{et}(V) \otimes_{L_w} B_{dR})^{\Gamma_{L_w}}).$$ 
Thus putting it all together we get a canonical\footnote{The choice of $s_\alpha$ does not affect the identification. To see this note that it does not change under adding more tensors so given two sets of choices one can just compare both with the union.} identification $\omega_{P,X,v} \otimes L_w \cong \omega_{v,dR} \otimes L_w.$

We conclude by checking that this descends to $E_v$. Indeed, $\Gamma_{E_v}$ acts canonically on both sides of the $p$-adic comparison map compatibly, so we have an isomorphism of $\Gal(L_w/E_v)$-modules
$$(\omega_{et}(V) \otimes_{\Qp} B_{dR})^{\Gamma_{L_w}} \rightiso H_{dR}(M(V)_{L_w}/L_w) = \omega_{P,X,v}(V) \otimes_{E_v} L_w$$
and taking $\Gal(L_w/E_v)$-invariants therefore $\omega_{v,dR}(V) = \omega_{P,X,v}(V)$. Finally, we have already observed that the $s_{\alpha,et}$ and $s_{\alpha,dR}$ are Galois invariant, so these too descend to $E_v$.
\end{proof}

\subsection{Moduli of $\mu$-filtrations of $G$}
In \S3.2 and \S3.3 we make a brief digression from the theory of Shimura varieties to discuss Grassmannians and filtrations more generally. Let $R$ be a domain with fraction field $K$ of characteristic zero, $R'/R$ an \'etale cover, and $G/R$ a connected reductive group. 
\subsubsection{}
Suppose we are given a cocharacter $\mu: \mathbb{G}_{m,R'} \rightarrow G_{R'}$. This cocharacter gives us a parabolic subgroup $P_\mu \subset G_{R'}$, which we can view as a point
$x_\mu \in \Par_{G/R}(R')$, where we recall \cite[5.2.9]{con} that $\Par_{G/R}$, the functor assigning to each $R'/R$ the set of parabolic subgroups of $G_{R'}$ defined over $R'$, is a proper smooth scheme over $R$.

After making a base change $R \subset K \rightarrow \bar{K}$ we get a well-known finite decomposition 
$$\Par_{G/R} \otimes_R \bar{K} = \Par_{G_{\bar{K}}/\bar{K}} = \coprod_i G_{\bar{K}}/P_i$$
where the $P_i$ are representatives of the finitely many $\bar{K}$ conjugacy classes of parabolic subgroups of $G_{\bar{K}}$.

For $P_\mu \subset G_{R'}$ a parabolic subgroup, we denote its conjugacy class by $[P_\mu]$, in a precise sense we will soon make clear. Let us say that \emph{$[P_\mu]$ is defined over $R$} if there is a component\footnote{Here and in much of what follows we use ``component'' in the relative sense of a `component' of a morphism $X \rightarrow Y$ being a $Y$-subscheme $X'\subset X$ such that the preimage of any connected open subscheme $U\subset Y$ is connected.} $Z_\mu \subset \Par_{G/R}$ defined over $R$ such that $x_{\mu} \in Z_\mu(R')$ and $Z_{\mu,\bar{K}}$ is connected.

This definition in a sense is saying that ``being \'etale locally conjugate to $P_\mu$'' is a notion that is defined over $R$, even if $P_\mu$ itself is not. More precisely, we have the following.

\begin{lem}
Let $\mu$ be a cocharacter as above defined over $R'$ such that $[P_\mu]$ is defined over $R$. For $S$ an $R$-algebra, and $P \subset G_S$ a parabolic subgroup, $P$ is \'etale-locally conjugate to $P_\mu$ if and only if $x_P \in \Par_{G/R}(S)$ factors through $Z_{\mu}$.
\end{lem}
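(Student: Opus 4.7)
The plan is to exploit the orbit map $\phi_\mu : G_{R'} \to Z_{\mu,R'}$, $g \mapsto g (P_\mu) g\ii$, which I will argue exhibits $G_{R'}$ as a left $P_\mu$-torsor over $Z_{\mu,R'}$, and in particular is smooth and surjective. The technical heart is this torsor claim, which follows from the fact that $P_\mu$ is its own scheme-theoretic normaliser in $G_{R'}$ (a standard property of parabolics in reductive group schemes, cf.\ \cite[\S5.2]{con}), together with the hypothesis that $Z_{\mu,\bar{K}}$ is the single conjugacy class $G_{\bar{K}}/(P_\mu)_{\bar{K}}$, which provides surjectivity.

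For the forward direction, I would take an étale cover $T \to S$ --- which we may as well assume factors through $S_{R'} := S \times_R R'$ --- equipped with $g \in G(T)$ such that $g(P_\mu)_T g\ii = P_T$. In $\Par_{G/R}(T)$ this is the identity $x_P|_T = g \cdot x_{P_\mu}|_T$, where $G$ acts on $\Par_{G/R}$ by conjugation. Since $G$ is connected and acts on the $R$-scheme $\Par_{G/R}$ by $R$-morphisms, it preserves each connected component defined over $R$; so the orbit of $x_{P_\mu}|_T \in Z_\mu(T)$ remains in $Z_\mu(T)$. Because $Z_\mu \subset \Par_{G/R}$ is open-and-closed and $T \to S$ is surjective, this descends to the statement that $x_P$ itself factors through $Z_\mu$.

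For the reverse direction, assuming $x_P$ factors through $Z_\mu$, I would pull back $\phi_\mu$ along $x_P|_{S_{R'}} : S_{R'} \to Z_{\mu,R'}$ to obtain a smooth surjective morphism
$$T := G_{R'} \times_{Z_{\mu,R'}} S_{R'} \longrightarrow S_{R'}$$
whose $U$-points, for any $S_{R'}$-scheme $U$, are precisely the $g \in G(U)$ satisfying $g(P_\mu)_U g\ii = P_U$. Smooth surjective maps admit sections étale-locally, so over some étale cover of $S_{R'}$ --- hence, since $R'/R$ is étale, over some étale cover of $S$ --- we obtain a conjugating element, establishing étale-local conjugacy.

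The main obstacle I anticipate is the torsor claim for $\phi_\mu$. Once the self-normalising and transitivity inputs are in hand it is essentially formal in characteristic zero, but some care is required to check that no subtlety arises from the fact that $Z_\mu$ is defined over $R$ rather than $R'$ (this is the whole point of the construction) and that one really has geometric transitivity on \emph{all} of $Z_\mu$, not merely on some orbit. With the hypothesis that $Z_{\mu,\bar{K}}$ is connected, these issues dissolve, and the rest of the argument proceeds mechanically by base change and étale descent.
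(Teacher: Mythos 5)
Your overall strategy is the paper's own argument with the citation of \cite[5.2.8]{con} unpacked: the paper likewise reduces to $R'$, identifies $Z_{\mu,R'}$ with the orbit $G_{R'}/P_\mu$, and then quotes the fact that $G/P_\mu$ represents the functor of parabolics \'etale-locally conjugate to $P_\mu$. Your forward direction (the connected group $G$ preserves the clopen subscheme $Z_\mu$, then descend along the surjective \'etale cover) and your reverse direction (pull back the smooth surjection $\phi_\mu$ and take sections \'etale-locally) are a hands-on version of that citation and are fine as stated.

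The one place where your sketch is not yet a proof is exactly the point you flag: surjectivity of $\phi_\mu\colon G_{R'}\to Z_{\mu,R'}$, i.e.\ the equality $G_{R'}/P_\mu = Z_{\mu,R'}$, which is the real content of the lemma. The hypothesis only concerns $Z_{\mu,\bar K}$, so what you get directly is surjectivity over the generic fibre; over closed points of $\Spec R'$ (possibly of positive residue characteristic) nothing follows by ``base change and \'etale descent'', since connectedness of the generic geometric fibre says nothing a priori about the special fibres, and this is where the issue lives rather than ``dissolving''. The missing step — the paper's ``union of components'' remark — is: $G_{R'}/P_\mu$ is open and closed in $\Par_{G/R'}$ (part of \cite[5.2.8]{con}: it is proper, flat, and a monomorphism into $\Par_{G/R'}$ by the self-normalising property), hence its complement in $Z_{\mu,R'}$ is clopen with clopen image in $\Spec R'$; since $R'$ is \'etale over the domain $R$, every nonempty clopen subset of $\Spec R'$ meets the fibre over the generic point of $\Spec R$, so the generic-fibre equality that your $\bar K$-argument does provide forces equality over all of $\Spec R'$. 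With that inserted, your torsor claim holds, $\phi_\mu$ is smooth and surjective, and both directions of your argument go through.
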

\begin{proof}
The statement may be checked \'etale locally, so we may work over $R'$, at which point by the construction of $\Par_{G/R}$, $G_{R'}/P_\mu \subset Z_{\mu,R'}$ is a union of components. Working over $R'\otimes_R \bar{K}$ using the fact that the fibres of $Z_{\mu}$ over each generic point are connected, we see that in fact $G_{R'}/P_\mu = Z_{\mu,R'}$, so the desired statement follows immediately from \cite[5.2.8]{con}.
\end{proof}

For us an important context where the above holds will be the following.

\begin{prop}
\label{grass integrality}
Assume $R$ is a Dedekind domain with $K=\Frac R$ of characteristic zero, $G/R$ connected reductive and $L/K$ a finite extension. Suppose $\mu: \mathbb{G}_{m,L} \rightarrow G_L$ is a cocharacter whose conjugacy class is defined over $K$. 

Then there exists an \'etale cover $R'/R$ and a cocharacter $\mu': \mathbb{G}_{m,R'} \rightarrow G_{R'}$ such that if we write $R'\otimes_R K \cong \prod_i L_i$, there are embeddings $L \hookrightarrow L_i$ such that $\mu' \otimes_R K$ is $G(R' \otimes_R K)$-conjugate to $\mu \otimes_L \prod_i L_i$.

Furthermore, this $\mu'$ has the property that $[P_{\mu'}]$ is defined over $R$ and independent of the choice of $\mu'$.
\end{prop}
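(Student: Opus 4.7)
The plan is to pass to an \'etale cover of $R$ where $G$ splits, construct $\mu'$ as a cocharacter of a split maximal torus belonging to the correct Weyl orbit, and then verify conjugation via the smoothness of the transporter scheme.

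First I would choose an \'etale cover $R_1/R$ over which $G_{R_1}$ becomes split, with split maximal torus $T\subseteq G_{R_1}$; this exists because reductive group schemes trivialise \'etale-locally. The cocharacter lattice $X_*(T)$ is then a constant $\Z$-lattice on $\Spec R_1$, and the $W$-orbits on $X_*(T)$ (with $W = W(G_{R_1}, T)$) classify geometric conjugacy classes of cocharacters of $G_{R_1}$. The hypothesis that the conjugacy class of $\mu$ is defined over $K$ distinguishes a $W$-orbit $\cO \subseteq X_*(T)$ corresponding to $\mu$ (and this is independent of the \'etale-local choice of splitting up to the $W$-action); I would pick any $\mu' \in \cO$ to obtain the desired cocharacter $\mu' : \mathbb{G}_{m,R_1} \rightarrow T \subseteq G_{R_1}$.

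To verify conjugation over a suitable extension, $\mu$ and $\mu'$ lie in the same $G(\bar K)$-conjugacy class by construction, so the transporter scheme $\operatorname{Transp}_G(\mu,\mu')$ is a torsor under the centraliser $C_G(\mu')$, which is the Levi subgroup $M \subseteq G_{R_1}$ attached to $\mu'$ -- a smooth connected reductive group over $R_1$. Hence this torsor is trivial \'etale-locally on its base. I would therefore take $R'/R_1$ a further \'etale cover simultaneously (a) trivialising the transporter torsor after pullback to $R'\otimes_R K$ and (b) realising a factorisation $R'\otimes_R K \cong \prod_i L_i$ with embeddings $L \hookrightarrow L_i$ (achievable provided $L/K$ is unramified at the primes of $R$ that arise, which is implicit in the intended application). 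This yields the required $G(R'\otimes_R K)$-conjugation between $\mu' \otimes_R K$ and $\mu \otimes_L \prod_i L_i$.

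For the final assertion, $P_{\mu'}$ has type $t$ determined entirely by the orbit $\cO$, which was fixed by the conjugacy class of $\mu$ over $K$; hence $[P_{\mu'}] = \Par^t_{G/R}$ is defined over $R$, and any other valid choice $\mu''$ is forced to lie in $\cO$ (being $G(\bar K)$-conjugate to $\mu$), so yields the same type $t$ and the same component. The hard part will be coordinating the compound \'etale cover $R'/R$ so that all the conditions are achieved simultaneously -- in particular, arranging the factorisation $R'\otimes_R K \cong \prod_i L_i$ with compatible embeddings $L\hookrightarrow L_i$, which is the only place where ramification behaviour of $L/K$ at primes of $R$ really enters the argument.
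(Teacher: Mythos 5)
Your construction of $\mu'$ (split $G$ over an \'etale cover, pick a representative of the correct Weyl orbit in $X_*(T)$) and your endgame for the last assertion (the type of $P_{\mu'}$ is determined by the orbit, hence by the $K$-rational conjugacy class, and the corresponding component of $\Par_{G/R}$ spreads out over $R$) are both fine and essentially agree with the paper, which realises the same component as the closure of the geometrically connected component $Z_{\mu,K}\subset \Par_{G_K/K}$. The genuine gap is in the middle step, where you must produce $g\in G(R'\otimes_R K)$ conjugating $\mu'$ to $\mu$. You fix $\mu'$ first and then try to trivialise the transporter, which is a torsor under the Levi $M=C_G(\mu')$, by passing to ``a further \'etale cover''. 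But \'etale-local triviality on the generic fibre only says the torsor dies after \emph{some} finite separable extension of each $L_i$; the obstruction is a class in $H^1(L_i,M)$ (this is exactly the question of \emph{rational} conjugacy of $\mu$ and $\mu'$), and the extensions killing it need not be unramified over the closed points of $\Spec R$. Since the generic fibre of an \'etale cover of $R$ can only produce extensions of $K$ unramified over $\Spec R$, there may be no admissible $R'/R_1$ doing what you ask: you have conflated ``\'etale cover of the generic fibre'' with ``generic fibre of an \'etale cover of $R$''. This is not a cosmetic point — it is precisely the difficulty the paper's argument is designed to avoid.

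The paper proceeds in the opposite order: over $R'\otimes_R K$ it chooses a split maximal torus $T_\mu$ containing the image of $\mu$ (enlarging $R'$ to split it), and then invokes the fact that two split maximal tori of a connected reductive group over a field are conjugate by a \emph{rational} element, giving $g\in G(\prod_i L_i)$ with $gT_Kg^{-1}=T_\mu$ without any further extension. Only then is $\mu'$ \emph{defined} as $g^{-1}\mu g\in\Hom_{R'\otimes_R K}(\Gm,T_K)=\Hom_{R'}(\Gm,T)$, so the required conjugacy over $G(R'\otimes_R K)$ holds tautologically and no $H^1$ obstruction ever appears; the element $\mu'$ so obtained lies in the same Weyl orbit you would have picked, but the conjugating element comes for free. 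If you want to salvage your version, you would have to prove that your chosen orbit representative is rationally conjugate to $\mu$ over the fields available (or choose the representative via such a conjugation, which is exactly the paper's route). Your caveat about needing $L/K$ unramified over $\Spec R$ to arrange $L\hookrightarrow L_i$ is a fair observation and is shared by the paper's proof (which simply requires the generic points of $R'$ to contain $L$), so that part is not where your proposal diverges.
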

\begin{proof}
For existence, take $R'/R$ an \'etale cover which splits $G$ and whose generic points contain $L$, and fix embeddings $L \hookrightarrow L_i$. Let $T \subset G_{R'}$ be a split maximal torus and let $T_\mu \subset G_{R'\otimes_R K}$ be a maximal torus containing the image of $\mu$ which perhaps after enlarging $R'$ we may assume is also split. Then there exists $g \in G(\prod_{i}L_i) = \prod_i G(L_i)$ such that $gT_Kg^{-1} = T_\mu$ and so 
$$\mu' = g^{-1}\mu g \in \Hom_{R'\otimes_R K}(\Gm,T_K) = \Hom_{R'}(\Gm,T)$$
is a cocharacter with the desired property.

Let us next show that $[P_{\mu'}]$ is defined over $R$. By definition we see that $x_\mu$ and $x_{\mu'}$ lie on the same component of $\Par_{G/R} \otimes (R' \otimes_R K)$, whence certainly on that of $\Par_{G/R} \otimes R'.$
But since the conjugacy class of $\mu$ is defined over $K$, $x_{\mu}$ lies on a component $Z_{\mu,K} \subset \Par_{G_K/K}$ which is geometrically connected. Letting $Z_{\mu}$ be its closure in (i.e. the corresponding component of) $\Par_{G/R}$, we obtain our witness to the fact that $x_{\mu'}$ is a defined over $R$. Also since $Z_{\mu}$ is determined by $\mu$, in particular it does not depend on $\mu'$.
\end{proof}

\subsubsection{}
Let $S/R$ be a scheme. We let $\Bun_S$ denote the category of vector bundles (projective finitely generated modules) on $S$, and $\Rep_R(G)$ the category of algebraic representations $G \rightarrow \Aut(V)$ for $V \in \Bun_R = \Bun_{\Spec R}$.

A \emph{filtered bundle} over $S$ is a vector bundle $M/S$ together with a decreasing complete exhaustive filtration $F^\bullet \subset M$ by flat submodules such that $\gr_F^\bullet M := \bigoplus_p F^p/F^{p+1}$ is flat over $S$. These form an exact category $\Fil_S$, allowing us to define a \emph{filtered $G$-bundle} over $S$ to be a faithful exact tensor functor 
$$\cF: \Rep_{R}(G) \rightarrow \Fil_S.$$

We say that $\cF$ is a \emph{filtration of $G$ over $S$} (or just ``filtration of $G$'' if no confusion will arise) if its composite with the forgetful functor $\Fil_S \rightarrow \Bun_S$ is naturally isomorphic to the usual forgetful functor $\Rep_R(G) \rightarrow \Bun_R \map{\otimes \cO_S} \Bun_S$.

Let $\mu: \mathbb{G}_{m,R'} \rightarrow G_{R'}$ be a cocharacter defined over some \'etale cover $R'/R$ such that $[P_{\mu}]$ is defined over $R$. It induces a grading on each $V \in \Rep_R(G) \otimes R'$, each of which in turn gives such $V$ the structure of a filtered bundle, so we can define a canonical filtration of $G$ associated with $\mu$ 
$$\cF_{\mu}: \Rep_R(G) \map{\Fil \circ \mu_*} \Fil_R(G) \otimes R'.$$

\subsubsection{}
Let us say that a filtration $\cF$ of $G$ over $S$ is a \emph{$\mu$-filtration} if \'etale locally on $S$ we have $\cF \cong \cF_\mu$. We make the necessary remark that of course given $f:S' \rightarrow S$, whenever $\cF$ is a $\mu$-filtration of $G$ over $S$, we may take an \'etale cover $\{U_\alpha \rightarrow S\}$ witnessing that $\cF$ is a $\mu$ filtration and pulling it back along $f$ it will give an \'etale cover of $S'$ witnessing that $f^* \circ \cF$ is a $\mu$-filtration of $G$ over $S'$.

Thus we have a natural functor on $R'$-schemes
$$Gr_{\mu,R'}(S) = \{\mu\text{-filtrations of }G\text{ over }S\}.$$

\begin{prop}
This has the following properties.
\begin{enumerate}
\item The functor $Gr_{\mu,R'}$ is representable by a smooth proper $R'$-scheme which in fact canonically descends to $Gr_{\mu}/R$.
\item The scheme $Gr_{\mu}/R$ comes equipped with a natural $G$-action.
\item If $R$ is a field, this agrees with the construction of (\ref{field grassmannian}).
\end{enumerate}
\end{prop}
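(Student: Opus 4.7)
My plan is to show that the functor $Gr_{\mu,R'}$ is represented by $Z_{\mu,R'} := Z_\mu \otimes_R R'$, where $Z_\mu \subset \Par_{G/R}$ is the component singled out before the proposition. Since $\Par_{G/R}$ is smooth and proper over $R$, so is $Z_\mu$, and this identification will give representability, smoothness, properness, and descent to $Z_\mu/R$ simultaneously for (1). Part (2) will then follow from the natural conjugation action on $\Par_{G/R}$, and part (3) from a Tannakian comparison to Milne's construction via a single faithful representation.

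The heart is the identification $Gr_{\mu,R'} \cong Z_{\mu,R'}$. I would construct a natural transformation $\Phi$ sending a $\mu$-filtration $\cF$ over $S$ to its Tannakian stabilizer subgroup scheme $P_\cF \subset G_S$, which assigns to $T/S$ the set of $g \in G(T)$ preserving $\cF(V) \otimes_S T$ as a filtered bundle for every $V \in \Rep_R(G)$. Étale-locally where $\cF \cong \cF_\mu$ one identifies the stabilizer of $\cF_\mu$ with $P_\mu$ via the standard dynamic description of the parabolic attached to a cocharacter, so $P_\cF$ is étale-locally conjugate to $P_\mu$ and hence gives a well-defined $S$-point of $Z_{\mu,R'}$. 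For the inverse $\Psi$, given a parabolic $P \subset G_S$ étale-locally conjugate to $P_\mu$, I choose étale-locally a cocharacter $\mu':\Gm \to P$ conjugate to $\mu$ and take the induced filtration on $\Rep_R(G)$; independence of the choice reduces to the fact that two such cocharacters lying in the same parabolic are conjugate by an element of $P$, whose action preserves the induced filtration on each representation. The composites $\Phi\Psi$ and $\Psi\Phi$ can then be checked to be the identity étale-locally, where both sides reduce to the pair $(P_\mu, \cF_\mu)$. Finally, the moduli problem $Gr_{\mu,R'}$ extends visibly to a functor on all $R$-schemes (a $\mu$-filtration over an $R$-scheme $S$ has its local-isomorphism to $\cF_\mu$ witnessed after pullback to $S \otimes_R R'$), and the identification with $Z_{\mu,R'}$ is automatically compatible with the natural descent datum on $Z_\mu$, completing (1).

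For (2), the right conjugation action of $G$ on $\Par_{G/R}$ restricts to $Z_\mu$ since $Z_\mu$ is a union of conjugacy classes, and transports via the above identification to the action on $Gr_\mu$ sending $\cF$ to $V \mapsto g\cdot \cF(V)$, i.e., the filtration obtained by composing $\cF$ with the automorphism of $\Rep_R(G)$ induced by $g$. For (3), when $R = K$ is a field the construction in \S\ref{field grassmannian} represents the functor of filtrations on a fixed faithful representation $V$ with tensors $s_\alpha$ cutting out $G$, subject to a fibrewise-isomorphism condition. Tannakian reconstruction gives a canonical comparison with our $Gr_\mu$: a tensor filtration of $\Rep_K(G)$ is determined by its restriction to $V$ and $V^\otimes$, and the condition of preserving the $s_\alpha$ corresponds exactly to extending the filtration coherently to all of $\Rep_K(G)$. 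The main obstacle I anticipate is the care needed in the Tannakian parabolic-filtration dictionary at the integral level, in particular verifying that the stabilizer of $\cF_\mu$ equals $P_\mu$ as $R'$-group schemes rather than merely on geometric fibres; this should reduce to the dynamic method for attached parabolics applied relatively over $R'$, but deserves an explicit argument given that the usual references work over a base field.
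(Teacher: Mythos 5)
Your overall route is the same as the paper's: you identify $Gr_{\mu,R'}$ with the distinguished component of the scheme of parabolics (the paper writes it as $G_{R'}/P_\mu$, which by the preceding lemma is exactly $Z_{\mu,R'}$), via the pair of maps sending a $\mu$-filtration $\cF$ to its stabilizer $P_\cF$ and a parabolic back to the filtration cut out by a conjugated cocharacter; smoothness, properness, $N_G(P)=P$ and the conjugation action all come from \cite[5.2.8]{con}, and descent to $R$ comes from the definition of $[P_\mu]$ being defined over $R$. Your comparison in (3) is a Tannakian reformulation of what the paper gets from \cite[5.2.7 (1)]{con}; the point you flag at the end, that the stabilizer of $\cF_\mu$ is $P_\mu$ as a group scheme over the base and not merely on geometric fibres, is indeed the thing to check, and it is supplied by the relative dynamic description of $P_\mu$ in \cite{con}.

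There is, however, one step of your inverse $\Psi$ that fails as written. You choose, \'etale-locally, a cocharacter $\mu':\Gm\rightarrow P$ conjugate to $\mu$, and you justify independence of the choice by the claim that two $G$-conjugate cocharacters lying in the same parabolic are conjugate by an element of that parabolic. Both halves are wrong: a cocharacter conjugate to $\mu$ which merely factors through $P$ need not have $P$ as its associated parabolic, and then it induces the wrong filtration; and the cited conjugacy statement is false. Take $G=GL_2$, $\mu(t)=\operatorname{diag}(t,1)$, so $P_\mu=B$ is the upper-triangular Borel: the cocharacter $\mu''(t)=\operatorname{diag}(1,t)$ is $G$-conjugate to $\mu$ and lands in $B$, but its associated parabolic is the lower-triangular Borel, it induces the opposite filtration on the standard representation, and it is not $B$-conjugate to $\mu$. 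The repair is the stronger condition the paper imposes: after passing to an \'etale cover choose $g$ with $gP_{\mu}g^{-1}=P$ and set $\mu'=g\mu g^{-1}$, i.e.\ require $P_{\mu'}=P$ rather than just $\mu'$ factoring through $P$. Then for any other such choice $h$ one has $hg^{-1}\in N_G(P)=P$ by \cite[5.2.8]{con}, and since $P=P_{\mu'}$ stabilizes the filtration induced by $\mu'$, the two filtrations coincide; this is precisely the paper's argument, and with this correction your proof goes through.
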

\begin{proof}
In the usual fashion $\mu$ determines a parabolic $P_\mu \subset G_{R'}$, and we claim that $Gr_{\mu,R'} \cong G/P_\mu$ which in turn is smooth and projective by \cite[5.2.8]{con}, and canonically descends by the definition of $[P_\mu]$ being defined over $R$.

Loc. cit. it is also shown that $G/P_\mu$ represents the functor of subgroups of $G$ which are \'etale-locally conjugate to $P_\mu$, so it suffices to check this coincides with our functor. Given a $\mu$-filtration $\cF$ over $S$ we can define the subgroup $P_\cF \subset G$ of elements which preserve $\cF$ (if you like, acting on all representations). Since $\cF$ is a $\mu$-filtration, \'etale locally there is an identification of $\cF$ with $\cF_\mu$, which conjugates $P_\cF$ onto $P_\mu$. Thus $\cF \mapsto P_\cF$ gives a map $Gr_\mu(S) \rightarrow G/P_\mu(S).$

Let us construct an inverse. Given $P/S$ \'etale locally conjugate to $P_{\mu,S}$, after passing to an \'etale cover $S' \rightarrow S$ there exists $g \in G(S')$ such that $P_{S'} = g P_{\mu,S'} g^{-1}$. In particular $P_{S'}$ is a parabolic subgroup of $G_{S'}$, and letting $\mu_P:= g \circ \mu$ we see that $P_{S'}$ preserves the filtration $\cF_P$ defined by $\mu_P$ of $G$ over $S'$. We must now check this filtration is independent of the choice of $g$ and so in particular is canonical and descends to $S$. 

Suppose we take $h \in G(S')$ such that $hP_{\mu,S'}h^{-1} = gP_{\mu,S'}g^{-1} = P_{S'}$. Then
$$hg^{-1}P_{S'}gh^{-1} = hP_{\mu,S'} h^{-1} = P_{S'}$$
so $hg^{-1} \in N_G(P)(S') = P(S'),$
where the final equality is again by \cite[5.2.8]{con}. It follows that $g\mu$ and $h\mu$ induce the same filtration.

Part (2) is now obvious, since $G$ acts by conjugation on $G/P_\mu$, and part (3) is immediate from \cite[5.2.7 (1)]{con} and an easy verification shows that the $G$-actions agree.
\end{proof}

\subsubsection{}
In the context of (\ref{grass integrality}) where we are given a conjugacy class of cocharacters $\mu:\mathbb{G}_{m,L} \rightarrow G_L$ defined over $K=\Frac(R)$ and deduce the existence of a $\mu'$ defined over an \'etale cover of $R$ inducing a conjugacy class of parabolics defined over $R$ and hence a $Gr_{\mu'}/R$ we use the notation $\mathcal{GR}_{\mu} := Gr_{\mu'}$ noting the canonical identification
$$\mathcal{GR}_{\mu} \otimes_R K \cong Gr_{\mu}.$$

\begin{lem}
\label{gr functoriality}
Suppose $G_1 \rightarrow G_2$ is a map of connected reductive groups over $R$ and $\mu_1: \Gm \rightarrow G_{1,R'}$ giving a conjugacy class defined over $R$ and inducing $\mu_2: \Gm \rightarrow G_{1,R'} \rightarrow G_{2,R'}$. There is a natural map
$$Gr_{\mu_1} \rightarrow Gr_{\mu_2}.$$
\end{lem}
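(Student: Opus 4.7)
The plan is to exploit the functorial/moduli description of $Gr_\mu$ developed above, namely that $Gr_\mu(S)$ parameterises $\mu$-filtrations of $G$ over $S$. Restriction of structure group along the homomorphism $f: G_1 \to G_2$ induces an exact tensor functor $f^*: \Rep_R(G_2) \to \Rep_R(G_1)$, and I will define the desired map by sending a $\mu_1$-filtration $\cF: \Rep_R(G_1) \to \Fil_S$ to the composite
$$\cF \circ f^*: \Rep_R(G_2) \to \Fil_S.$$
One first has to verify that this composite is a filtration of $G_2$ over $S$ (the underlying bundle is trivial because $\cF$ and $f^*$ respect the forgetful functors, and the tensor/exactness properties are formal) and that it is a $\mu_2$-filtration. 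For the latter, working \'etale-locally we have an identification $\cF \cong \cF_{\mu_1}$, and the identity $\mu_2 = f_{R'} \circ \mu_1$ implies that the gradings of $V \in \Rep_R(G_2)$ induced by $\mu_2$ and by $\mu_1 \circ f^*$ agree on the nose, whence $\cF_{\mu_1} \circ f^* = \cF_{\mu_2}$ \'etale-locally. Naturality in $S$ is evident, so Yoneda then produces the desired morphism $Gr_{\mu_1} \to Gr_{\mu_2}$ of $R$-schemes (equivalently, after unravelling, the morphism $G_{1,R'}/P_{\mu_1} \to G_{2,R'}/P_{\mu_2}$ sending $[g]$ to $[f(g)]$, using that $f(P_{\mu_1}) \subset P_{\mu_2}$ by the above grading comparison).

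The one nontrivial preliminary, which I expect to be the main (though still minor) obstacle, is to check that $[P_{\mu_2}]$ is itself defined over $R$ so that the target $Gr_{\mu_2}$ even makes sense as an $R$-scheme. For this I would argue as follows: the hypothesis that $[P_{\mu_1}]$ is defined over $R$ means that the $R'$-point $x_{\mu_1} \in \Par_{G_1/R}(R')$ lies in a geometrically connected component $Z_{\mu_1}$ defined over $R$, equivalently that for the descent groupoid of $R'/R$ any conjugate $\sigma^*\mu_1$ is \'etale-locally $G_1(R')$-conjugate to $\mu_1$. Applying $f_{R'}$ and using $\mu_2 = f_{R'} \circ \mu_1$, the same conjugation transports $\sigma^* \mu_2$ into $\mu_2$, so the component of $\Par_{G_2/R} \otimes R'$ containing $x_{\mu_2}$ is Galois-stable and hence descends to a component $Z_{\mu_2} \subset \Par_{G_2/R}$ witnessing that $[P_{\mu_2}]$ is defined over $R$. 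With this in hand $Gr_{\mu_2}$ is constructed as in the previous subsubsection, and the morphism defined above automatically respects the descent data on both sides because $f$ is defined over $R$.
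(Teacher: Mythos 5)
Your proposal is correct and takes essentially the same approach as the paper: the map is defined by composing a $\mu_1$-filtration, viewed as a fibre functor on $\Rep_R(G_1)$, with restriction along $f:G_1 \rightarrow G_2$, and checking the result is a $\mu_2$-filtration via $\cF_{\mu_1}\circ f^* = \cF_{\mu_2}$ \'etale-locally. Your additional verification that $[P_{\mu_2}]$ is defined over $R$ (so that $Gr_{\mu_2}$ makes sense as an $R$-scheme), via Galois-stability of the conjugacy class under $f$, is sound and is a point the paper leaves implicit.
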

\begin{proof}
Given $S/R$ and $\cF \in Gr_{\mu_1}(S)$, recall that $\cF$ is specified by a fibre functor
$$\Rep_R(G_1) \rightarrow \Fil_S.$$
Composing with the restriction map $\Rep_R(G_2) \rightarrow \Rep_R(G_1)$, we get a new fibre functor from $\Rep_R(G_2)$ which it is easy to check gives an element of $Gr_{\mu_2}(S)$, defining the map required.
\end{proof}

\subsection{Filtered $G$-bundles}
Fix $G,\mu$ as above and suppose we have $X/R$ a scheme and $P \rightarrow X$ a $G$-bundle on $X$. A $\mu$-\emph{filtration} of $P$ is a $G$-equivariant map of $R$-schemes $$\gamma:P \rightarrow Gr_\mu.$$

\begin{lem}
To give a $\mu$-filtration $\gamma$ on $P$ is to give a fibre functor
$$\omega^{\gamma}_P: \Rep_{R}(G) \rightarrow \Fil_X$$
which \'etale locally is isomorphic to $\cF_\mu$ and such that the composite with the forgetful functor
$$\Rep_{R}(G) \map{\omega^{\gamma}_P} \Fil_X \rightarrow \Bun_X$$
is equal to the fibre functor $\omega_P$ defined by $P$.
\end{lem}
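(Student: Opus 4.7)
The plan is to set up the equivalence by pulling back along the faithfully flat torsor projection $p: P \rightarrow X$ and descending, invoking fpqc descent for filtered bundles throughout. The Tannakian characterization of $Gr_\mu$ established in the previous proposition, as the functor of $\mu$-filtrations of $G$, will do most of the work.

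In the forward direction, given a $G$-equivariant map $\gamma: P \rightarrow Gr_\mu$, I would use the representability of $Gr_\mu$ to view $\gamma$ as a $\mu$-filtration of $G$ over $P$, i.e., a tensor fibre functor $\tilde{\omega}: \Rep_R(G) \rightarrow \Fil_P$ whose underlying bundle functor is the standard one $V \mapsto V \otimes_R \cO_P$. Using the canonical trivialization $P \times_X P \cong P \times G$, the descent datum on $p^*\omega_P(V) \cong V \otimes_R \cO_P$ arising from the associated bundle construction is given by the representation action of $G$ on $V$. The $G$-equivariance of $\gamma$, combined with the transport-of-structure action of $G$ on $Gr_\mu$ from the previous proposition, is precisely the statement that $\tilde{\omega}$ is compatible with this descent datum. fpqc descent for filtered bundles along $p$ then yields the desired $\omega_P^\gamma: \Rep_R(G) \rightarrow \Fil_X$, with underlying bundle functor $\omega_P$ by construction. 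The property of being \'etale-locally isomorphic to $\cF_\mu$ transfers from $\tilde{\omega}$ to $\omega_P^\gamma$ after combining an \'etale trivialization of $P$ with an \'etale cover witnessing the $\mu$-filtration property.

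Conversely, given $\omega_P^\gamma$ satisfying the stated properties, I would pull back along $p$ to obtain a filtered fibre functor on $P$; via the canonical trivialization of $p^*P$, its underlying bundle functor is identified with the standard one, and being \'etale-locally isomorphic to $\cF_\mu$ is preserved by pullback. Representability of $Gr_\mu$ then produces a classifying map $\gamma: P \rightarrow Gr_\mu$, and its $G$-equivariance encodes exactly the descent compatibility of $p^*\omega_P^\gamma$, which holds because the filtration arose from $X$. That the two assignments are mutually inverse is then formal from fpqc descent along $p$.

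The main technical point I expect to encounter is matching, under the Tannakian dictionary of the previous proposition, the $G$-equivariance of $\gamma$ on one side with the descent-datum compatibility of the filtered fibre functor on the other. Once I fix consistent conventions for the right $G$-action on $Gr_\mu$ (by conjugation on the classifying parabolic, equivalently transport-of-structure on the classified filtration) and on $P$ as a right $G$-torsor, this translation becomes essentially mechanical, and the remaining steps are routine Tannakian bookkeeping.
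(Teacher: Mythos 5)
Your proposal is correct and follows essentially the same route as the paper: view $\gamma$ (via the Tannakian/functor-of-points description of $Gr_\mu$) as a $G$-equivariant $\mu$-filtration of $G$ over $P$ with the canonical underlying bundle functor, and descend it along the torsor projection $P \rightarrow X$ to get $\omega_P^\gamma$, reversing the process for the converse. The paper simply asserts the descent step where you spell out the fpqc descent datum coming from $P \times_X P \cong P \times G$ and the $G$-equivariance, so your write-up is a more detailed version of the same argument.
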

\begin{proof}
Suppose we are given a $\mu$-filtration $\gamma:P \rightarrow Gr_{\mu}$ of $P$. Pulling back the universal $\mu$-filtration of $G$, we obtain a $G$-equivariant $\mu$-filtration of $G$ over $P$, which descends to a $\mu$-filtration of $G$ over $X$. I.e. we obtain a fibre functor $$\omega_P^\gamma:\Rep_R(G) \rightarrow \Fil_X.$$ Since it comes from a $G$-equivariant $\mu$-filtration of $G$ over $P$, whose forgetful functor to $\Bun_P$ by definition is the canonical one from $\Rep_R(G)$ which descends to $\omega_P$, we see that it satisfies the condition in the lemma.

Conversely, if we are given $\omega^\gamma_P$ satisfying the condition, we can pull it back along $P \rightarrow X$ to obtain a $G$-equivariant $\mu$-filtration on $P$, which is the same as a $G$-equivariant map $\gamma: P \rightarrow Gr_\mu$.
\end{proof}

We also need the following criterion for extending $\mu$-filtrations over $K$ to $\mu$-filtrations over $R$.

\begin{lem}
\label{filtration extension}
Suppose we are given $\mu:\mathbb{G}_{m,L} \rightarrow G_L$ for $L/K=\Frac(R)$ a finite extension whose conjugacy class is defined over $K$, $X/R$ a scheme and $P \rightarrow X$ a $G$-bundle. A $\mu$-filtration
$$\gamma: P_K \rightarrow Gr_{\mu}$$
extends to a $G$-equivariant
$$P \rightarrow \mathcal{GR}_{\mu}$$
if for $\rho: G \hookrightarrow GL(V)$ a faithful representation on $V/R$ finite projective,
the bundle $\cV := V \times^G P/X$ carries a filtration $\cV^\bullet$ (making $\cV$ into a filtered bundle in the above sense) extending that of $\cV_K = \omega^\gamma_{P_K}(V)$.
\end{lem}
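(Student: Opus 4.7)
The plan is to realise everything as sections of proper schemes over $X$ and finish with a closed-image argument. Giving a $G$-equivariant morphism $P \to \mathcal{GR}_\mu$ is the same as giving a section of the proper bundle $Y := P \times^G \mathcal{GR}_\mu \to X$, and $\gamma$ supplies such a section $s_K$ over $X_K$. So the task reduces to extending $s_K$ to a section $s : X \to Y$.

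To bring in the hypothesis I would embed $Y$ into a larger proper bundle built from $V$. After passing to the \'etale cover $R'/R$ on which $\mu'$ is defined, $\mathcal{GR}_\mu \cong G/P_{\mu'}$, where $P_{\mu'} = G_{R'} \cap Q_{\mu'}$ for $Q_{\mu'} \subset GL(V_{R'})$ the parabolic stabilising the filtration that $\rho \circ \mu'$ induces on $V$. The natural map $G/P_{\mu'} \to GL(V)/Q_{\mu'}$ is a monomorphism from a proper source to a separated target, hence a closed immersion; forming associated bundles yields a closed immersion $Y \hookrightarrow Z := P \times^G (GL(V)/Q_{\mu'})$, where $Z$ is the flag bundle over $X$ classifying filtrations of $\cV$ with the same discrete type (ranks of graded pieces) as the filtration on $V$ induced by $\mu'$.

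The given filtration $\cV^\bullet$ has flat graded pieces, so its type is locally constant on $X$, and on $X_K$ it agrees with $\omega^\gamma_{P_K}(V)$ which is of $\mu$-type by hypothesis. Under the assumption that $X_K$ is schematically dense in $X$ (automatic in the intended Shimura-variety applications, where $X$ is flat and normal over $R$), the filtration is globally of $\mu$-type and defines a section $\sigma : X \to Z$ whose restriction to $X_K$ equals $s_K$ under the identification of $\omega^\gamma_{P_K}(V)$ with the pullback of the universal filtration on $\mathcal{GR}_\mu$. Since $Y \hookrightarrow Z$ is closed, $\sigma^{-1}(Y)$ is closed in $X$ and contains the dense $X_K$, forcing $\sigma^{-1}(Y) = X$. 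Thus $\sigma$ factors through $Y$, giving the desired section $s$ and hence the $G$-equivariant extension $P \to \mathcal{GR}_\mu$.

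The main obstacle is the type-matching step: without density of $X_K$ in $X$, there is no a priori reason for $\cV^\bullet$ to have $\mu$-type on components of $X$ entirely in closed fibres, so one would either restrict the conclusion to $\overline{X_K} \subset X$ or impose a mild flatness hypothesis. Everything else is formal from the properness of $\mathcal{GR}_\mu$ and the closedness of $G/P_{\mu'} \hookrightarrow GL(V)/Q_{\mu'}$.
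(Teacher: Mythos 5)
Your argument is correct and takes essentially the same route as the paper's proof: both reduce to the closed immersion of the $G$-Grassmannian into the flag variety of filtrations of $V$ (proper plus monomorphism, the monomorphism holding because the filtration on $V$ determines the induced filtration on every tensor construction, equivalently $P_{\mu'}=G\cap Q_{\mu'}$), and then use density of the generic fibre to force the extended section into this closed subscheme. Your explicit remark about type-matching and schematic density of $X_K$ in $X$ is well taken, but it is not a departure: the paper's proof makes the same implicit use of flatness when it checks on the generic fibre that the ideal sheaf of $Gr^{G}_{\mu'}$ is killed, and this hypothesis is satisfied in the intended applications.
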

\begin{proof}
Let $R'/R$ be an \'etale cover and $\mu: \mathbb{G}_{m,R'} \rightarrow G_{R'}$ a cocharacter conjugate to $\mu$ as in (\ref{grass integrality}). We let $Gr_{\mu'}^G$ and $Gr_{\mu'}^{GL(V)}$ be the two Grassmannians formed from considering $\mu':\Gm \rightarrow G$ and $\mu':\Gm \rightarrow G \map{\rho} GL(V)$ respectively, both defined over $R$.

By (\ref{gr functoriality}) we obtain a map $$\rho_*: Gr_{\mu'}^G \rightarrow Gr_{\mu'}^{GL(V)}.$$
We claim this map is a closed immersion, and this suffices to prove the lemma because we are assuming that $\rho_*(\gamma) \in Gr_{\mu'}^{GL(V)}\otimes_R K$ extends to a map $\tilde{\gamma}: P \rightarrow Gr_{\mu'}^{GL(V)}$, and since $Gr_{\mu'}^{GL(V)}$ is flat that the ideal sheaf of $Gr_{\mu'}^G$ is killed by $\tilde{\gamma}^*$ may be checked on the generic fibre.

Since it is clearly proper (as both the source and target are proper), it suffices to show it is a monomorphism, for which it suffices to check the functor of points is injective. But this is obvious: given two $\mu'$-filtrations of $G_T$ for a test scheme $T$, if they induce the same filtration on $V_T$ then they are equal, since any other representation of $G_T$ can be embedded in a tensor construction on $V_T$ and will have to receive the induced filtration.

\end{proof}
\section{Integral Models for the standard principal bundle}
\subsection{Breuil-Kisin modules and lattices in de Rham cohomology}

We consider the following adaptation of the results of \cite{kis1}, as packaged in \cite[1.2]{kis2}. Fix $F = \prod_{i=1}^s F_i$ some finite \'etale algebra over $\Qp$, and let $\kappa_i$ be the residue field of $F_i$, and $E_i(u)$ the monic minimal polynomial of a uniformiser $\varpi_i$ for $F_i$ over $W(\kappa_i)$. Consider the ring $\fS = \fS_F := \prod_i W(\kappa_i)[[u]]$ and $E(u) = (E_1(u),\dots,E_s(u))$. We equip $\fS$ with the Frobenius $\varphi$ raising $u \mapsto u^p$ and the canonical Frobenius on each $W(\kappa_i)$.

Define the category $\text{Lisse}^{crys}_{\Zp}(F)$ to be that of crystalline constant rank lisse $\Zp$-sheaves on $\Spec F$, i.e. Galois-stable $\Zp$-lattices in tuples of crystalline $p$-adic representations $(\sigma_i: \Gal(\bar{F_i}/F_i) \rightarrow GL_{n}(\Qp)).$ Define the category $\text{Mod}_\fS^\varphi$ of $\fS$\emph{-modules} to consist of finite free $\fS$-modules $\fM$ together with a $\varphi$-semilinear isomorphism
$$1 \otimes \varphi: \varphi^*(\fM)[1/E(u)] \rightiso \fM[1/E(u)].$$

\begin{thm}
\label{kismod}
There is a fully faithful tensor functor
$$\fM: \Lisse^{crys}_{\Zp}(F) \rightarrow \Mod^\varphi_{\fS_F}$$
compatible with the formation of symmetric and exterior powers, unramified base change $F \rightarrow F'$ of finite \'etale algebras over $\Qp$, and with the property that
$$\D(L) := \varphi^*\fM(L) \otimes_{\fS_F} \cO_F \subset D_{dR}(L \otimes \Qp)$$
obtained by tensoring along the map $u \mapsto (\varpi_i)$ is a natural $\cO_F$-lattice in $D_{dR}(L \otimes \Qp)$.
Moreover, if $\cA$ is an abelian variety over $\cO_F$, and $L = T_p(\cA_F)^*$, then this lattice is identified with integral de Rham cohomology
$$
\begin{CD}
     H^1_{dR}(\cA/\cO_F) @>\subset >> H^1_{dR}(\cA_F/F) \\
@|        @VV\cong V\\
     \D(L) @>\subset >>  D_{dR}(L\otimes \Qp).
\end{CD}
$$
\end{thm}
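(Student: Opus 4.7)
The plan is to deduce this essentially formally from Kisin's original theorem for finite extensions of $\Q_p$ (\cite{kis1}, as packaged in \cite[1.2]{kis2}) via the product decomposition $F = \prod_i F_i$. First I would observe that both source and target categories split as products over $i$: a lisse $\Z_p$-sheaf on $\Spec F$ amounts to a tuple of crystalline Galois representations of $\Gal(\bar F_i/F_i)$, while $\fS_F = \prod_i W(\kappa_i)[[u]]$ and $E(u) = (E_1(u), \ldots, E_s(u))$ give $\Mod^\varphi_{\fS_F} = \prod_i \Mod^\varphi_{\fS_{F_i}}$ since the $\varphi$-structure is defined componentwise and $[1/E(u)]$ localizes each factor at its own $E_i(u)$. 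Similarly $\cO_F = \prod_i \cO_{F_i}$ and $D_{dR}(L \otimes \Q_p) = \prod_i D_{dR}(L_i \otimes \Q_p)$.

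Next I would define $\fM := \prod_i \fM_i$ where each $\fM_i$ is the functor given by Kisin's theorem for the single finite extension $F_i/\Q_p$, and note that full faithfulness and the tensor structure follow componentwise. The compatibility with symmetric and exterior powers is componentwise by construction, and the lattice property for $\D(L)$ is obtained by combining the lattice properties on each factor after base change along $u \mapsto (\varpi_i)$ in $\fS_{F_i} \to \cO_{F_i}$. Unramified base change $F \to F'$ refines the partition of the spectrum into components whose residue fields extend unramifiedly, and compatibility follows because Kisin's functor is known to commute with unramified base change of each residue extension $W(\kappa_i) \to W(\kappa_i')$.

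For the abelian variety comparison, I would restrict $\cA/\cO_F$ to each component $\cO_{F_i}$, obtaining abelian varieties $\cA_i/\cO_{F_i}$ whose Tate modules on the generic fibre reassemble to $T_p(\cA_F)$. Then the identification $\D(T_p(\cA_{i,F_i})^*) \cong H^1_{dR}(\cA_i/\cO_{F_i})$ is exactly the content of Kisin's identification in the single-factor case, and I would simply take the product. The statement as given is then obtained by reassembly.

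The only genuine content beyond bookkeeping is verifying that the product formalism makes sense, namely that ``finite free $\fS_F$-module'' really does decompose as a tuple of finite free $\fS_{F_i}$-modules (this is immediate since $\fS_F$ is a finite product of local rings, so projective modules split) and that the $\varphi$-structure respects the decomposition (clear since $\varphi$ acts componentwise). I do not anticipate a substantive obstacle; the main thing to be careful about is the normalization of the functor on the base change of $\fS$ along $u \mapsto (\varpi_i)$, ensuring the resulting map lands inside $D_{dR}$ of the corresponding factor of $L$ rather than getting permuted by some choice of embedding.
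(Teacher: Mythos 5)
Your reduction to the single-factor case via $\fS_F=\prod_i\fS_{F_i}$, $\Mod^\varphi_{\fS_F}=\prod_i\Mod^\varphi_{\fS_{F_i}}$, etc., is exactly the (implicit) content of the paper's treatment of the first part: the paper simply regards the passage from finite extensions to finite \'etale algebras as bookkeeping and quotes \cite[1.2.1]{kis2} for the functor $\fM$, its full faithfulness, tensor compatibilities, unramified base change, and the lattice $\D(L)\subset D_{dR}(L\otimes\Qp)$. So that portion of your argument is fine and agrees with the paper.

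The gap is in the last step. You assert that, in the single-factor case, the identification $\D(T_p(\cA_{F_i})^*)\cong H^1_{dR}(\cA_i/\cO_{F_i})$ inside $D_{dR}$ ``is exactly the content of Kisin's identification.'' It is not: the theorem packaged in \cite[1.2]{kis2} produces the lattice $\D(L)=\varphi^*\fM(L)\otimes_{\fS}\cO_F$ abstractly, but says nothing about its comparison with the integral de Rham cohomology of an abelian scheme over $\cO_F$. This comparison is the genuinely non-formal part of the statement, and the paper obtains it from the integral $p$-adic Hodge theory of Bhatt--Morrow--Scholze, specifically \cite[1.8 (ii)]{bms}, which identifies the relevant specialization of the Breuil--Kisin module of $H^1_{et}$ with crystalline/de Rham cohomology compatibly with the rational comparison isomorphisms (the paper remarks that a direct deduction from the Breuil--Kisin classification of $p$-divisible groups is probably also possible, but does not carry it out). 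As written, your proposal attributes this step to a citation that does not contain it; to close the gap you must either invoke \cite[1.8 (ii)]{bms} (or an equivalent statement) in the single-factor case, or actually supply the argument via the Breuil--Kisin theory of the $p$-divisible group $\cA[p^\infty]/\cO_{F_i}$ and check the compatibility with the embedding into $D_{dR}$.
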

\begin{proof}
The first part is just \cite[1.2.1]{kis2} and the second follows perhaps most quickly from \cite[1.8 (ii)]{bms} (although since we are in the case of abelian varieties the theorem probably also can be deduced directly from the theory of Breuil and Kisin).
\end{proof}

\subsubsection{}
For our application to integral models of Shimura varieties over $\cO_E[1/N]$, we also need the following abstract lemmas. We thank one of the anonymous referees for pointing out the ideas for the argument for (\ref{lattice uniqueness}) at \cite[6.15]{maul}.
\begin{lem}
\label{lattice uniqueness}
\begin{enumerate}
\item Let $A$ be a Dedekind domain with fraction field $K$, $X/A$ a smooth scheme and suppose we are given two vector bundles $\cL_1,\cL_2$ over $X$ together with an identification
$$\theta: \cL_1 \otimes_A K \rightiso \cL_2 \otimes_A K.$$
Suppose further that $\theta$ extends as an isomorphism to the formal completion of $X$ at all maximal ideals of $A$. Then $\theta$ extends over $X$.
\item Let $W = W(k)$ for $k$ a perfect field with fraction field $K$. Suppose $X/W$ is a smooth scheme, with $p$-adic completion $\hat{X}$, and special fibre $X_0$, and suppose we are given vector bundles $\cL_1,\cL_2$ over $\hat{X}$ together with an identification
$$\theta: \cL_1[1/p] \rightiso \cL_2[1/p].$$
Suppose further that there is a Zariski dense subset $U_0 \subset X_0$ of the special fibre such that each $x_0 \in U_0$ admits a lift $\tilde{x}_0 \in \hat{X}(W(\kappa(x_0)))$ with the property that $\tilde{x}_0^*(\theta)$ extends to an isomorphism  $\tilde{x}_0^*(\cL_1) \rightiso \tilde{x}_0^*(\cL_2)$.
Then $\theta$ extends over $\hat{X}$.
\end{enumerate}
\end{lem}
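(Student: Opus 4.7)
Write $\cH := \uHom(\cL_1, \cL_2)$, a vector bundle in each setting. Extending $\theta$ as an isomorphism reduces to extending it as a section of $\cH$: by symmetry the same argument extends $\theta^{-1}$, and the composites agree with the identity over the (rigid-)generic fibre, which is scheme-theoretically dense by flatness of the base. The two parts then proceed by minimal-denominator arguments with respect to the relevant completion.

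For Part (1), cover $X$ by affine opens $U_i = \Spec R_i$ on which $\cH$ becomes free. The problem reduces to the following local statement: for a Noetherian flat $A$-algebra $R$, an element $\beta \in R \otimes_A K$ lying in $\hat R_v := R \otimes_A \hat A_v$ for every maximal $v \subset A$ lies in $R$. Localising at fixed $v$ (so $A_{(v)}$ is a DVR with uniformiser $\pi$), write $\beta = r/\pi^n$ with $r \in R$; then $r = \pi^n\beta \in \pi^n \hat R_v \cap R = \pi^n R$, using the standard identification $\hat R_v/\pi^n \hat R_v = R/\pi^n R$, which shows $\beta \in R_{(v)}$. Tensoring the Dedekind decomposition $K/A = \bigoplus_v K/A_{(v)}$ with the flat $A$-module $R$ yields $(R \otimes K)/R = \bigoplus_v (R \otimes K)/R_{(v)}$, whence $R = \bigcap_v R_{(v)}$. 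Applying this coordinate-wise gives the extension on each $U_i$, and the local extensions agree on overlaps by uniqueness of lifting over the flat base.

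For Part (2), work on an affine open $U = \Spf B$ of $\hat X$ on which $\cH|_U$ is free. Since $\hat X/W$ is flat, $\cH$ is $p$-torsion-free, so we write $\theta|_U = p^{-n}\theta'$ with $\theta' \in \cH(U)$ and $n \geq 0$ minimal. Suppose for contradiction $n \geq 1$. The $p$-torsion-freeness yields an injection $\cH(U)/p\cH(U) \hookrightarrow (\cH/p\cH)(U_0)$, where $U_0 \subset X_0$ is the reduction of $U$; minimality sends $\theta'$ to a nonzero section $\bar\theta' \in (\cH/p\cH)(U_0)$. However, for each $x_0 \in U_0$ in the dense subset guaranteed by hypothesis, the lift $\tilde x_0$ satisfies $\tilde x_0^*(\theta) \in \cH(\tilde x_0)$, hence $\tilde x_0^*(\theta') \in p^n \cH(\tilde x_0)$; reducing modulo $p$ gives $x_0^*(\bar\theta') = 0$. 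Thus the closed vanishing locus of $\bar\theta'$ contains a Zariski-dense subset of $U_0$, and as $U_0$ is open in the smooth, hence reduced, scheme $X_0 = X \otimes_W k$, this forces $\bar\theta' = 0$, a contradiction. Hence $n = 0$, and the local extensions glue.

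The main obstacle in Part (1) is globalising from the local DVR computation, which hinges on the Dedekind decomposition of $K/A$ combined with flatness of $R/A$; everything else is formal. In Part (2), the delicate point is converting pointwise integrality at a Zariski-dense set of lifts into the vanishing of $\bar\theta'$; this works precisely because $X_0$ is reduced, and would fail without a sufficiently rich supply of lifts -- reflecting the importance in the subsequent Shimura-variety applications of having enough crystalline points.
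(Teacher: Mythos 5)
Your proof is correct and follows essentially the same route as the paper's: reduce to sections of $\uHom(\cL_1,\cL_2)$ (equivalently matrix coefficients) on affines, check integrality prime by prime against the completions for part (1), and in part (2) run the identical minimal power-of-$p$ denominator argument, using density of the lifted points together with reducedness of the smooth special fibre. The only cosmetic difference is in globalising part (1), where you invoke the decomposition $K/A=\bigoplus_v K/A_{(v)}$ and flatness to get $R=\bigcap_v R_{(v)}$, while the paper bounds denominators by a single $D\in A$ using finite-typeness; both amount to the same local-to-global integrality check.
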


\begin{proof}
Note that in both cases the claim may be checked locally on $X$, so we may assume $\cL_1,\cL_2$ are both free and that $X = \Spec R$ is affine, and by picking bases and considering the matrices of both $\theta$ and $\theta^{-1}$ the question can be reduced to a question about the matrix coefficients.

For (1), we wish to show matrix values lying in $R \otimes_A K$ in fact lie in $R$. The matrix values all lie in $R[1/D]$ for some $D \in A$, since $R$ is of finite type over $A$, and the $\cL_i$ of finite rank. But by the hypothesis for each maximal ideal $\fp$ containing $D$, we know $\theta$ and $\theta^{-1}$ extend over $\hat{X}_{/\fp}$, implying that the matrix values also lie in $R\otimes_A A_{\fp}$, hence they lie in $R$ as required.

For (2), letting $\hat{R}$ be the $p$-adic completion of $R$, we are required to show that a matrix coefficient $f \in \hat{R}[1/p]$ in fact lies in $\hat{R}$. Suppose for contradiction it does not, and let $r>0$ be minimal such that $F=p^rf \in \hat{R}$. The hypothesis tells us that for any $x_0 \in U_0$ and some lift $\tilde{x}_0$ we have that $\tilde{x}_0^*(f) \in W(\kappa(x_0))$. Therefore $\tilde{x}_0^*(F) \in p^r W(\kappa(x_0))$ and in particular $x_0^*(F)=0$, or $F \in \fm_{x_0} \subset R \otimes_W k$. Since the set of such $x_0$ is dense, and $R \otimes_W k$ a reduced algebra of finite type over a field, we deduce that the $F=0$ restricted to the special fibre. I.e. $F=pF'$ for some $F' \in \hat{R}$. But then $p^{r-1} f = F' \in R$ giving the required contradiction.
\end{proof}

\begin{lem}
\label{lattice torsors}
Let $R$ be a PID with fraction field $K$, $X/R$ a flat scheme, $G/R$ a flat affine group scheme of finite type, and $P/X_K$ a $G_K$-torsor. Suppose we have two pairs $(\cP_i,\iota_i)$ $i=1,2$ where $\cP_i$ is a $G$-torsor over $X$ and $\iota_i:\cP_i \otimes_R K \rightiso P$ an isomorphism of $G_K$-torsors over $X_K$.

Then the map $\iota_2^{-1} \iota_1: \cP_1 \otimes K \rightiso \cP_2 \otimes K$ extends over $R$ if and only if for every representation $G \rightarrow GL(V)$ with $V/R$ finite free the composite isomorphism
$$\omega_{\cP_1}(V) \otimes K \map{\iota_{1*}} \omega_{P}(V_K) \map{\iota_{2*}^{-1}} \omega_{\cP_2}(V) \otimes K$$
identifies the lattices $\omega_{\cP_1}(V)$ and $\omega_{\cP_2}(V)$. 
\end{lem}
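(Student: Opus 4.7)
The forward implication is formal: an extension $\Phi:\cP_1 \rightiso \cP_2$ of $\iota_2^{-1}\iota_1$ over $X$ induces by functoriality of the associated bundle construction $V \times^G -$ an isomorphism $\omega_{\cP_1}(V) \rightiso \omega_{\cP_2}(V)$ over $X$, whose generic fibre is $\iota_{2*}^{-1}\iota_{1*}$ and so identifies the lattices.

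For the converse, my plan is to invoke the Tannakian characterization of $G$-torsors. Writing $\omega_i := \omega_{\cP_i}: \Rep_R(G) \rightarrow \Bun_X$ for the fibre functor $V \mapsto V \times^G \cP_i$, the Tannakian dictionary (Saavedra-Rivano, and e.g.\ Broshi's treatment over Dedekind bases) identifies $G$-torsors on $X$ with exact $\otimes$-functors $\Rep_R(G) \rightarrow \Bun_X$, and morphisms of $G$-torsors with natural tensor isomorphisms of the associated fibre functors. The hypothesis provides, for every finite free $V \in \Rep_R(G)$, an $R$-linear isomorphism $\theta_V: \omega_1(V) \rightiso \omega_2(V)$ whose base change along $R \hookrightarrow K$ agrees with $\iota_{2*}^{-1}\iota_{1*}$. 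It therefore suffices to verify that the collection $\theta := (\theta_V)_V$ assembles into a natural tensor isomorphism $\omega_1 \rightiso \omega_2$, since then Tannakian duality produces the required $\Phi:\cP_1 \rightiso \cP_2$ extending $\iota_2^{-1}\iota_1$.

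The decisive point is flatness. Because $X$ is $R$-flat and each $V$ is finite free, each $\omega_i(V)$ is an $R$-flat sheaf on $X$ and so injects into $\omega_i(V) \otimes_R K$. Now the comparison $\iota_2^{-1}\iota_1$ is, tautologically, a natural tensor isomorphism of the $K$-fibre functors $(\omega_i)_K$ (it is an isomorphism of $G_K$-torsors over $X_K$). Consequently, for every morphism $f:V \rightarrow V'$ in $\Rep_R(G)$ the naturality square $\theta_{V'} \circ \omega_1(f) = \omega_2(f) \circ \theta_V$ commutes after tensoring with $K$, and the analogous diagram expressing tensor compatibility $\theta_{V \otimes V'} = \theta_V \otimes \theta_{V'}$ (under the canonical identifications $\omega_i(V \otimes V') \cong \omega_i(V) \otimes \omega_i(V')$) likewise commutes over $K$. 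The injectivity $\omega_i(-) \hookrightarrow \omega_i(-) \otimes_R K$ then forces these diagrams to commute already over $R$, so $\theta$ is indeed a natural tensor isomorphism of fibre functors.

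The main obstacle is to pin down Tannakian duality in the required generality: for a flat affine group scheme of finite type over a PID with an arbitrary flat base $X$. This is classical for reductive $G$ and is otherwise available via Saavedra-Rivano. Should one wish to avoid such machinery, an elementary substitute proceeds as follows: pick a faithful representation $G \hookrightarrow GL(V_0)$, realize each $\cP_i$ as the closed subscheme of the frame scheme $\uIsom(V_{0,X}, \omega_i(V_0))$ cut out by a suitable family of $G$-invariant tensors (identified via $\theta$ on the corresponding tensor powers of $V_0$), and construct $\Phi:\cP_1 \rightarrow \cP_2$ directly as composition with $\theta_{V_0}$, verifying that the tensor constraints cut out the correct subscheme using the remaining $\theta_V$'s.
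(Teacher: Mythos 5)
Your proposal is correct and follows essentially the same route as the paper: both reduce the statement to Broshi's Tannakian equivalence between $G$-torsors over $X$ and fibre functors $\Rep_R(G) \rightarrow \Bun_X$, so that extending $\iota_2^{-1}\iota_1$ is equivalent to extending the induced isomorphism of fibre functors, which is exactly the lattice condition. Your flatness/torsion-freeness argument checking that the $\theta_V$ assemble into a natural tensor isomorphism over $R$ is just a (welcome) spelling-out of the step the paper leaves implicit.
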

\begin{proof}
Recall the natural equivalence \cite[1.2]{brosh} under our hypotheses between the groupoid of $G_X$-torsors and the groupoid of fibre functors $\Rep(G) \rightarrow \Bun (X)$, and that it is functorial in $X/R$. In particular, $\iota_2^{-1} \circ \iota_1$ extends over $R$ if and only if the composite induced map $\omega_{\cP_1} \otimes_R K \rightiso \omega_{\cP_2} \otimes_R K$ on fibre functors extends over $R$, and this is the case if and only if the condition on lattices holds.
\end{proof}

\subsection{Special type case}

\subsubsection{}
Let $(T,h)$ be the Shimura datum defined by a torus split by a CM field, $E_T := E(T,h)$. Then $\Sh(T,h)/E_T$ is a product of algebraic field extensions, and we recall that its integral canonical model $\cS(T,h)$ is that obtained by taking integral closures. In particular, recall that if $T$ is unramified at all $p \nmid  N$, we get a unique integral model (also abusively written) $T/\Z[1/N]$ and letting $K^N = \prod_{p \nmid  N} T(\Zp)$ we get $\cS_{K^N}(T,h)$ ind-\'etale over $\cO_{E_T}[1/N]$ with a natural $\prod_{p|N}T(\Qp)$ action extending that on the generic fibre.

Recall that we also have $P_{K^N}(T,h) \rightarrow \Sh_{K^N}(T,h)$ the standard principal $T^c$-bundle, defined over $E_T$. Our aim here is to construct for it a canonical integral model. We assume henceforth that $T$ is unramified at $p \nmid  N$.

\subsubsection{}
\label{special canonical def}
Suppose $\cP/\cS_{K^N}(T,h)$ together with $\iota:\cP_{E_T} \rightiso P_{K^N}(T,h)$ is such a model. We can study the associated fibre functor 
$$\omega_{\cP}: \Rep_{\Z[1/N]}(T^c) \ni W \mapsto W \times^{T^c} \cP \in \vVec(\cS_{K^N}(T,h)).$$
The identification $\iota$ realises the image of such as lattices
$$\omega_{\cP}(W) \subset \omega_{P_{K^N}(T,h)}(W\otimes \Q)$$
in the ``de Rham sheaves'' defined by $P_{K^N}(T,h)$.

On the other hand, for each prime $q \nmid N$, and $v | q$ a place of $E_T$ we can let $K^{Nq} = \prod_{p \nmid Nq} T(\Zp)$ and it is immediate from the setup and class field theory that the pro-\'etale $T^c(\Z_q)$-cover $\Sh_{K^{Nq}}(T,h) \rightarrow \Sh_{K^N}$ gives rise to, for each representation $W_q$ of $T^c(\Z_q)$, a crystalline lisse $\Z_q$-sheaf $\omega_{et}(W_q)$ on $\Sh_{K^N}$. By (\ref{kismod}) we have associated to such a datum a canonical lattice $\D(\omega_{et}(W_q)) \subset D_{dR}(\omega_{et}(W_q) \otimes_{\Z_q} \Q_q)$.

Finally recall (\ref{dr identification}) which gives a natural identification
$$\theta: \omega_{P_{K^N}(T,h)}(W\otimes \Q) \otimes_{E_T} {E_{T,v}} \rightiso D_{dR}(\omega_{et}(W_q) \otimes_{\Z_q} \Q_q).$$

We say that the model $\cP$ is \emph{canonical} if for every $q \nmid  N$ and $W \in \Rep_{\Z[1/N]}(T)$ the two lattices $\omega_{\cP}(W) \otimes_{\cO_{E_T}[1/N]} \cO_{E_T,v}$ and $\D(\omega_{et}(W_q))$ constructed above are, under the map $\theta$, identified.

\begin{prop}
\label{torus canmod}
With $(T,h)$ as above, there exists a unique integral canonical model for $P_{K^N}(T,h)$.
\end{prop}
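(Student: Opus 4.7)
My plan is to handle uniqueness via the lattice lemmas of \S4.1, then to construct $\cP$ explicitly from the CM-motivic interpretation of the fibre functor in the proof of (\ref{dr identification}), with canonicity falling out of Theorem (\ref{kismod}).

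\emph{Uniqueness.} Given two integral canonical models $(\cP_1,\iota_1)$ and $(\cP_2,\iota_2)$, the canonicity conditions give, for each $W \in \Rep_{\Z[1/N]}(T^c)$ and each place $v \nmid N$ of $E_T$, the identifications
$$\omega_{\cP_1}(W) \otimes \cO_{E_T,v} \;=\; \theta^{-1}\D(\omega_{et}(W_q)) \;=\; \omega_{\cP_2}(W) \otimes \cO_{E_T,v}$$
inside $\omega_{P_{K^N}(T,h)}(W\otimes \Q)\otimes E_{T,v}$. Since $\cS_{K^N}(T,h)$ is ind-\'etale, hence smooth, over the Dedekind domain $\cO_{E_T}[1/N]$, Lemma (\ref{lattice uniqueness})(1) applied to the identity map on generic fibres promotes this place-by-place agreement to a global equality $\omega_{\cP_1}(W) = \omega_{\cP_2}(W)$ as lattices in $\omega_{P_{K^N}(T,h)}(W \otimes \Q)$. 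This holding for every $W$, Lemma (\ref{lattice torsors}) applied after localising $\cO_{E_T}[1/N]$ at each maximal ideal (producing DVRs, hence PIDs) provides an extension of $\iota_2^{-1}\iota_1$ to an isomorphism of $T^c$-torsors over each localisation, and these local extensions glue along their common generic fibre to a global isomorphism $\cP_1 \cong \cP_2$.

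\emph{Existence.} As in the proof of (\ref{dr identification}), the fibre functor $\omega_{P_{K^N}(T,h)}$ admits a motivic description: at a $\Qbar$-point $x$, $\omega_{P_{K^N}(T,h),x}(W) = H_{dR}(M_x(W)/\Qbar)$, where $M_x$ is the $T$-valued CM motive parametrised by $x$. Since $T$ is split by a CM field, every $W \in \Rep_{\Z[1/N]}(T^c)$ gives rise to a CM motive $M(W)$ which is realised, via absolute Hodge cycles, in the cohomology of some CM abelian variety. Such abelian varieties have good reduction everywhere, so extend canonically over each component $\Spec R_i$ of $\cS_{K^N}(T,h)$ (the $R_i$ being rings of integers of abelian extensions of $E_T$ unramified away from $N$). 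Defining $\omega_\cP(W)$ to be the integral de Rham cohomology piece cut out by the integrally-extended absolute Hodge cycles yields an exact tensor functor
$$\omega_\cP: \Rep_{\Z[1/N]}(T^c) \longrightarrow \Bun(\cS_{K^N}(T,h))$$
whose generic fibre is $\omega_{P_{K^N}(T,h)}$. Broshi's Tannakian equivalence \cite[1.2]{brosh} produces from this an honest $T^c$-torsor $\cP$ over $\cS_{K^N}(T,h)$ together with $\iota: \cP_{E_T} \rightiso P_{K^N}(T,h)$.

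\emph{Canonicity.} Fix $v | q$ with $q \nmid N$ and $W \in \Rep_{\Z[1/N]}(T^c)$, and choose a CM abelian variety $\cA$ over an unramified extension of $\cO_{E_T,v}$ realising $M(W)$ via absolute Hodge cycles. By construction $\omega_\cP(W) \otimes \cO_{E_T,v}$ is cut out of a tensor construction on $H^1_{dR}(\cA/\cO_{E_T,v})$, while $\D(\omega_{et}(W_q))$ is the corresponding summand of the Kisin-module construction applied to $T_p\cA$. The second half of Theorem (\ref{kismod}) identifies these two lattices in the case of abelian varieties with good reduction, and tensor functoriality of the Kisin module functor (together with its compatibility with algebraic correspondences, inherited from fully faithful tensor functoriality) transports this identification onto the cut-out summands associated to $W$, giving the canonicity condition. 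The main obstacle is verifying that $\omega_\cP(W)$ is intrinsic to $W$ (independent of the choice of CM abelian variety realising $M(W)$) and tensor-compatible: this reduces to the claim that absolute Hodge cycles between CM abelian varieties extend over their integral good reductions, which follows from the specialisation behaviour of de Rham and crystalline cycles on smooth proper schemes. Once this coherence is established, everything else is Tannakian bookkeeping plus the inputs from \S4.1 and (\ref{kismod}).
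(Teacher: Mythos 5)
Your uniqueness argument and the overall strategy (CM motives realised in abelian varieties, Néron models, the second half of (\ref{kismod})) follow the paper, but the existence step contains a genuine gap. You assert that the CM abelian varieties realising the motives $M(W)$ ``have good reduction everywhere, so extend canonically over each component $\Spec R_i$,'' and you build the integral fibre functor $\omega_\cP$ on that assertion. This is false as stated: a CM abelian variety over the chosen field $F$ only has \emph{potentially} good reduction, so its Néron model $\cA/\cO_F$ is an abelian scheme only over $\cO_F[1/M]$ for some $M$, which one cannot take to divide a power of $N$. (Passing to a larger field to force good reduction does not rescue the argument directly, since that extension will in general be ramified at places not dividing $N$ and you then owe a descent argument for the lattice.) Consequently your construction only produces a lattice over $\cO[1/M]$, and the proposal is missing the step the paper uses to finish: at the finitely many primes $p \mid M$, $p \nmid N$, one takes the Breuil--Kisin lattices $\D(\omega_{et}(W_q))$ of (\ref{kismod}) directly as the local integral structure and glues them to the $\cO[1/M]$-lattice using (\ref{lattice intersect}) to obtain a lattice over $\cO[1/N]$. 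The same issue infects your canonicity paragraph, where you choose ``a CM abelian variety over an unramified extension of $\cO_{E_T,v}$'': good reduction over an \emph{unramified} extension of $E_{T,v}$ is exactly what may fail, and at such places the agreement of lattices has to be built in by definition (via the $\D$-construction) rather than deduced from an abelian scheme.

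Two further points. First, the CM-motive moduli interpretation you invoke requires the weight to be defined over $\Q$; the paper first reduces from $(T,h)$ to $(T^c,h^c)$ (pulling back a canonical model along $\cS_{K^N}(T,h) \rightarrow \cS_{K^{cN}}(T^c,h^c)$, using that any $h:\bS \rightarrow T^c_\R$ has rational weight), and you should make this reduction explicit rather than speak of ``$T$-valued CM motives parametrised by $x$'' for general $(T,h)$. Second, you flag the coherence of $\omega_\cP$ (independence of the chosen abelian variety, tensor compatibility) and propose to settle it by an unproved claim that absolute Hodge cycles extend integrally over good reduction models; integrality of such cycles is precisely the delicate point in this subject and cannot be waved through by ``specialisation behaviour.'' The paper avoids this entirely: having already established uniqueness via (\ref{lattice uniqueness}) and (\ref{lattice torsors}), it observes that the construction is independent of the choices of $F$ and $A/F$ as a consequence of uniqueness, rather than by comparing cycles integrally. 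Restructuring your argument to lean on uniqueness in the same way would close that part of the gap.
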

\begin{proof}
We first remark that uniqueness follows directly from (\ref{lattice uniqueness} (1)) and (\ref{lattice torsors}). 

For existence, let us first remark that we have the map of Shimura data $(T,h) \rightarrow (T^c,h^c)$ giving rise to a map of integral canonical models for the Shimura varieties $i:\cS_{K^N}(T,h) \rightarrow \cS_{K^{cN}}(T^c,h^c)$. Suppose $\cP^c$ is an integral canonical model for $P_{K^{cN}}(T^c,h^c)$. Then $i^*\cP^c$ is an integral canonical model for $P_{K^N}(T,h)$ because $i^*\omega_{et,(T^c,h^c)} = \omega_{et,(T,h)}$. Since any $h: \bS \rightarrow T^c_{\R}$ has weight defined over $\Q$, we are therefore reduced to the case where $(T,h)$ is a CM pair (i.e. where $T$ is split by a CM field and the weight of $h$ is defined over $\Q$).

Let $T \rightarrow GL(W)$ be a representation of $T/\Z[1/N]$. Since $(T,h)$ is a CM pair, $\Sh_{K^N}(T,h)$, a union of copies of $\Spec E$ for some field $E$, can be interpreted as a moduli space of CM motives with level structures. Let $P_{K^N}(T,h)^0 \rightarrow \Sh_{K^N}(T,h)^0 \rightiso \Spec E$ be any single component together with the restriction of $P_{K^N}(T,h)$ over it. There exists a finite extension $F/E$ and a CM abelian variety $A/F$ for such that $\omega_{P_{K^N}(T,h)^0}(W \otimes \Q) \otimes_E F \subset H^1_{dR}(A/F)^\otimes$ and $\omega_{et}(W) = H^1_{et}(A,\Zp)^\otimes \cap \omega_{et}(W) \otimes \Qp$ for all $p \nmid  N'$ some $N'$ divisible by $N$. Let $\cA/\cO_F$ be the Neron model of $A$, and notice that it is an abelian variety over $\cO_F[1/M]$ for some $M$ which we may assume is divisible by $N'$. Thus we construct an $\cO_E[1/M]$-lattice 
$$\Lambda' :=\omega_{P_{K^N}(T,h)^0}(W \otimes \Q) \cap H^1_{dR}(\cA/\cO_F[1/M])^\otimes \subset \omega_{P_{K^N}(T,h)^0}(W \otimes \Q).$$ 
The second part of (\ref{kismod}) assures us that at all $p \nmid  M$ this lattice agrees with that obtained via first taking the dual Tate module and applying Breuil-Kisin theory.

Moreover, using the construction of (\ref{kismod}) together with (\ref{lattice intersect}) for the finitely many $p$ which divide $M$ but do not divide $N$, we are able to extend $\Lambda'$ to a lattice $\Lambda$ over $\cO_F[1/N]$. This $\Lambda$ (applying the construction for all components) gives us the lattice in $\omega_{P_{K^N}(T,h)}(W \otimes \Q)$ we require to prove existence. Note that since we already have uniqueness, we may observe that the construction does not depend on the choices of $F$ and $A/F$.
\end{proof}

\subsection{Connections on $G$-bundles}
It will help to collect some basic facts about connections on $G$-bundles. Let $S$ be a scheme and $G/S$ a flat affine group scheme of finite type.

Suppose $X/S$ a scheme, and let $\Delta^2(1) = \Delta_{X/S}^2(1)$ be the first order neighbourhood of the diagonal in $X \times_S X$, $\delta: X \hookrightarrow \Delta(1)$ and $p_1,p_2:\Delta(1) \rightarrow X$ the two projection maps. We also will need $\Delta^3(1)$, the first order neighbourhood of the diagonal in $X \times_S X \times_S X$ and its three projections $p_{12},p_{23},p_{13}: \Delta^3(1) \rightarrow \Delta^2(1)$.

\subsubsection{}
Recall that if we have a vector bundle $\cV$ on $X$ a \emph{connection} on $\cV/X$ (relative to $S$) is given by an isomorphism
$$\nabla: p_1^*\cV \rightiso p_2^*\cV$$
such that $\delta^*\nabla = \id$ (under the canonical identification $(\delta \circ p_i)^*\cV = \id^*\cV \cong \cV$). Such a connection is said to be \emph{flat} if $$p_{13}^*(\nabla) = p_{23}^*(\nabla) \circ p_{12}^*(\nabla).$$
It is well known that these definitions are equivalent to the more usual definitions from differential geometry.

\subsubsection{}
Let $P \rightarrow X$ be a $G_X$-torsor. A \emph{connection} on $P$ is an isomorphism
$$\nabla: p_1^* P \rightiso p_2^* P$$ such that $\delta^*\nabla = \id_P$, and $\nabla$ is said to be flat, again if 
$$p_{13}^*(\nabla) = p_{23}^*(\nabla) \circ p_{12}^*(\nabla).$$

\begin{lem}
\label{torsor connection}
Let $X/S$ be a scheme, and assume $S$ is Dedekind. Let $P \rightarrow X$ be a $G$-bundle, with associated fibre functor $\omega_P$. To give the following pieces of data are equivalent.
\begin{enumerate}
\item A (flat) connection on $P \rightarrow X$.
\item For each representation $V \in \Rep_S(G)$ a (flat) connection on $\omega_{P}(V)$ in such a way that the isomorphisms
$$\omega_{P}(V \otimes W) \cong \omega_{P}(V) \otimes \omega_{P}(W)$$
and
$$\omega_{P}(V^\vee) \cong \omega_P(V)^\vee$$
are isomorphisms of bundles with connection.

\item Given a faithful representation $G \hookrightarrow GL_S(V)$ and tensors $s_{\alpha} \in V^\otimes$ such that $G = GL_{S,s_\alpha}(V)$, a (flat) connection on $\omega_{P}(V)$ with the property that each $\omega_{P}(s_\alpha \cO_S) \subset \omega_{P}(V^\otimes) = \omega_{P}(V)^\otimes$ receives the trivial connection.
\end{enumerate}
\end{lem}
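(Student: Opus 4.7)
The plan is to establish the chain $(1) \Rightarrow (2) \Rightarrow (3) \Rightarrow (1)$, where the only step with real content is the last. Throughout, a connection on a bundle or torsor $\cE/X$ is encoded, as in the set-up preceding the lemma, by an isomorphism $p_1^*\cE \rightiso p_2^*\cE$ over $\Delta^2(1)$ restricting to the identity on the diagonal, and flatness is the cocycle condition on $\Delta^3(1)$ under $p_{12},p_{23},p_{13}$.

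For $(1) \Rightarrow (2)$, I apply the associated-bundle functor $V \mapsto V \times^G P = \omega_P(V)$ to a connection $\nabla: p_1^*P \rightiso p_2^*P$. Naturality in $V$ produces connections on each $\omega_P(V)$, and since $V \mapsto \omega_P(V)$ is a tensor functor, compatibility with tensor products and duals is automatic; the identity-on-diagonal condition and the cocycle condition for flatness are preserved term by term. For $(2) \Rightarrow (3)$, a tensor $s_\alpha \in (V^\otimes)^G$ is nothing but a $G$-equivariant map $\cO_S \rightarrow V^\otimes$ from the trivial representation. Tensor compatibility forces $\omega_P(\cO_S) = \cO_X$ to carry the trivial connection, and then the induced horizontal morphism $\cO_X \rightarrow \omega_P(V^\otimes)$ cutting out $\omega_P(s_\alpha\cO_S)$ realises this sub-line-bundle as a horizontal subbundle, so it inherits the trivial connection.

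The implication $(3) \Rightarrow (1)$ is where I need real input, namely the Tannakian equivalence of \cite[1.2]{brosh} recalled in the proof of (\ref{lattice torsors}): under our standing hypotheses, the groupoid of $G_Y$-torsors on $Y$ is equivalent to the groupoid of fibre functors $\Rep_S(G) \rightarrow \Bun_Y$, functorially in $Y$. Since $G = GL_{S,s_\alpha}(V)$ this can be made concrete by the reconstruction
$$P \cong \uIsom_{s_\alpha}(V \otimes_S \cO_X,\, \omega_P(V)),$$
and the same holds after pullback along any $Y \rightarrow X$. I apply this with $Y = \Delta^2(1)$: under the canonical identification $p_1^*(V \otimes_S \cO_X) = V \otimes_S \cO_{\Delta^2(1)} = p_2^*(V \otimes_S \cO_X)$, both $p_1^* s_\alpha$ and $p_2^* s_\alpha$ coincide with the constant tensor $s_\alpha \otimes 1$. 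Hence the isomorphism $\nabla_V: p_1^*\omega_P(V) \rightiso p_2^*\omega_P(V)$ supplied by $(3)$ sends $p_1^* s_{\alpha,X}$ to $p_2^* s_{\alpha,X}$, because the horizontality of $\omega_P(s_\alpha \cO_S)$ is exactly the statement that $\nabla_V$ fixes the image of this constant tensor. Postcomposition with $\nabla_V$ then carries tensor-preserving isomorphisms to tensor-preserving isomorphisms, inducing a $G$-equivariant
$$\nabla: p_1^*P \rightiso p_2^*P$$
over $\Delta^2(1)$. The identity condition on the diagonal is immediate from $\delta^*\nabla_V = \id$, and in the flat case the cocycle identity for $\nabla_V$ on $\Delta^3(1)$ transports through the functorial $\uIsom_{s_\alpha}$ construction to the cocycle identity for $\nabla$.

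The one genuinely delicate point is invoking the Tannakian reconstruction over the non-reduced infinitesimal thickenings $\Delta^2(1)$ and $\Delta^3(1)$ rather than over $X$ itself, and checking the naturality of $\uIsom_{s_\alpha}$ in the base. These thickenings are flat affine over $S$, so Broshi's theorem applies to them and its asserted functoriality in $Y$ is exactly what is needed to translate an isomorphism of tensor-preserving data on $\omega_P(V)$ into an isomorphism of $G$-torsors. Once this is granted, all three conditions are just different packagings of the same piece of Tannakian data, and independence of the choice of $(V, s_\alpha)$ in $(3)$ follows by applying the equivalence $(3)\Leftrightarrow(1)$ for any two such choices and comparing with $(1)$.
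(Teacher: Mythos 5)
Your proposal is correct and takes essentially the same route as the paper: the content-bearing step $(3)\Rightarrow(1)$ is precisely the paper's construction, namely postcomposing frames in the identification $P \cong \uIsom_{s_\alpha}(V\otimes\cO_X,\omega_P(V))$ with the tensor-preserving isomorphism $\nabla_V$ over the infinitesimal thickenings, while the passage between (1) and (2) rests on the associated-bundle/Tannakian (Broshi) formalism just as in the paper. One small quibble: re-invoking Broshi's theorem over $\Delta^2(1)$ (which need not be flat over $S$, so your stated justification is shaky) is unnecessary — the frame-bundle identification over $X$ simply pulls back along $p_1,p_2$ because $\uIsom$ commutes with base change, which is all the paper's argument uses.
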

\begin{proof}
The equivalence of (1) and (2) is formal given Broshi's Tannakian formalism over a Dedekind scheme \cite[1.2]{brosh}. Indeed the conditions in (2) are exactly those needed to say that the connections $\nabla_{\omega_P(V)}$ define an isomorphism of tensor functors $p_1^*\omega_P \rightiso p_2^*\omega_P$, which is the same as an isomorphism of torsors $p_1^*P \rightiso p_2^*P$, and one easily verifies that the conditions translate across.

It is obvious how to pass from (1) to (3). For going from (3) to (1), we note that $P$ can be canonically identified with the frame bundle $$P \cong \uIsom_{s_\alpha}(V \otimes \cO_X, P \times^G V),$$ (via $p \mapsto (v \mapsto (p,v))$). 
Let $\cV = \omega_P(V) = P \times^G V$ and $s_{\alpha,1} = (1,s_\alpha) \in \cV^\otimes$. If we are given a connection $\nabla:p_1^* \cV \rightiso p_2^* \cV$, which is trivial on each line containing $s_{\alpha,1}$, that is to say $\nabla(p_1^*s_{\alpha,1}) = p_2^*s_{\alpha,1}$, we obtain a well-defined isomorphism
$$\nabla_P: p_1^*P \ni \phi \mapsto \nabla \circ \phi \in p_2^*P,$$
giving the desired connection on $P$. Again it is easy to check that if $\nabla$ is flat then so is $\nabla_P$.
\end{proof}

\subsection{Definition of canonical models}
Let $(G,\fX)$ be a Shimura datum with reflex $E=E(G,\fX)$, and for what follows assume it admits an integral canonical model over $\cO_E[1/N]$ at any level hyperspecial away from $N$.

\subsubsection{}
We shall need the mild assumption that $Z(G)^\circ$ is split by a CM field, which we impose for the rest of the paper. This can be removed if the arguments relying on CM motives in proving (\ref{dr identification}) and (\ref{torus canmod}) can be replaced by arguments that work with greater generality.

\subsubsection{}

Recall (\ref{milne spb}) that the Shimura variety $\Sh(G,\fX)$ is equipped with a \emph{standard principal bundle} $P(G,\fX)$, which is a $G(\Af)$-equivariant $G^c$-torsor defined over $E$, with a flat connection $\nabla$ and a $G$-equivariant map $\gamma: P(G,\fX) \rightarrow Gr_\mu$. Suppose also $G$ is quasi-split and unramified away from $N$, fix $G=G_{\Z[1/N]}$ a reductive model, $K^N=\prod_{p \nmid N} G(\Zp)$ and for $\mu$ the Hodge cocharacter note that over an integral model one has the canonical inclusion $Gr_{\mu} \subset \mathcal{GR}_{\mu}.$

We also let $\omega_{et}: \Rep_{\Zp}(G^c) \rightarrow \Lisse_{\Zp}(\Sh_K(G,\fX))$ denote the standard \'etale $\Zp$-sheaves coming from the tower at $p$, and $\omega_{et,\eta}: \Rep_{\Qp}(G^c) \rightarrow \Lisse_{\Qp}(\Sh_K(G,\fX))$ the corresponding lisse $\Qp$-sheaves.

\subsubsection{}
\label{all the points}
We will say that a $(G,\fX)$ \emph{has enough crystalline points} if for every prime $\fp$ of $E$ with $\fp | p \nmid N$, and every $K_N \subset \prod_{q|N} G(\Q_q)$ compact open, $K=K_NK^N$, there is a dense subset $U_0$ of the special fibre of $\cS_{K_NK^N}/\fp$ with the property that for each $x_0 \in U_0$ there is a lift $\tilde{x}_0 \in \cS_{K_NK^N}(W(\kappa(x_0))$ such that $\tilde{x}_0[1/p]^*\omega_{et, \eta}$ takes values in crystalline representations of $\Gamma_{W(\kappa(x_0))[1/p]}$. 

We say $(G,\fX)$ has \emph{all the crystalline points} if for every $W(k)$-valued point $x \in \cS_{K_NK^N}$ for $k$ a finite field of characteristic $p \nmid N$, $x[1/p]^*\omega_{et, \eta}$ takes values in crystalline representations of $\Gamma_{W(k)[1/p]}$. Clearly in the present context where the base is smooth and so every $k$-point admits a $W(k)$-lift, if you have all the crystalline points you have enough crystalline points.

Moreover, we note that when $(G,\fX)$ is of Hodge type or special type it has all the crystalline points. In the Hodge type case, this is because the relevant Galois representations live inside $H^1_{et}(\cA_x[1/p],\Qp)^\otimes$, where $\cA_x$ is the fibre of the universal abelian scheme at $x$, and since $\cA_x/W(\kappa(x))$ is an abelian scheme, this representation is crystalline. In the special type case it follows from the explicit reciprocity law computation as in (\ref{special canonical def}). We shall later use the methods of \S4.6 to establish that $(G,\fX)$ of abelian type also has all the crystalline points.

In what follows, a \emph{crystalline point} is understood to be, for some $K_N$ and some finite field $k$ of characteristic $p \nmid N$, a point of $\cS_{K_NK^N}(W(k))$ such that $x[1/p]^*\omega_{et,\eta}$ takes values in crystalline representations, and any $p, K_N,k,K_0=W(k)[1/p]$ appearing will be understood to be part of this data.

\subsubsection{}
We will also say that to give $(G,\fX)$ a \emph{de Rham structure} is to give for every crystalline point $x \in \cS_{K_NK^N}(W(k))$,  an identification $$D_{dR} \circ x[1/p]^*\omega_{et, \eta} \rightiso x[1/p]^*\omega_{P_{K_NK^N}(G,\fX)}: \Rep_{\Qp}(G^c) \rightarrow \Bun_{K_0}.$$

In the case where $(G,\fX)$ is of abelian type (and we expect in general), there is a canonical de Rham structure. This follows using functoriality and the morphism $(G,\fX) \rightarrow (G^c,\fX^c)$ to reduce to the case where the weight is defined over $\Q$. Here we may use Milne's moduli interpretation \cite{milne9} in such a situation to interpret $x[1/p]$ as an abelian motive, and get the required identification by the same argument as (\ref{dr identification}). It is also straightforward to check that these canonical de Rham structures are compatible under morphisms of Shimura varieties induced by maps of Shimura data. In what follows, we will usually assume $(G,\fX)$ is of abelian type in which case we are always working with reference to the canonical de Rham structure.

\subsubsection{}
Let $(G,\fX)$ be a Shimura datum as above equipped with a de Rham structure. An \emph{integral canonical model} $\cP = \cP_{K^N}(G,\fX)$ for $P_{K^N}(G,\fX)$ is a $G^c$-torsor over the integral canonical model $\cS_{K^N}(G,\fX)$ for $\Sh_{K^N}(G,\fX)$ with an identification $$\iota: \cP_{K^N}(G,\fX) \otimes_{\cO_E[1/N]} E \rightiso P_{K^N}(G,\fX)$$ and equivariant $\prod_{p|N}G(\Qp)$-action such that for any crystalline point $x \in \cS_{K_NK^N}(W(k))$ we have the \emph{lattice property} given below.

Consider $$\omega_{dR,x}: D_{dR} \circ x[1/p]^*\omega_{et,\eta}: \Rep_{\Qp}(G^c) \rightarrow \Bun_{K_0}.$$
This functor comes with two canonical lattices. First, there is the lattice given by $\omega_{\cP_{K_NK^N}}$ coming via $\iota$ and the de Rham structure. Second, there is the lattice $\D \circ x[1/p]^*\omega_{et}$ coming from the theory (\ref{kismod})  of $\fS$-modules. The lattice property requires that these lattices are equal. Note that since everything is Hecke equivariant, it suffices to check the lattice property at infinite level, an observation of which we shall make liberal use.

We will also insist that for $\cP_{K^N}$ to qualify as a canonical model the connection $\nabla$ and the $\mu$-filtration $\gamma$ extend to $\cP$, although we shall see that in the abelian type situation these additional properties are automatic given the condition on crystalline points.

\subsubsection{}
We remark that this definition is compatible with (\ref{special canonical def}). We also remark that one can make definitions of integral canonical models defined over any unramified-away-from-$N$ extension $F/E$, and for any such $F$ containing the abelian extension over which $\Sh_{K^N}(G,\fX)$ splits into its geometrically connected components we may also define integral canonical models for $P_{K^N}(G,\fX)_{F}^+$. These definitions are made in exactly the same way and such models enjoy the properties we are about to note for the same reasons.

\begin{lem}
\label{uniqueness}
Suppose $(G,\fX)$ has enough crystalline points. An integral canonical model $(\cP,\iota)$, if it exists, is unique up to canonical isomorphism.

This is true without assuming $\nabla$ or $\gamma$ extend.
\end{lem}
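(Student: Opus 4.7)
\emph{Plan.} Let $(\cP_1, \iota_1)$ and $(\cP_2, \iota_2)$ be two integral canonical models, and write $\theta := \iota_2^{-1} \circ \iota_1 : \cP_{1,E} \rightiso \cP_{2,E}$ for the induced generic-fibre isomorphism. My plan is to show that $\theta$ extends (uniquely, by flatness) to an isomorphism of $G^c$-torsors $\cP_1 \rightiso \cP_2$ over $\cS_{K^N}(G,\fX)$. Working at each finite level $K = K_NK^N$ sufficiently small that $\cS_K$ is a smooth $\cO_E[1/N]$-scheme is enough, since the resulting isomorphisms then glue across the tower by uniqueness, and equivariance for the $\prod_{p\mid N}G(\Q_p)$-action is automatic: a Hecke translate of the extension gives another extension of the same generic isomorphism, hence coincides with it.

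The first step is to apply Lemma \ref{lattice torsors} (after localising on $\Spec \cO_E[1/N]$ so the hypothesis that the base is a PID is met, which is harmless because $G^c$-torsors over a Dedekind scheme are determined by their fibre functors via Broshi). This reduces the problem to showing that for every $V \in \Rep_{\Z[1/N]}(G^c)$ the isomorphism $\theta_{K,*}$ of generic fibre functors identifies the two integral lattices $\omega_{\cP_{i,K}}(V) \subset \omega_{P_K}(V\otimes\Q)$.

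The second step is to prove this lattice identification. I apply Lemma \ref{lattice uniqueness} (1) with $A = \cO_E[1/N]$ and $X = \cS_K$, reducing to the identification in the formal completion of $\cS_K$ at each maximal ideal $\fp \subset \cO_E[1/N]$. For such $\fp \mid p$ (necessarily $p \nmid N$), I apply Lemma \ref{lattice uniqueness} (2) with $W = W(\kappa(\fp))$, reducing further to checking the identification after pulling back along $W(\kappa(x_0))$-valued lifts $\tilde{x}_0$ of points $x_0$ ranging over a Zariski-dense subset of the special fibre. But this is precisely the output of the enough-crystalline-points hypothesis: such a dense $U_0$ exists, and at each lift $\tilde{x}_0$ the lattice property in the definition of canonical model forces both $\tilde{x}_0^*\omega_{\cP_{i,K}}(V)$ to coincide with the Breuil-Kisin lattice $\D(\tilde{x}_0[1/p]^*\omega_{et}(V))$, so they agree.

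I expect the main mild obstacle to be aligning Lemma \ref{lattice uniqueness} (2), stated for a Witt ring $W$, with the formal completion at a prime $\fp$ where $E/\Q$ is ramified; this is avoided entirely in all applications in this paper (the reflex field being unramified away from $N$ in the abelian-type construction), and can in any case be accommodated by replacing the $p$-adic valuation in the proof of that lemma with the normalised valuation of $\cO_{E,\fp}^\wedge$. Note also that $\nabla$ and $\gamma$ play no role in the argument, confirming the final sentence of the statement.
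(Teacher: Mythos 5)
Your proposal is correct and follows essentially the same route as the paper: reduce via Lemma (\ref{lattice torsors}) to an identification of lattices for each representation, pass to finite level by Hecke equivariance, and then apply parts (1) and (2) of Lemma (\ref{lattice uniqueness}) to reduce to the crystalline lifts supplied by the enough-crystalline-points hypothesis, where the lattice condition forces both lattices to equal the Breuil--Kisin lattice. Your extra remarks on the PID hypothesis in Lemma (\ref{lattice torsors}) and on unramifiedness at $\fp$ are sensible points of care that the paper passes over silently.
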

\begin{proof}
Suppose we have two $(\cP,\iota), (\cP',\iota')$ integral canonical models for $P_{K^N}(G,\fX)$. We will aim to show that $\iota'^{-1} \circ \iota: \cP \otimes E \rightiso \cP' \otimes E$ extends over $\cO_E[1/N]$. By (\ref{lattice torsors}) it suffices to check that for any representation of $G_{\Z[1/N]}^c$ on a finite free module $V$ that the composite $(\iota'^{-1} \circ \iota)_*: \omega_{\cP}(V) \otimes E \rightiso \omega_{\cP'}(V) \otimes E$ identifies the $\cO_E[1/N]$-lattice $\omega_{\cP}(V)$ with $\omega_{\cP'}(V)$.

By Hecke equivariance we may reduce to checking this at every finite level $K_NK^N$. Noting that $\cS_{K_NK^N}$ is smooth, we may invoke our lattice lemma (\ref{lattice uniqueness}) to reduce the statement first (\ref{lattice uniqueness}(1)) to checking equality of lattices over the formal completion of $\cS_{K_NK^N}$ at each (maximal) prime $\fp$ of $\cO_E[1/N]$, and then (\ref{lattice uniqueness} (2)) to checking equality of lattices on lifts of a dense subset of the special fibre at $\fp$. But since $(G,\fX)$ has enough crystalline points, and by the lattice condition we have the necessary equality at these points, we get the desired identification.

Note that the connection and $\mu$-filtration are extended from the generic fibre and that they extend is a property, so they have no impact on the uniqueness statement.
\end{proof}

The following functoriality properties can now be read off.

\begin{prop}
\label{func}
Let $f: (G_1,\fX_1) \rightarrow (G_2,\fX_2)$ be a morphism of Shimura data with enough crystalline points and compatible de Rham structures induced by a map $G_{1,\Z[1/N]} \rightarrow G_{2,\Z[1/N]}$ of reductive groups over $\Z[1/N]$, and $K^N_i = \prod_{p \nmid  N}G_i(\Zp)$ and $(\cP_i,\iota_i)$ an integral canonical model for $P_{K^N_i}(G_i,\fX_i)$, $i=1,2.$
\begin{enumerate}
\item We can canonically identify
$$\cP_1 \times^{G_1^c} G_2^c \rightiso f^*\cP_2.$$
\item The induced diagram 
$$
\begin{CD}
    \cP_1 @>\gamma_1 >> \mathcal{GR}_{\mu_1} \\
@VVV        @VVV\\
    \cP_2 @>\gamma_2 >>  \mathcal{GR}_{\mu_2}
\end{CD}
$$
commutes.
\end{enumerate}
\end{prop}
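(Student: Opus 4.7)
The plan is to deduce both parts from Lemma \ref{spb functorial} (which gives the generic fibre versions) combined with the uniqueness methodology of Lemma \ref{uniqueness}. For part (1), the generic fibre isomorphism $P_1 \times^{G_1^c} G_2^c \cong f^*P_2$ from Lemma \ref{spb functorial} must be extended to an isomorphism of $G_2^c$-torsors on $\cS_{K_1^N}(G_1,\fX_1)$. For part (2), once part (1) supplies a map $\cP_1 \to \cP_2$ over $\cO_E[1/N]$, commutativity of the square will follow from commutativity on the generic fibre together with separatedness of $\mathcal{GR}_{\mu_2}$.

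For part (1), I would apply Lemma \ref{lattice torsors} to the two $G_2^c$-torsors $\cP_1 \times^{G_1^c} G_2^c$ and $f^*\cP_2$ on $\cS_{K_1^N}$ equipped with the identification of generic fibres from Lemma \ref{spb functorial}. This reduces the question to showing that for each $V \in \Rep_{\Z[1/N]}(G_2^c)$ the two lattices $\omega_{\cP_1}(V|_{G_1^c})$ and $f^*\omega_{\cP_2}(V)$ coincide inside their common generic fibre. Hecke equivariance lets me work at any finite level $K_NK^N$, and since each $\cS_{K_NK^N}$ is smooth over $\cO_E[1/N]$, Lemma \ref{lattice uniqueness} further reduces the problem to verifying equality at crystalline lifts of a dense subset of each special fibre. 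At such a crystalline point $x_1$ with image $x_2 = f(x_1)$, functoriality of the \'etale realisation provides $x_1[1/p]^*\omega_{et,1}(V|_{G_1^c}) = x_2[1/p]^*\omega_{et,2}(V)$, and functoriality of the Breuil-Kisin functor $\D$ in Theorem \ref{kismod} then produces equality of the associated lattices; the lattice property applied to $\cP_1$ and $\cP_2$ respectively translates this into the desired equality $\omega_{\cP_1}(V|_{G_1^c})_{x_1} = (f^*\omega_{\cP_2}(V))_{x_1}$.

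For part (2), using (1) we obtain a natural composite $\cP_1 \to \cP_1 \times^{G_1^c} G_2^c \cong f^*\cP_2 \to \cP_2$ over $\cO_E[1/N]$, and via Lemma \ref{gr functoriality} a canonical map $\mathcal{GR}_{\mu_1} \to \mathcal{GR}_{\mu_2}$. The square commutes on the generic fibre by Lemma \ref{spb functorial}, so the equaliser of the two induced maps $\cP_1 \to \mathcal{GR}_{\mu_2}$ is a closed subscheme of $\cP_1$ (using that $\mathcal{GR}_{\mu_2}$ is separated, indeed proper, over $\cO_E[1/N]$) containing the generic fibre; since $\cP_1$ is flat over $\cO_E[1/N]$ the generic fibre is schematically dense, and the equaliser is all of $\cP_1$. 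The main obstacle I anticipate lies in part (1): one must carefully verify that the identification of \'etale realisations is genuinely compatible with the de Rham structures appearing on both sides, so that the chain of identifications at the crystalline point lives coherently inside a single $K_0$-vector space. This is exactly the content of the hypothesis that the de Rham structures are compatible under $f$, which in the abelian type situation traces back to compatibility of the Blasius comparison used in Proposition \ref{dr identification} with morphisms of Shimura data.
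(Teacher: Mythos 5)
Your proposal is correct and follows essentially the same route as the paper: part (1) is reduced via the torsor-lattice criterion, Hecke equivariance, and the smoothness/density lemma to checking lattice equality at crystalline points, where the lattice conditions for $\cP_1$, $\cP_2$, compatibility of \'etale fibre functors, and the fact that images of crystalline points are crystalline conclude; part (2) follows from generic-fibre commutativity via (\ref{spb functorial}). Your schematic-density argument for (2) merely spells out what the paper leaves implicit.
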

\begin{proof}
Let $E=E(G_1,\fX_1)$. Note that by (\ref{spb functorial}) we have a natural identification
$$\theta: P_{K^N_1}(G_1,\fX_1) \times^{G_1^c} G_2^c \rightiso f^*P_{K^N_2}(G_2,\fX_2)$$
and so as in the previous lemma it will suffice to check that 
$$f^*(\iota_2^{-1})\circ \theta \circ \iota_1: (\cP_1 \times^{G_1^c} G_2^c) \otimes_{\cO_E[1/N]} E \rightiso (f^*\cP_2) \otimes_{\cO_E[1/N]} E$$
exchanges the natural lattices when taken on the level of fibre functors $$\omega: \Rep_{\Z[1/N]} G_2^c \rightarrow \vVec(\cS_{K_1^N}(G_1,\fX_1)).$$ As in the previous lemma, we may reduce to working at finite level and checking equality at crystalline points. But this is then immediate from the lattice condition, the compatibility of the \'etale fibre functors on both Shimura varieties, together with the (consequent) observation that the image of a crystalline point is always a crystalline point. This completes the proof of (1).
Now (2) follows directly from the fact that after taking $\otimes_{\cO_E[1/N]} E$ the diagram commutes by (\ref{spb functorial}). Note that the right hand vertical map is that given by (\ref{gr functoriality}).
\end{proof}

\begin{lem}
\label{galois descent}
Suppose $F/E=E(G,\fX)$ a Galois extension unramified away from $N$, and $(\cP'/\cO_F[1/N],\iota')$ an integral canonical model for $P_{K^N}(G,\fX)_F$.

Then the descent data $$\Gal(F/E) \ni \sigma \mapsto \theta_\sigma: \sigma^*P_{K^N}(G,\fX)_F \rightiso P_{K^N}(G,\fX)_F$$
extend to $\cP'$, and the pair $(\cP,\iota)$ obtained by \'etale descent is an integral canonical model for $P_{K^N}(G,\fX)$ over $\cO_E[1/N]$.
\end{lem}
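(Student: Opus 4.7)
The plan is to extend the generic-fibre descent datum using the uniqueness of integral canonical models, and then apply \'etale descent since $\cO_F[1/N]/\cO_E[1/N]$ is ind-\'etale.

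First I would verify that for each $\sigma \in \Gal(F/E)$, the pair $(\sigma^*\cP', \theta_\sigma \circ \sigma^*\iota')$ is itself an integral canonical model for $P_{K^N}(G,\fX)_F$ over $\cO_F[1/N]$. This reduces to checking the lattice property, plus the (automatic) extension of connection, Hecke action, and $\mu$-filtration. Because being crystalline is an intrinsic condition on the associated Galois representations, $\sigma$ permutes the crystalline points of $\cS_{K^N}(G,\fX)_{\cO_F[1/N]}$, and the lattice at a crystalline point $x$ coming from $\sigma^*\cP'$ is canonically identified with the lattice at $\sigma \circ x$ coming from $\cP'$. The Breuil--Kisin lattice $\D$ is likewise natural under unramified base change $W(k) \to W(k')$ appearing in (\ref{kismod}) and behaves the same way, as does the de Rham structure (compatibility of which with Galois conjugation is built into the canonical construction recalled in section 4.4). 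Hence the lattice property for $\cP'$ at $\sigma \circ x$ transfers to $\sigma^*\cP'$ at $x$.

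Next I would invoke uniqueness (\ref{uniqueness}) over $\cO_F[1/N]$ to extend $\theta_\sigma$ uniquely to an isomorphism $\tilde\theta_\sigma : \sigma^*\cP' \rightiso \cP'$ of $G^c$-torsors. The cocycle identity $\tilde\theta_{\sigma\tau} = \tilde\theta_\sigma \circ \sigma^*\tilde\theta_\tau$ is automatic: both sides extend the same isomorphism on the generic fibre, and the extension is unique. This produces a bona fide descent datum on $\cP'$.

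Finally, since $F/E$ is unramified away from $N$, the ring extension $\cO_F[1/N]/\cO_E[1/N]$ is ind-\'etale, so \'etale (faithfully flat) descent yields a $G^c$-torsor $\cP$ over $\cS_{K^N}(G,\fX)/\cO_E[1/N]$ together with the required $\iota: \cP \otimes_{\cO_E[1/N]} E \rightiso P_{K^N}(G,\fX)$. To conclude that $(\cP,\iota)$ is an integral canonical model over $\cO_E[1/N]$ I would check the lattice condition after pullback to $\cO_F[1/N]$, where by construction it coincides with that for $\cP'$ (both lattices are faithfully flat, and equality may be checked after the faithfully flat base change); the connection, $\prod_{p \mid N}G(\Qp)$-action and $\mu$-filtration descend similarly by uniqueness of their extensions. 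The principal obstacle is the Galois-naturality step in the first paragraph; once one pins down that both the Breuil--Kisin lattice and the de Rham structure are manifestly functorial under Galois, the rest is formal descent.
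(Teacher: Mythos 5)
Your proposal is correct and follows essentially the same route as the paper: show that each Galois twist $(\sigma^*\cP',\theta_\sigma\circ\sigma^*\iota')$ is again an integral canonical model by checking the lattice condition at crystalline points (using that $\sigma^*(x)$ is crystalline via the identification of \'etale fibre functors over $E$ and the functoriality of the Breuil--Kisin construction $\D$ under unramified base change), then extend $\theta_\sigma$ by the uniqueness statement (\ref{uniqueness}) and conclude by \'etale descent along $\cO_F[1/N]/\cO_E[1/N]$. The only cosmetic difference is that you spell out the cocycle identity explicitly, which the paper leaves implicit in "the descent data extend."
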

\begin{proof}
Take $\sigma \in \Gal(F/E)$. We claim that $\sigma^*\cP'_{K^N}$ together with the composite
$$\sigma^*\cP'_{K^N} \otimes F \map{\sigma^*\iota} \sigma^*P_{K^N}(G,\fX)_F \map{\sigma} P_{K^N}(G,\fX)_F$$
is an integral canonical model. From this claim it is immediate that the descent data extend and in its turn $\cP'_{K^N}$ descends to an integral canonical model over $\cO_E[1/N]$, because being a crystalline point and the lattices coming from $\fS$-modules are stable under finite unramified base change.

We may pull back the Hecke action, so it remains to check the lattice condition on crystalline points at finite level. Let $x \in \cS_{K_NK^N}(G,\fX)(W(\kappa))$ be a crystalline point and $v$ the corresponding place of $F$, and $\sigma^*(x)$ its pullback along $\sigma: \cS_{K_NK^N} \rightiso \cS_{K_NK^N}.$ Let $\omega_{et,x}$ and $\omega_{et,\sigma^*(x)}$ be the $\Zp$-linear \'etale fibre functors coming from the Shimura variety at infinite level at $p$ and restricting to the fibres at $x[1/p]$ and $\sigma^*(x)[1/p]$ respectively, viewing both as taking values in $\Gamma_{K_0}$-representations. Note that by construction (since $\Sh(G,\fX)$ is defined over $E$) there is an isomorphism $\omega_{et, \sigma^*(x)} \rightiso \omega_{et,x}$ covering $\sigma: \sigma^*{x} \rightiso x.$ In particular, $\sigma^*(x)$ is also crystalline.

For $V \in \Rep_{\Zp}(G^c)$ we must check an equality of two lattices in $D_{dR}(\omega_{et,x}(V)[1/p])$. The first is the usual $\D (\omega_{et,x}(V))$ coming straight from $\fS$-modules, the second obtained as the composite
$$\D(\omega_{et,\sigma^*(x)}(V)) \subset D_{dR}(\omega_{et,\sigma^*(x)}(V)[1/p]) \rightiso D_{dR}(\omega_{et,x}(V)[1/p]).$$
Having thus spelt it out, we see that it follows immediately from functoriality of the $\D$ construction. Moreover, the $G^c$-action, connection and filtration are all defined over $E$ on the generic fibre and it can be checked they extend to $\cO_E[1/N]$ \'etale locally, so the fact they do over $F$ immediately gives them all.

\end{proof}

\subsection{Hodge type case}
We turn our attention to existence.
\begin{thm}
\label{main hodge}
Let $(G,\fX)$ be a Shimura variety of Hodge type with $G$ unramified away from $N$, $K^N = \prod_{p\nmid N}G(\Zp)$. Then $P_{K^N}(G,\fX)$ admits an integral canonical model.
\end{thm}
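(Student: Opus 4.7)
The plan is to follow the direct construction sketched in the introduction, using the fact that in the Hodge type case $Z_{nc}=1$ (as the weight is defined over $\Q$) so $G^c=G$, and exploiting the presence of a universal abelian scheme. Fix a symplectic embedding $i:(G,\fX)\hookrightarrow(GSp(V_\Q,\psi),S^\pm)$ and choose, as in the construction of $\cS_{K^N}$, a $\Z[1/N]$-lattice $V_{\Z[1/N]}\subset V$ on which $\cG$ acts. The pullback to $\cS_{K^N}$ of the universal abelian scheme on the Siegel integral model gives a universal abelian scheme $\cA\to\cS_{K^N}$, and I set $\cV:=H^1_{dR}(\cA/\cS_{K^N})$, a vector bundle with integrable connection and Hodge filtration.

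The first key step is to extend the Hodge tensors. On the generic fibre one has the $s_{\alpha,dR}\in\cV_E^\otimes$ constructed in \cite[2.2]{kis2} cutting out $G$; by the main integrality result loc.\ cit.\ (which extends to $\cO_E[1/N]$ because smoothness and the $\fS$-module dictionary are local at each $p\nmid N$, and we have already compared $\cS_{K^N}$ with Kisin's models at each such place), these sections extend to global sections $s_{\alpha,dR}\in\cV^\otimes$ over $\cS_{K^N}$. With these in hand I define
$$\cP_{K^N}:=\uIsom_{s_\alpha}\!\bigl(V_{\Z[1/N]}\otimes\cO_{\cS_{K^N}},\,\cV\bigr),$$
i.e.\ the subfunctor of the frame bundle preserving the tensors $s_\alpha\mapsto s_{\alpha,dR}$. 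The natural map $\iota:\cP_{K^N,E}\rightiso P_{K^N}(G,\fX)$ comes from the characterisation of $P_{K^N}(G,\fX)$ in \cite[III]{milne3} in terms of the same fibre functor, and the $\prod_{p\mid N}G(\Q_p)$-equivariance is automatic from the equivariance of $\cV$ and of $s_{\alpha,dR}$.

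Next I must show $\cP_{K^N}$ is a $\cG$-torsor. Since it is a closed subscheme of $\uIsom(V_{\Z[1/N]}\otimes\cO,\cV)$ and is a $\cG$-torsor on the generic fibre (using \cite[2.2]{kis2} again), it suffices to check the torsor property at each geometric point of the special fibre at $v\mid p\nmid N$; this is exactly the content of the Hodge-type construction of Kisin's integral model (and the $p=2$ variant of Kim--Madapusi Pera), which gives local trivialisations of $\cV$ identifying the $s_{\alpha,dR}$ with the reference tensors. Thus $\cP_{K^N}$ is a smooth $\cG$-torsor over $\cS_{K^N}$, and the Gauss--Manin connection restricts to a connection on $\cP_{K^N}$ via (\ref{torsor connection})(3) (the $s_{\alpha,dR}$ being horizontal on the generic fibre extends integrally by density), while the Hodge filtration on $\cV$ produces a $\mu$-filtration $\gamma:\cP_{K^N}\to\mathcal{GR}_\mu$ by (\ref{filtration extension}), since the filtered $\cV^\bullet$ extends the generic-fibre Hodge filtration.

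Finally I verify the lattice property at every crystalline point $x\in\cS_{K_NK^N}(W(k))$. Under the canonical de Rham structure (which for Hodge type comes directly from the comparison isomorphism for $\cA_x$), the lattice produced by $\cP_{K^N}$ at $x$ is, by construction, the sublattice of $H^1_{dR}(\cA_x/W(k))^\otimes$ cut out by the $s_{\alpha,dR}$; the lattice produced by Breuil--Kisin theory is the sublattice of $D_{dR}(\omega_{et,\eta}(V)_x)^\otimes$ cut out by the image of the $s_{\alpha,et}$. The identification of these two lattices is exactly the second part of (\ref{kismod}), which tells us that $\D(T_p\cA_x^\ast)=H^1_{dR}(\cA_x/W(k))$ as lattices in $D_{dR}$, combined with the fact that the $s_{\alpha,et}$ are $\Gamma_{K_0}$-invariant and match $s_{\alpha,dR}$ under the $p$-adic comparison (using \cite[2.2]{kis2}). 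The main technical obstacle I anticipate is the global extension of the Hodge tensors to $\cO_E[1/N]$: Kisin's construction is place-by-place, and one needs to argue either by patching across the finitely many relevant primes and invoking (\ref{lattice uniqueness})(1), or by a Zariski-closure argument inside $\cV^\otimes$ using that $\cS_{K^N}$ is regular and the tensors are already defined on the generic fibre and at all closed fibres away from $N$.
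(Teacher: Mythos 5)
Your proposal follows essentially the same route as the paper: pull back the universal abelian scheme, extend the de Rham tensors $s_{\alpha,dR}$ to $\cV^\otimes$ place-by-place via Kisin's integrality result, define $\cP_{K^N}=\uIsom_{s_\alpha}(V,\cV)$, verify the torsor property at formal neighbourhoods of closed points using Kisin's (and Kim--Madapusi Pera's) local deformation-theoretic trivialisations, extend $\nabla$ and $\gamma$ via (\ref{torsor connection}) and (\ref{filtration extension}), and check the lattice condition at crystalline points from (\ref{kismod}) and the comparison of tensors. The only ingredient you gloss over is that one first needs the tensors $s_\alpha$ to be defined over $\Z[1/N]$ and to cut out $\cG$ integrally, which the paper supplies by proving a PID version of \cite[1.3.2]{kis2}; with that added, your anticipated ``main obstacle'' of globalising the $s_{\alpha,dR}$ is handled exactly as you suggest, by extending over $\cO_{E,(v)}$ for each $v\nmid N$ and gluing.
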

We let $E=E(G,\fX)$ throughout this paragraph, and unless otherwise specified all Shimura varieties $\Sh_K,\cS_K$ or bundles $P_K$ or $\cP_K$ will be understood to be those attached to the Shimura datum $(G,\fX)$. We also note that $(G,\fX)$ being of Hodge type implies $G=G^c$.

\subsubsection{}
First some more algebraic preliminaries. Let $R$ be a ring, $M$ a finite free $R$-module, and $M^\otimes$ the direct sum of all $R$-modules formed from $M$ by the operations of taking duals, tensor products, symmetric powers and exterior powers. If $s_\alpha \in M^\otimes$ is a collection of tensors, we say that they \emph{define} a subgroup $G \subset GL(M)$ if it is precisely the group which acts trivially on all the $s_\alpha$.

We need the following version of \cite[1.3.2]{kis2}, whose proof is basically identical.

\begin{prop}
Let $R$ be a PID with field of fractions $K$, $M$ a finite free $R$-module, and $G \subset GL(M)$ a closed subgroup, flat over $R$ with reductive generic fibre. Then it is defined by a finite collection of tensors $s_\alpha$.
\end{prop}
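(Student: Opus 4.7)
The plan is to follow the proof of Kisin's Proposition 1.3.2 in \cite{kis2} closely, noting that the argument adapts without substantive change to a PID base because finitely generated torsion-free modules over a PID are still free.

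First, I would work on the generic fibre. Since $G_K \subset GL(M_K)$ is a closed subgroup and $M_K$ is a faithful representation, a version of Chevalley's theorem (or its Tannakian refinement) produces a finite collection of tensors $\{s_\alpha^K\} \subset M_K^\otimes$ whose common stabiliser in $GL(M_K)$ is exactly $G_K$. Each $s_\alpha^K$ lies in some $d_\alpha^{-1} M^\otimes$ for $d_\alpha \in R$; replacing $s_\alpha^K$ by $d_\alpha s_\alpha^K$ does not alter its stabiliser, so I may assume every $s_\alpha \in M^\otimes$.

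Next, let $H \subset GL(M)$ be the closed $R$-subscheme cut out by the conditions that each $s_\alpha$ is fixed. By construction $H_K = G_K$. Since $G$ is assumed $R$-flat, it coincides with the scheme-theoretic closure of $G_K$ in $GL(M)$; this closure is contained in $H$, giving $G \subset H$ as closed $R$-subschemes. The entire remaining content of the proposition is to arrange the equality $G = H$ by adjoining further tensors.

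To do this I would analyse the defining ideal $I_G \subset R[GL(M)] =: A$, which is finitely generated by Noetherianity of $A$. Writing $A$ as a filtered union of finite-rank $R$-submodules stable under one-sided translation by $G$, and using faithfulness of $M$ to relate these to subquotients of tensor constructions on $M$, the intersections with $I_G$ furnish $G$-invariant finite-rank $R$-submodules. Passing to a top exterior power produces, in each case, a $G$-stable rank-one submodule $L_i \subset M^\otimes$ (free over the PID $R$), and hence a distinguished tensor (up to scalar). By Noetherianity of $A$ only finitely many such tensors are needed before the combined stabiliser conditions generate $I_G$.

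The main obstacle I expect is the last step: verifying that the enlarged finite collection of tensors has stabiliser \emph{equal} to $G$, i.e. that they generate $I_G$ integrally and not merely after inverting primes. Flatness of $G$ over the PID $R$ reduces this to a verification at each height-one prime, and at each such prime the reductive structure of $G_K$ is what prevents the appearance of spurious non-flat components in the stabiliser scheme. This is precisely the content of Kisin's argument in \cite[1.3.2]{kis2}, and as the paper remarks, the proof transfers without essential modification.
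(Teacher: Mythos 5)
Your opening plan --- transfer Kisin's \cite[1.3.2]{kis2} to a PID base, the only genuinely new input being that finitely generated torsion-free modules over a PID are free --- is indeed the route the paper takes: it works with the ideal $I \subset \cO_{GL(M)}$ of $G$, produces a finite free $GL(M)$-stable submodule $W \subset \cO_{GL(M)}$ containing a generating set of $I$ (Waterhouse \cite[3.3]{wat} applied over $K$, then intersected with the integral Hopf algebra, the PID hypothesis giving freeness), notes that $G$ is the scheme-theoretic stabiliser of $W \cap I \subset W$, and concludes exactly as in Kisin. Your first two paragraphs (Chevalley tensors over $K$, the subgroup $H$ and the inclusion $G \subset H$) are a harmless detour that merely restates the problem; the substance is in your third paragraph, and there two steps as written would not go through.

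First, the finite-rank submodules of the Hopf algebra must be stable under all of $GL(M)$, not merely under translation by $G$: it is $GL(M)$-stability that exhibits $W$ as a subquotient of a direct sum of tensor constructions on $M$ and so lets one transport the problem into $M^\otimes$. Second, and more seriously, ``passing to a top exterior power'' only realises $G$ inside the \emph{stabiliser of a line}, which is strictly weaker than what the proposition requires: in this paper ``defined by tensors'' means $G$ is the pointwise fixer of the $s_\alpha$, and the fixer of a chosen generator of a $G$-stable line does not do the job --- inside $G$ it cuts out only the kernel of the character through which $G$ acts on that line, and your phrase ``distinguished tensor (up to scalar)'' is exactly the symptom of this. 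This is precisely where the hypothesis that $G_K$ is reductive enters Kisin's argument: since $\cO_G$ is flat, $N = W\cap I$ is a direct summand of $W$, and linear reductivity of $G_K$ in characteristic zero is what converts ``stabilises the submodule $N \subset W$'' into ``fixes genuine tensors'' (via a $G_K$-stable complement, or by pairing the relevant line with its dual), after which one checks the fixer equals $G$ integrally. Your closing appeal to flatness and ``verification at each height-one prime'' does not substitute for this step --- the issue is not ruling out non-flat components prime by prime but producing honest $G$-fixed tensors whose common fixer is $G$ --- so the proposal has a gap at exactly the point the reductivity hypothesis is meant to handle; with that step carried out as in \cite[1.3.2]{kis2}, the rest of your outline matches the paper.
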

\begin{proof}
Exactly as in \cite[1.3.2]{kis2} we can reduce to showing that it suffices to find tensors defining $G$ in some representation of $GL(M)$ on a finite projective $R$-module, and we take $I \subset \cO_{GL(M)}$ the ideal of $G$ in the Hopf algebra of $GL(M)$, which has scheme-theoretic stabiliser precisely $G$.

By \cite[3.3]{wat} there is a finite dimensional $GL(M)_K$-stable subspace $W_\eta \subset \cO_{GL(M)} \otimes K$ containing a finite set of generators for $I$ as an ideal. Let $W = W_{\eta} \cap \cO_{GL(M)}$, which is visibly $GL(M)$-stable, and it is finitely generated because $\cO_{GL(M)}$ is a free $R$-module and letting $e_1,...,e_n$ be a basis for $W_{\eta}$, writing them as elements of $\cO_{GL(M)} \otimes K$ we see they lie in $\tilde{W} \otimes K$ for some $\tilde{W} \subset \cO_{GL(M)}$ a finite free $R$-submodule, and such that $W \subset \tilde{W}$. But $R$ is a PID, so we deduce $W$ is finite free.

Now we see that $G$ is the stabiliser of $W \cap I \subset W$, and the argument can be concluded exactly as in \cite[1.3.2]{kis2}.

\end{proof}

\subsubsection{}
Now recall (\ref{hodge group construction}) that for $(G,\fX)$ our Shimura datum of Hodge type with $G$ unramified away from $\Z[1/N]$ (taking $G_{\Z[1/N]} = \cG$ from the discussion in (\ref{hodge group construction})) we may choose a symplectic embedding $i: (G,\fX) \hookrightarrow (GSp_{2g},S^\pm)$ extending to a closed immersion
$$i:G_{\Z[1/N]} \hookrightarrow GL(V_{\Z[1/N]}).$$
Henceforth view $V$ as a $\Z[1/N]$-module, and $G$ as a reductive group over $\Z[1/N]$. By the above proposition we can find $s_\alpha \in V^\otimes$ such that $G = \Aut_{s_\alpha}(V)$.

\subsubsection{}

The map $i$ defines (pulling back the universal abelian variety) an abelian variety $\pi:A_{K^N} \rightarrow \cS_{K^N}$, and we can study its sheaf of relative de Rham cohomology $\cV = \cH^1_{dR}(A_{K^N}/\cS_{K^N})$ with Gauss-Manin connection $\nabla$. 

By the absolute Hodge cycles argument of \cite[2.2.2]{kis2} we obtain $\nabla$-horizontal $\prod_{p|N}G(\Qp)$-invariant tensors $s_{\alpha,dR} \in \cV^\otimes \otimes_{\Z[1/N]} \Q$, and by \cite[2.3.9]{kis2} for any $v \nmid  N$ a finite place of $E$ these sections extend over $\cO_{E,(v)}$, which is enough to conclude they in fact lie in $\cV^\otimes.$

Let us therefore define the sheaf on $(\cS_{K^N})_{fppf}$
$$\cP_{K^N} := \uIsom_{s_\alpha}(V, \cV)$$
whose sections over $u:U \rightarrow \cS_{K^N}$ are those trivialisations $\theta: V \otimes_{\Z[1/N]} \cO_U \rightiso u^*\cV$ such that (after applying $(-)^\otimes$ to both sides) each $s_\alpha$ is identified with $s_{\alpha,dR}$. Note that since $\prod_{p|N}G(\Qp)$ acts equivariantly on $\cV$ and fixes $s_{\alpha,dR}$ we may freely pass between infinite and finite level.

\begin{prop}
The sheaf $\cP_{K^N}$ is a $G$-torsor over $\cS_{K^N}$.
\end{prop}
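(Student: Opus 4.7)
\emph{Proposal.} The plan is to check the defining properties of a $G$-torsor in turn: representability, stability and simple transitivity of the $G$-action, and fppf-local non-emptiness. The first three are essentially formal. Since $\uIsom(V,\cV)$ is a $GL(V)$-torsor over $\cS_{K^N}$ (in particular affine of finite presentation), the tensor-matching conditions $\theta(s_\alpha) = s_{\alpha,dR}$ cut out $\cP_{K^N}$ as a closed subscheme. The action of $G \subset GL(V)$ by precomposition preserves this condition because by construction $G = GL(V)_{s_\alpha}$; conversely, two sections $\theta_1, \theta_2 \in \cP_{K^N}(U)$ differ by $\theta_2 \circ \theta_1^{-1} \in GL(V)(U)$, which fixes every $s_\alpha$ and hence lies in $G(U)$.

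The substance of the proof is fppf-local non-emptiness. I would reduce this to producing a geometric point of $\cP_{K^N}$ over every geometric point of $\cS_{K^N}$: once pointwise non-emptiness is established, the $G$-action identifies each non-empty geometric fibre with $G$ (hence with a smooth scheme of dimension $\dim G$), and combined with closedness in $\uIsom(V,\cV)$ this forces $\cP_{K^N} \to \cS_{K^N}$ to be smooth; a smooth surjection admits sections étale-locally, completing the argument.

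Pointwise non-emptiness splits by characteristic. For a geometric point in the generic fibre, one identifies $\cP_{K^N} \otimes_{\cO_E[1/N]} E$ canonically with Milne's standard principal bundle $P_{K^N}(G,\fX)$ via the absolute Hodge cycles construction recalled after (\ref{hodge group construction}) (following \cite[2.2]{kis2}), and the latter is a $G$-torsor by (\ref{milne spb}). For a geometric point $\bar{x}$ of characteristic $p \nmid N$, I would invoke Kisin's integral $p$-adic comparison. Since $\cS_{K^N}/\cO_E[1/N]$ is smooth, $\bar{x}$ admits a $W(\kappa(\bar{x}))$-lift $\tilde{x}$, pulling back the universal abelian variety to an abelian scheme $\cA/\cO_K$ with $K = W(\kappa(\bar{x}))[1/p]$. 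The $K^N$-level structure furnishes an identification $V_{\Zp} \cong T_p(\cA_K)^*$ sending $s_\alpha$ to the $\Gamma_K$-invariant étale tensors $s_{\alpha,et}$; Theorem (\ref{kismod}) then propagates this to an isomorphism $V \otimes \cO_K \cong \D(T_p(\cA_K)^*) = H^1_{dR}(\cA/\cO_K)$ matching $s_\alpha$ with $s_{\alpha,dR}$. Reducing modulo the maximal ideal of $\cO_K$ supplies the required section of $\cP_{K^N}$ at $\bar{x}$.

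The main obstacle lies in the characteristic $p$ step: it requires not merely the existence of the integral tensors $s_{\alpha,dR}$ (already guaranteed by \cite[2.3.9]{kis2}) but their full compatibility with the $\fS$-module framework, i.e.\ that under the integral comparison the Galois-invariant étale tensors transport to the de Rham tensors inside $\fM^\otimes$. This is precisely the content of Kisin's machinery in \cite[\S1.4]{kis2}, together with the Kim--Madapusi Pera refinement for $p=2$; once these are in hand, the pointwise trivializations assemble as above.
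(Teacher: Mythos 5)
Your formal reductions (closedness in $\uIsom(V,\cV)$, the pseudo-torsor property via $G$ being the scheme-theoretic stabiliser of the $s_\alpha$) are fine, but the heart of your argument contains a genuine gap: the claim that pointwise non-emptiness of the geometric fibres, each then being isomorphic to $G$, ``forces $\cP_{K^N} \rightarrow \cS_{K^N}$ to be smooth.'' Smoothness of the morphism requires flatness, and flatness is not a fibrewise condition: having all geometric fibres smooth of constant dimension $\dim G$ does not imply the map is flat (the standard escape, miracle flatness, would require knowing the total space $\cP_{K^N}$ is Cohen--Macaulay, which is not available a priori for a closed subscheme cut out by tensor conditions). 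Establishing local triviality, i.e.\ producing sections over an fppf cover rather than over individual geometric points, is exactly the hard content of the proposition, and your characteristic-$p$ step only produces a point of the fibre at each closed point (a section over $\Spec W(\kappa(\bar x))$ reduced modulo $p$), which cannot be spread out without the flatness you are trying to prove.

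The paper's proof is organised precisely to avoid this trap: after the formal reductions it works over the completed local ring $N_x$ at each closed point and trivialises the torsor there. In characteristic zero the Betti--de Rham comparison gives an fpqc-local section over the formal neighbourhood; in characteristic $p$ one identifies $N_x$ with Kisin's versal deformation ring $R_{G(\tilde x)}$ of $G(\tilde x)$-adapted deformations of the $p$-divisible group, over which $\D(\cG)(R_{G(\tilde x)}) = \D(\cG_0)(W)\otimes_W R_{G(\tilde x)}$ carries the explicit lifted tensors $\tilde s_{\alpha,0} = s_{\alpha,0}\otimes 1$, and then \cite[1.4.3]{kis2} (using connectedness of $G$) supplies a $W$-linear isomorphism $V_W \cong \D(\cG_0)(W)$ matching $s_\alpha$ with $s_{\alpha,0}$; the composite trivialises $\cP_K$ over the whole formal neighbourhood, giving flatness and the torsor property at once. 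Your appeal to Theorem (\ref{kismod}) and the compatibility of \'etale and de Rham tensors is the right ingredient for the pointwise statement, but to repair the argument you would need to replace the fibrewise step by a trivialisation over the complete local rings (or otherwise prove flatness directly), which is exactly the deformation-theoretic input from \cite[2.3.5, 2.3.9, 1.5]{kis2} that your outline omits.
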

\begin{proof}
It obviously suffices to check at finite level $K=K_NK^N$ for some sufficiently small $K_N$. We claim it also suffices to check in the formal neighbourhood of any closed point $x \in \cS_K$. 

First we may directly verify that $\cP_K$ is representable because passing to a Zariski cover making $\cV$ free, and then choosing arbitrary identifications $V \cong \cV$, $\cP_K$ can be constructed as a closed (hence affine) subvariety of $GL(V)$. Now suppose we know it is a $G$-torsor in a formal neighbourhood of every closed point. Then since $G$ is faithfully flat, $\cP_K$ is faithfully flat over the formal neighbourhood of every closed point, which is enough to prove that $\cP_K/\cS_K$ is faithfully flat. It remains to show that the natural map
$$\cP_K \times_{\cS_K} G_{\cS_K} \ni (p,g) \mapsto (p,pg) \in \cP_K \times_{\cS_K} \cP_K$$
is an isomorphism, but this also is the case iff it is the case over a formal neighbourhood of each closed point, where again it follows from our assumption.

Thus we are reduced to the study of $\hat{P}_x := \cP_{K}|_{\hat{\cS_{K,x}}}$ for closed points $x \in \cS_K$. If $\Char(x) = 0$, the Betti de-Rham comparison theorem gives an identification $V_{\hat{\cS_{K,x}}} \otimes \C \rightiso \hat{\cV_{x}}\otimes \C$, giving a fpqc-local section. This suffices because it shows $\hat{P}_x$ is smooth by fpqc descent, and so $\hat{P}_x$ admits a section \'etale locally, which is enough to see it is a $G$-torsor.

If $\Char(x)=p$, we may follow the proof of \cite[2.3.5]{kis2} to study $N_x := \cO_{\hat{\cS_{K,x}}}$ together with the $p$-divisible group $\cG = A_K|_{\hat{\cS_{K,x}}}[p^\infty]$ over the formally smooth ring $N_x$.

First note that $x$ determines a place $v$ of $E$ above $p$ and since $E$ is unramified at $p$ we may identify $\cO_{E,v} = W(\kappa(x)) =: W$, over which $N_x$ is canonically an algebra. The crystalline-de Rham comparison gives an identification $\cV_{N_x} = \D(\cG)(N_x)$. We let $\cG_0$ denote the restriction of $\cG$ to $x$.

Taking $\tilde{x}: N_x \rightarrow W$ a lift, we may follow the argument of \cite[2.3.5]{kis2} to obtain from the Tate module of $\tilde{x}^*\cG$ tensors $s_{\alpha,0} \in \D(\cG_0)(W)^\otimes$ defining a reductive subgroup $G(\tilde{x}) \subset GL(\D(\cG_0)(W))$, which gives rise to an explicit versal deformation ring $R_{G(\tilde{x})}$ of $G(\tilde{x})$-adapted deformations of $\cG_0$. This then has the property that we may identify $R_{G(\tilde{x})} \rightiso N_x$ in such a way that induces an identification of $\cG$ with the explicit versal deformation (let's also call it $\cG$) one constructs over $R_{G(\tilde{x})}$, so in particular we have
$$\cV_{N_x} = \D(\cG)(R_{G(\tilde{x})}).$$
As in \cite[2.3.9]{kis2} one has by an explicit construction lifts $\tilde{s}_{\alpha,0}$ of $s_{\alpha,0}$ to $\D(\cG)(R_{G(\tilde{x})})^\otimes$ which are identified with $s_{\alpha,dR} \in \cV_{N_x}^\otimes$.
This explicit construction comes about in \cite[\S1.5]{kis2} by defining $\D(\cG)(R_{G(\tilde{x})}) = \D(\cG_0)(W) \otimes_W R_{G(\tilde{x})}$ as a module and taking the lift $\tilde{s}_{\alpha,0} = s_{\alpha,0} \otimes 1.$
Finally let us note that by \cite[1.4.3]{kis2} (since $G$ is connected) there exists a $W$-linear isomorphism
$$V_{W} \rightiso \D(\cG_0)(W)$$
identifying $s_\alpha$ with $s_{\alpha,0}$. In particular this identifies $G_W \rightiso G(\tilde{x})$ uniquely up to inner automorphisms and the composite

$$V_W \otimes_W R_{G(\tilde{x})} \rightiso \D(\cG_0)(W) \otimes_W R_{G(\tilde{x})} \rightiso \D(\cG)(R_{G(\tilde{x})}) = \cV_{N_x}$$
gives a section of $\hat{P}_x$ as required.
\end{proof}

\subsubsection{}
We would now like to show it is the desired canonical model for $P_{K^N}(G,\fX)$. Firstly we must check that the additional structures extend.

We have already noted that the $\prod_{p|N} G(\Qp)$-action extends. By (\ref{torsor connection}), the connection $\nabla$ on $P_{K^N}$ extends to $\cP_{K^N}$ because the Gauss-Manin connection extends to (i.e. can be constructed directly on) $\cV = \cH^1_{dR}(\cA_{K^N}/\cS_{K^N})$ and by the construction of $s_{\alpha,dR}$ in \cite[2.2]{kis2} it is clear they are parallel sections. It is also immediate the $\mu$-filtration extends, by (\ref{filtration extension}) and that the Hodge filtration is naturally defined on the integral de Rham cohomology $\cV$.

Finally, we need to verify that it is indeed a canonical model, and as usual we check the lattice condition at a crystalline point $x$. But in our situation this is very straightforward: we just use the facts that, taking $\cA_x$ to be the pullback of the universal abelian scheme to $x$, that (\ref{kismod}) $\D(H^1_{et}(\cA_x,\Zp)) = H^1_{dR}(\cA_x/W(k))$ and that by compatibiity of $\D$ with the $p$-adic comparison theorems, $\D(x^*s_{\alpha,et}) = x^*s_{\alpha,dR}$.

\subsubsection{}
\label{ad extension}
For making the transition from Hodge to abelian type, we will need to extend the equivariant $G^c \times \prod_{p|N}G(\Qp)$-action to an equivariant action of the $\Z^N = \Z[1/N]$-group scheme
$$\cA^N_P(G) = (G^c \times \frac{\prod_{p|N}G(\Qp)}{\overline{Z(\Z^N)}}) *_{G(\Z^N)_+/Z(\Z^N)} G^{ad}(\Z^N)^+$$
which sits in an exact sequence
$$1 \rightarrow G^c \rightarrow \cA_P^N(G) \rightarrow \cA^N(G) \rightarrow 1.$$
To do this it suffices to give the $G^{ad}(\Z^N)^+$-action and then check it is compatible, for which we imitate the approach taken in \cite[3.2]{kis2} and \cite[4.5]{kispap}, with our modifications from (\ref{adjoint action}) and further slight modifications.

To be precise, recall that a point $x \in \cP_{K^N}(T)$ gives the data of an quadruple $(A,\lambda,\epsilon_{et},\epsilon_{dR})$
where $A/T$ is an abelian scheme up to an isogeny whose degree is supported on $N$, $\lambda$ a weak polarisation of $A$, $\epsilon_{et} \in \Gamma(T, \uIsom_{s_{\alpha}}(V_{\prod_{p|N}\Qp}, \prod_{p|N}\hat{V}_p(A)))$, and $\epsilon_{dR}$ a section of $\uIsom_{s_{\alpha}}(V_T, \cV_T)$.

Given $\gamma \in G^{ad}(\Z^N)^+$, we may form the $Z^{der}$-torsor $\cP = \{g \in G^{der}|\pi(g)=\gamma\}$. We may take $F/\Q$ Galois such that $\cP(\cO_F[1/N])$ is nonempty, and take $\tilde{\gamma} \in \cP(\cO_F[1/N])$. This can be used to define an action just as in \cite[3.2]{kis2}. Indeed we can define $$A^\cP(T) = (A(T)\otimes_{\Z[1/N]} \cO_\cP)^{Z^{der}}$$
and specialise the map
$$A^\cP \otimes_{\Z[1/N]} \cO_{\cP} \rightiso A \otimes_{\Z[1/N]} \cO_{\cP}$$
at $\tilde{\gamma}$ to obtain
$$\iota_{\tilde{\gamma}}: A^{\cP} \otimes_{\Z[1/N]} \cO_F[1/N] \rightiso A \otimes_{\Z[1/N]} \cO_F[1/N].$$

These allow one to give the action of $\gamma$ as taking $(A,\lambda,\epsilon_{et},\epsilon_{dR}) \mapsto (A^\cP, \lambda^\cP, \epsilon_{et}^\cP, \epsilon_{dR}^\cP)$, where all but the last of these are as defined by Kisin. For the action on the de Rham component, we need the following modification of \cite[3.2.5]{kis2}. 

\begin{lem}
With notation as above, the composite
$$V_T \otimes_{\Z^N} \cO_F^N \map{\tilde{\gamma}^{-1}} V_T \otimes_{\Z^N} \cO_F^N \map{\epsilon_{dR}} H^1_{dR}(A/T) \otimes_{\Z^N} \cO_F^N \map{\iota_{\tilde{\gamma}}^{-1}} H^1_{dR}(A^\cP/T) \otimes_{\Z^N} \cO_F^N$$
is $Gal(F/\Q)$-invariant, hence induces a section
$$\epsilon_{dR}^\cP:V_T \rightiso H^1_{dR}(A^\cP/T)$$
which takes $s_{\alpha}$ to $s_{\alpha,dR}$.
\end{lem}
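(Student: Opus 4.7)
The plan is to adapt Kisin's argument in \cite[3.2.5]{kis2}, with two modifications: we work with $Z^{der}$-torsors rather than $Z$-torsors (as in (\ref{adjoint action})), and we have to track the preservation of the de Rham tensors $s_{\alpha,dR}$ rather than only Tate-module tensors.

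First I would recast the composite as the specialisation at $\tilde{\gamma}$ of a single map defined over $\cO_\cP$. Let $g \in G^{der}(\cO_\cP)$ be the tautological point (so that evaluation at $\tilde{\gamma}$ sends $g$ to $\tilde{\gamma}$), and consider
\[
V_T \otimes_{\Z^N} \cO_\cP \map{g^{-1}} V_T \otimes_{\Z^N} \cO_\cP \map{\epsilon_{dR}\otimes 1} H^1_{dR}(A/T) \otimes_{\Z^N} \cO_\cP \map{\iota^{-1}} H^1_{dR}(A^\cP/T) \otimes_{\Z^N} \cO_\cP,
\]
where $\iota$ is the defining $Z^{der}$-equivariant isomorphism $A^\cP \otimes \cO_\cP \rightiso A \otimes \cO_\cP$ from our modified twisting construction. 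The composite in the statement is the fibre of this universal map at $\tilde{\gamma}$, and likewise at $\sigma(\tilde{\gamma})=\tilde{\gamma} z_\sigma$ for $\sigma \in \Gal(F/\Q)$ and some $z_\sigma \in Z^{der}(F)$.

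Galois invariance will follow from showing the universal map is $Z^{der}$-invariant for the action $g \mapsto gz$ (i.e. right translation by $Z^{der}$ on $\cO_\cP$), so that it descends to a map defined over $\Z^N$ and its value is therefore independent of the chosen $F$-point of $\cP$. Replacing $g$ by $gz$ replaces $g^{-1}$ by $z^{-1}g^{-1}$ on the left, and replaces $\iota_g^{-1}$ by $\iota_{gz}^{-1}=\iota_g^{-1}\circ z^{-1}$ on the right (since $\iota$ is $Z^{der}$-equivariant, with $z$ acting on $A$ via $Z^{der}\subset G \hookrightarrow GL(V) \curvearrowright A$ up to isogeny). The two $z$'s will cancel provided $\epsilon_{dR}$ intertwines the $Z^{der}$-action on $V$ (via $Z^{der} \subset G$) with the induced action on $H^1_{dR}(A/T)$. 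This intertwining is not literally an equivariance of $\epsilon_{dR}$ (which is just a single trivialisation) but rather the statement that the two $Z^{der}$-subgroups of $GL(V_T)$ and $GL(H^1_{dR}(A/T))$ coincide under $\epsilon_{dR}$; this in turn follows from the moduli description, namely that $\epsilon_{dR}$ is a tensor-preserving trivialisation and the image of $Z^{der}$ in $GL(H^1_{dR}(A/T))$ is precisely the subgroup fixing the $s_{\alpha,dR}$ and lying in the image of $G$, which agrees with the source image via $\epsilon_{dR}$.

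Preservation of tensors is then a direct verification, since each of the three maps preserves the appropriate tensors. The element $\tilde{\gamma}^{-1} \in G^{der}(F) \subset G(F)$ fixes the $s_\alpha \in V^\otimes$ because $G$ is defined as the stabiliser of the $s_\alpha$; the map $\epsilon_{dR}$ sends each $s_\alpha$ to $s_{\alpha,dR}$ by the definition of the torsor $\cP_{K^N} = \uIsom_{s_\alpha}(V,\cV)$; and finally $\iota_{\tilde{\gamma}}^{-1}$ sends $s_{\alpha,dR}\in H^1_{dR}(A/T)^\otimes$ to the tensor $s_{\alpha,dR}^\cP \in H^1_{dR}(A^\cP/T)^\otimes$ on the twist, this being the definition of the twisted de Rham tensor (applying the construction of $(V \otimes \cO_\cP)^{Z^{der}} \rightiso (V \otimes \cO_{\cP'})^Z$ from the proposition in (\ref{adjoint action}) to tensor spaces). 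I expect the main subtlety to be in making the identification of the two $Z^{der}$-actions on $H^1_{dR}(A/T)$ rigorous (equivalently, the agreement of the two subgroups of $GL(H^1_{dR}(A/T))$ discussed above); once this is in place the rest of the argument is formal bookkeeping.
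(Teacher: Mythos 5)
Your overall skeleton (a cocycle/descent computation along $\cP$, plus checking each map on tensors) is the same as the paper's, which simply says the cocycle argument of Kisin [3.2.5] goes through unchanged. But the place where you yourself locate ``the main subtlety'' is exactly where your justification fails, and it is a genuine gap. What the cancellation of the two $z$'s requires is an identity of \emph{homomorphisms} $Z^{der}(F) \to GL(H^1_{dR}(A/T)\otimes F)$: the de Rham realization of the quasi-isogeny action of $z$ on $A$ (which is what enters the relation $\iota_{\tilde{\gamma}z}=\iota_{\tilde{\gamma}}\circ z$ in the twisting construction) must equal $\epsilon_{dR}\, z\, \epsilon_{dR}^{-1}$. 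Your argument only tries to show that two \emph{subgroups} of $GL(H^1_{dR}(A/T))$ coincide, which would not pin down the element-wise identification even if it were established; and your description of the image is not correct either — the subgroup fixing the $s_{\alpha,dR}$ is the full twisted form of $G$ (the automorphism group of the fibre functor), not the image of $Z^{der}$, and ``lying in the image of $G$'' presupposes a trivialization. The correct route, and the one the paper takes, is to use that $z$ is central (so conjugation by \emph{any} tensor-preserving trivialization gives a canonical element of the twisted centre) and then to verify the identity over $\C$ via the comparison with Betti cohomology, where the quasi-isogeny $z$ acts on Betti cohomology literally by $z$ and the Betti trivialization preserves the tensors; one then spreads out from $\C$-points to the base $T$. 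Without some such input your universal map over $\cO_\cP$ has no reason to be $Z^{der}$-invariant.

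The same issue infects your tensor claim. You assert that $\iota_{\tilde{\gamma}}^{-1}$ sends $s_{\alpha,dR}$ to ``the twisted de Rham tensor, this being the definition.'' But the assertion of the lemma is that the composite hits the \emph{canonical} de Rham tensors on $A^\cP$ (those furnished by the absolute Hodge cycles / the moduli interpretation of the point $x.(h,\gamma^{-1})$), not tensors defined as the image of the map — otherwise the statement is vacuous. That $\iota_{\tilde{\gamma}}^{-1}$ carries the canonical tensors of $A$ to the canonical tensors of $A^\cP$ is precisely the non-obvious point, and the paper again checks it by working over $\C$ and comparing with Betti cohomology as in Kisin [3.2.6]. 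So the missing idea in your proposal is the Betti comparison over $\C$ (plus centrality to make the conjugation trivialization-independent); once that is inserted at both places, the rest of your bookkeeping is fine.
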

\begin{proof}
The cocycle computation argument of \cite[3.2.5]{kis2} goes through unchanged. For the claim about tensors, the only non-obvious point is that $\iota_{\tilde{\gamma}}^{-1}$ preserves the tensors, which one checks directly by working over $\C$, using the comparison with Betti cohomology as in \cite[3.2.6]{kis2}.
\end{proof}
It is also clear that this action is compatible with the $G^c \times \prod_{p|N} G(\Qp)/\overline{Z(\Z^N)}$-action in the sense that if we are given $g \in G^c(\Z^N)_+$ we may take $F=\Q$, $\tilde{\gamma}=\gamma=g^{-1}$ and post-compose the whole quadruple by the quasi-isogeny $\iota_{\gamma}$ to identify 
$$(A,\lambda,\epsilon_{et} \circ g, \epsilon_{dR} \circ g) = (A^\cP, \lambda^\cP, \epsilon_{et}^\cP, \epsilon_{dR}^{\cP}).$$

\subsection{Some distinguished Shimura data}
In this section we digress a little. To get from the Hodge type to the abelian type case we would like to follow Deligne \cite{del2} and pass via connected components, but unfortunately a straightforward reduction of $\cP$ to the derived group is not possible to carry out over $\cO_E[1/N]$ (roughly speaking because carrying out such a construction requires ``trivialising the $G^{ab}$ part of the motivic structure'', which can only be done canonically over $\C$ using Betti-de Rham comparison). We address this by constructing some distinguished Shimura data $\cB(G^{der},\fX^+,E)$ which are in some sense initial among Shimura data whose connected Shimura datum is $(G^{der},\fX^+)$ and whose reflex field is contained in $E$. Pulling back our torsors from the Hodge type case we may then reduce along $\cB \rightarrow G$ while leaving all motivic structures intact.

\subsubsection{}
Let $(G,\fX)$ be a Shimura datum, and $E$ its reflex field. Our goal is to construct a new Shimura datum $(\cB,\fX_\cB) \rightarrow (G,\fX)$ with $\cB^{der}=G^{der}$ and reflex field $E$ and show it depends only on these data and $\fX^+$ and not on $G$.

Since $E$ is the reflex field, we obtain a well-defined $$\mu_E:\G_{m,E} \rightarrow G^{ab}_E$$ taking the `determinant' (composite with $G \rightarrow G^{ab}$) of any Hodge cocharacter. Taking Weil restriction along $E/\Q$ and the norm map, we get a composite
$$r: E^* := \Res_{E/\Q}\mathbb{G}_{m,E} \rightarrow \Res_{E/\Q} G^{ab}_E \rightarrow G^{ab}.$$

Define $\cB = G \times_{G^{ab}} E^*$, where the first map is the natural $\delta:G \rightarrow G^{ab}$. It is straightforward to check the following.

\begin{lem}
The scheme $\cB$ is a reductive group scheme over $\Q$, sitting in a short exact sequence of $\Q$-group schemes
$$0 \rightarrow G^{der} \rightarrow \cB \rightarrow E^* \rightarrow 0.$$
\end{lem}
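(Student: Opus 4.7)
The plan is to read off both conclusions from the general properties of the fibre product together with the fact that $\delta : G \twoheadrightarrow G^{ab}$ is a smooth surjection of $\Q$-groups with kernel $G^{der}$. First I would assemble the Cartesian square
$$\begin{CD}
\cB @>\pi_2>> E^* \\
@V\pi_1 VV @VV r V \\
G @>\delta>> G^{ab},
\end{CD}$$
noting that $\cB$ is an affine $\Q$-group scheme, being a closed subgroup of the affine group $G \times E^*$ cut out by the equation $\delta \circ \pi_1 = r \circ \pi_2$. Since $\pi_2$ is the base change of $\delta$ along $r$, and $\delta$ is smooth and faithfully flat with scheme-theoretic kernel $G^{der}$, the same holds for $\pi_2$; its kernel consists of pairs $(g,1)$ with $\delta(g)=1$, hence is $G^{der}$. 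This gives the desired short exact sequence
$$0 \to G^{der} \to \cB \to E^* \to 0$$
of fppf sheaves, equivalently of $\Q$-group schemes.

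Next I would check that $\cB$ is a reductive $\Q$-group scheme (i.e.\ smooth affine with connected reductive geometric fibres). Smoothness follows from smoothness of $\pi_2$ composed with smoothness of the torus $E^*/\Q$. For connectedness, extend scalars to $\Qbar$; the sequence above exhibits $\cB_{\Qbar}$ as an extension of the connected torus $E^*_{\Qbar}$ by the connected semisimple group $G^{der}_{\Qbar}$, and an extension of connected groups is connected.

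For reductivity of $\cB_{\Qbar}$, let $U$ denote its unipotent radical. Its image $\pi_2(U) \subset E^*_{\Qbar}$ is a unipotent subgroup of a torus, hence trivial, so $U \subset G^{der}_{\Qbar}$; but $G^{der}$ is semisimple, so $U = 1$. Thus $\cB_{\Qbar}$ is reductive. There is no genuine obstacle here — the argument is pure bookkeeping once one observes that $\cB \to E^*$ is a base change of $\delta$ — and the only fact doing real work is the standard statement that $\delta:G\to G^{ab}$ is smooth and surjective with kernel $G^{der}$, which is what lets the reductive/connected/smooth data propagate through the Cartesian square.
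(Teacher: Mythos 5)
Your argument is correct, and it is exactly the verification the paper leaves implicit (the lemma is prefaced by ``It is straightforward to check the following'' and no proof is given): identifying $\cB \to E^*$ as the base change of $\delta: G \to G^{ab}$ along $r$ gives smoothness, surjectivity and the kernel $G^{der}$ at once, and the connectedness/reductivity of the geometric fibre follows as you say since the unipotent radical maps trivially to the torus and then dies in the semisimple group $G^{der}$. Nothing to add.
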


Suppose now that $(G,\fX)$ induces a connected Shimura datum $(G^{der},\fX^+)$ with reflex field $E(G^{der},\fX^+):=E(G^{ad},\fX^{ad}) \subset E$. We could make the above construction varying over all Shimura data with connected datum $(G^{der},\fX^+)$ and reflex field contained in $E$. The following is an adaptation of the argument in \cite[2.5]{del2}.

\begin{prop}
The extension $$0 \rightarrow G^{der} \rightarrow \cB \rightarrow E^* \rightarrow 0$$
depends only on the connected Shimura datum $(G^{der},\fX^+)$ and the field $E$.
\end{prop}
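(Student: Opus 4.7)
The plan is to follow the strategy of \cite[2.5]{del2}. Given two Shimura data $(G_1, \fX_1)$ and $(G_2, \fX_2)$ with common connected Shimura datum $(G^{der}, \fX^+)$ and reflex fields contained in $E$, I aim to produce a canonical isomorphism of extensions $\cB_1 \cong \cB_2$.

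First I would form the fiber product Shimura datum $(G_3, \fX_3)$ with $G_3 := G_1 \times_{G^{ad}} G_2$ and $\fX_3 := \fX_1 \times_{\fX^{ad}} \fX_2$. Routine verification of Deligne's axioms shows $(G_3, \fX_3)$ is a Shimura datum with derived group the diagonal $G^{der}$, connected datum $(G^{der}, \fX^+)$, and reflex field contained in $E(G_1, \fX_1) \cdot E(G_2, \fX_2) \subset E$. The two projections $p_i \colon G_3 \to G_i$ are surjective morphisms of Shimura data restricting to the identity on $(G^{der}, \fX^+)$. This reduces the problem to the following: whenever $f \colon (G', \fX') \to (G, \fX)$ is a surjective morphism of Shimura data inducing the identity on connected data, with both reflex fields in $E$, the induced morphism $\cB' \to \cB$ is an isomorphism of extensions.

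For this reduction step, the argument is a diagram chase using the snake lemma. Applied to the map of short exact sequences $1 \to G^{der} \to G' \to G'^{ab} \to 1$ into $1 \to G^{der} \to G \to G^{ab} \to 1$ (identity on the left, $(f, f^{ab})$ elsewhere), the snake lemma yields an isomorphism $\delta' \colon \ker(f) \rightiso \ker(f^{ab})$. Compatibility of Hodge cocharacters under $f$ gives $f^{ab} \circ r' = r$, so for $(g, t) \in \cB$ and any lift $g' \in G'$ of $g$, the element $\delta'(g') \cdot r'(t)^{-1}$ lies in $\ker(f^{ab})$; the snake-lemma isomorphism produces a unique correction $h \in \ker(f)$ with $(g'h, t) \in \cB'$ mapping to $(g, t)$, proving surjectivity of $\cB' \to \cB$. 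Injectivity is immediate, since $(g', t) \mapsto (1, 1)$ forces $t = 1$ and $g' \in G^{der} \cap \ker(f) = 1$.

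The main obstacle I expect is the verification that $(G_3, \fX_3)$ is genuinely a Shimura datum — that each $h = (h_1, h_2) \in \fX_3$ satisfies Deligne's axioms — together with the bookkeeping showing the reflex field behaves correctly under the fiber product. Once this setup is in place the core argument is the snake-lemma diagram chase, which is purely formal, and the canonicity of the resulting isomorphism $\cB_1 \cong \cB_3 \cong \cB_2$ follows from the universal properties used throughout.
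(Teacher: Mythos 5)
Your reduction step is sound and is essentially the paper's own argument: the paper proves that any morphism $\alpha\colon (G,\fX,E)\rightarrow (G_2,\fX_2,E_2)$ induces a morphism of extensions with outer maps $G^{der}\rightarrow G_2^{der}$ and $N_{E/E_2}\colon E^*\rightarrow E_2^*$, and then observes that when the outer maps are isomorphisms so is the middle one; your snake-lemma chase (with $E$ fixed, so no norm map is needed) is the same five-lemma argument made explicit, and the verification $f^{ab}\circ r'=r$ via compatibility of the abelianized Hodge cocharacters is exactly the computation the paper does. Note also that you correctly isolate surjectivity of $f$ as a hypothesis, which is genuinely needed (e.g.\ $(SL_2)\hookrightarrow (GL_2)$ induces the identity on connected data but not an isomorphism of the $\cB$'s).

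The gap is in the construction of the dominating datum. The paper simply invokes \cite[2.5.6]{del2} for the existence of $(G'',\fX'')$ mapping to both data; you instead set $G_3:=G_1\times_{G^{ad}}G_2$ and assert that routine verification shows $(G_3,\fX_1\times_{\fX^{ad}}\fX_2)$ is a Shimura datum. As written this fails: the fibre product of connected reductive groups over their common adjoint quotient need not be connected. Indeed $G_3=G^{der}_{\mathrm{diag}}\cdot\bigl(Z(G_1)\times Z(G_2)\bigr)$, whose identity component is $G^{der}_{\mathrm{diag}}\cdot\bigl(Z(G_1)^\circ\times Z(G_2)^\circ\bigr)$, and these differ in general — already $SL_2\times_{PGL_2}SL_2\cong SL_2\times\mu_2$, and the same phenomenon occurs whenever the centres have nontrivial finite parts — so $G_3$ is not a (connected) reductive group and $(G_3,\fX_3)$ is not a Shimura datum. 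Relatedly, $\fX_1\times_{\fX^{ad}}\fX_2$ is not obviously a single conjugacy class of the correct (connected) group, and the inclusion $E(G_3,\fX_3)\subset E(G_1,\fX_1)\cdot E(G_2,\fX_2)$ is not mere bookkeeping: since the $G_3$-conjugacy class of $(\mu_1,\mu_2)$ is a priori smaller than the product of the classes $[\mu_1]\times[\mu_2]$, one must show a Galois element fixing each $[\mu_i]$ fixes the class of the pair, which uses that $Z_{G_i}(\mu_i)$ surjects onto $Z_{G^{ad}}(\mu^{ad})$. All of this is repairable — pass to the identity component $G^{der}\cdot\bigl(Z(G_1)^\circ\times Z(G_2)^\circ\bigr)$, take for $\fX_3$ the conjugacy class of a compatible pair $(h_1,h_2)$ lying over a common point of $\fX^{+,ad}$, and check the projections remain surjective morphisms of Shimura data — or one can simply cite \cite[2.5.6]{del2} as the paper does; but the step you label routine is precisely where the content (and, as stated, the error) lies.
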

\begin{proof}
Let $(G,\fX)$, $(G',\fX')$ be two Shimura data whose reflex fields are both contained in $E$ and which give rise to the same connected Shimura datum. By \cite[2.5.6]{del2} we can find a third such Shimura datum $(G'',\fX'')$ which admits maps $$(G, \fX) \leftarrow (G'',\fX'') \rightarrow (G',\fX').$$

It will therefore suffice to show that for any morphism $\alpha: (G,\fX,E) \rightarrow (G_2,\fX_2,E_2)$ of Shimura data with a field of definition (where $E \rightarrow E_2$ represents an inclusion $E_2 \subset E$) we have an induced natural morphism of extensions
$$
\begin{CD}
    0 @>>> G^{der} @>>> \cB @>>> E^* @>>>  0\\
@.      @VVV@VVV@VVV @. \\
      0 @>>> G^{der}_2 @>>> \cB_2 @>>> E^*_2 @>>>  0 
\end{CD}
$$
where the outer maps are the natural ones induced by $\alpha: G \rightarrow G_2$ and $N_{E/E_2}:E^* \rightarrow E_2^*$. Since in the above special case both these maps are isomorphisms, we will get a canonical identification
$$\cB \leftiso \cB'' \rightiso \cB'$$
of extensions, proving the proposition.

Let us verify the claim. Recall that to give a map $\cB \rightarrow \cB_2$ is to give maps $f: \cB \rightarrow G_2$ and $k: \cB \rightarrow E_2^*$ that agree when extended to $G_2^{ab}$. Write $N\mu:E^*\rightarrow G^{ab}$ and $\delta:G \rightarrow G^{ab}$ and $N\mu_2, \delta_2$ for the analogues for $G_2$. Let us take $f$ to be the natural composite
$$f: \cB \map{pr_G} G \rightarrow G_2$$
and $k$ to be
$$k: \cB \map{pr_{E^*}} E^* \map{N_{E/E_2}} E_2^*.$$
We must check that whenever $\delta(g) = N\mu(e)$, we have
$$\delta_2(\alpha(g)) = N\mu_2(N_{E/E_2}(e)).$$

But by functoriality of $(-)^{ab}$
$$\delta_2(\alpha(g)) = \alpha^{ab}(\delta(g)) = \alpha^{ab}(N\mu(e)) = N\mu_2(N_{E/E_2}(e))$$
where the final equality holds because since $\alpha$ is a morphism of Shimura data we have in particular that $\alpha^{ab} \mu = \mu_2:\G_{m,\C} \rightarrow G_{2,\C}^{ab}$, from which it follows that $\alpha^{ab} \circ N\mu = N\mu_2 \circ N_{E/E_2}$ by functoriality of the norm map between tori. Thus we have a map $\cB \rightarrow \cB_2$, and it is clear from its definition that it fits into the diagram described.
\end{proof}

\subsubsection{}
Our next task is to define a Shimura datum. Take any $h_G \in \fX^+$, and we define a canonical map
$$h_E: \C^* \rightarrow E^*(\R)$$
as follows. Let $\tau: E\hookrightarrow \C$ be the canonical inclusion of the reflex field $E$ into $\C$, and write $E^*(\R) = E_\tau^* \times \prod_{\tau' \not=\tau} E_{\tau'}^*$. If $\tau$ is real, set $$h_E(z) = (z\bar{z};1,1,...,1).$$
If $\tau$ is complex,
$$h_E(z) = (z;1,...,1)$$
(i.e. the entries away from the place $\tau$ are all trivial in both cases).

\begin{prop}
These give a map $h_G \times h_E: \mathbb{S} \rightarrow \cB_\R$ which defines a Shimura datum with a natural map $(\cB,\fX_{\cB}) \rightarrow (G,\fX)$. Moreover, it is independent of the choice of $h_G$, has reflex field $E$, and only depends on $E$ and the connected Shimura datum $(G^{der},\fX^+)$ up to canonical isomorphism.
\end{prop}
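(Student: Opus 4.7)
The proof proceeds by verifying each claim in sequence. \textbf{First}, to see that $h_G \times h_E$ lands in $\cB_\R \subset G_\R \times E^*_\R$, one must check $\delta \circ h_G = r \circ h_E$ as maps $\bS \to G^{ab}_\R$. Both sides are determined by their behavior on $\bS(\R) = \C^*$; using the standard formula $h(z) = \mu(z) \cdot \bar\mu(\bar z)$ for a torus-valued homomorphism from $\bS$, together with $\delta \circ \mu = \mu_E$ and $r = N_{E/\Q} \circ \Res_{E/\Q}(\mu_E)$, both sides evaluate on $z \in \C^*$ to $\mu_E^\tau(z) \cdot \mu_E^{\bar\tau}(\bar z)$, where $\mu_E^{\tau'}$ denotes the base change of $\mu_E$ along $\tau': E \hookrightarrow \C$. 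The real/complex case split in the definition of $h_E$ matches precisely: in the real case $\tau = \bar\tau$ and both sides collapse to $\mu_E^\tau(|z|^2)$, matching $h_E(z) = z\bar z$; in the complex case the two factors remain distinct, corresponding to the embedding $E^*(\R) \hookrightarrow E^*(\C) = \prod_{\tau'} \C^*$ that places $h_E(z) = z$ into the $\tau$-coordinate, doubling up as $(z, \bar z)$ in $(\tau, \bar\tau)$-coordinates.

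\textbf{Second}, the Shimura axioms follow via reduction to $(G,\fX)$. The exact sequence $0 \to \Gder \to \cB \to E^* \to 0$ gives $\cB^{der} = \Gder$, hence $\cB^{ad} = \Gad$ and $h_\cB^{ad} = h_G^{ad}$; since $E^*$ is abelian, the adjoint representation of $\cB$ on $\Lie(E^*)$ is trivial, so the Hodge structure $\ad \circ h_\cB$ on $\Lie(\cB_\R)$ decomposes as $(\ad \circ h_G)|_{\Lie(\Gder_\R)}$ plus a trivial summand of pure type $(0,0)$. Axiom (SV1) thus reduces to (SV1) for $(G,\fX)$, and (SV2), (SV3) follow identically via $\cB^{ad} = \Gad$. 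The natural morphism $(\cB, \fX_\cB) \to (G,\fX)$ is given by the first projection $\cB \to G$.

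\textbf{Third}, for independence of $h_G$ and the reflex field: $\Gder(\R)^+ \to \Gad(\R)^+$ is surjective (an isogeny of connected semisimple real groups is surjective on topological identity components), hence $G(\R)^+ = \Gder(\R) \cdot Z(G)(\R)$; since $Z(G)(\R)$ acts trivially on any $h_G$ by centrality, $\Gder(\R)$ already acts transitively on $\fX^+$. Thus any other $h_G' \in \fX^+$ has the form $g h_G g^{-1}$ for some $g \in \Gder(\R)$, and $(g,1) \in \cB(\R)$ (since $\delta(g) = 1 = r(1)$) conjugates $h_\cB$ to $h_G' \times h_E$, giving the same $\cB(\R)$-conjugacy class. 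For the reflex field: since $E^*$ is abelian, $\cB$-conjugacy classes of cocharacters are determined by the $G$-conjugacy class of the $G$-projection together with the $E^*$-projection itself. The field of definition of the class of $\mu_\cB$ is therefore the compositum of $E(G,\fX) \subset E$ with the field of definition of the Hodge cocharacter $\mu_{h_E}$. Since $\mu_{h_E}$ is supported entirely in the $\tau$-component of $E^*_\C = \prod_{\tau'} \Gm$, its $\Gal(\Qbar/\Q)$-stabilizer is that of $\tau$, giving reflex $E$, hence reflex $E$ for $(\cB,\fX_\cB)$.

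\textbf{Finally}, for dependence only on $(\Gder,\fX^+,E)$: the preceding proposition canonically identifies $\cB$ itself for any two such data via the zigzag $(G_1, \fX_1) \leftarrow (G'', \fX'') \to (G_2, \fX_2)$ of \cite[2.5.6]{del2}. Picking $h'' \in (\fX'')^+$ with projections $h_{G_i} \in \fX_i^+$, the Hodge structure $h_{\cB''} = h'' \times h_E$ maps under the canonical isomorphisms $\cB_1 \leftarrow \cB'' \to \cB_2$ to $h_{G_i} \times h_E = h_{\cB_i}$ (using that $h_E$ depends only on $E$), giving canonical identifications of the Shimura data. \textbf{The main obstacle} is the transitivity argument in the third step, which hinges on the standard but non-trivial surjectivity on topological identity components for isogenies of semisimple real groups; everything else is direct verification of definitions or formal consequences of the preceding proposition.
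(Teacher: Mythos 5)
Your proof is correct and follows essentially the same route as the paper: check $\delta\circ h_G = r_\R\circ h_E$ via $\mu_E$ with the real/complex split at $\tau$, reduce the Shimura axioms and the independence of $h_G$ to the adjoint datum $\cB^{ad}=G^{ad}$, compute the reflex field from the adjoint and abelianized parts, and deduce dependence only on $(G^{der},\fX^+,E)$ from the preceding proposition together with Deligne's \cite[2.5.6]{del2} zigzag. The extra detail you supply (transitivity of $G^{der}(\R)$ on $\fX^+$ and the explicit determination of the field of definition of the $\cB$-conjugacy class of $\mu_\cB$) merely fleshes out steps the paper treats tersely, e.g.\ its appeal to the general formula $E(\cB,\fX_\cB)=E(\cB^{ad},\fX_\cB^{ad})E(\cB^{ab},h)$.
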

\begin{proof}
We must check it is defined, which amounts to proving that $r_{\R}(h_E(z)) = \delta(h_G(z))$. First we note that
$$\delta(h_G(z)) = \mu_E(z)\mu_E(\bar{z}) = \mu_E(z) \overline{\mu_E(z)}$$
where we view $\mu_E$ as a character $\G_{m,\C} \rightarrow G^{ab}_{\C}$ using the canonical embedding $\tau$ of $E$.

On the other hand recall that $$r_{\R}=(N_{E/\Q} \circ \mu_E)_{\R}:E^*_{\R} \rightarrow G^{ab}_{E\otimes_\Q \R} \rightarrow G^{ab}_\R.$$
If $\tau$ is real then we have
$$(z\bar{z},1,1,...,1) \mapsto (\mu_E(z\bar{z}),1,...,1) \mapsto \mu_E(z\bar{z}) = \mu_E(z) \overline{\mu_E(z)}$$
and if $\tau$ is complex then
$$(z,1,1,1,1) \mapsto (\mu_E(z),1,...,1) \mapsto \mu_E(z) \overline{\mu_E(z)},$$
where again all occurrences of $\mu_E$ are viewed over $\C$ via $\tau$.

In particular we have checked the necessary equality.

It defines a Shimura datum because after projecting along $\cB \rightarrow G^{ad}$, $h_G \times h_E$ agrees with $h_G$. This is independent of the choice of $h_G$ because in general $\fX$ is a union of copies of $\fX^+$ so every other choice for $h_G \in \fX^+$ must be obtained. For the statement about the reflex field, we note that it is immediate from the general fact that $E(\cB,\fX) = E(\cB^{ad},\fX_{\cB}^{ad})E(\cB^{ab},h) = E$.

To show it only depends on $(G^{der},\fX^+)$ and $E$ we just return to the argument above for $\cB$ being independent. Given $(G,\fX), (G',\fX')$ Shimura data with reflex fields contained in $E$ and the same connected Shimura datum, we can again apply \cite[2.5.6]{del2} and find $(G'',\fX'')$ mapping to both. Since the construction of $(\cB,\fX_{\cB})$ is functorial in $(G,\fX)$, choosing $h \in \fX^+$ immediately gives a commuting diagram

$$
\begin{CD}
    \bS @= \bS @= \bS\\
@VVV  @VVV     @VVV\\
    \cB_{\R} @<\cong << \cB''_{\R}  @>\cong >> \cB'_{\R}.
\end{CD}
$$
From this it is clear that $(\cB,\fX_{\cB})$ depends only on $(G^{der},\fX)$ and $E$.
\end{proof}

We would like to check the obvious functorialities for the pair $(\cB,\fX_{\cB})$.

\begin{lem}
\label{distconst functorial}
The above construction is functorial in the following senses.
\begin{enumerate}
\item Let $u:(G_1,\fX_1^+) \rightarrow (G_2,\fX_2^+)$ be a map of connected Shimura data: that is, a central isogeny $G_1 \rightarrow G_2$ of semisimple groups such that $\fX_1^{+,ad} = \fX_2^{+,ad}$, and $E \supset E(G_1,\fX_1^+)$. Let $(\cB_1,\fX_1),(\cB_2,\fX_2)$ be the Shimura data associated to the triples $(G_1,\fX_1^+,E)$ and $(G_2,\fX_2^+,E)$ by the above procedure. Then there is a canonical morphism
$$u_*: (\cB_1,\fX_1) \rightarrow (\cB_2,\fX_2)$$
of Shimura data also induced by a natural central isogeny $\cB_1 \rightarrow \cB_2$.
\item Let $(G^{der},\fX^+)$ be a connected Shimura datum, $E(G^{der},\fX^+) \subset F \subset E$ two fields, and $(\cB_F,\fX_F),(\cB_E,\fX_E)$ the associated data. Then the norm map induces a canonical morphism of Shimura data
$$N_{E/F}: (\cB_E,\fX_E) \rightarrow (\cB_F,\fX_F).$$

\end{enumerate}
\end{lem}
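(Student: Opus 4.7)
The plan is to reduce both parts to the explicit fibre product description $\cB = G \times_{G^{ab}} E^*$ by choosing the ambient Shimura datum cleverly and reading off the desired maps, invoking the independence of $\cB$ from the ambient choice established in the previous proposition. For (1), I would form the quotient $\cB_1' := \cB_1/\ker(u)$. Since $\ker(u) \subset G_1 = \cB_1^{der}$, its image in $\cB_1^{ab} = E^*$ is trivial, so $\cB_1'$ is reductive with derived group $G_2$ and abelianisation $E^*$; composing $h_1 \times h_E$ with the quotient map produces a Hodge structure realising $\cB_1'$ as a Shimura datum lifting $(G_2, \fX_2^+)$ with reflex field $E$. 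The previous proposition then supplies a canonical identification $\cB_2 \cong \cB_1'$, and the desired morphism $u_*: \cB_1 \to \cB_2$ will be the central isogeny $\cB_1 \to \cB_1/\ker(u)$, which is a map of Shimura data since $h_1 \times h_E$ projects to $h_2 \times h_E$ by construction.

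For (2), I would fix any Shimura datum $(G, \fX)$ with connected datum $(G^{der}, \fX^+)$ and reflex field contained in $F$, and use it as a common ambient for both $\cB_F$ and $\cB_E$, so that $\cB_F = G \times_{G^{ab}, r_F} F^*$ and $\cB_E = G \times_{G^{ab}, r_E} E^*$. The crucial identity to establish is
$$r_E = r_F \circ N_{E/F}: E^* \to G^{ab},$$
which I expect to follow formally from the facts that $\mu_E = \mu_F \otimes_F E$, that the norm of tori is functorial (giving $N_{E/F} \circ \Res_{E/F} \mu_E = \mu_F \circ N_{E/F, \G_m}$ over $F$), and that norms are transitive ($N_{E/\Q} = N_{F/\Q} \circ \Res_{F/\Q} N_{E/F}$). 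Given this identity, $\cB_E = \cB_F \times_{F^*, N_{E/F}} E^*$, and the first projection yields the map $\cB_E \to \cB_F$. Compatibility with Hodge structures reduces to checking $N_{E/F} \circ h_E = h_F$ as homomorphisms $\bS \to F^*_\R$, which I will verify by a direct case analysis at the distinguished archimedean places: in each of the three possible cases ($\tau$ and $\tau|_F$ both real, both complex, or $\tau$ complex with $\tau|_F$ real), the local norm $N_{E_{v_\tau}/F_{\tau|_F}}$ applied to the explicit formula for $h_E(z)$ produces exactly the corresponding formula for $h_F(z)$.

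Neither part presents a serious obstacle: the only substantive input is the formal norm identity $r_E = r_F \circ N_{E/F}$, essentially the universal property of the norm for tori, and the remaining verifications are straightforward bookkeeping modulo invoking the previous proposition to glue things together.
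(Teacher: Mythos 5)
Your argument follows essentially the same route as the paper's. Part (2) is exactly the paper's proof with the details filled in: the identities $r_E = r_F\circ N_{E/F}$ and $N_{E/F}\circ h_E = h_F$ are precisely what the paper's ``obviously'' and ``visibly'' conceal, and your verifications (the first on cocharacter lattices using that $\mu_E$ is the base change of $\mu_F$, the second by the case analysis at the distinguished archimedean place) are correct. In (1) your construction is also the paper's --- quotient $\cB_1$ by $\Delta_1=\ker(u)$ and identify the quotient with $\cB_2$ --- but the one step you attribute entirely to the previous proposition is not quite covered by it. That proposition says the output of the construction is independent of the choice of ambient Shimura datum; it does not by itself say that the construction applied to the ambient datum $(\cB_1/\Delta_1,\fX_1/\Delta_1)$ returns $\cB_1/\Delta_1$. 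What it gives directly is a canonical map $\cB_2 = (\cB_1/\Delta_1)\times_{(\cB_1/\Delta_1)^{ab}} E^* \rightarrow \cB_1/\Delta_1$, and to conclude $\cB_2\cong\cB_1/\Delta_1$ you must still check that the structure map $r'\colon E^*\rightarrow(\cB_1/\Delta_1)^{ab}=E^*$ is an isomorphism. It is in fact the identity: on cocharacter lattices the reflex norm of the tautological cocharacter $[\tau]$ of $E^*$ sends $[\sigma]\mapsto \sigma\circ\tau=\sigma$; this is the paper's remark that the map is ``a pullback of the identity map on $E^*$''. This is a small, easily filled gap; with it supplied your proof coincides with the paper's.
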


\begin{proof}
For (1), let $\Delta_1 \subset G_1 \subset \cB_1$ be the kernel of $G_1 \rightarrow G_2$ viewed as a subgroup of the centre of $\cB_1$. Then $(\cB_1/\Delta_1, \fX_1/\Delta_1)$ is a Shimura datum with connected Shimura datum $(G_2,\fX_2^+)$ and reflex field contained in $E$. It therefore receives a natural map $(\cB_2,\fX_2) \rightarrow (\cB_1/\Delta_1, \fX_1/\Delta_1)$ which we claim is an isomorphism. It will suffice to check $\cB_2 \rightarrow \cB_1/\Delta_1$ is an isomorphism, which is immediate because it is a pullback of the identity map on $E^*$. Thus we obtain the canonical map
$$(\cB_1,\fX_1) \rightarrow (\cB_1/\Delta_1, \fX_1/\Delta_1) \leftiso (\cB_2,\fX_2)$$
required.

For (2), for any Shimura datum $(G,\fX)$ with connected Shimura datum $(G^{der},\fX^+)$ and reflex field contained in $F$, one obviously has a canonical map $\cB_E = G \times_{G^{ab}}E^* \rightarrow G \times_{G^{ab}} F^* = \cB_F$ induced by the norm morphism\footnote{Recall that the norm map $N_{E/F}$ may be defined for example as the composite of $\Q$-group maps $E^* \hookrightarrow \Res_{F/\Q} GL_F(E) \map{det} F^*$.} $N_{E/F}: E^* \rightarrow F^*$. Visibly $h_F = N_{E/F} \circ h_E$, so this induces the map of Shimura data claimed.
\end{proof}

\subsection{Abelian type case}
We now return to integral canonical models of standard principal bundles. Fix $(G_2,\fX_2)$ a Shimura datum of abelian type unramified away from $N$, $G_2/\Z[1/N]$ a reductive integral model and $K^N_2=\prod_{p \nmid  N} G_2(\Zp)$. The goal of this section is to prove the following.

\begin{thm}
There exists an integral canonical model for $P_{K^N_2}(G_2,\fX_2)$.
\end{thm}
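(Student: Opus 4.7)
The plan is to reduce to the Hodge type case (Theorem \ref{main hodge}) via the auxiliary Shimura data $(\cB,\fX_{\cB})$ of \S4.5. Using Lemma \ref{finding hodge} and Lemma \ref{abelian isogeny integral}, I would first pick a Shimura datum $(G,\fX)$ of Hodge type with $G/\Z[1/N]$ reductive, unramified away from $N$, together with a central isogeny $G^{der}\to G_2^{der}$ witnessing that $(G_2,\fX_2)$ is of abelian type. Setting $E:=E(G,\fX)\supset E(G_2,\fX_2)$, Theorem \ref{main hodge} furnishes an integral canonical model $\cP_{K^N}(G,\fX)$ over $\cO_E[1/N]$ equipped with the extended $\cA^N_P(G)$-action of \ref{ad extension}. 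Applying the $\cB$-construction to the triples $(G^{der},\fX^+,E)$ and $(G_2^{der},\fX_2^+,E)$ produces Shimura data $(\cB,\fX_{\cB})\to(G,\fX)$ and $(\cB_2,\fX_{\cB,2})\to(G_2,\fX_2)_E$, together with an induced central isogeny $(\cB,\fX_{\cB})\to(\cB_2,\fX_{\cB,2})$ by Lemma \ref{distconst functorial}.

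The first substep is to build a canonical model for $P_{K^N_{\cB}}(\cB,\fX_{\cB})$. The description $\cB=G\times_{G^{ab}}E^*$ passes to the $c$-quotients, so a $\cB^c$-torsor is the same data as a $G^c$-torsor together with an $E^{*,c}$-torsor whose pushouts to $(G^{ab})^c$-torsors agree. I would accordingly define $\cP_{K^N_{\cB}}(\cB,\fX_{\cB})$ as the fibre product of the pullbacks, along the morphisms of Shimura data, of $\cP_{K^N}(G,\fX)$ and the special-type canonical model $\cP_{K^N_{E^*}}(E^*,h_E)$ (from Proposition \ref{torus canmod}) over the pullback of the canonical model for the CM datum $(G^{ab},h^{ab})$. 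The lattice condition at crystalline points, the Gauss--Manin connection, the $\mu$-filtration and the Hecke action then descend from the factors by Proposition \ref{func} and the functoriality of the canonical de Rham structure.

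The second substep is to pass from $(\cB,\fX_{\cB})$ to $(G_2,\fX_2)$. The central isogeny $\cB\to\cB_2$ has finite kernel; working over the maximal abelian extension $E_N/E$ unramified away from $N$, I would quotient the restriction to each connected component of $\cP_{K^N_{\cB}}(\cB,\fX_{\cB})$ by the finite, freely-acting group $\Delta_{\cB}:=\ker(\cA^{N,\circ}(\cB^{der})\to\cA^{N,\circ}(\cB_2^{der}))$, by analogues of Propositions \ref{delta free} and \ref{delta finite} that rely only on $\cB^{der}=G^{der}$ and $\cB_2^{der}=G_2^{der}$, to obtain a $\cB_2^c$-torsor over $\cS^+_{K^N_{\cB_2}}(\cB_2,\fX_{\cB,2})$. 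Since $\cB_2^{der}=G_2^{der}$, the map $\cS^+_{K^N_{\cB_2}}\to\cS^+_{K^N_2}$ is an isomorphism, so I can push the structure group out to define $\cP^+_{K^N_2}:=\cP^+_{K^N_{\cB_2}}\times^{\cB_2^c}G_2^c$, then reassemble over all components of $\Sh_{K^N_2}(G_2,\fX_2)_{E_N}$ using the $\cA^N_P(G_2)$-action and Galois-descend to $\cO_{E(G_2,\fX_2)}[1/N]$ (by analogues of Lemma \ref{assemble extension prop} and Lemma \ref{galois descent} for principal bundles). Uniqueness then follows from Lemma \ref{uniqueness}, provided $(G_2,\fX_2)$ has enough crystalline points, which one verifies by reducing through the composite $(\cB,\fX_{\cB})\to(G_2,\fX_2)$ to the Hodge type and special type cases.

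The main obstacle will be the descent through the central isogeny $\cB\to\cB_2$: one must verify that the lattice property at crystalline points is preserved under both the quotient by $\Delta_{\cB}$ and the subsequent pushout from $\cB_2^c$ to $G_2^c$. The first parallels the descent argument of \S2 for Shimura varieties themselves, using Hecke equivariance together with the compatibility of the $\fS$-module construction (Theorem \ref{kismod}) with finite unramified base change and Galois descent. The second is essentially functorial: the surjection $\cB_2^c\twoheadrightarrow G_2^c$ restricts representations of $G_2^c$ to representations of $\cB_2^c$, and the \'etale fibre functor, the de Rham comparison, and the $\D$ functor are all compatible with such restriction, so the $G_2^c$-lattice agrees with the restricted $\cB_2^c$-lattice by construction.
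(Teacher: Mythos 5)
Your overall architecture is the same as the paper's: build $\cP_{K^N_\cB}(\cB,\fX_\cB)$ as a fibre product of the Hodge type model $\cP_{K^N}(G,\fX)$ and the special type model for $(E^*,h_E)$ over the pullback of the model for $(G^{ab},h^{ab})$; pass to a connected component over $\cO_{E_N}[1/N]$; quotient and change structure group to reach $(G_2,\fX_2)$; reassemble the components by Hecke operators using uniqueness of canonical models; and Galois-descend to $\cO_{E(G_2,\fX_2)}[1/N]$. The crystalline-points reduction through $(\cB,\fX_\cB)$ is also the paper's (though note that the image of the connected Shimura variety of $\cB$ only reaches one geometric component of the $G_2$-variety, so one still needs the Hecke-transitivity argument on Kisin's model at level $G_2(\Z_p)$ to get \emph{all} crystalline points).

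There is, however, a genuine gap at your central descent step. You propose to quotient the $\cB^c$-torsor $\cP^+_\cB$ by $\Delta_\cB=\Ker(\cA^{N,\circ}(\cB^{der})\to\cA^{N,\circ}(\cB_2^{der}))$ and assert the quotient is a $\cB_2^c$-torsor. Two things go wrong as stated. First, $\Delta_\cB$ does not act on $\cP^+_\cB$ at all: the group acting equivariantly on the bundle is the extension $1\to\cB^c\to\cA_P^{N,\circ}(\cB)\to\cA^{N,\circ}(\cB)\to 1$ of (\ref{ad extension}), and this extension need not split over $\Delta_\cB$, so there is no canonical lift of the $\Delta_\cB$-action from the base $\cS^+_{K^N_\cB}$ to the torsor. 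Second, even granting an action, quotienting by a group commuting with the structure group merely descends a $\cB^c$-torsor to a $\cB^c$-torsor over the quotient base; it does not change the structure group to $\cB_2^c$, so your subsequent pushout $\times^{\cB_2^c}G_2^c$ has nothing to apply to. The paper's resolution is precisely to replace $\Delta_\cB$ by the kernel $\Delta_P^N(\cB,G_2)=\Ker(\cA^{N,\circ}_P(\cB)\to\cA^{N,\circ}_P(G_2))$, which sits in an extension $1\to\Xi\to\Delta_P^N(\cB,G_2)\to\Delta^N(G,G_2)\to 1$ with $\Xi=\Ker(\cB^c\to G_2^c)$ central; one shows this group acts freely on $\cP^+_\cB$ (combining the fibrewise free action of $\Xi$ with (\ref{delta free}) and (\ref{delta finite}) on the base), takes the quotient — which is naturally a $\cB^c/\Xi$-torsor over $\cS^+_{K_2^N}(G_2,\fX_2)_{\cO}$ — and only then pushes out along $\cB^c/\Xi\to G_2^c$. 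Your ``main obstacle'' paragraph locates the difficulty in the lattice property, but the prior and more basic issue is defining the quotient correctly; once it is set up via $\Delta_P^N(\cB,G_2)$, the lattice condition, connection and filtration do descend essentially as you describe.
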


\subsubsection{}

As in (\ref{abelian model construction}) we may find $(G,\fX)$ of Hodge type with $G$ unramified away from $N$ with $j:G^{der} \rightarrow G_2^{der}$ a central isogeny inducing a morphism of connected Shimura data $(G^{der},\fX^+) \rightarrow (G_2^{der},\fX_2^+)$. By (\ref{abelian isogeny integral}) we may take $G/\Z[1/N]$ reductive such that $j$ is defined over $\Z[1/N]$.

By the construction of the previous section, we may obtain a diagram of Shimura data
$$
\begin{CD}
   (\cB,\fX_\cB)  @>>> (G,\fX) \\
@VVV        @.\\
    (\cB_2,\fX_{\cB,2}) @>>>  (G_2,\fX_2).
\end{CD}
$$

The plan has three stages. We first combine our constructions in the special and Hodge type cases to construct a canonical integral model of $P(\cB,\fX_{\cB})$. We next pass to connected components and take a quotient to transfer this bundle from a $\cB^c$-bundle on $\Sh(\cB,\fX_{\cB})^+$ to a $G_2^c$-bundle on $\Sh(G_2,\fX_2)^+$ defined over the field $E_N$ over which the connected components split off. We finally then assemble over the whole Shimura variety and descend these models making liberal use of the uniqueness statement for canonical models in place of the extension property from \S2.5. We believe this framework could be useful in other contexts where one wants to extend a ``$G$-valued'' construction over Hodge type Shimura varieties to abelian type Shimura varieties.

\subsubsection{}
We start out by checking this picture respects the integral structures on $G,G_2$. Since $E:= E(G,\fX)$ is absolutely unramified at $p\nmid N$ and $E_2 \subset E$ (hence $E_2^*,E^*$ are smooth tori over $\Z[1/N]$) we have a diagram defined over $\Z[1/N]$
$$G \leftarrow \cB=G \times_{G^{ab}} E^* \rightarrow \cB_2 = G_2 \times_{G_2^{ab}} E_2^* \rightarrow G_2.$$
We remark that the middle map is a composite of (\ref{distconst functorial}) (1) and (2), the first of which involves a change of derived group (and by construction $G^{der} \rightarrow G_2^{der}$ is defined over $\Z[1/N]$), and the second of which is a norm map $N_{E/E_2}:E^* \rightarrow E_2^*$ which clearly respects integral structures. We let $K^N,K_2^N,K_{\cB}^N$ denote the obvious corresponding hyperspecial prime to $N$ level structures and note there is a corresponding diagram of integral models of Shimura varieties
$$\cS_{K^N}(G,\fX) \leftarrow \cS_{K_{\cB}^N}(\cB,\fX_{\cB}) \rightarrow \cS_{K_2^N}(G_2,\fX_2).$$

Let us first, now this diagram is in place, check the following condition (\ref{all the points}), which we recall is important in guaranteeing the uniqueness of canonical models.

\begin{prop}
\label{all the abelian}
With notation as above, $\cS_{K_\cB^N}(\cB,\fX_{\cB})$ and $\cS_{K_2^N}(G_2,\fX_2)$ both have all the crystalline points.
\end{prop}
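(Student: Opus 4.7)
My plan is to leverage the Hodge- and special-type cases already recorded in \ref{all the points}, using functoriality of the \'etale fibre functor under morphisms of Shimura data. For $(\cB,\fX_\cB)$, the key observation is that the canonical maps to $(G,\fX)$ and to the abelianisation $(E^*,h_E)$ combine into a closed immersion of Shimura data $(\cB,\fX_\cB) \hookrightarrow (G,\fX) \times (E^*,h_E)$, arising directly from the fibre-product description $\cB = G \times_{G^{ab}} E^*$. Under the standing assumption that $Z(G)^\circ$ is split by a CM field (after harmlessly enlarging $E$ to a CM subfield if necessary), $(E^*,h_E)$ is a CM torus datum, in particular of Hodge type, so the product $(G\times E^*,\fX\times h_E)$ is also of Hodge type and therefore has all the crystalline points. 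Via the induced closed immersion $\cB^c \hookrightarrow (G\times E^*)^c = G^c\times(E^*)^c$, every $\Qp$-representation of $\cB^c$ is a subquotient of the restriction of a representation of $G^c\times(E^*)^c$. Given $x_\cB \in \cS_{K^N_\cB}(\cB,\fX_\cB)(W(k))$, functoriality of integral canonical models supplies a $W(k)$-point in the Hodge-type $\cS(G\times E^*,\fX\times h_E)$; the corresponding lisse sheaves there yield crystalline Galois representations, and since crystallinity is preserved under subquotients, the $\cB^c$-sheaves at $x_\cB$ are crystalline, proving the $\cB$-case.

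For $(G_2,\fX_2)$, I use the composite morphism of Shimura data $(\cB,\fX_\cB) \to (\cB_2,\fX_{\cB,2}) \to (G_2,\fX_2)$. The key geometric input is that, over $\cO_N[1/N]$, the induced map on connected components $\cS^+_{K^N_\cB}(\cB,\fX_\cB) \to \cS^+_{K^N_2}(G_2,\fX_2)$ is a finite \'etale surjection, realised as a quotient by the evident finite analogue of the $\Delta^N$-groups of section 2 (finite by an argument parallel to \ref{delta finite}). Given $x_2 \in \cS_{K^N_2}(G_2,\fX_2)(W(k))$, base-changing along the ind-\'etale $\cO_E[1/N] \to \cO_N[1/N]$ picks out a $W(k')$-point on a component of $\cS^+(G_2)_{\cO_N[1/N]}$ for some finite unramified $k'/k$, and by finite \'etale surjectivity (Hensel / formal smoothness) this lifts, after a further finite unramified extension, to a $W(k'')$-point $x_\cB \in \cS^+(\cB)_{\cO_N[1/N]}$. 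For any $V_2 \in \Rep_{\Qp}(G_2^c)$, functoriality identifies the pullback along $x_\cB$ of the lisse sheaf attached to $V_2|_{\cB^c}$ with the restriction to $\Gamma_{W(k'')[1/p]}$ of the pullback along $x_2$; the former is crystalline by the $\cB$-case, and crystallinity of a Galois representation is detected on a finite unramified extension of the base, so the $G_2$-case follows.

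The main obstacle is really only the lifting argument in the second paragraph: making precise the identification of $\cS^+(\cB) \to \cS^+(G_2)$ as a finite \'etale quotient over $\cO_N[1/N]$ (one must check that the acting group is the kernel of the evident $\cA^{N,\circ}$-map and is finite, by arguments parallel to \ref{delta free} and \ref{delta finite}), and then tracking base changes and Hensel lifts through $W(k) \otimes_{\cO_E[1/N]} \cO_N[1/N]$ to produce the required $W(k'')$-point $x_\cB$. The only other bookkeeping point is that $\cB$ sits as a genuinely \emph{closed} subgroup of $G\times E^*$ (and $\cB^c$ in $(G\times E^*)^c$) as reductive group schemes over $\Z[1/N]$, which is immediate from the fibre-product definition of $\cB$.
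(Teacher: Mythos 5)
Your first paragraph is essentially the paper's argument for $\cS_{K_\cB^N}(\cB,\fX_{\cB})$: the point maps to the Hodge-type and the special-type Shimura varieties, and crystallinity for a faithful representation $V_1\oplus V_2$ of $\cB$ (pulled back from $G\times E^*$) gives crystallinity for all representations. One caveat: your claim that $(E^*,h_E)$ is ``in particular of Hodge type'' is neither justified nor needed --- the weight of $h_E$ need not be defined over $\Q$ and no symplectic embedding is exhibited; the paper simply invokes that special-type data have all the crystalline points (via the reciprocity-law computation of \ref{special canonical def}), and you should do the same rather than route through a putative Hodge-type product datum.

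The second paragraph has a genuine gap. You assume that an arbitrary point $x_2\in\cS_{K_2^N}(G_2,\fX_2)(W(k))$, after base change along $\cO_E[1/N]\to\cO_N[1/N]$ and a finite unramified extension, lands on the distinguished component $\cS^+_{K_2^N}(G_2,\fX_2)$, so that it can be lifted through the finite \'etale cover $\cS^+_{K_\cB^N}(\cB,\fX_\cB)\to\cS^+_{K_2^N}(G_2,\fX_2)$. But the map $\cS_{K_\cB^N}(\cB,\fX_\cB)\to\cS_{K_2^N}(G_2,\fX_2)$ is in general not surjective: its image is only a union of geometric components, and at level $K_2^N$ the group $\prod_{p|N}G_2(\Qp)$ is not known to act transitively on $\pi_0(\Sh_{K_2^N}(G_2,\fX_2)_{\Qbar})$ --- this is exactly the difficulty already flagged in \S2.5 and the reason the integral model was assembled component by component. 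So your Hensel-lifting step only covers points on the components hit by the $\cB$-variety, and the ``main obstacle'' you identify (bookkeeping of the quotient and the lift) is not the real one. The paper's proof supplies the missing idea: lift $x$ to a $W(k)^{ur}$-point of Kisin's model at level $K_{2,p}=G_2(\Zp)$ over $W(k)$ (the transition map being formally \'etale), use that $G_2(\A^{\infty,p})$ acts on this model transitively on geometric connected components and equivariantly for the tower at $p$, translate by a prime-to-$p$ Hecke operator into the distinguished component --- where all $W(k)^{ur}$-points are crystalline by the finite \'etale covering from the $\cB$-side --- and conclude because crystallinity of a $\Gamma_{K_0}$-representation can be checked after restriction to $\Gamma_{K_0^{ur}}$. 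Without this Hecke-translation step (or some substitute argument reaching all components), your proof of the $G_2$-case does not go through.
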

\begin{proof}
Let $x \in \cS_{K_\cB^N}(\cB,\fX_{\cB})(W(k))$, $K_0 = W(k)[1/p]$. Also let $W(k)^{ur}$ be the ring of integers of the maximal unramified (algebraic) extension $K_0^{ur}/K_0$. Note that its images in $\cS_{K^N}(G,\fX)$ and $\cS_{\hat{E^{*,N}}}(E^*,h_E)$
 are both crystalline (since these Shimura data have all the crystalline points). To check $x$ is crystalline, it will suffice to check $\omega_{et,x}(V)$ is crystalline for $V$ a faithful representation of $\cB$. Let us take $G \hookrightarrow GL(V_1)$ and $E^* \hookrightarrow GL(V_2)$ and consider the representation
$$\cB \subset G \times E^* \hookrightarrow GL(V_1) \times GL(V_2) \subset GL(V_1 \oplus V_2).$$
Considering the diagram of Shimura varieties and towers at infinite level at $p$, it is clear that $\Gamma_{K_0} \rightarrow GL(\omega_{et}(V_1 \oplus V_2))$ acts via $\Gamma_{K_0} \rightarrow GL(\omega_{et}(V_1)) \times GL(\omega_{et}(V_2))$, each projection of which is crystalline, and this is enough to give the first part of our proposition.

We turn our attention to $\cS_{K_2^N}(G_2,\fX_2)$. First, since it receives a map from the Shimura variety $\cS_{K_\cB^N}(\cB,\fX_{\cB})$ and we observe that the image of a crystalline point is crystalline, and moreover at finite level these maps are finite \'etale, we deduce that in particular all $W(k)^{ur}$ points on the geometric connected component $\cS_{K_2^N}^+(G_2,\fX_2)_{W(k)^{ur}}$ are crystalline.

To show that any point $x \in \cS_{K_2^NK_{2,N}}(W(k))$ is crystalline, recall that Kisin's integral model at level $K_{2,p} := G_2(\Zp)$ over $W(k)$, $\cS_{K_{2,p}}(G_2,\fX_2)_{W(k)}$ has a $G(\Afp)$-action that acts transitively on geometric connected components and acts equivariantly with regard to the tower at $p$ on the generic fibre. Therefore, taking a lift $\tilde{x}$ of $x$ to level $K_{2,p}$, which we may assume is a $W(k)^{ur}$ point since $\cS_{K_{2,p},W(k)} \rightarrow \cS_{K_{2,p}K^p,W(k)}$ is formally \'etale, and taking a translate $\tilde{x}.a$ by a Hecke operator $a \in G(\Afp)$ such that $\tilde{x}.a$ lies in the connected component, we may deduce that $x$ is crystalline from the fact that all points on the connected component are, and that the crystalline property can be checked after restricting a $\Gamma_{K_0}$ representation to $\Gamma_{K_0^{ur}}$.
\end{proof}
We now proceed with constructing the canonical models. The content of the next step can be extracted as a lemma.
\begin{lem}
\label{torsor fiber product}
Let $S$ be a scheme, and $G \twoheadrightarrow \Delta \leftarrow H$ a diagram of $S$-groups, $\cB = G \times_{\Delta} H$ the fibre product group. Let $X$ be an $S$-scheme and suppose we are given $P_G$ a $G$-torsor and $P_H$ an $H$-torsor on $X$ with the property that there is an isomorphism of $\Delta$-torsors $\theta: P_H \times^H \Delta \rightiso P_G \times^G \Delta.$

Then there is a unique (up to canonical isomorphism) $\cB$-torsor $P_{\cB}$ together with torsor isomorphisms $\theta_G: P_{\cB} \times^\cB G \rightiso P_G$ and $\theta_H: P_{\cB} \times^\cB H \rightiso P_H$ such that the induced isomorphism
$$\theta_H \circ \theta_G^{-1}: P_G \times^G \Delta \rightiso P_{\cB} \times^\cB \Delta \rightiso P_H \times^H \Delta$$
is equal to $\theta$, and it is given by\footnote{We use the notation $X\times_{\theta} Y$ in a situation where we are given maps $X \rightarrow Z_X, Y \rightarrow Z_Y$ and an isomorphism $\theta:Z_X \rightiso Z_Y$ between two schemes to mean the limit of the diagram
$$
\begin{CD}
     @. @.Y  \\
@.  @.      @VVV\\
    X @>>> Z_X @>\theta>\cong> Z_Y.
\end{CD}
$$}

$$P_{\cB} = P_G \times_{\theta} P_H$$
with $\theta_G,\theta_H$ the natural projections.
\end{lem}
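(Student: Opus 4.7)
The plan is to directly verify that the explicit construction $P_\cB := P_G \times_\theta P_H$ works, and then deduce uniqueness by a standard Tannakian/torsor trick.

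First I would define a $\cB$-action on $P_G \times P_H$ by the rule $(p_G, p_H)\cdot(g,h) := (p_G g, p_H h)$ for $(g,h) \in \cB = G \times_\Delta H$. This preserves the fibre product condition defining $P_\cB$ precisely because the two maps $P_G \times P_H \to P_G \times^G \Delta$ and $P_G \times P_H \to P_H \times^H \Delta$ (composed with $\theta$) are equivariant for the shared $\Delta$-action, which is exactly the content of $\cB$ being the fibre product $G \times_\Delta H$. I would check this action is free and transitive fpqc-locally on $X$: after pulling back to a cover over which both $P_G$ and $P_H$ are trivial, $\theta$ becomes an element of $\Delta(X)$ which, after a further cover splitting $G \twoheadrightarrow \Delta$, can be lifted to an element of $G$ trivializing it. On such a cover $P_\cB$ becomes $G \times_\Delta H = \cB$ with its natural right translation action, so it is indeed a $\cB$-torsor.

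Next I would take $\theta_G$ and $\theta_H$ to be the natural projections from $P_\cB \subset P_G \times P_H$. Each is $\cB$-equivariant for the action of $\cB$ on $P_G$ (resp. $P_H$) via the structure map $\cB \to G$ (resp. $\cB \to H$), so the induced map of associated torsors $P_\cB \times^\cB G \to P_G$ (resp.\ $\to P_H$) is a map of $G$-torsors (resp. $H$-torsors), hence an isomorphism. Compatibility $\theta_H \circ \theta_G^{-1} = \theta$ on the level of associated $\Delta$-torsors is immediate from the definition of $P_\cB$ as the fibre product over $\theta$.

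For uniqueness, suppose $(P'_\cB, \theta'_G, \theta'_H)$ is another such triple. The pair $(\theta'_G, \theta'_H)$ gives a $\cB$-equivariant morphism $P'_\cB \to P_G \times P_H$, and the prescribed compatibility $\theta'_H \circ \theta'_G{}^{-1} = \theta$ forces this morphism to factor through the fibre product $P_\cB = P_G \times_\theta P_H$. The resulting morphism $P'_\cB \to P_\cB$ is a $\cB$-equivariant map between $\cB$-torsors over $X$, hence an isomorphism, and it is canonically determined by the requirement that it intertwines $\theta'_G, \theta'_H$ with $\theta_G, \theta_H$.

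The main obstacle (such as it is) is just bookkeeping the various actions and verifying the torsor property fpqc-locally; everything is formal once one has the fibre-product description and observes that the $\Delta$-equivariance of $\theta$ is precisely what promotes the diagonal $\cB$-action on $P_G \times P_H$ to an action preserving the fibre product.
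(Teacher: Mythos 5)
Your proposal is correct and follows essentially the same route as the paper: define the componentwise $\cB$-action on $P_G \times_\theta P_H$, verify the torsor property locally by trivialising $P_G$, $P_H$ and using the surjectivity of $G \twoheadrightarrow \Delta$ to make the two sections compatible under $\theta$, and deduce uniqueness from the universal property of the fibre product (an equivariant map of torsors being automatically an isomorphism). The only cosmetic differences are that you work fpqc-locally where the paper uses \'etale covers, and your phrase about ``splitting'' $G \twoheadrightarrow \Delta$ should really just say that the given section of $\Delta$ lifts to $G$ after a further cover.
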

\begin{proof}
Let us first show that $P_{\cB} = P_G \times_{\theta} P_H$ is indeed a $\cB$-torsor. We define the $\cB=G \times_{\Delta} H$-action $$(p_G,p_H).(g,h) =: (p_G .g, p_H .h)$$
which is visibly an action. Passing to an \'etale cover $X' \rightarrow X$ over which $P_G$ and $P_H$ admit sections $p_G,p_H$. Since $G$ surjects onto $\Delta$ these may be chosen (perhaps at the cost of passing to a finer \'etale cover) such that if $\pi_G:P_G \rightarrow P_G \times^G \Delta$ and $\pi_H:P_H \rightarrow P_H \times^H \Delta$ are the projection maps, we have $\theta(\pi_G(p_G)) = \pi_H(p_H)$, giving a section $(p_G,p_H) \in P_{\cB}(X')$, from which we may see immediately that it is a $\cB$-torsor. It is also obvious that it has the required property.

Uniqueness is essentially formal, but we give the argument. Suppose we are given $P'_{\cB}$ together with $\theta'_G:P'_{\cB} \times^\cB G \rightiso P_G$ and $\theta'_H:P'_{\cB} \times^\cB H \rightiso P_H$ satisfying the compatibility given. Then by the universal property of $P_{\cB} = P_G \times_{\theta} P_H$ there exists a unique map $\alpha: P'_{\cB} \rightarrow P_{\cB}$ making everything commute. It is easy to check $\alpha$ is a map of $\cB$-torsors, whence it is automatically an isomorphism.
\end{proof}

\subsubsection{}
Let us apply this in the context of our standard principal bundles. If $T$ is a torus unramified away from $N$ we introduce the notation $\hat{T}^N = \prod_{p \nmid N} T(\Zp)$.

Consider the diagram (whose arrows we have named for convenience)
$$
\begin{CD}
    \cS_{K_\cB^N}(\cB,\fX_{\cB}) @>>\delta'> \cS_{\hat{E}^{*,N}}(E^*,h_E) \\
@V\pi VV        @VVN\mu V\\
    \cS_{K^N}(G,\fX) @>>\delta> \cS_{\hat{G}^{ab,N}}(G^{ab},h^{ab}). 
\end{CD}
$$

By (\ref{main hodge}) we have $\cP_{K^N}$ over $\cS_{K^N}(G,\fX)$ an integral canonical model for $P_{K^N}(G,\fX)$, and by (\ref{torus canmod}) we have $\cP_{\hat{E}^{*,N}}, \cP_{\hat{G}^{ab,N}}$ canonical models over the right hand side of the diagram also. By (\ref{func}) there are natural isomorphisms of $G^{ab}$ torsors
$$N\mu^*\cP_{\hat{G}^{ab,N}} \cong \cP_{\hat{E}^{*,N}} \times^{E^{*c}} G^{ab}, \ \ \delta^*\cP_{\hat{G}^{ab,N}} \cong \cP_{K^N} \times^{G} G^{ab}.$$
Pulling these back further and using that the diagram commutes and so we have canonical identifications $\pi^*\delta^* \cong (\delta \pi)^* = (N\mu\delta')^* \cong \delta'^*(N\mu)^*$, we obtain a natural isomorphism
$$\theta: \pi^*(\cP_{K^N} \times^G G^{ab}) \rightiso \delta'^*(\cP_{\hat{E}^{*,N}} \times^{E^{*c}} G^{ab}).$$

Note further that any integral canonical model $\cP_{K_\cB^N}$ for $P_{K_\cB^N}(\cB,\fX_\cB)$ will by (\ref{func}) be required to satisfy the conditions of (\ref{torsor fiber product}) with respect to this isomorphism $\theta$. Therefore by (\ref{torsor fiber product}) if it exists it is given by
$$\cP_{K^N_\cB} := \pi^*\cP_{K^N} \times_{\theta} \delta'^*\cP_{\hat{E}^{*,N}},$$
together with the identification
\begin{eqnarray*}
\iota_\cB: \cP_{K^N_\cB} \otimes_{\cO_E[1/N]} E & = & (\pi^*\cP_{K^N} \times_{\theta} \delta'^*\cP_{\hat{E}^{*,N}})\otimes_{\cO_E[1/N]} E \\
& \map{(\iota_G,\iota_{E^*})} & \pi^*P_{K^N}(G,\fX) \times_{\theta \otimes E} \delta'^*P_{\hat{E}^{*,N}}(E^*,h_E)\\
& = & P_{K^N_\cB}(\cB,\fX_\cB).\\
\end{eqnarray*}

\begin{prop}
As defined above, $(\cP_{K^N_{\cB}}, \iota_\cB)$ is an integral canonical model for $P_{K^N_\cB}(\cB,\fX_\cB)$ with equivariant $\cA_P^N(\cB)$-action.\footnote{Recall we defined this group in (\ref{ad extension}).}
\end{prop}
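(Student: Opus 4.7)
The strategy is essentially forced by the observation preceding the proposition: any candidate integral canonical model $P'$ for $P_{K_\cB^N}(\cB,\fX_\cB)$ must, by the functoriality (\ref{func}) applied along $\pi$ and $\delta'$, satisfy the hypotheses of (\ref{torsor fiber product}) with respect to $\theta$, so the uniqueness clause of that lemma forces $P' \cong \cP_{K_\cB^N}$. The task is therefore to verify that this $\cP_{K_\cB^N}$ truly satisfies all the axioms of an integral canonical model and admits an equivariant $\cA_P^N(\cB)$-action.

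First I identify $\cB^c$. The projections $\cB \to G$ and $\cB \to E^*$ induce by (\ref{c functorial}) maps $\cB^c \to G^c$ and $\cB^c \to E^{*c}$, and a routine unravelling of the $Z_{nc}$ construction on $\cB = G \times_{G^{ab}} E^*$ produces an identification $\cB^c \cong G^c \times_{G^{ab}} E^{*c}$ of $\Z^N$-group schemes. Applying (\ref{torsor fiber product}) to this diagram of structure groups together with $\theta$, one obtains that $\cP_{K_\cB^N}$ is a $\cB^c$-torsor and that the generic fibre identification $\iota_\cB$ is automatically an isomorphism of $\cB^c$-torsors.

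To check the lattice condition, let $x \in \cS_{K_\cB^N K_{\cB,N}}(W(k))$ be a crystalline point; such points are dense by (\ref{all the abelian}). The images $\pi(x), \delta'(x)$ are crystalline on the Hodge-type and special-type Shimura varieties respectively, and for $V \in \Rep_{\Z^N}(\cB^c)$ the Tannakian equivalence of \cite[1.2]{brosh} lets one compute $\omega_{\cP_{K_\cB^N}}(V)$ at $x$ in terms of the fibre functors of $\pi^*\cP_{K^N}$ and $\delta'^*\cP_{\hat E^{*,N}}$ glued via $\theta$ along their common $G^{ab}$-parts. The lattice conditions already established in the Hodge-type and special-type cases identify each piece with the Breuil-Kisin lattice $\D(x[1/p]^*\omega_{et})$, and these assemble correctly because $\D$ is a tensor functor compatible with unramified base change (\ref{kismod}), while the canonical de Rham structures on the two sides restrict to a common one on the $G^{ab}$-overlap by functoriality of the de Rham fibre functor along morphisms of Shimura data.

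It remains to extend $\nabla$, $\gamma$, and the Hecke action. The connection extends on $\cP_{K^N}$ via Gauss--Manin and trivially on $\cP_{\hat E^{*,N}}$ (the base is zero-dimensional), and by (\ref{torsor connection}) these assemble on the fibre product since $\theta$ is horizontal by construction; the $\mu$-filtration extends analogously via (\ref{filtration extension}). For the $\cA_P^N(\cB)$-action, since $\cB^{ad} = G^{ad}$ and $E^{*,ad}$ is trivial, the adjoint piece acts through the construction of (\ref{ad extension}) on the $\pi^*\cP_{K^N}$-factor alone, while $\prod_{p|N}\cB(\Qp)$ acts by Hecke on both factors. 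I expect the main obstacle to be verifying that the Kisin-style twist by $\gamma \in G^{ad}(\Z^N)^+$ passes compatibly through $\theta$: one must check that the induced modification on the $G^{ab}$-overlap is trivial, which holds because $G^{ad}$ acts trivially on $G^{ab}$, so the twisting abelian variety construction leaves the image of $\pi^*\cP_{K^N}$ in $G^{ab}$ unchanged and agrees there with the untwisted torus-side action. Uniqueness of the model is then automatic from (\ref{uniqueness}) combined with (\ref{all the abelian}).
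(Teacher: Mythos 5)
Your construction and most of the verification coincide with the paper's: the torsor is forced to be $\pi^*\cP_{K^N}\times_{\theta}\delta'^*\cP_{\hat{E}^{*,N}}$ by (\ref{func}) and (\ref{torsor fiber product}); the lattice property is checked essentially as you propose (the paper packages your gluing argument by taking a faithful representation $V=V_1\oplus V_2$ of $\cB$ with $V_1,V_2$ faithful for $G$ and $E^*$, so that $\omega_{\cP_{K^N_\cB}}(V)=\omega_{\pi^*\cP_{K^N}}(V_1)\oplus\omega_{\delta'^*\cP_{\hat{E}^{*,N}}}(V_2)$, using that images of crystalline points are crystalline); and the connection and Hecke action are extended componentwise on the fibre product. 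Two points differ. For the filtration the paper does not need (\ref{filtration extension}): by \cite[5.3.4]{con} one has $\mathcal{GR}_\mu=G/P_\mu=G^{der}/(P_\mu\cap G^{der})=\cB/P_{\mu_\cB}=\mathcal{GR}_{\mu_\cB}$, so $\gamma_\cB$ is simply the composite $\cP_{K^N_\cB}\rightarrow\cP_{K^N}\map{\gamma}\mathcal{GR}_\mu$, which is more direct than exhibiting an integral filtration on a faithful representation. More significantly, for the $\cA_P^N(\cB)$-action the paper does not redo the twisting construction at all: once the lattice property is in hand, uniqueness of canonical models (\ref{uniqueness}) together with (\ref{all the abelian}) makes the extension of the $\cA_P^N(\cB)$-action from the generic fibre purely formal, since each group element carries $(\cP_{K^N_\cB},\iota_\cB)$ to another canonical model. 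Your explicit route via (\ref{ad extension}) can likely be pushed through, but it carries exactly the compatibilities you flag and more (equivariance of $\theta$ under the twist, agreement of the two partial actions on the overlap of the amalgamated product, and the definition of the action on the torus factor covering the nontrivial base action), and your stated reason for triviality on the $G^{ab}$-overlap is not quite the right one: the relevant point is that the twisting torsor is a torsor under $Z^{der}=Z(G^{der})\subset G^{der}=\Ker(G\rightarrow G^{ab})$, which maps trivially to $G^{ab}$, rather than that $G^{ad}$ acts trivially on $G^{ab}$. If you order the argument as the paper does --- lattice property first, then uniqueness --- these checks become unnecessary.
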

\begin{proof}
Our task is to show the connection, filtration and $\cA_P^N(\cB)$-actions extend, and check the lattice property.

We first study the connection. Let $p_i, i=1,2$ be the projections from a first order neighbourhood of the diagonal, abusively retaining the same notation for morphisms of schemes and their first order thickenings. Then\footnote{We also abuse notation here and confuse $\pi,\delta'$ with their induced maps on first order neighbourhoods of the diagonal.}
$$p_i^*\cP_{K_{\cB}^N} = p_i^*(\pi^*\cP_{K^N} \times_{\theta} \delta'^*\cP_{\hat{E}^{*,N}}) \cong \pi^*p_i^*\cP_{K^N} \times_{p_i^*(\theta)} \delta'^* p_i^*\cP_{\hat{E}^{*,N}},$$
and in these co-ordinates, $\nabla_{\cB} = (\pi^*\nabla_G,\delta'^*\id)$ is the connection required so we see directly that it extends.

For the filtrations, it is easy to check using \cite[5.3.4]{con} that in fact we may naturally identify $$\mathcal{GR}_\mu = G/P_\mu = G^{der}/(P_\mu \cap G^{der}) = \cB/\cP_{\mu_\cB} = \mathcal{GR}_{\mu_\cB}$$
with $G$ and $\cB$-actions factoring through $G^{ad}$, so actually the composite
$$\gamma_{\cB}:\cP_{K_{\cB}^N} \rightarrow \cP_{K^N} \map{\gamma} \mathcal{GR}_\mu = \mathcal{GR}_{\mu_\cB}$$
does the job.

We also note that we already have a natural extension of the Hecke action via
$$\prod_{p|N} \cB(\Qp) \subset \left(\prod_{p|N} G(\Qp)\right) \times \left( \prod_{p|N} E^*(\Qp) \right)$$
which acts via its two projections on $\pi^*\cP_{K^N} \times_{\theta} \delta'^*\cP_{\hat{E}^{*,N}}$.

For the lattice property, we borrow the trick from the first part of the proof of (\ref{all the abelian}), taking a faithful representation $V = V_1 \oplus V_2$ of $\cB$, formed as a sum of faithful representations of $G$ and $E^*$. It is easy to check from the construction that 
$$\omega_{\cP_{K_\cB^N}}(V) = \omega_{\pi^*\cP_{K^N}}(V_1) \oplus \omega_{\delta'^*\cP_{\hat{E}^{*,N}}}(V_2)$$ whence the lattice property follows immediately from that already known for the two terms on the right hand side together with the fact that the image of a crystalline point is always crystalline. Now we know the lattice property, we get a uniqueness statement, and the $\cA_P^N(\cB)$-action extends formally.
\end{proof}

\subsubsection{}
We now pass to a connected component 
$$\Sh^+_{K_\cB^N}(\cB,\fX_\cB) = \Sh^+_{K^N}(G,\fX),$$
recalling (\ref{component field}) that it is defined over an extension $E_N/E$ unramified away from $N$ and letting $\cO := \cO_{E_N}[1/N]$. We may therefore extend the above identification to 
$$\cS^+_{K_\cB^N}(\cB,\fX_\cB)_{\cO} = \cS^+_{K^N}(G,\fX)_{\cO},$$
 and restrict $\cP_{K^N_\cB}$ to get the $\cB^c$-torsor
$$\cP_{\cB}^+: = \cP_{K^N_{\cB}} \times_{\cS_{K^N_{\cB}}(\cB,\fX_{\cB})_{\cO}} \cS^+_{K^N}(G,\fX)_{\cO} \rightarrow \cS^+_{K^N}(G,\fX)_{\cO}.$$

Recall the group 
$$\Delta^N(G,G_2) = \Ker(\cA^{N,\circ}(G^{der}) \rightarrow \cA^{N,\circ}(G_2^{der}))$$
with the property that 
$$\cS_{K^N}^+(G,\fX)_{\cO}/\Delta^N(G,G_2) = \cS_{K_2^N}^+(G_2,\fX_2)_{\cO}.$$

Let us also recall the extension of group schemes over $\Z[1/N]$
$$1 \rightarrow \cB^c \rightarrow \cA_P^N(\cB) \rightarrow \cA^N(\cB) \rightarrow 1.$$
Forming the pullback along $\cA^{N,\circ}(G^{der}) =\cA^{N,\circ}(\cB) \subset \cA^N(\cB)$ we get an extension
$$1 \rightarrow \cB^c \rightarrow \cA_P^{N,\circ}(\cB) \rightarrow \cA^{N,\circ}(G^{der}) \rightarrow 1$$
which acts on $\cP_{\cB}^+$ equivariantly in the obvious fashion.

\begin{lem}
These group schemes have the following additional properties.
\begin{enumerate}
\item The group $\Xi:=\Ker(\cB^c \rightarrow G_2^c) \subset \cB^c$ is a normal subgroup of $\cA_P^{N,\circ}(\cB)$.
\item The kernel $\Delta_P^{N}(\cB,G_2) := \Ker(\cA^{N,\circ}_P(\cB) \rightarrow \cA^{N,\circ}_P(G_2))$ is canonically an extension
$$1 \rightarrow \Xi \rightarrow \Delta_P^{N}(\cB,G_2) \rightarrow \Delta^N(G,G_2) \rightarrow 1$$
which acts freely on $\cP_{\cB}^+$.
\end{enumerate}
\end{lem}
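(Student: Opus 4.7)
The plan for (1) is to identify $\Xi$ with the intersection $\cB^c \cap \Delta_P^N(\cB, G_2)$ inside $\cA_P^{N,\circ}(\cB)$. Given $z \in \cB^c$, its image $\Phi(z)$ under $\Phi : \cA_P^{N,\circ}(\cB) \to \cA_P^{N,\circ}(G_2)$ lies in $G_2^c \subset \cA_P^{N,\circ}(G_2)$ and, by compatibility of the two displayed extensions, coincides with $\phi(z)$. Thus $z \in \Ker \Phi$ precisely when $\phi(z) = 1$, i.e.\ when $z \in \Xi$. Since $\cB^c$ (as the kernel of the projection to $\cA^{N,\circ}(G^{der})$) and $\Delta_P^N(\cB,G_2)$ (as the kernel of a group homomorphism) are both normal subgroups of $\cA_P^{N,\circ}(\cB)$, their intersection $\Xi$ is also normal, which gives (1).

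For (2), I would carry out a snake-lemma style diagram chase on the two compatible extensions. The identification $\Xi = \cB^c \cap \Delta_P^N(\cB, G_2)$ already established furnishes the left-exactness of the claimed sequence, and the projection $\Delta_P^N(\cB, G_2) \to \cA^{N,\circ}(G^{der})$ lands in $\Delta^N(G, G_2) = \Ker \psi$ automatically by commutativity. The main content of (2) is then surjectivity onto $\Delta^N(G, G_2)$: given $y \in \Delta^N(G, G_2)$, lift it to some $\tilde y \in \cA_P^{N,\circ}(\cB)$ and note that $\Phi(\tilde y)$ projects to $\psi(y) = 1$, so $\Phi(\tilde y) \in G_2^c$. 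I would then correct $\tilde y$ by an element $b \in \cB^c$ with $\phi(b) = \Phi(\tilde y)^{-1}$ to produce a preimage of $y$ in $\Delta_P^N(\cB, G_2)$. The hard part will be verifying that such a $b$ exists, i.e.\ that $\phi : \cB^c \to G_2^c$ is surjective: using the fibre product $\cB = G \times_{G^{ab}} E^*$ and $G^{der} = \cB^{der}$ one reduces to surjectivity of the central isogeny $G^{der} \to G_2^{der}$ (automatic) and surjectivity of the induced abelian map $E^{*,c} \to G_2^{ab,c}$, which should follow from the reflex norm construction together with the fact (implicit in the proof of (\ref{dr identification})) that the weight of the Hodge cocharacter on $G_2^{ab,c}$ is defined over $\Q$ and the inclusion $E_2 \subset E$.

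To conclude with the freeness of the action, I would take $x \in \Delta_P^N(\cB, G_2)$ fixing some $P \in \cP_\cB^+$. Projecting to the base $\cS_{K^N}^+(G,\fX)_\cO$, the image $\bar x$ of $x$ in $\Delta^N(G, G_2) \subset \Delta^N$ fixes the image of $P$, whence $\bar x = e$ by (\ref{delta free}), which asserts that $\Delta^N$ acts freely on $\cS_{K^N}$. Therefore $x \in \Xi \subset \cB^c$, and since $\cB^c$ acts freely on the $\cB^c$-torsor $\cP_\cB^+$ we conclude $x = e$.
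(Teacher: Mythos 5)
Your part (1) is fine, though you argue it differently from the paper: you realise $\Xi$ as the intersection of the two normal subgroups $\cB^c=\Ker(\cA_P^{N,\circ}(\cB)\to\cA^{N,\circ}(\cB))$ and $\Delta_P^N(\cB,G_2)$, whereas the paper simply observes that $\Xi\subset Z(\cB^c)$ and hence is centralised by all of $\cA_P^{N,\circ}(\cB)$; both work. Your freeness argument at the end is exactly the paper's: a stabilising element has trivial image in $\Delta^N(G,G_2)$ by (\ref{delta free}), hence lies in $\Xi\subset\cB^c$, which acts freely on the fibres of the torsor.

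The genuine gap is in your surjectivity step. You reduce surjectivity of $\Delta_P^N(\cB,G_2)\to\Delta^N(G,G_2)$ to surjectivity of $\phi:\cB^c\to G_2^c$, and propose to deduce the latter from surjectivity of $E^{*,c}\to G_2^{ab,c}$. But the map $E^*\to G_2^{ab}$ is by construction the composite of the norm $N_{E/E_2}$ with the reflex norm attached to the determinant $\mu_{E_2}$ of the Hodge cocharacter, and the image of a reflex norm is in general a proper subtorus of $G_2^{ab}$; passing to $(-)^c$ only kills the $\R$-split, $\Q$-anisotropic part of the centre and does not repair this. For example, if $G_2$ has a central torus factor which is anisotropic over $\R$ (say the norm-one torus of an imaginary quadratic field, so still split by a CM field) and on which the members of $\fX_2$ are trivial, that factor survives in $G_2^{ab,c}$ but is missed entirely by the reflex norm, so $\phi$ is not surjective and your correcting element $b$ need not exist. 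The paper's argument requires no such surjectivity: writing an element of $\Delta^N(G,G_2)$ as $(g,\gamma^{-1})$ with $g\in\overline{G^{der}(\Z[1/N])_+}$ and $\gamma\in G^{ad}(\Z[1/N])^+$ via (\ref{ridiculous group lemma}), it lifts it directly to the element $(1,g,\gamma^{-1})$ of $\cA_P^{N,\circ}(\cB)$ with trivial $\cB^c$-component and checks this lies in $\Delta_P^N(\cB,G_2)$, so no correction inside $\cB^c$ is ever needed. If you insist on the ``lift and correct'' route, you would at least have to show that the specific element $\Phi(\tilde y)\in G_2^c$ you need to cancel lies in the image of $\phi$ over $\Z[1/N]$ (it lies in the image of $G_2^{der}\to G_2^c$, so the issue is producing an integral point of the relevant fibre), and your proposal does not address this.
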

\begin{proof}
We first remark that (1) is obvious because $\Xi \subset Z(\cB^c)$ which commutes with the whole of $\cA_P^{N,\circ}(\cB)$.

For (2), that $\Xi$ is a subgroup is clear and normality follows by (1). Existence of the maps and exactness in the middle follow in the usual way. That the final map is a surjection follows because for any $(g,\gamma^{-1}) \in \Delta^N(G,G_2)$, it is hit by $(1,g,\gamma^{-1}) \in \Delta_P^N(\cB,G)$. Finally the action on $\cP^+_\cB \rightarrow \cS_{K^N}^+(G,\fX)_\cO$ is free because $\Xi$ acts freely on each fibre, and $\Delta^N(G,G_2)$ acts freely on $\cS^+_{K^N}(G,\fX)_\cO$ by (\ref{delta free}).
\end{proof}

\subsubsection{}
Equipped with this lemma we may construct a $G_2^c$-bundle
$$\cP_2^+ := (\cP_{\cB}^+/\Delta_P^N(\cB,G_2)) \times^{\cB^c/\Xi} G_2^c \rightarrow \cS_{K_2^N}^+(G_2,\fX_2)_{\cO}$$
with equivariant $\cA_P^{N,\circ}(G_2)$-action. Moreover we have
\begin{eqnarray*}
\iota_2: \cP_2^+ \otimes_\cO E_N & = & (\cP_{\cB}^+ \otimes_{\cO} E_N /\Delta_P^N(\cB,G_2)) \times^{\cB^c/\Xi} G_2^c\\ 
& \map{\iota_{\cB}} &(P^+_{K_\cB^N,E_N}/\Delta_P^N(\cB,G_2)) \times^{\cB^c/\Xi} G_2^c = P^+_{K_2^N,E_N},
\end{eqnarray*}
where the final equality follows by working over $\C$ and the argument of \cite[7.2]{milne1}.

\begin{prop}
\label{almostlastprop}
The pair $(\cP_2^+,\iota_2)$ is an integral canonical model for $P_{K_2^N, E_N}^+(G_2,\fX_2)$.
\end{prop}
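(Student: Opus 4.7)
The plan is to verify each of the conditions in the definition of an integral canonical model, exploiting the fact that $(\cP_\cB^+,\iota_\cB)$ is already such a model for $P^+_{K_\cB^N,E_N}(\cB,\fX_\cB)$ and that $\cP_2^+$ is built from it by two formal operations: the quotient by the free action of $\Delta_P^N(\cB,G_2)$, and the contracted product along $\cB^c/\Xi \hookrightarrow G_2^c$.

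First I would verify that $\cP_2^+$ is indeed a $G_2^c$-torsor over $\cS^+_{K_2^N}(G_2,\fX_2)_\cO$. Since $\Xi \subset \cB^c$ is normal in $\cA_P^{N,\circ}(\cB)$ and acts freely (being a subgroup of $\cB^c$ acting on a $\cB^c$-torsor), the quotient $\cP_\cB^+/\Xi$ is naturally a $\cB^c/\Xi$-torsor on $\cS_{K^N}^+(G,\fX)_\cO$. The remaining quotient by $\Delta^N(G,G_2)$ is by a finite group acting freely on the base, by (\ref{delta free}), so descends the torsor to $\cS^+_{K_2^N}(G_2,\fX_2)_\cO$. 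The contracted product along the closed immersion $\cB^c/\Xi \hookrightarrow G_2^c$ then yields a $G_2^c$-torsor. The generic fibre identification $\iota_2$ is produced by the displayed computation, using the identification over $\C$ of \cite[7.2]{milne1} to match it with $P^+_{K_2^N,E_N}$.

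Next I would check that the extra structure descends. The $\prod_{p|N}G_2(\Qp)$-action is inherited from the $\prod_{p|N}\cB(\Qp)$-action on $\cP_\cB^+$ via the surjection $\cA_P^{N,\circ}(\cB)\twoheadrightarrow \cA_P^{N,\circ}(G_2)$, whose kernel is exactly $\Delta_P^N(\cB,G_2)$, so the quotient carries the action and contracting to $G_2^c$ preserves it. The connection $\nabla$ on $\cP_\cB^+$ is $\cA^{N,\circ}_P(\cB)$-equivariant by construction, so descends to the quotient, and any connection on a torsor pushes out along a group homomorphism $\cB^c/\Xi \hookrightarrow G_2^c$. For the $\mu_2$-filtration, by (\ref{gr functoriality}) together with the identification $\cB^{ad}\cong G_2^{ad}$ (so $P_{\mu_\cB}$ and $P_{\mu_2}$ correspond under the isogeny), we have $\mathcal{GR}_{\mu_\cB}=\mathcal{GR}_{\mu_2}$ canonically, and the map $\gamma_\cB:\cP_\cB^+\to\mathcal{GR}_{\mu_\cB}$ is invariant under $\Delta_P^N(\cB,G_2)$ and factors through $\cB^c/\Xi$, hence descends to the required $\gamma_2:\cP_2^+\to\mathcal{GR}_{\mu_2}$.

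The main obstacle is the lattice property at crystalline points, which I would attack as follows. By (\ref{all the abelian}) every $W(k)$-point $x_2\in\cS_{K_2^NK_{2,N}}(G_2,\fX_2)$ with $k$ finite of characteristic $p\nmid N$ is crystalline. Since $\cS^+_{K_\cB^N}\to \cS^+_{K_2^N}$ is a torsor under the finite group $\Delta^N(G,G_2)$ and $W(k)$ is strictly henselian after a finite unramified extension, we may lift $x_2$ to a crystalline point $x\in\cS^+_{K_\cB^N K_{\cB,N}}(W(k'))$ for some finite $k'/k$. Given $V_2\in\Rep_{\Z^N}(G_2^c)$, restriction along $\cB^c/\Xi\hookrightarrow G_2^c$ and then along $\cB^c\twoheadrightarrow \cB^c/\Xi$ yields $V_\cB\in\Rep_{\Z^N}(\cB^c)$, and by construction of the contracted product we have a canonical identification of fibre functors
$$x_2^*\omega_{\cP_2^+}(V_2)\otimes_{W(k)} W(k')\ \cong\ x^*\omega_{\cP_\cB^+}(V_\cB),$$
compatibly with $\iota_2$ and $\iota_\cB$. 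The \'etale local systems match under pullback, so $\omega_{et,x}(V_\cB)=\omega_{et,x_2}(V_2)|_{\Gamma_{K_0'}}$. The lattice property for $(\cP_\cB^+,\iota_\cB)$ at $x$ then identifies both sides with $\D(x^*\omega_{et}(V_\cB))$, which is functorial under the $\fS$-module construction (\ref{kismod}) and invariant under finite unramified base change. Since $W(k)\to W(k')$ is faithfully flat, the desired equality of lattices over $W(k)$ follows by descent. Uniqueness of $(\cP_2^+,\iota_2)$ up to canonical isomorphism is then automatic from (\ref{uniqueness}), given (\ref{all the abelian}).
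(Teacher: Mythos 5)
Your proposal is correct and takes essentially the same route as the paper: the same case-by-case verification (torsor/Hecke/connection/filtration), with the lattice property checked by lifting a crystalline point through the finite \'etale cover $\cS^+_{K_\cB^N}\to\cS^+_{K_2^N}$, using that the pullback of $\cP_2^+$ is $\cP_\cB^+\times^{\cB^c}G_2^c$, the compatibility of $\D$ with unramified base change, and descent of the lattice equality along $W(k)\to W(k')$. The only inaccuracy is the claimed surjectivity of $\cA_P^{N,\circ}(\cB)\to\cA_P^{N,\circ}(G_2)$ (its image meets $G_2^c$ only in $\cB^c/\Xi$), but this is harmless since, as in the paper, the action descended from $\cA_P^{N,\circ}(\cB)$ together with the pushed-out $G_2^c$-action supplies the required equivariance.
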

\begin{proof}
The Hecke action extends by construction. Let us check the connection extends. We know (from direct observation over $\C$) that the $\Delta_P^N(\cB,G_2)$ action is horizontal, so the flat connection $\nabla: p_1^*\cP_\cB^+ \rightiso p_2^*\cP_\cB^+$ descends (and then can obviously be pushed out along $\cB^c/\Xi \rightarrow G_2^c$) to a flat connection $$\nabla: p_1^*\cP_2^+ \rightiso p_2^*\cP_2^+.$$
Since a similar relationship also relates the connections on the generic fibre we have shown that this $\nabla$ extends over $\cO$ the connection on $P_{K_2^N,E_N}$.

To check the lattice condition at a crystalline point $x$, note the commutative diagram of $E_N$-schemes, letting $K_{\cB}^{Np},K_2^{Np}$ be the obvious full level structures at all primes except $N$ and $p$
$$
\begin{CD}
    \Sh_{K_\cB^{Np}}(\cB,\fX_{\cB})|_{\Sh^+_{K_\cB^N}(\cB,\fX_{\cB})} @>>>  \Sh_{K_2^{Np}}(G_2,\fX_{2})|_{\Sh^+_{K_2^N}(G_2,\fX_2)}\\
@VVV        @VVV\\
    \Sh^+_{K_\cB^N}(\cB,\fX_{\cB}) @>>>  \Sh^+_{K_2^N}(G_2,\fX_2).
\end{CD}
$$
Taking a lift $\tilde{x}$ of $x$ to $\cS^+_{K_\cB^N}(\cB,\fX_{\cB})_{\cO}$, which is crystalline since the map $\cS^+_{K_\cB^N}(\cB,\fX_{\cB})_{\cO} \rightarrow  \cS^+_{K_2^N}(G_2,\fX_2)_{\cO}$ is finite \'etale and its source has all the crystalline points, the diagram gives an identification
$$\omega_{et,\tilde{x}} \circ \Res_{\cB^c(\Zp)}^{G_2^c(\Zp)} = \omega_{et,x}: \Rep_{\Zp}(G_2^c) \rightarrow \Rep_{\Zp}(\Gamma_{\Qp^{ur}}).$$
But now the lattice condition is immediate, since equality of lattices can be checked after passing to a finite \'etale cover, and $\cP_{2,x}^+$ pulls back to $\cP_{\cB,\tilde{x}}^+ \times^{\cB^c} G_2^c$ which as a canonical model already has the required property that 
$\omega_{\cP_{\cB,\tilde{x}}^+} = \D \circ \omega_{et,\tilde{x}}$ by its own lattice condition.

It remains to check the filtration $\gamma_B:\cP^+_\cB \rightarrow \mathcal{GR}_{\mu_\cB}$ descends to $\gamma_2: \cP^+_2 \rightarrow \mathcal{GR}_{\mu_2}$, which amounts to showing that the composite
$$\cP^+_\cB \rightarrow \mathcal{GR}_{\mu_\cB} \rightarrow \mathcal{GR}_{\mu_2}$$
is $\Delta_P^N(\cB,G_2)$-invariant. But this is clear: since $\Xi = \Ker(\cB^c \rightarrow G_2^c) \subset Z(\cB^c)$ it acts trivially on $\mathcal{GR}_{\mu_2}$ and $\Delta^N(G,G_2)$ acts trivially because Hecke operators always act trivially on any $Gr_{\mu}$.
\end{proof}

\subsubsection{}
With this in hand, we are finally able to construct an integral canonical model $\cP_2$ for $P_{K_2^N}(G_2,\fX_2)$ in the spirit of (\ref{assemble extension prop}), but with the uniqueness of canonical models used in place of the extension property, because it may be used to canonically extend any isomorphism.

Indeed, let us decompose $\cS_{K_2^N, \cO} = \coprod_{c \in \pi_0} \cS_c$ into components each of which is geometrically integral. By the argument of (\ref{assemble extension prop} (1)) we can find for each $c \in \pi_0$ some $\cG_c/\Z[1/N]$ a model for $G_2$ such that we may identify $\cS_c \cong \cS_{\prod_{p \nmid  N}\cG_c(\Zp), \cO}^+$ via a Hecke operator. For each $c$ we may make such a choice and invoking (\ref{almostlastprop}) for $\cG=\cG_c$ and using the Hecke equivariance of $P_{K^N}$ we obtain a canonical model $\cP_c^+$ for $P_{K^N_2,E_N}(G_2,\fX_2)|_{\cS_c \otimes_{\cO} E_N}.$ Taking the disjoint union of these we obtain
$$\cP_{2,\cO} := \coprod_{c \in \pi_0} \cP_c^+,$$
an integral canonical model minus a full Hecke action. But by uniqueness (\ref{uniqueness}) of canonical models, it is formal to extend the Hecke operators acting between components. Finally by (\ref{galois descent}) it descends to $\cP_2/\cO_{E_2}[1/N]$, an integral canonical model for $P_{K_2^N}(G_2,\fX_2)/E_2$. Thus our main theorem has been proved.

\subsection{Automorphic vector bundles}
With our integral canonical models $\cP_{K^N}(G,\fX)$ constructed in the abelian type case, we should discuss the construction of automorphic vector bundles (recall the discussion (\ref{avbs})) in this setting, although it requires no new ideas.

\begin{thm}
Let $(G,\fX)$ be a Shimura datum of abelian type, $\cG/\Z[1/N]$ a reductive model for $G$, $K^N = \prod_{p \nmid N} \cG(\Zp)$ and for $\mu$ a Hodge cocharacter of $(G,\fX)$, and $L/E=E(G,\fX)$ a finite extension. 

Then we have a canonical functor $\cJ \mapsto \cV(\cJ)$ from $\cG_{\cO_L[1/N]}$-equivariant vector bundles on $\mathcal{GR}_{\mu, \cO_L[1/N]}$ to vector bundles on $\cS_{K^N}$ which on the generic fibre is identified naturally with that of \cite[III 5.1]{milne3}.
\end{thm}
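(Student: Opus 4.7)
The plan is to mimic the characteristic zero construction of \cite[III]{milne3} as reviewed in (\ref{avbs}), using the integral canonical model $\cP_{K^N}(G,\fX)$ and its filtration map $\gamma$ in place of their characteristic zero counterparts. First I would base change everything to $\cO_L[1/N]$: the integral canonical model $\cP_{K^N}(G,\fX)_{\cO_L[1/N]}$, the Shimura variety $\cS_{K^N,\cO_L[1/N]}$, and the flag variety $\mathcal{GR}_{\mu,\cO_L[1/N]}$. By the construction of the previous section we have a $\cG^c$-equivariant morphism
$$\gamma: \cP_{K^N}(G,\fX)_{\cO_L[1/N]} \longrightarrow \mathcal{GR}_{\mu,\cO_L[1/N]},$$
a $\prod_{p|N}G(\Qp)$-equivariant action, and $\cP_{K^N}(G,\fX)_{\cO_L[1/N]} \to \cS_{K^N,\cO_L[1/N]}$ is a $\cG^c$-torsor.

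Given a $\cG$-equivariant vector bundle $\cJ$ on $\mathcal{GR}_{\mu,\cO_L[1/N]}$, I would first note that since $Z_{nc} \subset \cG$ acts trivially on $\mathcal{GR}_\mu$ (as $Z_{nc}$ lies in every parabolic $P_\mu$), and the relevant bundles to consider for automorphic forms are those on which $Z_{nc}$ acts trivially, we may assume (as in (\ref{avbs})) that the $\cG$-action on $\cJ$ factors through $\cG^c$. Pulling back along $\gamma$ then produces a $\cG^c$-equivariant vector bundle
$$\gamma^*\cJ \;\text{on}\; \cP_{K^N}(G,\fX)_{\cO_L[1/N]}.$$

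The next step is descent through the torsor. Since $\cG^c$ is a smooth affine group scheme over $\Z[1/N]$ and $\cP_{K^N}(G,\fX)_{\cO_L[1/N]} \to \cS_{K^N,\cO_L[1/N]}$ is a $\cG^c$-torsor, it is an fpqc (in fact, fppf-locally trivial) cover, and $\cG^c$-equivariant quasi-coherent sheaves descend uniquely to quasi-coherent sheaves on $\cS_{K^N,\cO_L[1/N]}$. Locally-free-of-finite-rank is preserved by descent along a faithfully flat finitely presented map, so we obtain a vector bundle
$$\cV(\cJ) := \left( (\gamma^*\cJ)\big/\cG^c \right) \;\;\text{on}\;\; \cS_{K^N,\cO_L[1/N]}.$$
Functoriality in $\cJ$ and compatibility with the $\prod_{p|N}G(\Qp)$-action are immediate from the equivariance of $\gamma$ and the torsor structure.

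Finally, one verifies compatibility with Milne's construction on the generic fibre. Since the identification $\iota: \cP_{K^N}(G,\fX)_{\cO_L[1/N]} \otimes_{\cO_E[1/N]} E \rightiso P_{K^N}(G,\fX)$ is part of the data of the integral canonical model and the extended $\gamma$ agrees with Milne's $\gamma$ under $\iota$, the generic fibre of $\gamma^*\cJ$ is identified with the pullback Milne uses, and the descent constructions along the torsor coincide. I expect no real obstacle here: the only delicate point is confirming that the fpqc descent of a locally free sheaf is again locally free of the same rank, which is standard once one knows $\cP_{K^N}(G,\fX)_{\cO_L[1/N]} \to \cS_{K^N,\cO_L[1/N]}$ is faithfully flat of finite presentation (which follows from $\cG^c$ being smooth affine), and that the associated bundle construction commutes with base change to the generic fibre.
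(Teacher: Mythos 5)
Your proposal is correct and is essentially the paper's own argument: pull $\cJ$ back along the extended filtration map $\gamma:\cP_{K^N}\rightarrow \mathcal{GR}_{\mu}$ and descend through the $\cG^c$-torsor $\cP_{K^N}\rightarrow \cS_{K^N}$, with compatibility on the generic fibre coming from the identification $\iota$ under which $\gamma$ extends Milne's filtration. The paper states this more tersely; your added remarks (that the action is assumed to factor through $\cG^c$, and that fppf descent preserves local freeness) are exactly the routine points being suppressed there.
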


\begin{proof}
Recall that we have the picture
$$\cS_{K^N} \leftarrow \cP_{K^N} \map{\gamma} \mathcal{GR}_\mu.$$
The functor $\cJ \mapsto \cV(\cJ)$ is given by pulling back $\cJ \mapsto \gamma^*\cJ$ and then using the usual equivalence between $\cG$-equivariant vector bundles on $\cP_{K^N}$ and vector bundles on $\cS_{K^N}$. This is obviously a functor, and the compatibility with the usual construction \cite[III 5.1]{milne3} follows because $\cP_{K^N}$ comes with a canonical identification $$\iota: \cP_{K^N} \otimes E \rightiso P_{K^N}(G,\fX)$$ under which $\gamma$ is an extension of the $G$-filtration on the RHS.
\end{proof}

It is also easy to deduce from our construction the following additional functoriality property.
\begin{prop}
We are given a morphism $f: (G_1,\fX_1,\cG_1) \rightarrow (G_2,\fX_2,\cG_2)$ of Shimura data of abelian type together with reductive models for $G_i$ over $\Z[1/N]$, $L/E(G_1,\fX_1)$ finite, and $\mu_1,\mu_2$ Hodge cocharacters for $\fX_1,\fX_2$ respectively. Suppose we are also given $\cJ_2$ a $\cG_2$-equivariant vector bundle on $\mathcal{GR}_{\mu_2,\cO_L[1/N]}$. Pulling back $\cJ_2$ along $$\mathcal{GR}_{\mu_1, \cO_L[1/N]} \rightarrow \mathcal{GR}_{\mu_2,\cO_L[1/N]}$$ and restricting the $\cG_2$ action to $\cG_1$ we obtain a $\cG_1$-equivariant vector bundle $\cJ_1$.

Then there is a canonical identification of vector bundles over $\cS_{\prod_{p \nmid  N}\cG_1(\Zp)}(G_1,\fX_1)$
$$\cV(\cJ_1) \cong f^*\cV(\cJ_2).$$
\end{prop}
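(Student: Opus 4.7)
The plan is to unwind the definitions and invoke the functoriality already established in Proposition \ref{func}. By construction, $\cV(\cJ_i)$ is obtained by pulling back $\cJ_i$ along $\gamma_i: \cP_{K_i^N}(G_i,\fX_i) \to \mathcal{GR}_{\mu_i}$ to get a $\cG_i^c$-equivariant bundle on $\cP_{K_i^N}$, and then descending to $\cS_{K_i^N}$ via the standard equivalence between $\cG_i^c$-equivariant vector bundles on a $\cG_i^c$-torsor and vector bundles on the base.

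First I would reduce the problem to an identification of $\cG_1^c$-equivariant bundles on $\cP_{K_1^N}$. Pulling back $\cV(\cJ_2)$ along the induced map $\cS_{K_1^N} \to \cS_{K_2^N}$ corresponds, under the torsor/bundle equivalence, to pulling back the $\cG_2^c$-equivariant bundle $\gamma_2^*\cJ_2$ on $\cP_{K_2^N}$ along the projection $f^*\cP_{K_2^N} \to \cP_{K_2^N}$. By part (1) of Proposition \ref{func} we have the canonical identification $f^*\cP_{K_2^N} \cong \cP_{K_1^N} \times^{\cG_1^c} \cG_2^c$, under which $\cG_2^c$-equivariant bundles on $f^*\cP_{K_2^N}$ correspond naturally to $\cG_1^c$-equivariant bundles on $\cP_{K_1^N}$ by restriction of structure group (equivalently, by pulling back along $\cP_{K_1^N} \hookrightarrow \cP_{K_1^N} \times^{\cG_1^c}\cG_2^c$).

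Next I would invoke part (2) of Proposition \ref{func}, which yields commutativity of the square relating $\gamma_1,\gamma_2$ and the map $g:\mathcal{GR}_{\mu_1} \to \mathcal{GR}_{\mu_2}$ provided by Lemma \ref{gr functoriality}. Concretely, if $\pi:\cP_{K_1^N} \to f^*\cP_{K_2^N}$ denotes the natural morphism, then chasing the diagram gives
\[ \pi^*\gamma_2^*\cJ_2 \;=\; \gamma_1^*g^*\cJ_2 \;=\; \gamma_1^*\cJ_1 \]
as $\cG_1^c$-equivariant bundles on $\cP_{K_1^N}$. Descending this identification to $\cS_{K_1^N}$ produces the desired canonical isomorphism $f^*\cV(\cJ_2) \cong \cV(\cJ_1)$.

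There is no substantive obstacle here; the work is purely bookkeeping, and the only point to be careful about is that the two manipulations above (descending a $\cG_2^c$-equivariant bundle on $\cP_{K_1^N}\times^{\cG_1^c}\cG_2^c$ either directly, or by first restricting its equivariance structure to $\cG_1^c$ on $\cP_{K_1^N}$) produce the same vector bundle on $\cS_{K_1^N}$. This compatibility is formal from the Tannakian equivalence of \cite[1.2]{brosh}, so one just has to assemble the diagram carefully.
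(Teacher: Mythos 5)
Your proposal is correct and follows exactly the route the paper takes: the paper's proof is simply "immediate from the construction of $\cV(-)$ and (\ref{func})", and your write-up is just that argument with the bookkeeping (the torsor/bundle equivalence, the identification $f^*\cP_{K_2^N}\cong \cP_{K_1^N}\times^{\cG_1^c}\cG_2^c$ from \ref{func}(1), and the commuting square from \ref{func}(2)) made explicit.
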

\begin{proof}
This is immediate from the construction of $\cV(-)$ and (\ref{func}).
\end{proof}

\end{document}